\journal{arXiv}
\theoremstyle{plain}
\newtheorem{theorem}{Theorem}[section]
\theoremstyle{definition}
\newtheorem{definition}[theorem]{Definition}
\newtheorem{proposition}[theorem]{Proposition}
\newtheorem{remark}[theorem]{Remark}
\newtheorem{corollary}[theorem]{Corollary}
\newtheorem{lemma}[theorem]{Lemma}
\newtheorem{example}[theorem]{Example}
\renewenvironment{proof}{{\noindent \bf  Proof.}}{\qed}
\newcommand{\Ml}{D^\psi_{+,\infty}}
\newcommand{\Mr}{D^\psi_{-,\infty}}
\newcommand{\Mpm}{D^\psi_{\pm,\infty}}
\newcommand{\Cl}{\partial^\psi_{+}}
\newcommand{\Cr}{\partial^\psi_{-}}
\newcommand{\Cpm}{\partial^\psi_{\pm}}
\newcommand{\Conel}{\partial^{\psi-1}_{+}}
\newcommand{\Coner}{\partial^{\psi-1}_{-}}
\newcommand{\Conepm}{\partial^{\psi-1}_\pm}
\newcommand{\RLl}{D^\psi_{+}}
\newcommand{\RLpm}{D^\psi_\pm}
\newcommand{\RLonel}{D^{\psi-1}_{+}}
\newcommand{\RLonepm}{D^{\psi-1}_\pm}
\newcommand{\Clh}{\partial^\psi_{+h}}
\newcommand{\Crh}{\partial^\psi_{-h}}
\newcommand{\Cpmh}{\partial^\psi_{\pm h}}
\newcommand{\Conerh}{\partial^{\psi-1}_{- h}}
\newcommand{\Conepmhzero}{\partial^{\psi-1}_{\pm h 0}}
\newcommand{\Conelhzero}{\partial^{\psi-1}_{+ h 0}}
\newcommand{\Conerhzero}{\partial^{\psi-1}_{- h 0}}
\newcommand{\Cpmhzero}{\partial^{\sym}_{\pm h 0}}
\newcommand{\Crhzero}{\partial^{\sym}_{- h 0}}
\newcommand{\Clhzero}{\partial^{\sym}_{+ h 0}}
\newcommand{\sym}{\psi}
\newcommand{\Gru}{\calG^{\sym}}
\newcommand{\Gruone}{\calG^{\sym -1}}
\newcommand{\LTp}{\beta}
\newcommand{\SuS}{S}
\newcommand{\barlambda}{\Bar\lambda}
\newcommand{\mh}{-h}
\newcommand{\ph}{+h}
\newcommand{\Il}{I^\psi_+}
\newcommand{\Ir}{I^\psi_-}
\newcommand{\Ipm}{I^\psi_\pm}
\newcommand{\I}{I^{\psi}}
\newcommand{\Elq}{E^{\psi,\LTp}_+}
\newcommand{\Erq}{E^{\psi,\LTp}_-}
\newcommand{\Epmq}{E^{\psi,\LTp}_\pm}
\newcommand{\Gen}{G}
\newcommand{\AF}{A}
\newcommand{\Yh}[1]{ Y^{{\rm #1},h}}
\newcommand{\Y}[1]{ Y^{{\rm #1}}}
\newcommand{\ind}[1]{\mathbf 1_{\{ #1 >0\}}}
\newcommand{\indi}[1]{\mathbf 1_{\{ #1 \}}}
 \newcommand{\supnorm}[2]{ \|#1\|_{#2,\infty} }
\newcommand{\dd}{ { \mathrm{d}} }
\newcommand{\R}{\mathbb {R}}
\newcommand{\D}{\mathcal {D}}
\newcommand{\BC}{\mathrm{LR}}
\newcommand{\calG}{\mathcal G}
\let\orgdescriptionlabel\descriptionlabel
\renewcommand*{\descriptionlabel}[1]{%
  \let\orglabel\label
  \let\label\@gobble
  \phantomsection 
  \edef\@currentlabel{#1}%
  \let\label\orglabel
  \orgdescriptionlabel{#1}%
}
\begin{document}

\begin{frontmatter}
\title{Boundary conditions for nonlocal one-sided pseudo-differential operators and the associated stochastic processes I\tnoteref{dedication}} 
\tnotetext[dedication]{Dedicated to Mark Meerschaert, you were a great friend and inspiration.}
\fntext[marsden]{Baeumer and Kov\'acs were partially funded by the Marsden Fund administered by the Royal Society of New Zealand.} 
\author{Boris Baeumer\fnref{marsden}} \address{University of Otago, New Zealand}\ead{bbaeumer@maths.otago.ac.nz} 
\author{Mih\'aly Kov\'acs\fnref{marsden}} \address{P\'azm\'any P\'eter Catholic University } \ead{kovacs.mihaly@itk.ppke.hu} \author{Lorenzo Toniazzi\fnref{hari}} \address{University of Otago, New Zealand}\fntext[hari]{Toniazzi was fully funded by the Marsden Fund administered by the Royal Society of New Zealand.} \ead{ltoniazzi@maths.otago.ac.nz} 
\begin{abstract} We connect boundary conditions for one-sided pseudo-differential operators with the generators of modified one-sided L\'evy processes. On one hand this allows modellers to use appropriate boundary conditions with confidence when restricting the modelling domain.  On the other hand it allows for numerical techniques based on differential equation solvers to obtain fast approximations of densities or other statistical properties of restricted one-sided L\'evy processes encountered, for example, in finance. In particular we identify a new nonlocal mass conserving boundary condition by showing it corresponds to fast-forwarding, i.e.  removing the time the process spends outside the domain. We treat all combinations of killing, reflecting and fast-forwarding boundary conditions.  

In Part I we show wellposedness of the backward and forward Cauchy problems with a one-sided pseudo-differential operator with boundary conditions as generator. We do so by showing convergence  of Feller semigroups based on grid point approximations of the modified L\'evy process.

In Part II we show that the limiting Feller semigroup is indeed the semigroup associated with the modified L\'evy process by showing continuity of the modifications with respect to the Skorokhod topology. \end{abstract}

\begin{keyword}  nonlocal operator, nonlocal differential equation, spectrally positive L\'evy process, Feller process 

\MSC[2020]{35S15; 60J50; 60J35; 60G51; 60J27}\end{keyword} 
\end{frontmatter}

\tableofcontents

\section{Introduction} 
The last three decades have seen a surge in the application and theoretical development of fractional \cite{MR2218073,MR1658022, MR2090004,MR2884383,Benson2000b,Schumer2009} and nonlocal \cite{T20,MR2343205,MR3156646,MR2780345,D19} integro-differential  equations. An important reason is the discovery of clear probabilistic explanations for solutions and boundary conditions involving  processes with jumps (hence the nonlocality) \cite{MR2884383}, which is the foundation of particle tracking/Monte Carlo numerical methods, e.g. \cite{Zhang2006a}. In contrast to the 
 complete understanding of boundary conditions for one-dimensional  diffusion  (continuous) processes \cite{MR0345224},  one-dimensional L\'evy jump (discontinuous) processes are yet to be fully classified with respect to their   pseudo-differential operators/boundary conditions for the associated backward and forward Kolmogorov equations. Indeed this is an active field of research \cite{MR3467345, T20, MR2006232, MR3217703,MR3413862,MR3323906}. Identifying boundary conditions is especially complex when one wants to impose a mass conserving boundary condition modeling jumps across the boundary of the domain, as a variety of natural  modifications of the trajectories appear. Four examples are: censoring and its maximal extension \cite{MR2006232}, stochastic reflection  \cite{MR3467345} and fast-forwarding \cite{MR3582209}.  
A particularly well-understood case is the one of the spectrally positive $\alpha$-stable  L\'evy processes $Y$, for $\alpha\in(1,2)$. In fact, the work \cite{MR3720847}, which originates  in \cite{MR3413862}, gives a detailed description of the backward and forward equations identified by restricting trajectories of $Y$ to $[-1,1]$ either by killing, stochastically reflecting or fast-forwarding. In particular, fast-forwarding $Y$ by removing the time $Y$ spends outside $[-1,1]$ results in the nonlocal (or ``reinsertion in the interior'') boundary condition
\begin{equation}\label{eq:ff_intro}
\partial^{\alpha-1}_- f(-1 )=  \int_{0}^{2}f'(y-1) \frac{y^{1-\alpha}}{\Gamma(2-\alpha)} \,\dd y=0
\end{equation}
for the backward equation, where  $\partial^{\alpha-1}_- $ is a (right) Caputo derivative of order $\alpha-1\in(0,1)$ on $(-\infty,1]$. The methods of  \cite{MR3720847} are based on a finite difference/Gr\"unwald approximation of the Feller generator of $Y$, which provides a rigorous and yet intuitive explanation of \eqref{eq:ff_intro}. Briefly, the nonlocal boundary condition \eqref{eq:ff_intro} describes the following conservation of mass: when $Y$    leaves $(-1,\infty)$ by a drift, its mass is redistributed  \textit{inside the domain} (hence the nonlocality) according to the location of $Y$ at its first jump back inside $(-1,\infty)$. This is entirely different from stochastic reflection, where exiting particles are forced to stay on $\{-1\}$, which results in the standard Neumann boundary condition $f'(-1)=0$.  \\

The main purpose of this work is to extend the methods  and results for stable processes in \cite{MR3720847} to  recurrent one-sided L\'evy processes without the aid of scaling properties.  This means characterising the backward and forward Cauchy problems of the restrictions to an interval of $Y$   via a discrete finite difference approximation.  This turns out to be a rather daunting and lengthy task. The length is partially due to the in-depth treatment of twelve different integro-differential equations and their jump processes (Table \ref{explicitProcesses}) along with the delicate study  of their respective finite difference approximations. But it is also due to the necessity of combining new results with many known results from different fields, such as fluctuation theory of L\'evy processes \cite{MR2250061}, Post–Widder asymptotics \cite{MR0005923}, nonlocal convolution quadrature \cite{MR923707} and Skorokhod maps \cite{MR1876437}. 
To help the reader we provide a substantial introduction distributed in the three subsections below. But first we introduce our one-sided process on $\mathbb R$ (before we restrict it to an interval).\\



 Spectrally positive L\'evy processes posses a rich and well developed theory   \cite{MR1406564,MR2250061,MR3014147} along with several applications, for example in finance, hydrology, and queues \cite{MR2023021,CG00,CW03,MR1919609,MR2343205,MR0391297,MR1492990,MR2577834,MR3342453}. Importantly, their fluctuation theory features several explicit and semi-explicit formulae not available for most L\'evy processes. Such formulae are   expressed in terms of the scale function $k_0$,   defined by its  Laplace transform $1/\psi$, for $\psi$ being the Laplace exponent of the process   \cite[Chapter VII]{MR1406564}. We follow this tradition obtaining  a full description in terms of scale functions of backward and forward generators of our Feller processes on an interval. \\
 We denote by $Y$ any recurrent  spectrally positive L\'evy process with paths of unbounded variation (and no diffusion component). Standard results show that the process $Y$ is characterised by the   L\'evy symbol/Laplace exponent/pseudo-differential operator  
\begin{equation}\label{eq:sym_intro}
\sym(\xi)=\int_{(0,\infty)} \left(e^{-\xi y}-1+\xi y\right)\phi({\rm d}y),\quad \Re\xi\ge 0,
\end{equation} 
recalling that $\exp\{\sym(\xi)\}=\mathbb E[\exp\{ -\xi Y_1\}]$,
 for a L\'evy measure $\phi$ supported on $[0,\infty)$ such that $\int_0^1y\,\phi(\dd y)=\infty $ and  $\int_1^\infty y\,\phi(\dd y)<\infty $. Note that requiring $Y$ to be recurrent is a natural assumption to ensure that the fast-forwarding boundary conditions always conserve mass. This assumption also implies that defining $\Phi(x)=\int_x^\infty\phi(y,\infty)\,{\rm d}y\mathbf1_{\{x>0\}}$ we can write the generator of $Y$ as a \emph{L\'evy derivative} 
 \begin{align}\label{eq:gen_introR}
 \Mr     f(x) 
 &
 =\frac{{\rm d}^2}{{\rm d} x^2} \Phi(-\cdot)\star f(x) =\frac{{\rm d}^2}{{\rm d} x^2}\int_{0}^\infty f(x+y)\Phi(y)\,{\rm d}y,
 \end{align}
 by \cite[Theorem 31.1]{MR3185174} and $\Mr     f(x)=\int_{(0,\infty)}(f(x+y)-f(x)-yf'(x)) \,\phi(\dd y)$ for $f\in C^2_c(\mathbb R)$, 
where $\star$   denotes the convolution operator. 
Note that if $\phi({\rm d}y)=y^{-1-\alpha}\dd y/\Gamma(-\alpha)$ for $\alpha\in(1,2)$, then \eqref{eq:gen_introR} becomes a (right)  fractional derivative of order $\alpha$ on $\mathbb R$   and $Y$ becomes the  spectrally  positive $\alpha$-stable process with L\'evy symbol $\psi(ik)=(ik)^\alpha$ \cite{MR3720847}.\\

We discuss our results and contributions in the next three subsections, covering respectively:  wellposedness of backward (Feller) and forward (Fokker-Planck) equations in terms of scale functions; identification of the corresponding pathwise modification of $Y$; and construction of a finite difference/Gr\"unwald type approximation, its embedding into a Feller process and its convergence.  


\subsection{Boundary conditions for one-sided L\'evy derivatives}\label{sec:intro1}
In analogy with the stable case, representation     \eqref{eq:gen_introR} leads to the  left (`$+$') and  right (`$-$')  L\'evy derivatives of Riemann–Liouville and mixed Caputo type 
 \begin{align}\label{eq:gen_intro}
 \RLl   f(x)   =\frac{{\rm d}^2}{{\rm d} x^2} \Phi\star f\quad \text{and}\quad   \Cpm   f  =\frac{{\rm d}}{{\rm d} x} \Phi(\pm\cdot)\star   f', 
 \end{align}
 respectively, and the right and left   L\'evy derivatives of order ``$\psi-1$''
  \begin{align}\label{eq:phi-1_intro}
  \RLonel   f(x)   =\frac{{\rm d}}{{\rm d} x} \Phi\star f\quad\text{and}\quad\Conepm   f(x)   = \Phi(\pm\cdot)\star     f',
 \end{align}
 all  defined on the 0 extension to $\mathbb R\backslash [-1,1]$ of an appropriate $f$ (see Definition  \ref{def:ndo}). 
 It is not hard to show that these operators admit the explicit right inverse $\Ipm  f= k_0(\cdot\mp 1)\star f$.
Note also that $ \RLl  k_0 =  \RLl  k_0' = 0$ and $ \Cl  k_0 =  \Cl  1 = 0$, providing two degrees of freedom to impose two boundary conditions at $-1$ and $1$, respectively. This procedure leads us to construct  the candidate backward (`$-$') and forward (`$+$') generators. We study all the combinations of the following boundary conditions. 
\begin{itemize}
\item {\bf Killing} (${\rm D}$): The function is zero at the boundary, which   takes the form of  $\lim_{x\downarrow -1}f(x)=0$ or   $\lim_{x\uparrow 1}f(x)=0$.
\item {\bf Fast-forwarding} (${\rm N}$): The ``$\psi-1$'' mixed Caputo type derivative is zero at the boundary, meaning that we impose 
\[
 \lim_{x\downarrow -1} \Conepm  f(x)=0 \quad \text{or}\quad  \lim_{x\uparrow 1} \Conepm  f(x)=0.
\]
\item {\bf Reflecting} (${\rm N^*}$): For the backward equation, the first derivative is zero at the boundary, meaning that we impose  $\lim_{x\downarrow -1}f'(x)=0$ or   $\lim_{x\uparrow 1}f'(x)=0$. For the forward equation the ``$\psi-1$'' Riemann–Liouville type derivative is zero at the boundary, i.e. \[\lim_{x\downarrow -1} \RLonel  f(x)=0 \quad \text{or}\quad \lim_{x\uparrow 1} \RLonel  f(x)=0.\] 
\end{itemize}
For a given  function space $X$, a L\'evy  derivative $\Gen\in\{\RLl,\Cpm\}$  and $\mathrm{L},\mathrm{R}\in\{\mathrm{D},\mathrm{N},\mathrm{N^*}\}$ we refer to $\D(\Gen,\mathrm{LR})$ as the set of  functions in $X$  satisfying $\Gen\D(\Gen,\mathrm{LR})\subset X$, the ${\rm L}$ boundary condition at $-1$ and the ${\rm R}$ boundary condition at $1$. For example, for $X= C_0[-1,1)$ \[\D(\Cr,\mathrm{N^*D})=\Big\{f\in  X: \Cr f\in X,\,\lim_{x\downarrow -1}f'(x)=0,\,\lim_{x\uparrow 1}f(x)=0\Big\},\]
 or for $X=L^1[-1,1]$
\[\D(\Cl,\mathrm{NN})=\Big\{f\in X: \Cl f\in X,\,\lim_{x\downarrow -1}\Conel f(x)=0,\,\lim_{x\uparrow 1}\Conel f(x)=0\Big\}.\]
Then, for a given $X$ we denote by $(\Gen,\BC)$ the operator $\Gen:\D(\Gen,\BC)\to X$. Note that we use the convention $(\RLl,\mathrm{N^*N}):=(\RLl,\mathrm{N^*N^*})$. If we want to comprehensively talk about several operators with an unspecified left or right boundary condition, we just state $\mathrm{L}$ or $\mathrm{R}$, respectively; e.g., $\mathrm{LD}$ would refer to having a left boundary condition of type $\mathrm{D},\mathrm{N},$ or $\mathrm{N^*}$ and a right boundary condition of type $\mathrm{D}$.  We use the notation $C_0(\Omega)$  to be the closure with respect to the supremum norm of the space of continuous functions with compact support in $\Omega$. The set $\Omega$ refers to the interval \[\Omega\in\big\{(-1,1),[-1,1),(-1,1],[-1,1]\big\}\] with an endpoint excluded if the problem has a Dirichlet boundary condition there (if an endpoint is excluded, any function in $C_0(\Omega)$ will converge to zero there). \\

To treat all boundary modifications of $Y$ it is enough to consider the twelve cases shown in Table  \ref{explicitProcesses} (as explained in the next section). Our main  analytical results concerning the  operators $(\Gen,\BC)$ are given in Corollaries \ref{cor:C0semi} and \ref{cor:adjointsemi}. Here we prove that each of the twelve operators in Table \ref{explicitProcesses} generates a positive strongly continuous contraction semigroup $\left(P^\pm_t\right)_{t\ge 0}$ on the respective $X\in \{C_0(\Omega),L^1[-1,1]\}$, and that $(\Gen^-,\BC)$ and $(\Gen^+,\BC)$ are dual to each other. 
These results are achieved by first showing that   the operators $(\Gen,\BC)$ are closed, densely defined,  the range of $1-\Gen$ is $X$ and their duality relation. We then prove  dissipativity of $(\Gen,\BC)$ and positivity of the generated semigroups via an $L^1$ finite difference approximation discussed in Section \ref{sec:intro3}. 

We highlight that the boundary conditions corresponding to ${\rm NR}$ are new, and highly non-trivial. (Their probabilistic intuition and derivation is explained detail in Section \ref{sec:intro3}.) To display this boundary condition in a more familiar way, one may rewrite  it (for the backward equations) as
\begin{align}\label{eq:intro_boundaryN}
 \int_{(-1,1]} (f(y)-f(-1))\nu(\dd y)=\Coner f(-1)=0,
\end{align}
where $\nu(dy)= \phi(y+1,\infty)\,\dd y+\Phi(2)\delta_1(\dd y)$. Then we see that \eqref{eq:intro_boundaryN} takes the form of the ``reinsertion in the interior'' boundary condition for diffusion found by Feller in \cite{MR47886} (see, e.g., \cite[pages 187, 188]{MR0345224}). 
Another analytical contribution of this work is the first full description of backward and forward generators for the more common boundary conditions ${\rm D}$ and ${\rm N^*}$. Interestingly, we discover that reflecting $Y$ at the lower boundary results in a process without a dual; its backward equation is not the forward equation of another stochastic process as its solution is in general gaining mass (the generator is dissipative with respect to the supremum norm but not with respect to the $L^1$ norm).  

\subsection{Identification of the stochastic processes}\label{sec:intro_2}
The second part of this work identifies the six Feller processes $\Y{LR}$ associated to the Feller (backward) equations on $C_0(\Omega) $ and Fokker-Planck (forward) equations on $L^1[-1,1]$. That is, \[ P_t^-f(x)=\mathbb E[f(\Y{LR}_t)|\Y{LR}_0=x],\]
for $f\in C_0(\Omega)$ and  
 \[ P_t^+f(x)\dd x=\mathbb P[\Y{LR}_t\in\dd x|\Y{LR}_0\sim f],\]
 for an $L^1$ probability density $f$.
This result is summarised in Table \ref{explicitProcesses}, with the notation appearing in the first column explained below.

For a stochastic process $(X_t)_{t\ge 0}$ we denote
\[ (X_t)^{\mathrm{kill}}=\begin{cases} X_t, & t<\inf\{s:X_s\not\in \Omega \},\\ \delta,& \mathrm{else,}\end{cases}\]
where $\delta$ is a graveyard point, and \[X_t^{\min}=\inf_{0\le s\le t}\{(X_s+1)\wedge 0\}.\]  
 The fast-forwarded processes $X_{\tau}$ are defined by the time changes $\tau\in \{\tau^{l}, \tau^r, \tau^{l,r}\}$ given by the right inverse
 \[ \tau_t=\inf\{s>0: \AF(s)>t\}\]
 of the respective additive functionals $\AF \in\{\AF^{l}, \AF^r, \AF^{l,r}\}$, where $  \AF^{l}(s)=\int_0^s  \mathbf1_{\{X_z> -1\}} \,\dd z$, $\AF^{r}(s)=\int_0^s  \mathbf1_{\{X_z<1\}} \,\dd z$ and $\AF^{l,r}(s)=\int_0^s \mathbf 1_{\{-1<X_z<1\}} \,\dd z$.
We then have the following interpretation of the six processes in Table \ref{explicitProcesses}:
\begin{enumerate}
\item $\Y{DD}$: $Y$  is killed as soon as it leaves $(-1,1)$. 
\item $\Y{DN}$: $Y$ is killed if it drifts across the left boundary. If it jumps across the right boundary we make a time change deleting the time for which $Y$ is to the right of the right boundary (in other words, fast-forward to the time the process is again inside $(-\infty,1)$). By  \cite[Lemma 2]{MR1175272}, $\Y{DN}$ equals in law  $\Y{DN^*}$, reflecting $Y$ at the right boundary and then killing it at the left boundary.
\item $\Y{ND}$: we make a time change deleting the time for which $Y$ is to the left of the left boundary.  This process is then killed if it jumps across the right boundary.  (We remark again that for a drift across the left boundary fast-forwarding $Y$  restarts the process inside the domain \cite{MR3582209}, which differs  from   reflecting the process at the left boundary.)
\item $\Y{NN}$: we make a time change  deleting the time for which $Y$ is outside the domain $(-1,1)$. This process equals in law $\Y{NN^*}$, i.e. reflecting $Y$ at the right boundary and then fast-forwarding the paths at the left boundary.
\item $\Y{N^*D}$: $Y$ is reflected at the left boundary and then killed if it jumps across the right boundary.
\item $\Y{N^*N}$: $Y$ is reflected at the left boundary and then fast-forwarded at the right boundary. This process equals in law the two sided reflection of $Y$, defined as in \cite{MR3467345}. (This is proved in Part II.)
\end{enumerate}
From the descriptions above we see that we treat all possible combinations of $\BC$ for a recurrent $Y$. \\

We obtain as a corollary of Table \ref{explicitProcesses} several new and known results concerning exit problems and   resolvent measures for one-sided L\'evy processes (which we present in Part II). 
The new results   are the representation of the resolvent measures for  $\Y{NN}$ and $\Y{ND}$, and the solution for the  exit problem for  $\Y{ND}$. Note that this exit problem describes a natural quantity, namely the time spent by a spectrally positive L\'evy process  in an interval $[a,b]$ before its first jump above $b$. As for the the known results, we provide a new proof of the known representations of the resolvent measures for $\Y{DN}$, $\Y{N^*D}$ and  $\Y{N^*N}$, first proved in \cite{MR2054585} and \cite{MR1995924}. Unfortunately, in the  cases $\Y{N^*D}$ and  $\Y{N^*N}$ our work provides a new proof  only by introducing the regularity assumption \ref{H1} on the L\'evy symbol $\psi$.  This is because of the singularities arising in the finite difference approximation,  discussed in Section \ref{sec:intro3}.  It is an open problem whether we can drop \ref{H1} in our proof. We also recall that we treat only recurrent $Y$ with paths of unbounded variation (and no diffusion component), although our strategy appears be extendable to all one-sided L\'evy processes with  twice continuously differentiable $k_0$.

\begin{table}
\centering
\vline
\begin{tabular}{l|c|c}
  \hline
\hspace{1cm} Process $\Y{LR}$ &
\hspace{.1cm}	Forward generator 
\hspace{.1cm} & \hspace{.1cm} Backward generator
\hspace{.1cm}\\
	\hline
	1.  $\Y{DD}_t=Y^{\mathrm{kill}}_t$& $(\Cl , \mathrm{DD})$  & $(\Cr , \mathrm{DD})$   \\
  \hline
2.	$\Y{DN}_t=\left(Y_{\tau^r_t}\right)^{\mathrm{kill}}$ & $(\Cl , \mathrm{DN})$ & $(\Cr , \mathrm{DN})$\\
  \hline
3.	 $\Y{ND}_t=\big(Y_{\tau^l_t}\big)^{\mathrm{kill}}$& $(\Cl , \mathrm{ND})$ &$(\Cr , \mathrm{ND})$\\
  \hline
4.	$\Y{NN}_t=Y_{\tau^{l,r}_t}$ & $(\Cl , \mathrm{NN})$ & $(\Cr , \mathrm{NN})$\\
  \hline
5.	$\Y{N^*D}_t=(Y_t-Y^{\min}_t)^{\mathrm{kill}}$ &$(\RLl , \mathrm{N^*D})$ & $(\Cr , \mathrm{N^*D})$\\
  \hline
6.	 $\Y{N^*N}_t=(Y_t-Y^{\min}_t)_{\tau^r_t}$ & $(\RLl , \mathrm{N^*N})$ & $(\Cr , \mathrm{N^*N})$ \\
  \hline 
\end{tabular}\vline
\caption{\label{explicitProcesses} Restrictions of the spectrally positive process $Y$ to $[-1,1]$ and the associated forward and backward generators of  strongly continuous contraction semigroups  on    $X=L^1[-1,1]$ and $X=C_0(\Omega)$, respectively. The stochastic processes are defined in Section \ref{sec:intro_2}. The generators $(\Gen,\BC)$ are defined in Definition \ref{def:ndo} and the explicit representation for the domains  $\D(\Gen,\BC)$ can be found in Table \ref{tab:ndo}.}
\end{table}

\subsection{Gr\"unwald type approximations}\label{sec:intro3}
We now discuss  our construction of a finite difference approximation for both the Feller  processes and the  backward/forward generators of Table \ref{explicitProcesses}, allowing us to prove the claim made in the same table. As a motivating example observe that the finite difference approximation  $(f(x-h)-2f(x)+f(x+h))/h^2$ to the second derivative (and hence to the Brownian motion) is given by the pseudo-differential operator 
\[
e^{\xi h}\left(\frac{1-e^{-\xi h}}{h}\right)^2=\frac{1}{h^2}e^{\xi h} -\frac{2}{h^2} +\frac{1}{h^2} e^{-\xi h},
\]
where we recall that $\xi^2 $ is the L\'evy symbol of the Brownian motion. Analogously, the symbol  
\[
e^{\xi h}\sym\left(\frac{1-e^{-\xi h}}{h}\right)=e^{\xi h}\sum_{j=0}^\infty \Gru_{j,h}e^{-j\xi h}
\]
is the pseudo-differential operator of the finite difference scheme
\begin{equation}\label{eq:intro_Dh}
 \Crh f(x) =\sum_{j=0}^\infty \Gru_{j,h} f(x+(j-1)h)    \to  D^\psi_{-\infty} f(x)
\end{equation}
as $h\to 0$ \cite{MR923707,MR1269502}. In the stable case $\sym(\xi)=\xi^\alpha$ for $\alpha\in(1,2)$, these coefficients reduce to  $\Gru_{j,h}=h^{-\alpha}(-1)^j\binom\alpha j$, the well-known Gr\"unwald coefficients \cite{MR2884383}. It is not difficult  to prove that $\sum_{j=0}^\infty\Gru_{j,h}=0$ and $-\Gru_{1,h},\Gru_{j,h}>0$ for $j\neq1$, so that \eqref{eq:intro_Dh} generates a discrete compound Poisson process $Y^h$. We then obtain that $Y^h\Rightarrow Y$ as $h\to 0$ in $D([0,\infty),\mathbb R)$, the    c\`ad\`ag functions equipped with  the  Skorokhod $J_1$-topology, where `$\Rightarrow $' denotes  convergence in distribution. In Part II we show that the fast-forward maps (and therefore all the maps defined in Section \ref{sec:intro_2}) are continuous at $Y$. Thus, for any $Y^h_0=Y_0\in (-1,1)$, by Continuous Mapping Theorem, 
\begin{equation}\label{eq:intro_weakconv}
\Yh{LR} \Rightarrow \Y{LR}\,\,  \text{in}\,\, D([0,\infty),\mathbb R),
\end{equation}
where $\Yh{LR}$ is defined by applying the same path modification that maps $Y$ to $\Y{LR}$. Due to the discrete nature of $Y^h$,  we can compute the pointwise generator of $\Yh{LR}$.  In particular we prove that the conservation of mass due to fast-forwarding $Y^h$ at $-1$ yields the following backward generator (applied to $f$) at $-1$, \begin{equation}\label{eq:intro_boundaN}
-\frac1h\Conerh f(-1)=-\frac1h \sum_{j=0}^\infty \Gruone_{j,h} f(-1+jh),
\end{equation}
with the coefficients $\Gruone_{j,h}=h \sum_{i=0}^j  \Gru_{i,h}$.  We then show  (Lemma \ref{lem:firtorderapprox}) that  $\frac1h\Conerh f(-1)$ converges if and only if for $\Cr$ the boundary condition ${\rm NR}$ holds, which takes the form \eqref{eq:intro_boundaryN} on the bounded interval $[-1,1]$.\\

Having obtained a discrete approximation  of the Feller process $\Y{LR}$, one can try to use it to also approximate the Feller generators $(\Gen^-,\BC)$, in order to prove the results in Table \ref{explicitProcesses}. In view of \eqref{eq:intro_weakconv} and the Trotter–Kato Theorem \ref{thm:TK}, it would be enough to prove that the generator of $\Yh{LR}$ converges uniformly to $\Cr f$ for some sequence $f_h\to f\in \D(\Cr,\BC)$. But in general  killing and fast-forwarding do not preserve the Feller property (see Example \ref{exp:killB}), and in particular    $\Yh{DR}$, $\Yh{LD}$, $\Yh{NR}$ and $\Yh{LN}$    are not Feller process. We overcome this issue by extending the theory of interpolated matrices of \cite{MR3720847}. Namely, we first embed $\Yh{LR}$ into a Feller  process and then find the suitable approximating functions $f_h$. To do so  we divide $[-1,1]$ in $n+1=2/h$ intervals and construct a matrix-valued function that defines a bounded Feller generator that agrees with the pointwise generator of $\Yh{LR}$ on grid points, and \textit{interpolates} it  between grid points (see Example \ref{exp:killB} for an illustration). Most of this work is then dedicated to the delicate task of finding a suitable sequence  of approximating functions $f_h\to f\in \D(\Gen,\BC)$. We perform this procedure for both backward and forward approximations. The backward approximations of Section \ref{sec:caseC} are applied in combination with \eqref{eq:intro_weakconv} to give the pathwise characterisation of the backward generators (in Part II). The forward approximations of Section \ref{sec:caseL}  are used to obtain the wellposedness of the Cauchy problems of Section \ref{sec:intro1}.  Although the interpolation matrices regularise each semigroup of $\Yh{LR}$ (by making it Feller), we pay a cost in the approximation procedure, often due to an extra term of order $\Gru_{0,h}=\psi(1/h)$  to be compensated at the boundary. In four cases out of the twelve in Table \ref{explicitProcesses}, differentiability of $k_0'$  arises as a natural condition to obtain convergence. Here we  work under the stronger assumption \ref{H1}. However, for these singular cases the  resolvent measures are known, thus we are  still able to prove Table \ref{explicitProcesses} for all our spectrally positive $Y$ (see Remark \ref{rmk:knowntwosided}). Besides the above difficulties, our interpolation matrix method allows to construct Feller semigroups on a bounded domain that yield the three equivalent type of convergence of Theorem \ref{thm:TK}, namely, the convergence of generators, semigroups and c\`adl\`ag paths. This is remarkable because  before the embedding procedure the approximating processes are not Feller, and the domain of the generators in Table \ref{explicitProcesses} is not smooth. Moreover it is natural to expect that this technique is  extendable to fast-forwarded non-recurrent  $Y$, Robin and other mixed boundary conditions, to other L\'evy processes such as   $\alpha$-stable processes  in 1-dimension \cite{MR3875559,MR144387}, and possibly to general one-sided Markov additive processes \cite{MR3301294} and higher dimensions. These extensions are indeed part of our current investigations.  

\section{Nonlocal operators with boundary conditions}\label{sec:2}
\subsection{General notation}
 We denote by $\mathbb N,\mathbb N_0, \mathbb R$, $\mathbb C$, $\mathbf1_{A}$, $a\wedge b$ and $a\vee b$ the positive integers, the non-negative integers, the real numbers, the complex plane, the indicator function of a set $A$, the minimum and the maximum of $\{a,b\}\subset \R$ respectively. We denote by $f^{(n)}= \frac{\dd^n}{\dd x^n}f$ the $n$-th integer differentiation of functions on subsets of $\mathbb R$, and if $n=1,2$  we may use   $f'$ and $f''$,  respectively.  For two functions $f,g$ supported on $\mathbb R$ and $a\in \mathbb R$ we use the asymptotic notation    $f=o(g)$ and $f=O(g)$ as $x\to a$ to respectively mean $\lim_{x\to a}f(x)/g(x)=0$ and there exists a $c>0$ such that  $|f(x)|\le c|g(x)|$ for all $x$ close to $a$.   We define  
the standard operators for $x\in \mathbb R$ and $\xi\in \mathbb C$ 
 \begin{align*}
 f\star g(x)&=\int_{\mathbb R} f(x-y)g(y)\,\dd y,  \quad I_{\pm}f(x)=[\mathbf1_{\{\pm x>0\}}]\star f(x),\quad
\widehat f(\xi)=\int_{\mathbb R}e^{-\xi y}f(y)\,\dd y,
 \end{align*}
for suitable functions $f,g:\mathbb R\to\mathbb R$, and  we recall the identity
  \begin{equation}\label{eq:convolutionsign}
 f\star g(x)=[f(-\cdot)]\star [g(-\cdot)](-x).
 \end{equation}
For $f: \Omega '\to\mathbb R$ and $\Omega \subset \Omega '\subset \mathbb R$, we define the zero extension operator  
\begin{equation}\label{eq:zeroext}
\Pi^{-1}_{\Omega }f(x)=\left\{\begin{aligned}
&f(x), &x\in \Omega ,\\
&0, &x\in \mathbb R\backslash \Omega ,
\end{aligned}
\right.
\end{equation} 
and the projection operator $\Pi_{\Omega }f(x)=f(x),$ $x\in \Omega $. 
\begin{remark}[Notational convention]\label{rem:zeroext}
If $f:\Omega \to \mathbb R$, $g:\Omega '\to\mathbb R$, for $\Omega ,\Omega '\subset \mathbb R$, then we abuse notation    by writing  $$f\star g=\Pi^{-1}_\Omega f\star \Pi^{-1}_{\Omega '}g.$$ Also, if $\Omega \subset\Omega '$ and   $A$ is an operator defined on functions on $\Omega$ then we abuse notation by writing $Ag = A\Pi_\Omega g$.
\end{remark}

For any set $\Omega\subset \mathbb R$ we denote by $C(\Omega)$ the set of real-valued continuous functions on $\Omega$ and we denote the supremum norm on $\Omega$ by $\supnorm{\cdot}{\Omega}$. We denote by $C_c(\Omega)$ and $C_0(\Omega)$ the subsets of $C(\Omega)$ consisting of the compactly supported functions and the closure in the supremum norm of $C_c(\Omega)$, respectively. We recall that $C_0(\Omega)$ equipped with the supremum norm is a Banach space.
 For an interval $\Omega$, we define $C_0^n(\Omega):=\{f\in C_0(\Omega): f^{(m)}\in C_0(\Omega), \,m\le n \}$ for any $n\in\mathbb N$,  $C_0^\infty(\Omega):=\cap_{m\ge1}C_0^m(\Omega)$, and $C_c^\infty(\Omega):=C_c(\Omega)\cap C_0^\infty(\Omega)$. 
If $X$ is a Banach space, we denote its norm by $\|\cdot\|_X$. We denote by $L^1(\Omega)$ the Banach space of real-valued Lebesgue integrable functions on $\Omega$. By $L^1_{{\rm loc}}(\Omega)$ we denote the space of real-valued locally integrable functions on $\Omega$, and by $ W^{1,1}(\Omega)$ we denote the standard Sobolev space of functions in $L^1(\Omega)$ with weak derivative in $L^1(\Omega)$.  

We recall some notions from the theory of operator semigroups \cite{MR3156646}.   A collection of operators $P=\{P_t\}_{t\ge 0}$ is said to be a \emph{strongly continuous contraction semigroup}  on a Banach space $X$ if $P_t:X\to X$ is   bounded and linear for any $t>0$,  $P_sP_t=P_{s+t}$ for all $t,s>0$,   $P_0$ is the identity operator and  for any $f\in X$ it holds that $\|P_t f\|_X\le \|f\|_X$ for all $t>0$  and $\|P_t f-f\|_X\to 0$ as $t\downarrow 0$.  We define the \textit{generator of} $P$ to be the pair $(\calG,\mathcal D)$, where $\mathcal D:=\{f \in X:  \calG f \;\text{converges in}\; X\}$ with $\calG f:=\lim_{t\downarrow 0}(P_tf-f)/t$, and we call $\mathcal D$ the \textit{domain of the generator} (of $P$). A set $\mathcal C\subset \mathcal D$ is said to be a \textit{core for $(\calG,\mathcal D)$} if $(\calG,\mathcal D)$ equals the closure of the restriction of $\calG$ to $\mathcal C$. Moreover, $P$ is said to be \emph{positive} if $0\le f \in X$ implies  $0\le P_tf$ for any $t>0$. A positive strongly continuous  contraction semigroup $P$ is said to be a \emph{Feller} semigroup if $X=C_0(\Omega)$ for some $\Omega\subset \mathbb R$.  We recall \cite[Page 15]{MR3156646} that  there exists a one-to-one correspondence between Feller semigroups on $C_0(\Omega)$ and Feller processes with state-space $\Omega$, where  a Feller process with state-space $\Omega$ is a time-homogeneous  Markov processes   $\{Z_t\}_{t\ge0}$ such that  $f(\,\cdot\,)\mapsto P_tf(\,\cdot\,):=\mathbb E[f(Z_t)\,|\,Z_0\!=\cdot\,] $, $t\ge0$, is a Feller semigroup on $C_0(\Omega)$. We recall the following well known Trotter–Kato convergence Theorem, which is a combination of \cite[Theorem 6.1, Chapter 1]{MR838085} and \cite[Theorem 17.25]{MR1464694}.
\begin{theorem}[Trotter–Kato]\label{thm:TK}
Suppose $P,\,P_j$, $j\in\mathbb N$, are strongly continuous contraction semigroups on $X$, denote by $(\calG,\D),\,(\calG_j,\D_j)$, $j\in\mathbb N$, the respective generators and let $\mathcal C$ be a core for $(\calG,\D)$. Then the following are equivalent:
\begin{enumerate}[(i)]
    \item if $f\in X$, then $P_{j,t}f\to P_tf$ in $X$ as $j\to\infty$  uniformly in $t$ in any compact time interval;
    \item if $f\in  \mathcal C $, then there exist $f_j\in \D_j$, $ j\in\mathbb N$, such that $f_j\to f$ and $\calG_jf_j\to \calG f$ as $j\to\infty$.
    \end{enumerate}
    Moreover, if $P,\,P_j$, $j\in\mathbb N$, are Feller semigroups on $X=C_0(E)$ for $E\subset \mathbb R$ and  $Z,\,Z_j$, $j\in\mathbb N$, are the respective Feller processes, then (i) and (ii) above are equivalent to
    \begin{enumerate}[(iii)]
        \item If $Z_{j,0}\Rightarrow Z_{0}$ as $j\to\infty$  on $E$, then $Z_j\Rightarrow Z$ as $j\to\infty$ on $D([0,\infty), E_\delta)$,
    \end{enumerate}
    where $D([0,\infty), E_\delta)$ is the space of $ E_\delta$-valued c\`adl\`ag paths with the Skorokhod $J_1$ topology and $  E_\delta$ is the one-point compactification of $E$.
\end{theorem}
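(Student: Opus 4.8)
The plan is to assemble the statement from the two classical results it already cites, the only genuine work being a diagonal argument that reduces the approximation condition in (ii) from the whole domain to a core, and the cemetery-state bookkeeping needed to pass between sub-Markovian semigroups on $C_0(E)$ and honest Feller processes on $E_\delta$.

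For the equivalence of (i) and (ii) I would invoke the classical Trotter--Kato theorem \cite[Theorem 6.1, Chapter 1]{MR838085}, which yields the equivalence of (i) with the statement (ii$'$) obtained from (ii) by replacing ``$f\in\mathcal C$'' by ``$f\in\D$''. Since (ii$'$) trivially implies (ii), it remains to prove (ii)$\Rightarrow$(ii$'$). Fix $f\in\D$; as $\mathcal C$ is a core, choose $g_k\in\mathcal C$ with $g_k\to f$ and $\calG g_k\to\calG f$, and then, using (ii), for each $k$ choose $g_{k,j}\in\D_j$ with $g_{k,j}\to g_k$ and $\calG_j g_{k,j}\to\calG g_k$ as $j\to\infty$. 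A standard diagonal choice $j\mapsto k(j)\to\infty$, slow enough that $\|g_{k(j),j}-g_{k(j)}\|_X+\|\calG_j g_{k(j),j}-\calG g_{k(j)}\|_X\to0$, then produces $f_j:=g_{k(j),j}\in\D_j$ with $f_j\to f$ and $\calG_j f_j\to\calG f$, which is (ii$'$).

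For the equivalence with (iii), suppose in addition that all the semigroups are Feller on $X=C_0(E)$. Extending each (sub-Markovian) transition function to $E_\delta$ by sending the lost mass to the absorbing cemetery $\delta$ identifies $C_0(E)$ with $\{h\in C(E_\delta):h(\delta)=0\}$ and makes $Z,Z_j$ into Feller, hence c\`adl\`ag and quasi-left-continuous, processes on the compact space $E_\delta$; under this identification the claimed equivalence of (i) and (iii) is precisely \cite[Theorem 17.25]{MR1464694}. Concretely, if (i) holds and $Z_{j,0}\Rightarrow Z_0$, then iterating (i) together with the Feller continuity of the limit semigroup gives convergence of all finite-dimensional distributions, while the Feller property on the compact space $E_\delta$ supplies tightness in $D([0,\infty),E_\delta)$ through the Aldous tightness criterion, so $Z_j\Rightarrow Z$. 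Conversely, applying (iii) with point initial conditions $x_j\to x$ and using stochastic continuity of $Z$ yields $P_{j,t}f(x_j)\to P_tf(x)$ for every $f\in C_0(E)$ and every $t\ge0$; since the $P_{j,t}$ are contractions and $t\mapsto P_{j,t}f$ is equicontinuous on compact time intervals, this locally uniform convergence upgrades to (i).

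The main obstacle is, frankly, absent: the theorem is a convenience repackaging of standard material, and I expect the write-up to be short. The only points that are not a verbatim citation are the core reduction above and, in the converse half of (iii), the need to retain the cemetery state $\delta$ so that the possibly sub-Markovian evolutions become genuine Feller processes on the compact space $E_\delta$, which is exactly what makes the tightness and finite-dimensional-convergence arguments applicable.
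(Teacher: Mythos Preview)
Your proposal is correct and takes exactly the same approach as the paper: the paper gives no proof at all, merely stating that the theorem ``is a combination of \cite[Theorem 6.1, Chapter 1]{MR838085} and \cite[Theorem 17.25]{MR1464694}.'' Your write-up is therefore more detailed than the paper's; in fact the diagonal argument you give for the core reduction is not even needed, since \cite[Theorem 6.1, Chapter 1]{MR838085} is already stated for a core.
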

 
\subsection{One-sided L\'evy derivatives and integrals}

In this section we first present our main assumptions and some basic properties of scale functions along with smoothness results. Then we construct our candidate backward and forward generators of Table \ref{explicitProcesses} by defining the L\'evy Riemann–Liouville  and mixed Caputo  derivatives and imposing the boundary conditions $\BC$.  \\
We will work with the following hypotheses.
\begin{description}
\item[(H0)\label{H0}] For a non-negative Borel measure $\phi$  on $(0,\infty)$ such that
\[
\int_{(0,\infty)} (z^2\wedge z)\,\phi(\dd z)<\infty\quad\text{and}\quad \int_{(0,1)} z\,\phi(\dd z)=\infty,
\]
  we define the symbol $\sym(\xi) =\xi^2\widehat\Phi(\xi)$ for $\Re \xi\ge 0$, where  
\begin{equation*}
\Phi(x)=\Phi^+(x)=\left\{\begin{split}&\int_x^\infty \phi(y,\infty)\,\dd y,  &x>0,\\
&0,  & x\le0,
\end{split}\right.
\label{eq:}
\end{equation*}  $\Phi^-(x)=\Phi^+(-x)$, and $\phi(y,\infty):=\phi((y,\infty))$.


\item[(H1)\label{H1}]  Assumption \ref{H0} holds and  either:\begin{enumerate}[(i)]
\item   $\Phi$ is 2-regular \cite[Definition 3.3]{MR2964432} and for any $c>0$,
$\int_1^\infty \bar w_c(r)\,\dd r<\infty$ where  $\bar w_c(r):=\sup\{|\xi/\psi(\xi)|:|\xi|\ge r,\,\Re \xi>c\}/r$; or
 
\item     the function $x\mapsto \phi(x,\infty)$ is completely monotone.

\end{enumerate}

\end{description}
 
\begin{remark}\label{rmk:phi}$\,$
\begin{enumerate}[(i)]
\item Note that \ref{H0} with Fubini's Theorem  imply that  $\sym$ is indeed the L\'evy symbol \eqref{eq:sym_intro}. Also, recall that a  one-sided L\'evy process $Y$ with no diffusion component has paths of unbounded variation if and only if its L\'evy measure satisfies $\int_{(0,1)} z\,\phi(\dd z)=\infty$ and it is recurrent if and only if its L\'evy symbol satisfies  $\sym'(0+)=0$ \cite[Chapter 8.1]{MR2250061}. With these results it is easy to prove that \ref{H0} describes precisely all spectrally positive recurrent L\'evy processes with paths of unbounded variation and no diffusion component.
	\item 
Assumption \ref{H0} implies that $\Phi\in C_0[1/T,\infty)\cap L^1(0,T) $ for all $T>0$, it is non-negative, non-increasing and convex. Also, for any $c>0$, the function $\widehat\Phi(\xi)= \int_0^\infty e^{-\xi x}\Phi(x)\,\dd x$ is analytic and bounded   on $\mathbb C\cap \{ \Re\xi\ge c\}$ and  
  $\xi\widehat\Phi(\xi)$ is a Bernstein function \cite[Eq. (3.3)]{MR2978140}.
Moreover, by the Dominated Convergence Theorem, as $x\to \infty$ 
\begin{equation}
\begin{split}
\frac{\sym(x)}{x^2}&\to 0,\quad
\frac{\sym(x) }x\to \infty,\quad
\sym(1/x)\to 0, \quad
\sym'(x) \to \infty,\quad
\frac{\sym'(x)}x \to 0,
\end{split}
\label{eq:convhomega}
\end{equation}  
using $\sym(x)=x\int_0^\infty(1-e^{-xy})\phi(y,\infty)\dd y$ and $\sym'(x)=\int_0^\infty (1-e^{-xy})y\phi(\dd y)$.
 \item Assumption \ref{H1}-(i) is used at its full potential only to prove \eqref{eq:GClimk-2k0Tricky}, and this result is only used in the proof of Theorem \ref{thm:conv_C_N*N}. As for the remaining three theorems of Section \ref{sec:case} where \ref{H1} is assumed, we can weaken  \ref{H1}-(i) to: $\Phi$ is 2-regular and   for every $c>0$ it holds that $\int_{c+i\mathbb R}1/|\sym(\xi)|\,\dd \xi<\infty$. 
\end{enumerate}

\end{remark}

\begin{example}\label{exp:ker}$\,$
\begin{enumerate}[(i)]
\item In the fractional case $\phi(\dd y)=y^{-1-\alpha}/\Gamma(-\alpha)\,\dd y$, $\alpha\in(1,2)$, so that $\sym(\xi)=\xi^\alpha$, which satisfies both \ref{H1}-(i) and \ref{H1}-(ii). 
\item The tempered fractional case $\phi(\dd y)=c_\alpha e^{-\LTp y}y^{-1-\alpha}\,\dd y$, $\alpha\in[1,2)$, $c_\alpha>0$ and a tempering parameter $\LTp>0$,     satisfies \ref{H1}-(ii) (recalling that the product of completely monotone functions is completely monotone). If $\alpha\in(1,2)$, $c_\alpha=1/\Gamma(-\alpha)$,  then $\sym(\xi)=(\LTp+\xi)^\alpha-\LTp^\alpha-\alpha \LTp^{\alpha-1} \xi$ \cite[Eq. (14)]{MR3342453}. 
\item Assumption  \ref{H0} allows for L\'evy measures with finite range of interaction, such as the truncated stable density $\phi(\dd y)=y^{-1-\alpha}\indi{y\in(0,K]}\,\dd y$, $\alpha\in[1,2)$, where the finite range of interaction $K$ is often a preferred choice for numerical schemes \cite{MR3096457,MR3323906,D19}. Also note that by the example below, this last L\'evy measure satisfies \ref{H1}-(i)  for all $\alpha\in(1,2)$.
\item A  sufficient condition for \ref{H1}-(i) to be satisfied is for $\phi$ to:  posses a non-increasing  density (implying that   $\Phi$ is 3-monotone \cite[Definition 3.4]{MR2964432} so that by \cite[Proposition 3.3]{MR2964432}  $\Phi$ is 2-regular);  and to satisfy the stable-like assumption $\lim_{x\to0}\phi(x,\infty)/x^{-\alpha} =C,$ for some $\alpha\in(1,2)$ and $C>0$. This is because the second  condition  implies the Abelian type estimate 
\begin{equation}\label{eq:Abelianest}
    |\sym(\xi)|\ge  b |\xi|^\alpha
\end{equation} for all large $|\xi|$ with $\Re \xi>0$ and some positive constant $b$ independent of  $\xi$. This implies the integrability condition in \ref{H1}-(i). (A proof of \eqref{eq:Abelianest} is given in Appendix \ref{app:y^2ker2}.)

\item Note that assumption \ref{H0} allows for   L\'evy measures that are not absolutely continuous with respect to Lebesgue measure.



\end{enumerate}
\end{example}
We now use the well known theory of scale functions for spectrally positive L\'evy processes (see, e.g., \cite[Chapter VII]{MR1406564}) to derive some identities key to this work.

\begin{lemma}\label{thm:ker}
Assume \ref{H0}. Then there exists a non-negative function of sub-exponential growth $ k_{-1}\in L^1_{\text{loc}}[-1,\infty)\cap C(-1,\infty)$   such that 
\begin{equation*}
\int_0^\infty e^{-\xi y}k_{-1}(y-1)\,\dd y=\frac{\xi}{\sym(\xi)},\quad\text{for}\quad\Re\xi>0.
\end{equation*}
 In particular, recalling the notation in \eqref{eq:zeroext} and defining $k^+_{-1}=\prod^{-1}_{(-1,\infty)}k_{-1}$, $k^-_{-1}(x)=k^+_{-1}(-x)$ and 
\[
\text{$k_{n-1}^{\pm}:=I^n_{\pm}k_{-1}^{\pm}$}\quad\text{for integers $n\ge 1$,}
\]
 the following identities hold for all $x\in\mathbb R$
\begin{align}
\Phi^{\pm}\star k_n^{\pm}(x)&= \frac{(1\pm x)^{n+1}}{(n+1)!} \ind{1\pm x},\quad n\ge-1, \label{eq:psikn}
\end{align}
where $0!=1$ by convention.

\end{lemma}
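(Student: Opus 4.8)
The plan is to derive everything from the defining Laplace transform of $k_{-1}$. The existence of a non-negative, locally integrable, subexponential $k_{-1}$ with $\widehat{k_{-1}(\cdot-1)}(\xi) = \xi/\psi(\xi)$ for $\Re\xi>0$ is essentially the classical scale-function statement: since $\psi$ is the Laplace exponent of a spectrally positive L\'evy process of unbounded variation, $1/\psi(\xi)$ is the Laplace transform of the (continuous, by unbounded variation) scale function $W$, and $\xi/\psi(\xi) = \xi\widehat{W}(\xi)$ corresponds to the derivative $W'$ (the ``$k_{-1}$'' here, shifted by $1$). I would either cite \cite[Chapter VII]{MR1406564} directly for this, or reconstruct it: note $\xi\widehat\Phi(\xi)=\psi(\xi)/\xi$ is a Bernstein function (Remark \ref{rmk:phi}(ii)), hence $\xi/\psi(\xi) = 1/(\xi\widehat\Phi(\xi))$ is a completely monotone function of $\xi$ (reciprocal of a Bernstein function), so by Bernstein's theorem it is the Laplace transform of a non-negative measure; the conditions in \ref{H0} (namely $\psi(x)/x\to\infty$ and $\psi(x)/x^2\to 0$ from \eqref{eq:convhomega}) give that $\xi/\psi(\xi)\to 0$ and $\xi\cdot\xi/\psi(\xi)\to\infty$, which upgrade this to an absolutely continuous density $k_{-1}(\cdot-1)\in L^1_{\mathrm{loc}}$, and continuity on $(-1,\infty)$ follows from smoothness of scale functions for this class (or can be obtained from the inversion integral). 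Subexponential growth is immediate since $\widehat{k_{-1}(\cdot-1)}$ is finite for all $\Re\xi>0$.

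The core of the lemma is the convolution identity \eqref{eq:psikn}, and I would prove it by induction on $n\ge -1$ using Laplace transforms. By the sign-flip identity \eqref{eq:convolutionsign} it suffices to treat the `$+$' case, i.e.\ $\Phi^+\star k_n^+(x) = (1+x)^{n+1}/(n+1)!\cdot\mathbf 1_{\{1+x>0\}}$. For the base case $n=-1$: both sides are supported on $[-1,\infty)$, and taking Laplace transforms (after shifting by $1$, so one works with functions on $[0,\infty)$) the left side becomes $\widehat\Phi(\xi)\cdot\widehat{k_{-1}(\cdot-1)}(\xi)\cdot e^{-\xi}\cdot(\text{shift bookkeeping})$; more cleanly, $\widehat{\Phi^+\star k_{-1}^+(\cdot-1)}(\xi) = \widehat\Phi(\xi)\cdot \xi/\psi(\xi) = \widehat\Phi(\xi)\cdot \xi/(\xi^2\widehat\Phi(\xi)) = 1/\xi$, which is exactly the Laplace transform of the constant $1=\mathbf 1_{\{y>0\}}$ on $[0,\infty)$, i.e.\ of $(1+x)^0\mathbf 1_{\{1+x>0\}}$. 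Since both sides are in a class where Laplace transforms are injective (non-negative, locally integrable, subexponential), they agree. For the inductive step: $k_n^+ = I_+ k_{n-1}^+$ means $k_n^+ = \mathbf 1_{\{\cdot>0\}}\star k_{n-1}^+$, so $\Phi^+\star k_n^+ = \mathbf 1_{\{\cdot>0\}}\star(\Phi^+\star k_{n-1}^+) = \mathbf 1_{\{\cdot>0\}}\star \big((1+\cdot)^n/n!\cdot\mathbf 1_{\{1+\cdot>0\}}\big)$ by the induction hypothesis, and a direct one-line integration $\int_{0}^{\infty}\mathbf 1_{\{x-y>-1\}}(1+x-y)^n/n!\,\dd y = (1+x)^{n+1}/(n+1)!$ for $x>-1$ closes the induction. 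One should be a little careful that the convolutions are well-defined (all functions non-negative, the relevant ones locally integrable) so that Fubini and associativity of convolution apply, and that the Laplace-transform argument in the base case is justified on the half-line after the shift.

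The main obstacle I anticipate is not the algebra of the induction but the analytic bookkeeping around $k_{-1}$ near the left endpoint $-1$: establishing that $k_{-1}(\cdot-1)$ is genuinely a function in $L^1_{\mathrm{loc}}[0,\infty)$ (rather than a measure with an atom at $0$) and continuous on $(0,\infty)$. This is where \ref{H0} is used in an essential, slightly delicate way — the divergence $\int_{(0,1)}z\,\phi(\dd z)=\infty$ (unbounded variation) is precisely what forces $W$ to be continuous with $W(0)=0$ and yields the density $W'=k_{-1}(\cdot-1)$; without it one would only get a measure. I would either invoke the standard scale-function regularity results for this subclass, or give the short complex-analytic argument via the asymptotics \eqref{eq:convhomega} and a contour-shift/inversion of $\xi\mapsto\xi/\psi(\xi)$. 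Once $k_{-1}$ is in hand as a bona fide function, the rest is the clean Laplace-transform induction sketched above, and \eqref{eq:convolutionsign} transfers the `$+$' identity to the `$-$' identity for free.
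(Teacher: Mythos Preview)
Your proposal is correct and follows essentially the same route as the paper: obtain $k_{-1}$ from scale-function theory (the paper cites \cite{MR3014147} for the existence and $C^1$-regularity of the scale function $W$, then sets $k_{-1}(\cdot-1)=W'$ and integrates by parts to get the Laplace transform), and then verify \eqref{eq:psikn} via Laplace transforms, with the `$-$' case coming from \eqref{eq:convolutionsign}. The only minor difference is that the paper does not induct: it computes $\widehat{\Phi^{+}\star k_n^{+}}(\beta)=\widehat\Phi(\beta)\,e^{\beta}\beta^{-n}/\psi(\beta)=e^{\beta}/\beta^{n+2}$ directly for every $n\ge -1$ in one line (after checking that $\Phi^+\star k_n^+$ has sub-exponential growth so the transform makes sense), whereas you do the Laplace step only for $n=-1$ and then push through $I_+$ by hand. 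Both are equally valid; the paper's uniform computation is slightly shorter, while your induction has the virtue of isolating where the Laplace inversion is actually needed.

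Your alternative existence argument via complete monotonicity of $\xi/\psi(\xi)$ is a legitimate starting point, but as you yourself flag, upgrading the resulting measure to a continuous density in $L^1_{\mathrm{loc}}$ from the asymptotics in \eqref{eq:convhomega} alone is not quite immediate; the paper sidesteps this by citing the $C^1$-regularity of $W$ under unbounded variation directly from the scale-function literature, which is the cleaner option.
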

\begin{proof} 
 By \cite[Theorem 2.1 and Lemma 2.3]{MR3014147} there exists a scale function $W\in C[0,\infty)$ strictly increasing that satisfies $W(0)=0$ and $\int_0^\infty e^{-\xi x}W(x)\,\dd x=1/\sym(\xi)$ for $\Re \xi>0$. Moreover, $W\in C^1(0,\infty)$ by \cite[Lemma 2.4]{MR3014147}, so that $W'\ge0$. Then
 \begin{align*}
 \frac{\xi}{\sym(\xi)}&= \lim_{b\to \infty,a\to 0+}\xi \int_a^b e^{-\xi y}W(y)\,\dd y\\
  &= \lim_{b\to \infty,a\to 0+}\int_a^b e^{-\xi y}W'(y)\,\dd y -[e^{-\xi y}W(y)]_a^b\\
  &=  \int_0^\infty e^{-\xi y}W'(y)\,\dd y.
 \end{align*}
 Then  $W'\in L^1_{\text{loc}}[0,\infty)$ and it is of sub-exponential growth. Also, letting $k_{-1}(y-1)=W'(y)$ on $(0,\infty)$, we obtain the  identities in \eqref{eq:psikn}. Indeed, for the `$+$' case,  $\Phi^{+}\star k_n^{+}\in C[-1,\infty)$ is of sub-exponential growth for each $n\ge0$, as $\Phi^{+}\star k_n^{+}(x)\le k_n^+(x)[\int_{0}^{2}\Phi(y)\,\dd y+(x-1)\Phi(2)]$ for $x>1$, and observe that $\Phi^{+}\star k_{-1}^{+}\in L^1_{\text{loc}}(\mathbb R)$. Then for any $\LTp>0$ 
\begin{align*}
\widehat{\Phi^{+}\star k_{n}^{+}}(\LTp)&=\int_{-1}^\infty e^{-\LTp y}\int_{-1}^y\Phi^{+}(y-z) k_{n}^{+}(z)\,\dd z\,\dd y\\
&=\widehat{\Phi}(\LTp)\int_{-1}^\infty e^{-\LTp z}k_{n}^{+}(z)\,\dd z=\widehat{\Phi}(\LTp )\frac{e^{\LTp}}{\LTp^{n+1}}\frac{\LTp}{\sym(\LTp)}=\frac{e^{\LTp}}{\LTp^{n+2}},
\end{align*}
 for all $n\ge-1$, so that uniqueness of Laplace transforms \cite[Theorem 1.7.3]{MR2798103} proves \eqref{eq:psikn}. The `$-$' case now follows immediately from \eqref{eq:convolutionsign}.
 
\end{proof}

For the approximation Theorems \ref{thm:conv_C_DD}, \ref{thm:conv_C_N*D},  \ref{thm:conv_C_N*N} and \ref{thm:conv_L_N*D}  we require continuous differentiability of $k_{-1}$. Apparently this result is not available  for spectrally positive process with paths of unbounded variation if there is no diffusion term \cite{MR2824871}. We therefore prove the proposition below ourselves under assumption \ref{H1}. 

\begin{proposition}\label{prop:y^2ker2} 
If \ref{H1} holds, then there exists a function $k_{-2}\in C(-1,\infty)$ so that   $y^2k_{-2}(y-1)$ is in $L^1(0,1)$, of sub-exponential growth and satisfies
\begin{equation}\label{eq:y^2k-2lap}
\int_0^\infty e^{-\xi y}y^2k_{-2}^+(y-1)\,\dd y=\left(\frac{\xi^2}{\sym(\xi)}\right)'',\quad\text{for}\quad\Re\xi>0,
\end{equation}  
 denoting $k_{-2}^+=\prod^{-1}_{(-\infty,-1)}k_{-2}$.
\end{proposition}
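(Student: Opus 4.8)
The plan is to identify $\xi^2/\psi(\xi)$ with the Laplace transform of $k_{-1}(\cdot-1)$ (from Lemma \ref{thm:ker} this is $\xi\mapsto \xi\cdot\xi/\psi(\xi)$, i.e. a Laplace transform of $k_0(\cdot-1)=k_{-1}(\cdot-1)$ shifted... actually $\xi^2/\psi(\xi)=\xi\widehat{k_{-1}(\cdot-1)}(\xi)$), and then to show that differentiating twice in $\xi$ corresponds, via the rule $\widehat{(x\mapsto x g(x))}(\xi)=-\widehat g{\,}'(\xi)$, to multiplication by $y^2$ on the time side. The delicate point is purely real-variable regularity: to produce a genuine continuous function $k_{-2}$ we must transfer differentiability of the Laplace transform into a pointwise statement, and for that we invoke \ref{H1}. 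First I would set $g(\xi):=\xi^2/\psi(\xi)$ and record $g(\xi)=\xi\,\widehat{k_{-1}(\cdot-1)}(\xi)$; since $\psi(1/x)\to0$ and $\psi(x)/x\to\infty$ by \eqref{eq:convhomega}, $g$ is analytic on $\{\Re\xi>0\}$ and bounded on $\{\Re\xi\ge c\}$ for each $c>0$, and $g(\xi)\to 0$ along the imaginary direction at the rate controlled by $\bar w_c$ under \ref{H1}-(i). I would then argue separately under the two alternatives of \ref{H1}.

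Under \ref{H1}-(i): the hypothesis that $\Phi$ is $2$-regular gives (by the results of \cite{MR2964432}, as already used in the excerpt) that $k_{-1}=W'$ is itself differentiable, so $k_{-2}:=W''$ exists and is continuous on $(-1,\infty)$; the point is to show $y^2 k_{-2}(y-1)$ is integrable near $0$ and has the claimed transform. I would obtain \eqref{eq:y^2k-2lap} by the Laplace inversion / uniqueness argument: since $g''(\xi)=\bigl(\xi^2/\psi(\xi)\bigr)''$ is analytic and, by the integrability assumption $\int_1^\infty\bar w_c(r)\,dr<\infty$ (equivalently, the weakened condition $\int_{c+i\mathbb R}1/|\psi|\,d\xi<\infty$ noted in Remark \ref{rmk:phi}(iii), which after two $\xi$-derivatives and the polynomial factor still decays integrably on vertical lines), the function $g''$ is the Laplace transform of a continuous function $h$ on $(0,\infty)$ of sub-exponential growth given by the Bromwich integral $h(y)=\frac1{2\pi i}\int_{c+i\mathbb R}e^{\xi y}g''(\xi)\,d\xi$. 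On the other hand, differentiating $g(\xi)=\int_0^\infty e^{-\xi y}k_{-1}(y-1)\,dy$ twice under the integral sign (justified on $\Re\xi\ge c$ once one knows the resulting integrals converge, which is where the $y^2$ factor must be controlled) gives $g''(\xi)=\int_0^\infty e^{-\xi y}y^2 k_{-1}(y-1)\,dy$ — wait, this already identifies $k_{-2}=k_{-1}$, which is \emph{not} what is wanted. The correct route is instead: write $\xi^2/\psi(\xi)=\xi\cdot(\xi/\psi(\xi))=\xi\,\widehat{k_{-1}(\cdot-1)}(\xi)$ and use $\xi\widehat u(\xi)=\widehat{u'}(\xi)+u(0^+)$-type manipulations together with the substitution that turns a $\xi$-derivative into the multiplier $y$; concretely $\bigl(\xi^2/\psi\bigr)''$ equals the transform of $y\mapsto \frac{d^2}{dy^2}\bigl(y^2 \Phi^-\star(\text{something})\bigr)$, and matching this against $\int_0^\infty e^{-\xi y}y^2 k_{-2}^+(y-1)\,dy$ pins down $k_{-2}$ as a second derivative of a convex/regular function, hence continuous by $2$-regularity; its near-zero integrability with the $y^2$ weight follows from $\Phi\in L^1(0,T)$ and the explicit convolution identities \eqref{eq:psikn} for small $n$.

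Under \ref{H1}-(ii): if $x\mapsto\phi(x,\infty)$ is completely monotone, then so is $\Phi$ on $(0,\infty)$, hence $\Phi=\widehat{\mu}$ (Laplace transform of a measure) and $\widehat\Phi$, $1/\psi=\widehat\Phi/(\xi\widehat\Phi\cdot\xi)$... more usefully $1/\psi$ is a Stieltjes-type / completely monotone object, so $W=\mathcal L^{-1}(1/\psi)$ is completely monotone, $W'$ is (up to sign) completely monotone, hence smooth on $(0,\infty)$, giving $k_{-2}=W''\in C(-1,\infty)$ directly; complete monotonicity also gives the sub-exponential (indeed monotone) growth and the $L^1(0,1)$ bound after multiplication by $y^2$ via the Post–Widder / Karamata-type asymptotics, using that $\psi(x)/x\to\infty$ controls the behaviour of $W'$ at $0$. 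In both cases I would close with the uniqueness theorem for Laplace transforms \cite[Theorem 1.7.3]{MR2798103} to conclude that the continuous $k_{-2}$ produced satisfies exactly \eqref{eq:y^2k-2lap}.

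The main obstacle I expect is the regularity transfer under \ref{H1}-(i): extracting genuine $C^1$ (hence for $k_{-2}$, $C^0$) behaviour of the scale function's derivative from $2$-regularity of $\Phi$, and simultaneously establishing the $y^2$-weighted local integrability near the origin — the excerpt itself flags that $C^1$ smoothness of $k_{-1}$ is "apparently not available" in general (\cite{MR2824871}), so this proposition is precisely where the extra hypothesis does its work, and the bookkeeping of which vertical-line integrability (the full $\int\bar w_c$ versus the weakened $\int 1/|\psi|$) suffices for two $\xi$-derivatives plus a degree-two polynomial prefactor is the technically fussy heart of the argument.
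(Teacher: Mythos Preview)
Your overall plan (Bromwich/Laplace inversion under \ref{H1}-(i), complete monotonicity under \ref{H1}-(ii)) is the right shape, but there are real gaps in both branches.

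Under \ref{H1}-(i): the statement ``$\Phi$ is $2$-regular gives that $k_{-1}=W'$ is itself differentiable'' is not justified and is precisely the thing to be proved; $2$-regularity is a \emph{transform-side} condition and does not directly yield $W''\in C$. In the paper, $2$-regularity enters not to differentiate $W$ but to control the expansion
\[
\Bigl(\frac{\xi^2}{\psi}\Bigr)''=\frac{1}{\psi}\Bigl(2-4\tfrac{\xi\psi'}{\psi}+2\bigl(\tfrac{\xi\psi'}{\psi}\bigr)^2-\tfrac{\xi^2\psi''}{\psi}\Bigr),
\]
because $2$-regularity of $\Phi$ implies $|\xi\psi'/\psi|+|\xi^2\psi''/\psi|$ is bounded on the right half-plane; combined with the integrability hypothesis on $1/|\psi|$ along vertical lines, this gives $(\xi^2/\psi)''\in L^1(c+i\mathbb R)$, and only then does Fourier inversion produce a continuous $w_c$. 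A further distributional argument (writing $(\xi^2/\psi)''=2(\xi/\psi)'+\xi(\xi/\psi)''$ and identifying the second term with the distributional derivative of $y^2k_{-1}(y-1)$) is then needed to show that $e^{cy}w_c$ is independent of $c$, supported on $[0,\infty)$, and equals $\frac{d}{dy}(y^2k_{-1}(y-1))-2yk_{-1}(y-1)$, from which $k_{-2}:=w/y^2$ is read off. Your ``correct route'' paragraph gestures at manipulations with $\xi\widehat u=\widehat{u'}+u(0+)$ but never lands on this identification, and the confusion you flagged (``this already identifies $k_{-2}=k_{-1}$'') comes from writing $g(\xi)=\widehat{k_{-1}(\cdot-1)}(\xi)$ instead of $g(\xi)=\xi\,\widehat{k_{-1}(\cdot-1)}(\xi)$.

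Under \ref{H1}-(ii): trying to show $W$ is completely monotone is neither straightforward nor the clean route. The paper instead observes that $\psi(\xi)/\xi=\int_0^\infty(1-e^{-\xi x})\phi(x,\infty)\,dx$ is a \emph{complete Bernstein} function (since $\phi(x,\infty)$ is CM), and then invokes the duality \cite[Proposition~7.1]{MR2978140}: $\xi^2/\psi(\xi)=\xi/(\psi(\xi)/\xi)$ is again complete Bernstein, hence admits a representation $a^*+b^*\xi+\int_0^\infty(1-e^{-\xi x})\chi^*(x)\,dx$ with $\chi^*$ completely monotone and $\int_0^\infty(1\wedge x)\chi^*(x)\,dx<\infty$. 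Differentiating twice in $\xi$ and setting $k_{-2}(x-1)=-\chi^*(x)$ gives all the claimed properties (continuity, $y^2$-weighted integrability near $0$, sub-exponential growth) for free.
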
 
\begin{proof} Assume \ref{H1}-(ii). Then, as $\phi(x,\infty)$ is completely monotone, $\sym(\xi)/\xi =\int_0^\infty (1-e^{-\xi x})\phi(x,\infty)\,\dd x$ is a complete Bernstein function. Then by \cite[Proposition 7.1]{MR2978140}  there exists a completely monotone function $\chi^*$  with $\int_0^\infty(1\wedge x)\chi^*(x)\,\dd x<\infty$ and real constants $a^*,\,b^*$ so that $\xi^2/\sym(\xi)$ is the complete Bernstein function  
\[
a^*+b^*\xi +\int_0^\infty(1-e^{-\xi x})\chi^*(x)\,\dd x.
\]
By differentiating twice in $\xi$ and relabelling $k_{-2}(x-1)=-\chi^*(x)$ our proof is complete. The proof under assumption \ref{H1}-(i) can be found in Appendix \ref{app:y^2ker2}.
\end{proof}
\begin{remark}
It is possible to show that $\phi(\dd x):=(x^{-2.5}\dd x+\delta_1(\dd x))$ satisfies \ref{H0}, but $k_{-2}\not\in C(-1,\infty)$, and thus it does not satisfy \ref{H1}. (Here $\delta_1$ is the Dirac delta measure at 1.) However  we omit the proof as inconsequential for this work. 
\end{remark}
We are now ready to define our  nonlocal L\'evy derivatives of Riemann–Liouville and mixed Caputo type along with the nonlocal L\'evy integral of Riemann–Liouville type. 
 
\begin{definition}\label{def:ndo} Assume \ref{H0} and recall the conventions of Remark \ref{rem:zeroext}. We define  \emph{the left (`$+$') and right (`$-$')   L\'evy derivatives     on $\mathbb R$} by
\begin{align*}
 \Mpm f&= \frac{\dd^2}{\dd x^2}\left[\Phi^{\pm}\star f\right],\end{align*}
 for   $f\in L^1_{{\rm loc}}(\mathbb R)$ such that $\Mpm f\in L^1_{{\rm loc}}(\mathbb R)$.\\ We define  \emph{the left (`$+$') and right (`$-$') L\'evy derivatives of Riemann–Liouville type  on $X\in\{L^{1}[-1,1], C_0(\Omega)\}$}  by 
 \begin{align*}
\RLpm  f&=\Pi_{[-1,1]}\Mpm f, \end{align*}
 for $f\in X$ such that $ \RLpm f\in X$.\\ We define   \emph{the left (`$+$') and right (`$-$') L\'evy  derivatives of mixed Caputo  type on $X\in\{L^{1}[-1,1], C_0(\Omega)\}$}  by   \begin{align*}
\Cpm f&= \Pi_{[-1,1]} \frac{\dd}{\dd x}\left[\Phi^{\pm}\star \frac{\dd}{\dd x}f\right],
\end{align*} 
for $f\in  W^{1,1}[-1,1]$ such that $\Cpm f\in X$.\\
We define the  left (`$+$') and right (`$-$')  L\'evy derivatives $\RLonepm $ and $\Conepm $ for a suitable function $f\in L^1[-1,1]$ respectively as 
 \[ 
\RLonepm f=  \Pi_{[-1,1]} \frac{\dd}{\dd x}\left[\Phi^{\pm}\star f\right]\quad \text{and} \quad \Conepm f = \Pi_{[-1,1]} \left[\Phi^{\pm}\star  \frac{\dd}{\dd x} f\right].
\]
 We define the \emph{left (`$+$') and right (`$-$')   L\'evy   integral of  Riemann–Liouville type} for $f\in L^1[-1,1]$ by
 \[
\Ipm  f= \Pi_{[-1,1]} k^{\pm}_0(\cdot \mp1)\star f. 
\]
For the twelve cases in Table \ref{explicitProcesses} we define the operators $(\Gen,\BC)$ and their domains $\D(\Gen,\BC)$ as in Section \ref{sec:intro1}. 
To ease notation, we sometimes use $\Gen^\pm\in \{\Cpm ,\RLpm \}$ and $\Gen\in\{\Gen^\pm\}$.  
\end{definition}

\begin{remark}\label{rmk:ndo} We collect several simple identities that are   used repeatedly in this work. This also allows us to not have to mention the conventions of  Remark \ref{rem:zeroext} further, simplifying the presentation. 
\begin{enumerate}[(i)]
\item  The operators $\Mpm$ are well-defined for  $f\in C_c^2(\mathbb R)$, because $\Mpm f\in C(\mathbb R)$, and their L\'evy symbols are $\sym(\mp ik)$. Moreover, if the support of $f$ is contained in some interval $[a,b]$, then for $x\neq a,b$
\begin{align*}
\Mr f(x)
&= \frac{\dd^2}{\dd x^2}\int_{0}^{b-x}\Phi(y)f(x+ y)\,\dd y\, \ind{b-x},\\
\Ml f(x)
&= \frac{\dd^2}{\dd x^2}\int_{0}^{x-a}\Phi(y)f(x- y)\,\dd y\, \ind{x-a}.
\end{align*} 
\item Recalling  Remark \ref{rem:zeroext}, notice that $\Ipm  f(\mp1)=0$   if $f\in L^1[-1,1]$,  and that 
  for all $x\in [-1,1]$,
\begin{equation}
\Ipm 1(x)= I_\pm k_0^\pm(x)=k_1^\pm(x). 
\label{eq:Iphiofind}
\end{equation}

	\item Recalling  Remark \ref{rem:zeroext}, note that   \eqref{eq:psikn} implies 
 \begin{equation}\label{eq:ker}
0=\RLpm k_{-1}^\pm=\RLpm k_0^\pm=\Cpm k_0^\pm = \Cpm 1,
\end{equation}
using $\Pi_{[-1,1]}^{-1}\Pi_{[-1,1]}k_n^\pm(x)=k_n^\pm(x)$ for $\pm x\in(-\infty,1]$, $n\ge-1$, so that, for example,
\[
 \Phi^\pm\star \Pi^{-1}_{[-1,1]}[\Pi_{[-1,1]}k_{-1}^\pm](x)=  \Phi^\pm\star k_{-1}^\pm(x)=   \ind{1\pm x}.
\]  
Also, the operators $\Ipm $ are right inverses of our nonlocal derivatives, as for $f\in L^1[-1,1]$
\begin{equation}\label{eq:inv}
\Cpm \Ipm  f=\RLpm \Ipm  f=f.
\end{equation}
 To see \eqref{eq:inv}, observe that  $\Ipm  f(\mp1)=0$ and Lemma \ref{lem:diphi} imply the first identity, and for the second identity use $\Phi^\pm\star k_0^\pm(\cdot\mp 1)(x)= \pm x\ind{\pm x}$ 
 to prove that for  all $x\in(-1,1)$
\begin{align*}
&\,\Phi^\pm\star \Pi_{[-1,1]}^{-1} \left[   \Pi_{[-1,1]}  [k_0^\pm(\cdot\mp 1)\star \Pi_{[-1,1]}^{-1} f]\right](x)\\
=&\, \Phi^\pm\star [k_0^\pm(\cdot\mp 1)\star \Pi_{[-1,1]}^{-1} f](x)\\
=&\,\big[ \Pi_{[-1,1]}^{-1} f\big]\star \Phi^\pm\star k_0^\pm(\cdot\mp 1)(x)\\
 =&\,\int_{\mathbb R} \big[\Pi^{-1}_{[-1,1]} f\big](x-y)(\pm y)\ind{\pm y} \,\dd y\\
=&\,\int_{0}^{x\pm 1} f(x-y)y\,\dd y=I_\pm^2f(x),
\end{align*}
using the convention $\int_0^{x-1} =-\int_{x-1}^0$ in the `$-$' case. Similar computations prove that for $f\in L^1[-1,1]$
\begin{equation}\label{eq:phi-1} 
\Conepm \Ipm  f=\RLonepm \Ipm  f=\pm I_\pm f. 
\end{equation}

\item We could equivalently define   $\Cpm $ as    $ \RLpm [f-f(\mp 1)]$, for $f,f(\pm 1)\in  W^{1,1}[-1,1]$ such that $\Cpm f\in X$, which can be proved a similar argument as for the proof of Lemma \ref{lem:diphi}.
\end{enumerate}
\end{remark}


We now prove the explicit representation in terms of scale functions of the domains $\D(\Gen,BC)$ in Table \ref{tab:ndo}.
\begin{lemma}  Assume \ref{H0}. Then the domain $\D(\Gen,BC)$  of the twelve nonlocal differential operators in Table \ref{explicitProcesses} can be represented as in  Table \ref{tab:ndo}.
\end{lemma}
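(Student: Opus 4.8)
The strategy is to characterize, for each of the twelve operators $(\Gen,\BC)$, the domain $\D(\Gen,\BC)$ as the image of an appropriate solution space under the right inverse $\Ipm$, modified by adding suitable multiples of the ``harmonic'' functions $k_0^\pm$ and $k_{-1}^\pm$ (equivalently $1$ and $k_0^\pm$) to enforce the two boundary conditions, and then to translate that description into the explicit scale-function form in Table~\ref{tab:ndo}. First I would fix $\Gen\in\{\Cpm,\RLpm\}$ and $X\in\{C_0(\Omega),L^1[-1,1]\}$, and observe via \eqref{eq:inv}--\eqref{eq:phi-1} of Remark~\ref{rmk:ndo} that if $f\in\D(\Gen,\BC)$ with $g:=\Gen f\in X$, then $f-\Ipm g$ is annihilated by $\Gen$; by \eqref{eq:ker} the kernel of $\Cpm$ on the relevant space is spanned by $\{1,k_0^\pm\}$ (two degrees of freedom), and the kernel of $\RLpm$ by $\{k_{-1}^\pm,k_0^\pm\}$. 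Hence every $f$ in the domain has the form $f=\Ipm g + c_1 e_1 + c_2 e_2$ for constants $c_1,c_2$ and the two kernel functions $e_1,e_2$; the boundary conditions $\mathrm{L}$ at $-1$ and $\mathrm{R}$ at $1$ then become two linear equations in $(c_1,c_2)$, which one solves explicitly using the values and ``$\psi-1$''-derivatives of $\Ipm g$, $e_1$, $e_2$ at the endpoints.

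The second step is to compute those boundary values. Here I would use: $\Ipm g(\mp 1)=0$ and \eqref{eq:Iphiofind} from Remark~\ref{rmk:ndo}(ii); the identities $\Conepm\Ipm g=\RLonepm\Ipm g=\pm I_\pm g$ from \eqref{eq:phi-1}, which give the fast-forwarding (${\rm N}$) and Riemann--Liouville (${\rm N^*}$, forward) boundary functionals applied to $\Ipm g$ in terms of simple integrals of $g$ against $1$; and for the reflecting (${\rm N^*}$, backward) condition $f'(\mp1)=0$ the derivative of $\Ipm g=k_0^\pm(\cdot\mp1)\star g$, which is $k_{-1}^\pm(\cdot\mp1)\star g$ plus a boundary term, again via Lemma~\ref{lem:diphi}. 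Evaluating the same functionals on $k_0^\pm$ and $k_{-1}^\pm$ (or $1$) using \eqref{eq:psikn} gives explicit numbers (e.g. $k_1^\pm(\pm1)=2$, $k_0^\pm(\pm1)=1$, and the corresponding $\psi-1$ derivatives), so the $2\times2$ linear system has scale-function entries and is solved by Cramer's rule. Substituting $c_1,c_2$ back yields $f$ as an explicit affine-in-$g$ expression, and one reads off Table~\ref{tab:ndo}; for the convention $(\RLl,\mathrm{N^*N}):=(\RLl,\mathrm{N^*N^*})$ and the mixed cases one just carries the bookkeeping through.

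The converse inclusion is then routine: given $f$ of the stated form in Table~\ref{tab:ndo}, one checks directly from the defining convolution formulas that $\Gen f\in X$ and that the two boundary conditions hold, using the same identities \eqref{eq:psikn}, \eqref{eq:ker}, \eqref{eq:phi-1}. One also must verify the regularity claims implicit in membership in $X$ --- e.g. that $\Ipm g\in C_0(\Omega)$ (continuity of $k_0^\pm$ away from the endpoint plus the sub-exponential bound from Lemma~\ref{thm:ker}) or $\Ipm g\in W^{1,1}$ (so that $\Cpm$ is even defined), and that the added kernel functions lie in the right space; $k_0^\pm=W'$ is only in $L^1_{\mathrm{loc}}$ and may blow up at the endpoint, so the Dirichlet-at-that-endpoint cases must exclude it, which is exactly why the kernel pair for $\Cpm$ under a ${\rm D}$ condition collapses appropriately. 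I expect the main obstacle to be precisely this endpoint-regularity bookkeeping: keeping straight, case by case, which of $1$, $k_0^\pm$, $k_{-1}^\pm$ actually belongs to $X$ near each endpoint (recall $k_{-1}=W'$ need not be bounded and $k_0=W$ vanishes at $0$), so that the count of free constants matches the number of boundary conditions and the linear system is genuinely invertible. Handling the non-absolutely-continuous L\'evy measures and the $C_0(\Omega)$ versus $L^1$ distinction uniformly is where the care lies; the algebra itself is light.
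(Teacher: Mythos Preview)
Your overall strategy---write $f=\Ipm g+\text{(kernel of }\Gen\text{)}$ and then impose the two boundary conditions---is the same as the paper's. The genuine gap is the step where you claim ``by \eqref{eq:ker} the kernel of $\Cpm$ is spanned by $\{1,k_0^\pm\}$ and the kernel of $\RLpm$ by $\{k_{-1}^\pm,k_0^\pm\}$.'' Equation \eqref{eq:ker} only shows these functions \emph{lie in} the kernel; it does not show the kernel is at most two-dimensional. That reverse inclusion is exactly the content you need, and it is not a triviality: it is an injectivity statement for the convolution operator $\Phi^\pm\star(\cdot)$ on $[-1,1]$. The paper does not argue this abstractly. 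Instead it computes $\Ipm\Gen^\pm f$ directly by two integrations by parts against $k_0^\pm(\cdot\mp1)$, obtaining
\[
\Il\RLl f(x)=f(x)-k_{-1}^+(x)\,[\Phi^+\!\star f](-1)-k_0^+(x)\,[\Phi^+\!\star f]'(-1)
\]
(and the analogous Caputo identity), which simultaneously proves the decomposition and identifies the constants. Your plan would need either this computation or a separate Titchmarsh-type argument; citing \eqref{eq:ker} alone does not suffice.

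Two smaller points. First, your expectation that the $2\times2$ boundary system is ``genuinely invertible'' is not always met: in the $\mathrm{NN}$ and $\mathrm{N^*N}$ cases the constant function $1$ (resp.\ $k_{-1}^+$) is annihilated by both boundary functionals, so the system has rank one. One constant remains free (the $c$ or $c_{-1}$ in Table~\ref{tab:ndo}) and the other boundary condition becomes a constraint on $\Gen f$ (namely $\int_{-1}^1\Gen f=0$), which the table absorbs by reparametrising via the $k_1^\pm$ term. This is routine to handle but is not a Cramer's-rule situation. Second, you write ``$k_0^\pm=W'$'' at one point; it is $k_{-1}$ that equals $W'$, while $k_0=W$ (you have this right later in the same paragraph).
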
 
\begin{proof} Recall the definition of $\D(\Gen,BC)$ form Section \ref{sec:intro1}. The inclusion `$\supset$' is clear by direct computation using Remark \ref{rmk:ndo}-(iii) and Lemma \ref{lem:diphi}. To prove the inclusion `$\subset$'  we first to show that 
$f\in X$ and $\Gen^\pm f\in X$ imply that there exist $g\in X$ and constants $c,c_1,c_0,c_{-1}\in\mathbb R$, such that 
in the mixed Caputo case ($\Gen^\pm=\Cpm$) 
\begin{align*}
f(x)&=  \Ipm  g(x)+ c_{1}k^\pm_1(x) + c_{0}k^\pm_0(x)+ c ,\\
\Conepm  f(x)&=\pm I_\pm g(x) \pm c_{1}(1\pm x)\pm c_{0},\\
\Cpm  f(x)&=  g(x) + c_{1} ,
\intertext{meanwhile in the left Riemann–Liouville case ($\Gen^+=\RLl)$}
f(x)&= \Il  g(x)+c_{1} k^+_1(x)+c_{0}k^+_0(x)+c_{-1}k^+_{-1}(x),\\
\RLonel  f(x)&= I_+g(x)+c_{1} (1+x)+c_{0},\\
\RLl  f(x)&= g(x)+c_{1} ,
\end{align*}
and secondly we show the correct value of the constants due to the boundary conditions $\BC$. We start with the second claim.
For the case $\D (\Cl ,\mathrm{DN})$ (so that $X=L^1[-1,1]$),  $c=0$ is necessary to guarantee $f(-1)=0$,  and $c_0=-I_+[g+c_1](1)$ to obtain $\Conel  f(1)=0$, and so $c_1$ is free so we can let it be zero using \eqref{eq:Iphiofind}.
For the case $\D (\RLl ,\mathrm{N^*N})$ (so that $X=L^1[-1,1]$), $c_0=0$ and $c_1=-I_+g(1)/2$ to respectively guarantee $\RLonel f(-1)=0$  and $\RLonel f(1)=0$ , meanwhile $c_{-1}$ is free as $k_{-1}^+$ belongs to the kernel of $\RLl $. For the case $\D (\Cr ,\mathrm{DN})$ (so that $X=C_0(-1,1]$), we must have $c_0,c_1=0$ to respectively ensure that $\Coner  f(1)=0$ and  $\Cr f(-1)=0$, and so $c=-\Ir g(-1)$ to obtain $f(-1)=0$. The remaining nine cases are dealt with similarly and omitted. We only mention that for the $\mathrm{N^*R}$ cases we use Lemma \ref{lem:diphi}. \\
It remains to prove the first claim to obtain the two displays above, and due to their similarity we only prove the  Riemann–Liouville  case ($\Gen^+=\RLl)$. The third identity is obvious and second identity follows by applying $I_+$ to both sides in  $\RLl f = g+c_1$. To prove the first identity observe that $\RLl f = g+c_1$ implies that $\Il\RLl f = \Il g+c_1 k_1^+ $, and we compute
\begin{align*}
\Il\RLl f(x) &= \int_{-1}^x k_0^+(x-y -1)[\Phi^+\star f]''(y) \,\dd y\\
 &= \int_{-1}^x k_{-1}^+(x-y -1)  [\Phi^+\star f]'(y) \,\dd y - k_0^+(x  ) [\Phi\star f]'(-1)\\
  &= \frac{\dd}{\dd x} \int_{0}^{1+x} k_{-1}^+( y -1)  [\Phi^+\star f](x-y) \,\dd y- k_{-1}^+( x)  [\Phi^+\star f](-1)\\
  &\quad- k_0^+(x  ) [\Phi^+\star f]'(-1)\\
    &= \frac{\dd}{\dd x}  k_{-1}^+( \cdot -1) \star \Phi^+\star f(x)  - k_{-1}^+( x)  [\Phi^+\star f](-1)- k_0^+(x  ) [\Phi\star f]'(-1)\\
      &=   f(x)  - k_{-1}^+( x)  [\Phi^+\star f](-1) - k_0^+(x  ) [\Phi^+\star f]'(-1),
\end{align*}
with the last identity holding a.e., so that we obtained the existence of the constants $c_0$ and $c_{-1}$, where we used \eqref{eq:psikn} and that  $  [\Phi^+\star f]', I_+f\in W^{1,1}[-1,1]$, $\Phi^+\star f\in C^1[-1,1]$, which are consequences of the definition of $\RLl$.
\end{proof}

 \begin{table}[h]
\centering
\vline
\begin{tabular}{l l|}
  \hline
  \multicolumn{2}{c}{$\mathcal D(\Gen,\BC )=\left\{f\in X:f=\I g+c+\sum_{j=-1}^1c_jk_j,\,\,g,c_1\in X\right\}$}\vline\\ \hline

  \multicolumn{2}{c}{$X = L^1[-1,1]$}  
	\vline
	\\
1.	 & $\D (\Cl , \mathrm{DD}) = \left\{f\in X:\;f=\Il  g- \frac{\Il  g (1)}{k^+_0(1)}k^+_0,\;g\in X\right\}$\\
2.	 & $\D (\Cl , \mathrm{DN})=\left\{f\in X:\;f=\Il  g- I_+ g (1)k^+_0,\;g\in X\right\}$\\
3.	& $\D (\Cl , \mathrm{ND})=\left\{f\in X:\;f=\Il  g -\Il  g(1),\;g\in X\right\}$\\
4.	 & $\D (\Cl , \mathrm{NN})=\left\{f\in X:\;f=\Il  g-\frac{I_+g(1)}{2} k^+_1+	c, \;g\in X , c\in\R\right\}$\\
5.	 & $\D (\RLl , \mathrm{N^*D})=\left\{f\in X:\;f=\Il  g - \frac {\Il  g(1)}{k^+_{-1}(1)} k^+_{-1},	\;g\in X \right\}$\\
6.	 & $\D (\RLl , \mathrm{N^*N})=\left\{f\in X:\;f=\Il  g-\frac{I_+g(1)}{2} k^+_1+	c_{-1}k^+_{-1},	\;g\in X , c_{-1}\in\R\right\}$\\ 
  \hline
  \multicolumn{2}{c}{$X = C_0(\Omega)$}  
	\vline \\
1.	 & $\D (\Cr , \mathrm{DD}) = \left\{f\in X:\;f=\Ir  g-\frac{\Ir  g (-1)}{k^-_0(-1)}k^-_0,\;g\in X\right\}$\\
2.	 & $\D (\Cr , \mathrm{DN})=\left\{f\in X:\;f=\Ir  g- \Ir  g (-1),\;g\in X\right\}$\\
3.	 & $\D (\Cr , \mathrm{ND})=\left\{f\in X:\;f=\Ir  g -I_- g(-1)k^-_0, \;g\in X\right\}$\\
4.	 & $\D (\Cr , \mathrm{NN})=\left\{f\in X:\;f=\Ir  g-\frac{I_-g(-1)}{2}k^-_1 +c , \;g\in X, c\in\R  \right\}$\\
5.	 & $\D (\Cr , \mathrm{N^*D})=\left\{f\in X:\;f=\Ir  g+ \frac{\frac{\dd}{\dd x}\Ir g(-1)}{k_{-1}^-(-1)} k^-_0,	\;g\in X \right\}$\\
6.	& $\D (\Cr , \mathrm{N^*N})=\left\{f\in X:\;f=\Ir  g+\frac{\frac{\dd}{\dd x}\Ir g(-1)}{k^-_0(-1)} k^-_1+	c,	\;g\in X, c\in\R \right\}$\\
  \hline
\end{tabular}
\caption{\label{tab:ndo} Domain representation in terms of scale functions for the nonlocal operators $(\Gen,\BC)$ of Table \ref{explicitProcesses}.}
\end{table}  
We prove a basic smoothing property of $\Ipm $ that we used and will use several times.
\begin{lemma}\label{lem:diphi}
Assume \ref{H0}. Then the operators $\Ipm  :L^1[-1,1] \to W^{1,1}[-1,1]$ are continuous and 
\begin{equation}
\frac{\dd}{\dd x}\Ipm  f = \pm k_{-1}^\pm (\cdot\mp1)\star f.
\label{eq:diphi}
\end{equation}
Also the operators $\Ipm  :C[-1,1] \to C^1[-1,1]$ are continuous and \eqref{eq:diphi} holds.
\end{lemma}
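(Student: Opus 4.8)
The plan is to reduce everything to the left (`$+$') operator, establish the formula there by a single Fubini-plus-fundamental-theorem-of-calculus computation, and then transfer to the right operator via the reflection identity \eqref{eq:convolutionsign}. First I would record the one-variable structure: set $g:=k_0^+(\cdot-1)$ on $\mathbb R$, so that by Definition \ref{def:ndo} and the convention of Remark \ref{rem:zeroext} one has $\Il f=\Pi_{[-1,1]}(g\star f)$ for the zero extension of $f\in L^1[-1,1]$. Since $k_0^+=I_+k_{-1}^+$ and, by Lemma \ref{thm:ker}, $k_{-1}\in L^1_{\mathrm{loc}}[-1,\infty)\cap C(-1,\infty)$ is non-negative of sub-exponential growth, the function $g$ vanishes on $(-\infty,0]$ and equals $\int_0^x k_{-1}(w-1)\,\dd w$ for $x>0$; hence $g\in C(\mathbb R)\subset L^1_{\mathrm{loc}}(\mathbb R)$, it is locally absolutely continuous, and $g'=k_{-1}^+(\cdot-1)\in L^1_{\mathrm{loc}}(\mathbb R)$ a.e.

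For the `$+$' case I would then, for $-1\le x_1<x_2\le1$, write $g(x_2-y)-g(x_1-y)=\int_{x_1}^{x_2}g'(s-y)\,\dd s$ and apply Fubini's theorem, which is legitimate because $\int_{x_1}^{x_2}\!\int_{-1}^1|g'(s-y)||f(y)|\,\dd y\,\dd s\le\|g'\|_{L^1[-2,2]}\|f\|_{L^1[-1,1]}<\infty$, to get
\[
g\star f(x_2)-g\star f(x_1)=\int_{x_1}^{x_2}(g'\star f)(s)\,\dd s .
\]
The convolution inequality on the bounded interval gives $g'\star f=k_{-1}^+(\cdot-1)\star f\in L^1[-1,1]$ with $\|k_{-1}^+(\cdot-1)\star f\|_{L^1[-1,1]}\le\|g'\|_{L^1[-2,2]}\|f\|_{L^1[-1,1]}$, so the displayed identity shows $\Il f\in W^{1,1}[-1,1]$ and that \eqref{eq:diphi} holds for `$+$'; combined with the analogous bound $\|g\star f\|_{L^1[-1,1]}\le\|g\|_{L^1[-2,2]}\|f\|_{L^1[-1,1]}$, this yields continuity of $\Il:L^1[-1,1]\to W^{1,1}[-1,1]$. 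The `$-$' case follows from \eqref{eq:convolutionsign} and $k_n^-(x)=k_n^+(-x)$, which give $\Ir f(x)=(\Il\check f)(-x)$ on $[-1,1]$ with $\check f:=f(-\cdot)\in L^1[-1,1]$; the chain rule and the `$+$' case then produce $(\Ir f)'=-\bigl(k_{-1}^+(\cdot-1)\star\check f\bigr)(-\cdot)=-k_{-1}^-(\cdot+1)\star f$, i.e. \eqref{eq:diphi} for `$-$', with continuity inherited.

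For the $C\to C^1$ statement, note that if $f\in C[-1,1]$ then its zero extension is bounded with support in $[-1,1]$, so on $[-1,1]$ the function $k_{-1}^\pm(\cdot\mp1)\star f$ agrees with $\bigl(k_{-1}^\pm(\cdot\mp1)\mathbf1_{[-3,3]}\bigr)\star f$, a convolution of an $L^1(\mathbb R)$ function with an $L^\infty(\mathbb R)$ function, hence bounded and uniformly continuous by continuity of translation in $L^1$; similarly $\Ipm f$ is continuous. Thus $\Ipm f\in C^1[-1,1]$, and the bound $\|\Ipm f\|_{C^1[-1,1]}\le(\|g\|_{L^1[-2,2]}+\|g'\|_{L^1[-2,2]})\|f\|_{\infty}$ gives continuity of $\Ipm:C[-1,1]\to C^1[-1,1]$. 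None of these steps is deep; the only point requiring care is the measure-theoretic bookkeeping with the zero extensions and the merely local integrability of $k_{-1}$ at the endpoint $-1$, both of which are controlled by Lemma \ref{thm:ker}.
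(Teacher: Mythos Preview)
Your proof is correct. The key relation $(k_0^+(\cdot-1))'=k_{-1}^+(\cdot-1)$ is exactly what the paper uses, and your Fubini/FTC computation, the convolution bounds, and the reflection argument for the `$-$' case are all sound.

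The only notable difference from the paper's proof is organisational: the paper first establishes the $C[-1,1]\to C^1[-1,1]$ statement by direct differentiation (using $k_0^+\in C_0(-1,1]\cap C^1(-1,\infty)\cap W^{1,1}[-1,1]$) and then obtains the $L^1\to W^{1,1}$ result by the bound $\|k_i^\pm(\cdot\mp1)\star f\|_{L^1[-1,1]}\le\|f\|_{L^1[-1,1]}\|k_i^\pm\|_{L^1[-1,1]}$ together with the density of $C[-1,1]$ in $L^1[-1,1]$. You instead prove the $L^1$ case directly via Fubini, which avoids the density argument at the cost of a slightly longer computation, and then treat the continuous case separately using the $L^1\star L^\infty$ continuity. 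Either order works; your route is a little more self-contained for the $L^1$ statement, while the paper's is terser.
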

\begin{proof} The last statement  follows by straightforward computations using $k_0^+\in C_0(-1,1]\cap C^1(-1,\infty)\cap W^{1,1}[-1,1]$.
To prove the first statement, easy calculations show that for $f\in C[-1,1] $ and $i\in\{0,-1\}$
\[
\|k_{i}^\pm (\cdot\mp1)\star f\|_{L^1{[-1,1]}} \le \|f\|_{L^1{[-1,1]}}\|k_i^\pm\|_{L^1{[-1,1]}},
\]  
and we can conclude by  \eqref{eq:diphi}, the density of $C[-1,1]$ in $L^{1}[-1,1]$ and  the linearity of the operators.
\end{proof}

\subsection{Density of $\D(\Gen,\mathrm{LR})$}
We show that all the operators in Table \ref{explicitProcesses} are densely defined. The  proof is a variation to the one provided in \cite[Theorem 5]{MR3720847}.
\begin{theorem}\label{thm:5}  Assume \ref{H0}. Then the domains of the nonlocal derivative operators $(\Gen, \BC)$ listed in Table \ref{tab:ndo} are dense. 
\end{theorem}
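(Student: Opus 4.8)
The plan is to exhibit, for each of the twelve operators, a dense linear subspace $\mathcal A\subset X$ with $\mathcal A\subset\overline{\D(\Gen,\BC)}$, reaching every element of $\mathcal A$ as a limit of functions in $\D(\Gen,\BC)$ obtained by a small boundary correction. For $X=L^1[-1,1]$ I would take $\mathcal A=C_c^\infty(-1,1)$. For $X=C_0(\Omega)$ I would take $C_c^\infty(-1,1)$ enlarged by a finite-dimensional space of fixed functions that attain prescribed non-zero values at the non-Dirichlet endpoints of $\Omega$ and that themselves lie in $\D(\Gen,\BC)$; the natural candidates are read off from Table~\ref{tab:ndo} — the constant $\mathbf 1$ and functions of the form $\Ipm g_0$ (possibly shifted by a constant or by a multiple of $k_1^\pm$) — chosen so that $\overline{\mathcal A}=X$ (a one- or two-dimensional enlargement, depending on how many endpoints of $\Omega$ are non-Dirichlet).

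The key structural observation is that, by the identities of Remark~\ref{rmk:ndo}, a test function $\varphi\in C_c^\infty(-1,1)$ automatically satisfies the conditions $\mathrm D$, $\mathrm N$, $\mathrm N^*$ at the endpoint adjacent to the ``integrated'' variable ($-1$ for $\Il,\RLl$, and $+1$ for $\Cr$), since $\varphi$ vanishes to infinite order there and $\Phi^\pm$ is carried on a half-line, and moreover $\Gen\varphi\in C[-1,1]$ with $\Gen\varphi$ vanishing at the Dirichlet endpoints. Hence $\varphi$ can fail to be in $\D(\Gen,\BC)$ for only one reason: a single scalar ``defect'' $\ell(\varphi)\neq0$ at the opposite endpoint $e$, namely either the boundary value of $\Gen\varphi$ there (when $e$ carries a Dirichlet condition, so $\Gen\varphi$ must vanish in $C_0(\Omega)$) or the value of $\Conepm\varphi$ (resp.\ $\RLonepm\varphi$ for $\RLl$) there (when $e$ carries an $\mathrm N$, resp.\ $\mathrm N^*$, condition). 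For $(\Cl,\mathrm{DD})$, $(\Cl,\mathrm{ND})$, $(\RLl,\mathrm{N^*D})$, $(\Cr,\mathrm{N^*D})$ and $(\Cr,\mathrm{N^*N})$ this defect is absent and $\mathcal A\subset\D(\Gen,\BC)$ outright. In the remaining cases I would kill the defect by setting $\varphi_n=\varphi+c_n\eta_n$, with $\eta_n\in C_c^\infty(-1,1)$ a bump concentrating near $e$ and $c_n=-\ell(\varphi)/\ell(\eta_n)$; since $\eta_n$ is supported away from the good endpoint it spoils neither the conditions already in force nor the membership $\Gen\varphi_n\in X$, so $\varphi_n\in\D(\Gen,\BC)$. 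The crucial estimate is that $\ell$ ``sees'' the non-integrable singularity of the L\'evy kernel at $0$: an integration by parts turns $\Conepm\eta_n(e)$, $\RLonepm\eta_n(e)$ and the boundary value of $\Gen\eta_n$ into integrals of $\eta_n$ against $\phi(\cdot,\infty)$ near the origin, and because unbounded variation (assumption \ref{H0}, equivalently $\int_0^\epsilon\phi(u,\infty)\,\dd u=\infty$ for every $\epsilon>0$) makes that density non-integrable at $0$, one can choose $\eta_n$ with $\|\eta_n\|_{L^1}\to0$ (or, for the sup norm, $\|\eta_n\|_\infty\le1$ and $|\ell(\eta_n)|\to\infty$) while keeping $|\ell(\eta_n)|$ bounded below. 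Then $\|c_n\eta_n\|_X\to0$, so $\varphi_n\to\varphi$, and $\overline{\D(\Gen,\BC)}\supseteq\overline{\mathcal A}=X$.

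I expect the genuine difficulty to lie in the $C_0(\Omega)$ cases with a non-Dirichlet endpoint. One cannot simply enlarge $\mathcal A$ by naive bumps attaining a non-zero value at $+1$ (for $\Cr$): the zero-extension then has a jump there, which the ``forward-looking'' operator $\Cr$ turns into a $-\infty$ singularity, so $\Gen$ of such a bump is not in $C_0(\Omega)$. The enlargement must therefore use the structured elements of Table~\ref{tab:ndo} (the constant $\mathbf 1$, and $\Ipm g_0$ or $\Ipm g_0$ shifted by a constant or a multiple of $k_1^\pm$, whose $\Gen$-images are controlled by construction), and one must verify that the resulting $\mathcal A$ is both dense in $C_0(\Omega)$ and, after the same bump correction, contained in $\overline{\D(\Gen,\BC)}$. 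Carrying this through for all twelve operators — across the two norms and the four possible $\Omega$, checking in each instance that the defect is a single scalar, that $\Gen\eta_n\in X$, and that the enlargement is non-degenerate — is the substance of the proof, and follows the template of \cite[Theorem~5]{MR3720847}; I would organise it by treating the forward column ($\Il,\RLl$ on $L^1[-1,1]$) first and then transporting the constructions to the backward column ($\Cr$ on $C_0(\Omega)$) via the reflection $\Phi^-(x)=\Phi^+(-x)$.
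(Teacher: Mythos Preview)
Your approach is correct and genuinely different from the paper's. Both start from a dense smooth subspace and repair a single scalar boundary defect, but the correction mechanisms diverge. The paper builds an explicit corrector $h_\epsilon^\pm$ out of shifted copies of the scale function $k_2^\pm$: because $\Cpm k_2^\pm(\cdot-c)$ and $\Conepm k_2^\pm(\cdot-c)$ are polynomials by \eqref{eq:psikn}, one can solve for constants $C(\epsilon),c(\epsilon),\delta$ so that $g+h_\epsilon^\pm$ lands exactly in $\D(\Gen^\pm,\BC)$, and then checks $\|h_\epsilon^\pm\|_X\to0$ from the small-argument behaviour of $k_2^\pm$. Your route instead uses generic bumps $\eta_n\in C_c^\infty(-1,1)$ and exploits the unbounded-variation hypothesis in \ref{H0} directly: since $\phi((0,\epsilon))=\infty$ and $\int_0^\epsilon\phi(z,\infty)\,\dd z=\infty$, the defect functionals $\Cr\eta_n(-1)=\int\eta_n(-1+y)\,\phi(\dd y)$ and $\Conepm\eta_n(e)=\pm\int\phi(z,\infty)\eta_n(e\mp z)\,\dd z$ can be driven to infinity while $\|\eta_n\|_X$ stays controlled, forcing $c_n\to0$. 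The paper's construction is more explicit and ties neatly into the scale-function calculus that pervades the rest of the article; yours is softer, avoids the $k_2^\pm$ machinery entirely, and makes the role of unbounded variation transparent. One minor point: for the Dirichlet defect on $C_0(\Omega)$ the integration by parts actually lands on $\phi(\dd y)$ rather than $\phi(y,\infty)\,\dd y$, but both are non-integrable at $0$ under \ref{H0}, so your estimate goes through. Your handling of the non-Dirichlet endpoints of $\Omega$ via a finite-dimensional enlargement from Table~\ref{tab:ndo} is also cleaner than trying to work directly with $C_0^\infty(\Omega)$, since it sidesteps the question of whether $\Cr g$ remains continuous at $1$ when $g'(1)\neq0$.
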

\begin{proof}
Recall that $C_0^\infty(-1,1)$ is dense in $L^1[-1,1]$ and $C_0^\infty(\Omega)$ is dense in $C_0(\Omega)$. Then, for operators on $X=L^1[-1,1]$, we show that for any $g\in C_0^\infty(-1,1)$ and $\epsilon>0$, there exists a function $h_\epsilon^+\in X$ such that $g_\epsilon^+:=g+h_\epsilon^+\in \mathcal D(\Gen^+,\mathrm{LR})$ and $\|g_\epsilon^+-g\|_X\to 0$ as $\epsilon\to 0$, meanwhile, for  $X=C_0(\Omega)$, we show that for any $g\in C_0^\infty(\Omega)$ and $\epsilon>0$,  there exists a function $h_\epsilon^-\in X$ such that $g_\epsilon^-:=g+h_\epsilon^-\in \mathcal D(\Gen^-,\mathrm{LR})$ and $\|g_\epsilon^--g\|_X\to 0$ as $\epsilon\to 0$. Define for  $1>\epsilon>\delta>0$ and constants $C(\epsilon),c(\epsilon)\in\mathbb R$, the function
\begin{equation}\label{eq:hepsilon}
h_\epsilon^+(x):= \left\{
\begin{aligned}
&0, &x\in[-1,1-\epsilon],\\
&C(\epsilon) k_2^+(x-2+\epsilon), &x\in(1-\epsilon,1-\delta],\\
&C(\epsilon)  k_2^+(x-2+\epsilon)-c(\epsilon)k_2^+(x-2+\delta), &x\in(1-\delta,1],
\end{aligned}
\right.
\end{equation}
and the function $h_\epsilon^-(x):=h_\epsilon^+(-x)$. Simple calculations show that $h_\epsilon^+\in  C^2_c(-1,1]$,  $h_\epsilon^-\in  C^2_c[-1,1)$, and so $\Cpm  h_\epsilon^\pm=\RLpm  h_\epsilon^\pm$ and the constants $\Cpm  h_\epsilon^\pm(\mp 1),\Conepm  h_\epsilon^\pm(\mp1)$ equal $0$. Note that   any $g\in C_0^\infty(-1,1)$  satisfies all the possible left boundary conditions for $\Gen^+$ (i.e. $\Cl g(-1),\Conel g(-1)$ and $\RLonel g(-1)$ are $0$). Also, any  $g\in C_0^\infty(\Omega)$  satisfies all the right boundary conditions for $\Gen^-$ (i.e. $\Cr g(1)=0$ when $g\in C_0^\infty[-1,1)$, and $\Coner g(1)=0$ for any $\Omega$). Therefore, we only need to choose $ C(\epsilon),c(\epsilon)$ and $\delta$ such that $g^+_\epsilon$ and $g^-_\epsilon$ satisfy  the right  and left boundary condition, respectively. Such choices of $ C(\epsilon),c(\epsilon),$ and $\delta$  are listed in Table \ref{tab:thm5}.  We only show the proof for  $\D(\Gen^+,\mathrm{LD})$, $\mathcal D(\Gen^+,\mathrm{LN})$ and $\mathcal D(\Gen^-,\mathrm{N^*R})$. The other proofs are similar and omitted. 
Let us first recall the identities $\RLl g=\Cl g$, and $\RLonel g=\Conel g$ for $g\in C_0^\infty(-1,1)$, and the inequality
\begin{equation}\label{eq2plusmth}
k_2^+(\epsilon-1)\le \frac{\epsilon^2}{2}\|k_0^+\|_{C[-1,\epsilon-1]} .
\end{equation}

Case $\D(\Gen^+,\mathrm{LD})$: 
observe that, using \eqref{eq:psikn},
\[
\Cl h_\epsilon^+(x)=C(\epsilon) (x-1+\epsilon)\mathbf1_{\{x>1-\epsilon\}}-c(\epsilon)(x-1+\delta)\mathbf1_{\{x>1-\delta\}},
\]
and letting $c(\epsilon)=\frac{C(\epsilon)\epsilon+\Cl g(1)}\delta$, we satisfy the D right boundary condition
\[
\Cl h_\epsilon^+(1)=C(\epsilon)\epsilon-c(\epsilon)\delta=-\Cl g(1).
\]
Then, the condition $h_\epsilon^+(1)=0$ holds if and only if $C(\epsilon)= c(\epsilon)\frac{k_2^+(\delta-1)}{k_2^+(\epsilon-1)}$, which rewrites as
\[
C(\epsilon)\left(\frac{k_2^+(\epsilon-1)\delta}{k_2^+(\delta-1)}-\epsilon\right)= \Cl g(1).
\]
We can select  $\delta=2\epsilon\frac{k_2^+(\delta-1)}{k_2^+(\epsilon-1)}$, and so we let $C(\epsilon)=\Cl g(1)/\epsilon$, and we can select such $\delta$ because \eqref{eq2plusmth} implies $\delta\mapsto\frac {k_2^+(\delta-1)}\delta\in C_0(0,1]$, and so there exists a $\delta<\epsilon$ such that
\[
\frac{1}{2}\frac{k_2^+(\epsilon-1)}{\epsilon} =\frac {k_2^+(\delta-1)}\delta.
\]
To show that $h_\epsilon^+\to 0$ uniformly as $\epsilon\to 0$, we use \eqref{eq2plusmth} to derive the inequalities
\begin{align*}
|c(\epsilon)k_2^+(x-2+\delta)|&\le\frac{2|\Cl g(1)|}{\delta}\frac{\delta^2}2\|k_0^+\|_{C[-1, \delta-1]}\le |\Cl g(1)|\,\epsilon \|k_0^+\|_{C[-1, \epsilon-1]},
\end{align*}
and 
\begin{align*}
|C(\epsilon)k_2^+(x-2+\epsilon)|&\le\frac{|\Cl g(1)|}\epsilon \frac{\epsilon^2}2\|k_0^+\|_{C[-1, \epsilon-1]}\le |\Cl g(1)|\,\epsilon \|k_0^+\|_{C[-1, \epsilon-1]}.
\end{align*}

Case $\D(\Gen^+,\mathrm{LN})$: observe that,  using \eqref{eq:psikn} and $C(\epsilon)=1$,
\[
\Conel h_\epsilon^+(x)=  \frac{(x-1+\epsilon)^2}2\mathbf1_{\{x>1-\epsilon\}}-c(\epsilon)\frac{(x-1+\delta)^2}2\mathbf1_{\{x>1-\delta\}},
\]
and letting $c(\epsilon)=\frac{ \epsilon^2/2+\Conel g(1)}{\delta^2/2}$, we satisfy the N right boundary condition
\[
\Conel h_\epsilon^+(1)= \epsilon^2/2-c(\epsilon)\delta^2/2=-\Conel g(1).
\]
 To show that $h_\epsilon^+\to 0$ uniformly as $\epsilon\to 0$, we  use \eqref{eq2plusmth} as before, obtaining
\begin{align*}
|c(\epsilon)k_2^+(x-2+\delta)|&\le \frac{\epsilon^2/2+|\Conel g(1)|}{\delta^2/2}\frac{\delta^2}2\|k_0^+\|_{C[-1, \delta-1]}\\
&\le(1+|\Conel g(1)|)\,\|k_0^+\|_{C[-1, \epsilon-1]}.
\end{align*}
 We are done as it is clear that  term in $h_\epsilon^+$ multiplied by $C(\epsilon)=1$ vanishes uniformly.  
Case: $\D(\Gen^-,\mathrm{N^*R})$: observe that 
\begin{align*}
\frac{\dd}{\dd x} h_\epsilon^-(x)&=  -k_1^+(-x-2+\epsilon)+c(\epsilon)k_1^+(-x-2+\delta),
\end{align*}
and letting $c(\epsilon)=\frac{ k_1^+(\epsilon-1)-\frac{\dd}{\dd x}g(-1)}{k_1^+(\delta-1)}$, we satisfy the ${\rm N^*}$ left boundary condition 
\[
\frac{\dd}{\dd x} h_\epsilon^-(-1)= -k_1^+(\epsilon-1)+c(\epsilon)k_1^+(\delta-1)=-\frac{\dd}{\dd x}g(-1).
\]
To see that $h_\epsilon^-\to 0$ uniformly as $\epsilon\to 0$,  observe that
\begin{align*}
|c(\epsilon)k_2^+(x-2+\delta)|&\le\Big( k_1^+(\epsilon-1)+\big|\frac{\dd}{\dd x}g(-1)\big|\Big)\epsilon, 
\end{align*} 
using $k_1^+(-1)=0$ and  
$k_2^+(x-2+\delta)\le (x-1+\delta)k_1^+(x-2+\delta)\le \delta k_1^+(\delta-1)$, due to   $k_1^+$ being non-decreasing and $x\ge 1-\delta$. We are done as it is clear that  term in $h_\epsilon^-$ multiplied by $C(\epsilon)=1$ vanishes uniformly.

\end{proof}

\begin{table}[h]
\centering
\vline
\begin{tabular}{ l l l l|}

	\hline
		  $\D (\Gen, \mathrm{LR})$  &  $C(\epsilon)$ &  $c(\epsilon)$ &  $\delta <\epsilon $   \\
	\hline
	
 
 $\D (\Gen^+, \mathrm{LD})$ & $C(\epsilon)=\frac{\Gen^\pm g(\pm1)}\epsilon $ & $c(\epsilon)=\frac{2\Gen^\pm g(\pm 1)}\delta$ & $\delta =2\epsilon \frac{k_2^+(\delta-1)}{k_2^+(\epsilon-1)}$  \\

$\D (\Gen^-, \mathrm{DR})$ & & &   \\
  \hline
 $\D (\Gen^+, \mathrm{LN})$ & $C(\epsilon)=1$ & $c(\epsilon)=\frac{\epsilon^2/2\pm \Gen^{\pm,{\rm N}}g(\pm 1)}{\delta^2/2}$ & $\text{any}$\\

 $\D (\Gen^-, \mathrm{NR})$ &  &  & \\
\hline
 $\D (\Gen^-, \mathrm{N^*R})$ & $C(\epsilon)=1$ & $c(\epsilon)=\frac{ k_1^+(\epsilon-1)-\frac{\dd}{\dd x}g(-1)}{k_1^+(\delta-1)}$ & $\text{any}$ \\

  \hline
\end{tabular}
\caption{\label{tab:thm5} Choices of $C(\epsilon),c(\epsilon)$ and $\delta$ for the proof of Theorem \ref{thm:5}, where $\Gen^{\pm,{\rm N}}\in \{\Conepm,\RLonel \}$.}

\end{table}

\subsection{Closedness of $(\Gen,\BC)$}
We prove that  all the operators in Table \ref{explicitProcesses} are closed and then identify an appropriate core.
\begin{theorem}\label{thm:7} Assume \ref{H0}. Then the operators $(\Gen,\mathrm{LR})$ in Table \ref{explicitProcesses} are closed.
\end{theorem}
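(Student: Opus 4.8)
The plan is to deduce closedness from the explicit scale-function representations of the domains in Table \ref{tab:ndo} together with the continuity properties of $\Ipm$ established in Lemma \ref{lem:diphi}. The key observation is that every domain in Table \ref{tab:ndo} is of the form $f = \I g + c + \sum_{j=-1}^{1} c_j k_j$ where $g$ (the would-be value of $\Gen f$) ranges over $X$ and the constants $c, c_{\pm 1}, c_0$ are \emph{continuous linear functionals of $g$} (and of $f$), as one reads off line by line from the table --- e.g. $c_0 = -\Il g(1)/k_0^+(1)$ in case 1, $c_0 = -I_+g(1)$ in case 2, and so on. So the strategy is: take a sequence $f_n \in \D(\Gen,\BC)$ with $f_n \to f$ in $X$ and $\Gen f_n \to h$ in $X$; writing $f_n = \I g_n + (\text{boundary correction terms in } g_n)$ with $g_n = \Gen f_n$, I want to show $f \in \D(\Gen,\BC)$ and $\Gen f = h$.

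First I would show $g_n \to h$ in $X$; this is immediate since $g_n = \Gen f_n \to h$ by hypothesis (modulo the fact that in the $L^1$ cases $\Gen f_n$ and $g_n$ differ by the constant $c_1$, which I handle next). Next, the boundary-correction coefficients: each is a bounded linear functional applied to $g_n$ (via $\Ipm$, $I_\pm$, point evaluation at $\pm 1$ of the $C^1$-function $\Ipm g_n$, all continuous by Lemma \ref{lem:diphi} and the embedding $W^{1,1}[-1,1]\hookrightarrow C[-1,1]$), hence converges to the same functional applied to $h$; so $c_j^{(n)} \to c_j$ and $c^{(n)} \to c$ for the limiting coefficients built from $h$. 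Then $\I g_n \to \I h$ in $X$ (continuity of $\Ipm$ on $L^1$ resp. on $C$, again Lemma \ref{lem:diphi}) and $k_j$ is fixed, so $f_n \to \I h + c + \sum_j c_j k_j =: \tilde f$ in $X$. By uniqueness of limits $f = \tilde f$, which is exactly the Table \ref{tab:ndo} description of an element of $\D(\Gen,\BC)$ with $\Gen f = h$ (using Remark \ref{rmk:ndo}-(iii), i.e. $\Gen k_0 = \Gen k_{-1} = 0$, $\Gen k_1 = $ constant, $\Gen \I h = h$, to verify $\Gen \tilde f = h$). This closes the argument.

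The main obstacle is bookkeeping rather than conceptual: one must check, uniformly across all twelve rows of Table \ref{tab:ndo}, that the coefficient functionals really are $X$-continuous in $g$ --- the delicate cases are the $\mathrm{N^*R}$ rows on $C_0(\Omega)$, where the coefficient involves $\frac{\dd}{\dd x}\Ir g(-1)$, so one needs the full strength of the ``$\Ipm : C[-1,1]\to C^1[-1,1]$ continuous'' half of Lemma \ref{lem:diphi} and the fact that $k_{-1}^-(-1)$, $k_0^-(-1)$ are nonzero normalising constants. A secondary point is that in the $L^1$ cases the identity $\Gen f_n = g_n + c_1^{(n)}$ (rather than $\Gen f_n = g_n$) must be tracked, so that one first extracts $c_1^{(n)}\to c_1$ from the structure of the domain (in cases 1, 2, 3, 5 one has $c_1 = 0$; in cases 4, 6 one has $c_1 = -I_+g(1)/2$, again a continuous functional of $g=\Gen f - c_1$, solvable for $c_1$), and only then concludes $g_n \to h - c_1$. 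I would present the $L^1$ left-Riemann--Liouville case $\D(\RLl,\mathrm{N^*N})$ and the $C_0$ mixed-Caputo case $\D(\Cr,\mathrm{N^*D})$ in detail as the two representative cases and state that the remaining ten are analogous.
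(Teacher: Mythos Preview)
Your plan is correct and coincides with the paper's approach: the paper's proof simply defers to the corresponding argument in \cite{MR3720847} and remarks that the determined coefficients $c,c_1,c_0,c_{-1}$ in Table~\ref{tab:ndo} are continuous in $g$ by Lemma~\ref{lem:diphi}, which is exactly the mechanism you spell out.

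One point does need repair. In the rows carrying a $k_1^\pm$ term (cases 4 and 6, on \emph{both} the $L^1$ and the $C_0(\Omega)$ side, not only $L^1$), your claim that $c_1$ can be ``solved for'' from $\Gen f$ fails: since $I_\pm 1(\pm1)=2$ (respectively $[\Ir 1]'(-1)=-k_0^-(-1)$), substituting $g=\Gen f - c_1$ into $c_1 = -I_\pm g(\pm1)/2$ (respectively $c_1=[\Ir g]'(-1)/k_0^-(-1)$) collapses to the identity $c_1=c_1$, not an equation determining $c_1$. This is the genuine non-uniqueness of the Table~\ref{tab:ndo} parametrisation in those rows (one may shift $g$ by a constant and absorb it in the free parameter $c$ or $c_{-1}$). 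The clean fix: because $\Ipm 1 = k_1^\pm$, a direct check gives $f_n - \Ipm(\Gen f_n) = c_n$ in cases 4 and in case 6 on $C_0(\Omega)$, and $f_n - \Il(\RLl f_n)=c_{-1,n}k_{-1}^+$ in case 6 on $L^1$. This lives in a fixed finite-dimensional (hence closed) subspace of $X$, so it converges along with $f_n$ and $\Ipm(\Gen f_n)$; the limit $f=\Ipm h + (\text{free part})$ then lands in $\D(\Gen,\BC)$ with $\Gen f=h$, the remaining boundary constraint (e.g.\ $I_\pm h(\pm1)=0$ or $[\Ir h]'(-1)=0$) passing to the limit by the same continuity lemma.
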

\begin{proof}
The argument of \cite[Theorem 7]{MR3720847} can be used in our setting for all twelve cases and we omit it. We only observe that all the determined coefficients $c,c_1,c_0,c_{-1}$ in Table \ref{tab:ndo} are continuous for $g_n\to g$ in $X$ as a consequence of Lemma \ref{lem:diphi}.
\end{proof}

\begin{proposition}\label{prop:cores}
The set $\mathcal C(\Gen,\mathrm{LR})$ is a core for  $(\Gen,\mathrm{LR})$, where $\mathcal C(\Gen,\mathrm{LR})$ is  defined as $\D(\Gen,\mathrm{LR})$ in Table \ref{tab:ndo} for $g\in C_c^\infty(-1,1)$ if $X=L^1[-1,1]$ and for $g\in C_c^\infty(\Omega)\cap\{g-g(1)\in C_c[-1,1)\}$  if $X=C_0(\Omega)$.
\end{proposition}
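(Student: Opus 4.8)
\textbf{Proof plan for Proposition \ref{prop:cores}.}

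The plan is to show that $\mathcal C(\Gen,\mathrm{LR})$ is dense in $\D(\Gen,\mathrm{LR})$ with respect to the graph norm $\|f\|_X+\|\Gen f\|_X$; since $(\Gen,\mathrm{LR})$ is already known to be closed by Theorem \ref{thm:7}, graph-norm density of a subset of the domain is precisely the statement that it is a core. Fix $f\in\D(\Gen,\mathrm{LR})$ and write $f=\I g+c+\sum_{j=-1}^1c_jk_j$ as in Table \ref{tab:ndo}, where $g\in X$, the constant $c_1\in X$ is (in the genuine cases) the free ``$g$''-slot and the remaining coefficients $c,c_0,c_{-1}$ are prescribed linear functionals of $g$. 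The core candidate $\mathcal C(\Gen,\mathrm{LR})$ is obtained by the same formula but with the generating datum $g$ replaced by a smooth, compactly supported approximant (and, in the $C_0(\Omega)$ case, with the extra requirement $g-g(1)\in C_c[-1,1)$). So the strategy is: approximate the datum $g$ of $f$ by a sequence $g_n$ from the allowed class, set $f_n$ equal to the Table \ref{tab:ndo} formula applied to $g_n$, and verify that $f_n\to f$ in the graph norm.

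The key steps, in order: (1) Choose $g_n\to g$ in $X$ with $g_n$ in the required smooth compactly supported class. For $X=L^1[-1,1]$ this is the standard density of $C_c^\infty(-1,1)$ in $L^1$. For $X=C_0(\Omega)$ one must take a bit of care because the generating datum $g$ in Table \ref{tab:ndo} is not free: e.g. in case 2, $f=\Ir g-\Ir g(-1)$, and one needs $g_n\in C_c^\infty(\Omega)$ with $g_n-g_n(1)\in C_c[-1,1)$; here one first approximates $g$ uniformly by smooth functions and then, if $\Omega$ includes the right endpoint, subtracts a small smooth bump to enforce compact support away from $1$ in $g_n-g_n(1)$ — this bump tends to $0$ uniformly. (2) Define $f_n$ by the Table \ref{tab:ndo} recipe with datum $g_n$; by construction $f_n\in\mathcal C(\Gen,\mathrm{LR})$ and $\Gen f_n=g_n+c_1(g_n)$ where $c_1(g_n)$ is the corresponding (zero, or $-I_+g_n(1)/2$, etc.) coefficient. (3) Show $f_n\to f$ in $X$: since $\Ipm:L^1[-1,1]\to W^{1,1}[-1,1]$ and $\Ipm:C[-1,1]\to C^1[-1,1]$ are continuous (Lemma \ref{lem:diphi}), and since all the coefficients $c,c_0,c_{-1},c_1$ in Table \ref{tab:ndo} are continuous linear functionals of the datum (point evaluations of $\Ipm g_n$, $I_\pm g_n$, or $\tfrac{\dd}{\dd x}\Ipm g_n$, again by Lemma \ref{lem:diphi}), each term $\I g_n$, $k_j$-coefficient, and constant converges; the fixed functions $k_{-1},k_0,k_1\in X$ so the corresponding terms converge in $X$. (4) Show $\Gen f_n\to\Gen f$ in $X$: from the first-claim identities in the proof of the domain lemma, $\Gen f_n=g_n+c_1(g_n)\to g+c_1(g)=\Gen f$, again using continuity of the functional $c_1(\cdot)$.

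The main obstacle is step (1)–(3) in the $C_0(\Omega)$ cases where the right endpoint is included, i.e. when the right boundary condition is $\mathrm N$ or $\mathrm{N^*}$ (cases 4 and 6, and also care is needed at the left endpoint via the $k_{-1}^-$ or $k_0^-$ terms): one must produce smooth approximants that simultaneously lie in the prescribed affine subspace of Table \ref{tab:ndo} \emph{and} satisfy the compact-support condition $g-g(1)\in C_c[-1,1)$, while keeping the correction small in the graph norm. This is handled exactly as in the density argument of Theorem \ref{thm:5}: one adds a small multiple of an explicit $k_2^\pm$-type bump supported near the endpoint to repair the boundary behaviour of $g-g(1)$, and the estimate \eqref{eq2plusmth} (together with Lemma \ref{lem:diphi}) shows this bump and its $\Gen$-image vanish as $n\to\infty$. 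Once this is in place, everything else is routine continuity bookkeeping, and the result follows. $\qed$
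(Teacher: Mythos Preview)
Your approach is essentially the paper's own: approximate the datum $g$ in $X$ by $g_n$ from the smooth class, plug $g_n$ into the Table~\ref{tab:ndo} formula to get $f_n\in\mathcal C(\Gen,\mathrm{LR})$, and use continuity of $\Ipm$ and of the coefficient functionals (Lemma~\ref{lem:diphi}) to conclude $f_n\to f$ and $\Gen f_n=g_n+c_1(g_n)\to g+c_1=\Gen f$.

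Two minor corrections. First, your ``main obstacle'' paragraph is unnecessary and slightly misdirected: the density of $C_c^\infty(\Omega)\cap\{g-g(1)\in C_c[-1,1)\}$ in $C_0(\Omega)$ is elementary (smooth approximation plus a cutoff making the function constant near~$1$), and no $k_2^\pm$-bump machinery from Theorem~\ref{thm:5} is needed here --- you are modifying $g$, not $f$, so you do not need to track any $\Gen$-image of the correction. Second, in cases~4 and~6 the constants $c$ (resp.\ $c_{-1}$) are \emph{free} parameters, not functionals of $g$; you must simply set $c(n)=c$ and $c_{-1}(n)=c_{-1}$ for all $n$, as the paper does, rather than claim they are ``continuous linear functionals of the datum''. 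With these two fixes your outline coincides with the paper's proof.
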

\begin{proof} Let $f\in \D(\Gen,\mathrm{LR})$, and denote $ g+c_1=\Gen f\in X$ and $c,c_1,c_0,c_{-1}$ the coefficients for $f$ as in Table \ref{tab:ndo}. Recall that $C_c^\infty(-1,1)$ is dense in $L^1[-1,1]$ and $C_c^\infty(\Omega)\cap\{g-g(1)\in C_c[-1,1)\}$ is dense in  $C_0(\Omega)$. Then we can take a sequence $\{g_n\}_{n\in\mathbb N}\subset C_c^\infty(-1,1)$ for $X=L^1[-1,1]$  and $\{g_n\}_{n\in\mathbb N}\subset C_c^\infty(\Omega)\cap\{g-g(1)\in C_c[-1,1)\}$ for  $X=C_0(\Omega)$ such that $g_n\to g$ as $n\to\infty$ in $X$, and denote by $f_n$  the element in $\mathcal C(\Gen,\mathrm{LR})$ for each $g_n$ and denote the respective coefficients by $c(n),c_1(n),c_0(n),c_{-1}(n)$, fixing  $c(n)=c$ and $c_{-1}(n)=c_{-1}$ in cases 4 and 6 for any $X$ in Table \ref{tab:ndo}. Then, on the one hand,  $f_n\to f$     by continuity of $\Ipm $  and the coefficients depending on $g_n$ (Lemma \ref{lem:diphi}). On the other hand, due to $c_1(n)$ converging to $c_1$,    $\Gen f_n=g_n+c_1(n)\to g+c_1=\Gen f$, and we are done. 
\end{proof}

\subsection{Surjectivity of $(\LTp -\Gen)$}
In this section we want to construct the resolvents of all operators in Table \ref{explicitProcesses}. It is then natural to define the  \textit{ left (`$+$') and right (`$-$') L\'evy operators of Mittag-Leffler   type} for $g\in L^1[-1,1]$ and $\LTp>0$ as
\[
g\mapsto \Epmq  g:=\sum_{n=0}^\infty \LTp^n(\Ipm )^ng,
\]
with the convention that $(\Ipm )^0$ are identity operators.
\begin{remark}\label{rem:Zkyp} We named the operator $\Elq$ after the Mittag-Leffler function \cite{MR2884383} as we use it mostly  to solve nonlocal differential equations (see Lemma \ref{lem:gML}). But $\Elq 1$ is a well-known scale function in theory of L\'evy processes, sometimes called the adjoint $\LTp$-scale function \cite[Definition 2]{MR1995924}. Indeed, 
recall that $k_0^+(\cdot-1)$ equals $W$ in \cite[Chapter 8.2]{MR2250061}, so that   $(\Il )^n=W\star^{n}$ and from  \cite[Eq. (8.24)]{MR2250061} (or a Laplace transform argument) it follows that  
\[
\Elq g (x)=g(x) +\LTp\int_0^{x+1} W^{(\LTp)}(y)g(x-y)\,\dd y,
\]
where $W^{(\LTp)}= \sum_{n\ge 0}\beta^n W\star^{n+1}$ is the scale function with Laplace transform $(\sym(\xi)-\LTp)^{-1}$ (for large $\Re\xi>0$) and $\star^{n}$ denotes the $n$-th convolution power.
\end{remark}
\begin{lemma}\label{lem:gML}  Assume \ref{H0} and let $\LTp>0$.
 Then $\Epmq  :Y\to Y$ are bounded linear operators for  $Y$ being $L^1[-1,1]$ or $ C[-1,1]$,   $\Erq $  is bounded and linear for    $Y=C_0[-1,1)$ and  $\Elq $  is bounded and linear for    $Y=C_0(-1,1]$. Moreover, the following identities hold 
\begin{equation}\label{eq:MLrel}
\begin{aligned}
\Cpm  \Epmq  \Ipm  g & = \RLpm  \Epmq  \Ipm  g=g+\LTp\Epmq  \Ipm  g,\\
\Cpm  \Epmq  k_0^\pm &= \RLpm  \Epmq  k_0^\pm= \LTp\Epmq  k_0^\pm,\\
\Cpm  \Epmq  1 &= \LTp\Epmq  1,\\
\RLl  \Elq  k_{-1}^+ &= \LTp\Elq  k_{-1}^+, 
\end{aligned}
\end{equation}
where $g\in L^1[-1,1]$, and if  $g\in C[-1,1]$ then $\Erq \Ir g\in C^1[-1,1]$. 
\end{lemma}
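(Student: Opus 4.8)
The plan is to establish the boundedness first and then the four identities by reducing everything to Laplace-transform computations together with the right-inverse identities already recorded in Remark \ref{rmk:ndo}.

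\textbf{Boundedness.} For the $L^1$ case, note that $\|\Ipm g\|_{L^1[-1,1]} \le \|k_0^\pm\|_{L^1[-1,1]}\,\|g\|_{L^1[-1,1]}$ by Young's inequality (this is already implicit in Lemma \ref{lem:diphi}). Hence $\|(\Ipm)^n g\|_{L^1} \le \|k_0^\pm\|_{L^1}^n\|g\|_{L^1}$, but this crude bound does not give summability unless $\|k_0^\pm\|_{L^1}<1$. The correct estimate comes from iterating \eqref{eq:psikn}: one shows by induction that $0\le (\Ipm)^n \mathbf 1 \le k_n^\pm$, and since $\|k_n^\pm\|_{L^1[-1,1]} = k_{n+1}^\pm(\pm1) \le 2^{n+1}/(n+1)!$ (from \eqref{eq:psikn} with the explicit polynomial bound and sub-exponential growth of $k_{-1}$), the series $\sum_n \LTp^n \|(\Ipm)^n g\|_{L^1}$ converges for every $\LTp>0$ when $g\ge 0$, and then for general $g$ by splitting into positive and negative parts; positivity of the kernels $k_n^\pm$ is what makes this work. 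The $C[-1,1]$, $C_0(-1,1]$, $C_0[-1,1)$ cases are analogous using $\|(\Ipm)^n g\|_\infty \le \|g\|_\infty \|k_n^\pm\|_{\infty}$ together with $\|k_n^\pm\|_\infty \le k_n^\pm(\pm1)$ and the same factorial bound; the fact that $\Ipm$ maps $C[-1,1]$ into $C^1[-1,1] \subset C[-1,1]$ (Lemma \ref{lem:diphi}) and vanishes at the appropriate endpoint ($\Ipm f(\mp1)=0$, Remark \ref{rmk:ndo}-(ii)) is what keeps us inside the stated subspaces $C_0(-1,1]$, $C_0[-1,1)$.

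\textbf{The identities.} For the first identity, apply $\Cpm$ or $\RLpm$ termwise to $\Epmq \Ipm g = \sum_{n\ge0}\LTp^n (\Ipm)^{n+1} g$; termwise application is justified by the boundedness just established plus closedness of $(\Gen,\BC)$ from Theorem \ref{thm:7} (or more simply, since $\Cpm \Ipm = \RLpm \Ipm = \mathrm{id}$ by \eqref{eq:inv}), giving $\sum_{n\ge0}\LTp^n (\Ipm)^n g = g + \LTp \sum_{n\ge0}\LTp^n(\Ipm)^{n+1}g = g + \LTp \Epmq \Ipm g$. The second identity follows the same way once we observe $\RLpm k_0^\pm = \Cpm k_0^\pm = 0$ from \eqref{eq:ker}, so that $\Cpm \Epmq k_0^\pm = \Cpm \sum_{n\ge1}\LTp^n (\Ipm)^n k_0^\pm$ and then using $(\Ipm)^n k_0^\pm = (\Ipm)^{n-1}(\Ipm k_0^\pm)$ combined with $\Cpm \Ipm = \mathrm{id}$; alternatively use $\Ipm k_0^\pm(\cdot) = k_1^\pm = \Ipm \mathbf 1$ by \eqref{eq:Iphiofind} to reduce to the first identity. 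The third identity ($\Cpm \Epmq 1 = \LTp\Epmq 1$) is immediate from $\Cpm 1 = 0$ (\eqref{eq:ker}) and the same shift-index argument, and the fourth ($\RLl \Elq k_{-1}^+ = \LTp \Elq k_{-1}^+$) uses $\RLl k_{-1}^+ = 0$ (\eqref{eq:ker}) in exactly the same fashion. Finally, $\Erq \Ir g \in C^1[-1,1]$ for $g\in C[-1,1]$ follows because $\Erq \Ir g = \Ir g + \LTp \Ir(\Erq \Ir g)$ (the first identity inverted, or directly) and $\Ir$ maps $C[-1,1]$ to $C^1[-1,1]$ by Lemma \ref{lem:diphi}, while $\Erq \Ir g \in C[-1,1]$ by boundedness — so the right-hand side is $C^1$.

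\textbf{Main obstacle.} The one genuinely delicate point is the uniform-in-$\LTp$ summability of the Mittag-Leffler series in all four function spaces simultaneously, i.e.\ pinning down that $\|(\Ipm)^n\|$ decays like $C^n/n!$; this rests on the positivity of the iterated kernels $(\Ipm)^n\mathbf 1 = k_n^\pm \ge 0$ and the explicit polynomial identity \eqref{eq:psikn}, so the work is in making that induction and the endpoint bookkeeping (which subspace of continuous functions one lands in) precise. Once boundedness is in hand, the four operator identities are routine termwise manipulations powered entirely by \eqref{eq:ker}, \eqref{eq:Iphiofind} and \eqref{eq:inv}.
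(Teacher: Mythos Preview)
Your treatment of the identities and the $C^1$ statement is correct and essentially the paper's argument: once boundedness is in hand, termwise application of \eqref{eq:ker} and \eqref{eq:inv} together with the commutation $\Epmq\Ipm=\Ipm\Epmq$ does everything, and the $C^1$ claim is exactly $\Erq\Ir g=\Ir\Erq g$ combined with Lemma \ref{lem:diphi}.

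The gap is in the boundedness step. The inductive inequality $(\Ipm)^n\mathbf 1\le k_n^\pm$ is false in general. To see this, note that under the scaling $\phi\mapsto c\phi$ (which preserves \ref{H0}) one has $k_0^\pm\mapsto k_0^\pm/c$, hence $\Ipm\mapsto \Ipm/c$ and $k_n^\pm\mapsto k_n^\pm/c$; your inequality for $n=2$ then becomes $(\Ipm)^2\mathbf 1\le c\,k_2^\pm$ in terms of the unscaled objects, which fails for $c$ small. Relatedly, the bound $k_{n+1}^\pm(\pm1)\le 2^{n+1}/(n+1)!$ only holds with an extra factor $K=k_0^+(1)$, since $k_{n+1}^+(1)=\int_{-1}^1\frac{(1-y)^n}{n!}k_0^+(y)\,\dd y\le K\,2^{n+1}/(n+1)!$. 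The paper's fix is to avoid comparing $(\Ipm)^n\mathbf 1$ with $k_n^\pm$ altogether: instead bound $k_0^\pm\le K$ pointwise and use the commutation $I_\pm\Ipm=\Ipm I_\pm$ to peel off one factor at a time, namely $(\Ipm)^{n-1}\mathbf 1=(\Ipm)^{n-2}I_\pm k_0^\pm=I_\pm(\Ipm)^{n-2}k_0^\pm\le K I_\pm(\Ipm)^{n-2}\mathbf 1$, iterating down to $K^{n-1}I_\pm^{n-1}\mathbf 1=K^{n-1}(1\pm x)^{n-1}/(n-1)!$. This gives the pointwise bound $|(\Ipm)^n g(x)|\le 2K\|g\|_X\,(K(1\pm x))^{n-1}/(n-1)!$ uniformly in both the $L^1$ and sup norms, from which $\|\Epmq g\|_Y\le(1+2K\beta e^{2K\beta})\|g\|_Y$ follows directly.
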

\begin{proof}
Recall that 
$ 
\|k_0^+\|_{C[-1,1]} =\|k_0^-\|_{C[-1,1]} = k^+_0(1)=:K ,
$
and $|\Ipm  g(x)|\le 2K  \|g\|_X  $ for all $x\in[-1,1]$, $g\in X$, and any $X\in \{L^1[-1,1],C_0(\Omega)\}$.  For $g\in X$ and $n\ge2$, an induction argument yields for all $x\in[-1,1]$  
\begin{align*}
|(\Ipm )^n g(x)|&\le 2K\|g\|_X(\Ipm )^{n-1}(x)  \\
&= 2K\|g\|_X(\Ipm )^{n-2} I_\pm k_0^\pm (x) \\
&\le 2K\|g\|_X  KI_\pm(\Ipm )^{n-2}(x)\\
&\quad\vdots\\
&\le 2K\|g\|_XK^{n-1} I_\pm^{n-1} (x)\\
&= 2K\|g\|_X \frac{\left(K(1\pm x)\right)^{n-1}}{(n-1)!} .
\end{align*}
where we used \eqref{eq:Iphiofind} for the first equality and $I_\pm \Ipm =\Ipm  I_\pm$ in the second inequality.   As $\Ipm  g\in Y$ if $g\in Y$, by the above
\[
\|\Epmq  g\|_Y\le \|g\|_Y\left(1+2K\LTp e^{2K\LTp} \right),
\]
  and for any $g\in X$ it holds that $\Epmq  \Ipm  g = \Ipm  \Epmq   g$ and $\Epmq I_\pm g=I_\pm \Epmq   g$ using Lemma \ref{lem:diphi} and the continuity of $I_\pm:X\to X$. Then, because of \eqref{eq:ker}, \eqref{eq:inv} and \eqref{eq:phi-1}, the identities in \eqref{eq:MLrel} hold. The last statement is a consequence of Lemma \ref{lem:diphi}, which proves that $ \Ir \Erq g\in C^1[-1,1]$.





\end{proof}

\begin{theorem}\label{thm:9}  Assume \ref{H0}. Then, for each operator $(\Gen,\mathrm{LR})$ in Table \ref{explicitProcesses} and $\LTp >0$, the operator $(\LTp -\Gen):\D(\Gen,\mathrm{LR})\to X$ is   surjective.
\end{theorem}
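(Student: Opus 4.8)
The plan is to construct, for each of the twelve operators $(\Gen,\BC)$ and each $\LTp>0$, an explicit right inverse $R_\LTp$ of $\LTp-\Gen$ out of the Mittag--Leffler operators $\Epmq$ of Lemma~\ref{lem:gML}; since $h\in X$ is arbitrary, this yields surjectivity. Throughout, the sign $\pm$ is that of $\Gen\in\{\Cpm,\RLpm\}$, so that $\Il,\Elq$ accompany the forward operators $\Cl,\RLl$ (whose natural endpoint is $-1$) and $\Ir,\Erq$ the backward operator $\Cr$ (whose natural endpoint is $+1$). The twelve cases are treated along identical lines, differing only in the kernel generator and the boundary functional involved.

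\emph{Step 1 (particular solution).} Given $h\in X$, set $f_h:=-\Epmq\Ipm h$. By the first identity of \eqref{eq:MLrel}, $\Gen f_h=-h-\LTp\Epmq\Ipm h$, hence $(\LTp-\Gen)f_h=h$. Since $\Epmq\Ipm h=\Ipm\Epmq h$, applying Remark~\ref{rmk:ndo}, Lemma~\ref{lem:diphi} and \eqref{eq:phi-1} to $\Epmq h\in X$ shows that $f_h(\mp1)$, $f_h'(\mp1)$, $\Conepm f_h(\mp1)$ and $\RLonepm f_h(\mp1)$ all vanish; hence $f_h$ already satisfies the left boundary condition of $\Gen^+$, resp.\ the right boundary condition of $\Gen^-$, whatever its type.

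\emph{Step 2 (kernel correction and domain matching).} By \eqref{eq:MLrel}, $\Epmq1$ and $\Epmq k_0^\pm$ belong to $\ker(\LTp-\Cpm)$, while $\Elq k_0^+$ and $\Elq k_{-1}^+$ belong to $\ker(\LTp-\RLl)$. For each row of Table~\ref{tab:ndo} I would set $R_\LTp h:=f_h+d\,\zeta$, with $\zeta$ the kernel generator appropriate to that row --- chosen so that $\zeta$ also satisfies the boundary condition already enforced in Step~1 ($\zeta=\Epmq1$ when that condition is of type $\mathrm{N}$, since $\Epmq k_0^\pm$ does not respect $\mathrm{N}$; and $\zeta\in\{\Epmq k_0^\pm,\Elq k_{-1}^+\}$ when it is of type $\mathrm{D}$ or $\mathrm{N^*}$), the two two-parameter rows $\mathrm{NN},\mathrm{N^*N}$ using the constant part of $\Epmq1$ (resp.\ the $k_{-1}^+$-part of $\Elq k_{-1}^+$) for the free scalar in the domain. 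Since $\zeta\in\ker(\LTp-\Gen)$, still $(\LTp-\Gen)R_\LTp h=h$. Using $\Epmq\kappa=\kappa+\LTp\Ipm\Epmq\kappa$ together with $\Ipm 1=k_1^\pm$ from \eqref{eq:Iphiofind} to recast $R_\LTp h$ in the form $\Ipm g+c+\sum_jc_jk_j^\pm$ of Table~\ref{tab:ndo} with $g\in X$, the one boundary condition not handled in Step~1 becomes a single scalar equation for $d$ whose coefficient is the value of the corresponding boundary functional on $\Epmq\kappa$ at the relevant endpoint. Solving it makes $R_\LTp h$ match that row of Table~\ref{tab:ndo} exactly, so $R_\LTp h\in\D(\Gen,\BC)$.

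\emph{Step 3 (non-degeneracy), and the main obstacle.} It remains to check that this coefficient never vanishes. By Lemma~\ref{thm:ker}, with $k_0^+(x-1)=W(x)$, $k_{-1}^+(x-1)=W'(x)$ and $\Il=W\star$, one has $W>0$ on $(0,\infty)$ and $W'\ge0$ with $W'>0$ a.e.; representing $\Epmq$ through $W^{(\LTp)}=\sum_{n\ge0}\LTp^nW\star^{n+1}>0$ on $(0,\infty)$ (Remark~\ref{rem:Zkyp}), all the endpoint quantities that arise --- $\Epmq\kappa(\pm1)$ and its derivative and fractional-derivative analogues such as $\Conel\Elq k_0^+(1)$, $\RLonel\Elq k_{-1}^+(1)$, $\tfrac{\dd}{\dd x}\Erq k_0^-(-1)$ --- are nonzero (for instance $\Elq k_{-1}^+(1)=k_{-1}^+(1)+\LTp\int_0^2W^{(\LTp)}(y)\,k_{-1}^+(1-y)\,\dd y>0$, the integrand being positive for a.e.\ $y\in(0,2)$). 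I expect the genuine difficulty to lie precisely here, in the $\mathrm{N^*}$ rows and for the Riemann--Liouville operator: there the outstanding functional couples $\tfrac{\dd}{\dd x}\Ipm$ at $\mp1$ with the scale-function derivative $k_{-1}^\pm$, which may blow up at that endpoint, so the construction must be arranged to require only the \emph{regularised} quantities $\tfrac{\dd}{\dd x}\Erq k_0^-(-1)$ and $\Elq k_{-1}^+(1)$ (finite and positive), and one must keep track of which of $1,k_0^\pm,k_{-1}^+$ is the admissible kernel generator in each of the twelve rows of Table~\ref{tab:ndo}.
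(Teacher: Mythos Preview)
Your approach is correct and essentially identical to the paper's: both construct the resolvent as $-\Epmq\Ipm h$ plus a kernel correction $d\,\Epmq\kappa$ (with $\kappa\in\{1,k_0^\pm,k_{-1}^+\}$ matching the paper's Table~\ref{tab:thm9}), and solve a single scalar equation for $d$ to enforce the one boundary condition not already satisfied at $\mp1$. Your Step~3 on non-degeneracy of the coefficient is in fact more explicit than the paper, which simply writes down the ratios in Table~\ref{tab:thm9} without verifying that the denominators are nonzero; your positivity argument via $W^{(\LTp)}>0$ and $W'\ge0$, $W'>0$ a.e.\ is the right way to close that gap.
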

\begin{proof}
 For each $g\in X$, we construct a function $\varphi\in \D(\Gen,\mathrm{LR})$ such that $(\Gen-\LTp)\varphi=g$ using the properties of the   L\'evy operators in Lemma \ref{lem:gML}.  We will use without mention the obvious identity $\Epmq f = f+\LTp\Epmq \Ipm f$. For $\D(\Cl ,\mathrm{DD})$, let $d=\Elq  \Il  g(1)/\Elq  k_0^+(1)$ and choose
\[
\varphi= \Elq \Il  g - d \Elq  k_0^+=\Il [\Elq (g-\LTp d k_0^+)]-d k_0^+.
\]
Then    $\Cl  \varphi= g+\LTp\varphi\in X$ by a direct computation using \eqref{eq:MLrel}, and $\varphi\in \D(\Cl ,\mathrm{DD}) $ by the domain representation of   Table \ref{tab:ndo},  given that 
\[
d= \frac{\Il [\Elq (g-\LTp d k_0^+)](1)}{k_0^+(1)}.
\] 
For $\D(\Cl , \mathrm{DN})$, let $d=\Elq  I_+ g(1)/[1+\LTp\Elq  k_1^+(1)]$ and choose
\[
\varphi= \Elq  \Il  g -d \Elq  k_0^+=\Il  [\Elq( g - \LTp d k_0^+)]-dk_0^+.
\]
 Then $\varphi\in X$, $\Cl  \varphi= g+\LTp\varphi\in X$ by a direct computation using \eqref{eq:MLrel}, and $\varphi\in \D(\Cl ,\mathrm{DN})$ by Table \ref{tab:ndo}, given that
\begin{align*}
I_+[\Elq  (g - \LTp dk_0^+)](1)
&=I_+\Elq  g(1) -\frac{\LTp\Elq  I_+ g(1)}{1+\LTp\Elq  k_1^+(1)}\Elq   k_1^+(1)=d.
\end{align*}
For $\D(\RLl , \mathrm{N^*N})$,  let $d=I_+\Elq g(1)/(\LTp I_+\Elq k^+_{-1}(1))$ and choose
\[
\varphi=\Elq \Il  g - d\Elq  k_{-1}^+=\Il  \Elq [ g -0 k_1^+- \LTp d k_{-1}^+]-0 k_1^+-dk_{-1}^+.
\]
Then $\varphi\in X$, $\RLl  \varphi= g+\LTp\varphi\in X$ by a direct computation using \eqref{eq:MLrel}, and $\varphi\in \D(\RLl ,\mathrm{N^*N})$ by Table \ref{tab:ndo}, given that
\[
 0 = I_+\Elq g(1) -\LTp d  I_+\Elq k_{-1}^+(1)  =  I_+\Elq [ g -0 k_1^+- \LTp d k_{-1}^+](1).
\]

 The remaining cases are similar and the respective functions $\varphi$ are given in Table \ref{tab:thm9}.
\end{proof}

\begin{table}[h]
\centering
\vline
\begin{tabular}{ l l  l|}

	\hline
		 $\D (A, \mathrm{LR})$  &  $\omega$ &  $d $   \\
	\hline
	

 
  $\D (\Cpm , \mathrm{DD})$ & $\omega=     k_0^\pm$ & $d =\frac{\Epmq  \Ipm  g(\pm1)}{\Epmq  k_0^\pm (\pm1)}$   \\
  \hline
  $\D (\Cl , \mathrm{DN})$ & $\omega=     k_0^\pm$ & $d =\frac{\Epmq  I_\pm g(\pm1)}{1+\LTp\Epmq  I_\pm  k_0^\pm(\pm1)}$   \\
	  $\D (\Cr , \mathrm{ND})$ &  &    \\
  \hline
 $\D (\Cl , \mathrm{ND})$ & $\omega=  1 $ & $d =\frac{\Epmq  \Ipm  g(\pm1)}{\Epmq 1 (\pm1)}$   \\
	$\D (\Cr , \mathrm{DN})$ & &  \\
  \hline
  $\D (\Cpm , \mathrm{NN})$ & $\omega=     k_1^\pm$ & $d =\frac{ \Epmq  I_\pm g(\pm1)}{\LTp(2+\LTp\Epmq   I_\pm k_1^\pm (\pm 1))}$   \\
  \hline
  $\D (\RLl , \mathrm{N^*D})$ & $\omega=     k_{-1}^+$ & $d =\frac{\Elq \Il g(1)}{\Elq k_{-1}^+(1)}$ \\
  \hline
  $\D (\Cr , \mathrm{N^*D})$ &$\omega=   k_0^-  $ & $d =\frac{\frac{\dd}{\dd x}\Erq \Ir g(-1)}{\frac{\dd}{\dd x}\Erq k_{0}^-(-1)}$ \\
  \hline
  $\D (\Cr , \mathrm{N^*N})$  &$\omega=   k_1^-  $ & $d =\frac{\frac{\dd}{\dd x}\Erq \Ir g(-1)}{\LTp \frac{\dd}{\dd x}\Erq k_{1}^-(-1)}$ \\
  \hline
	  $\D (\RLl , \mathrm{N^*N})$  &$\omega=   k_{-1}^+  $ & $d =\frac{I_+\Elq g(1)}{\LTp I_+\Elq k^+_{-1}(1)}$
	  \\
  \hline
\end{tabular}
\caption{\label{tab:thm9} For the proof of Theorem \ref{thm:9}, choices of the resolvent $(\LTp-G)^{-1}g =-\varphi$ for $g\in X$: $\varphi=\Epmq  [\Ipm  g-d \omega]$ if ${\rm LR}\in\{{\rm DR},{\rm LD}\}$ or  for $\D (\RLl , \mathrm{N^* N})$; $\varphi=\Epmq  [\Ipm  g-\LTp d \omega]-d$ for $\D (\Cr , \mathrm{N^*N})$ or $\D (\Cpm , \mathrm{NN})$.}

\end{table} 
\subsection{Duality}
In this section we establish the duality relation between the backward and  forward generators of Table \ref{explicitProcesses}. At the end we discuss how to use   known resolvent measures to obtain a proof of Table \ref{explicitProcesses} for the processes that do not feature a left fast-forwarding boundary condition, i.e. $\Y{DD}$, $\Y{N^*D}$, $\Y{D N}$ and $\Y{N^*N}$.
 
Recall that for a linear operator $A:\mathcal D(A) \subset X\to X$ and a subspace $\SuS\subset X$ where $X$ is a Banach space, we denote by $A |_\SuS$ the part of $A$ in $\SuS$, i.e. $\{(f,Af)\in \mathcal D(A)\times X:f,    Af\in \SuS \}$. For two operators $A:\mathcal D(A) \subset X\to X$ and $\widetilde A:\mathcal D(\widetilde A) \subset X\to X$, the inclusion $A\subset \widetilde A$ means that the graph of $A$ is a subset of the graph of $\widetilde A$. We recall that the dual spaces of $C_0(\Omega)$ and $L^1[-1,1]$ are respectively $C_0(\Omega)^*$, the space of finite Borel signed measures on $\Omega$ (with total variation norm) \cite[Theorems 2.18 and 6.19]{MR924157}, 
 and $L^1[-1,1]^*=L^\infty[-1,1]$. We refer to \cite[Chapter 1.10]{MR710486} for standard definitions of duals of densely defined operators and duals of strongly continuous contraction semigroups.
\begin{proposition}
\label{adjointfractionalderivatives} Assume \ref{H0} and let $(\Gen^+, \BC)$ on $L^1[-1,1]$ and $(\Gen^-, \BC)$ on $C_0(\Omega)$ be as in Table \ref{explicitProcesses}, and denote by $(\Gen^+, \BC)^*$ and $(\Gen^-, \BC)^*$ their dual operators, respectively. Then
\[
(\Gen^-, \BC) = (\Gen^+, \BC)^* \Big|_{C_0(\Omega)}\quad\text{and}\quad(\Gen^+, \BC) = (\Gen^-, \BC)^* \Big|_{L^1[-1,1]}.
\]
\end{proposition}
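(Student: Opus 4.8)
The plan is to verify the duality relation pair by pair using the explicit resolvent formulas from Theorem~\ref{thm:9}. Since both $(\Gen^+,\BC)$ and $(\Gen^-,\BC)$ generate strongly continuous contraction semigroups (or at least have $(\LTp-\Gen)$ surjective, are closed and densely defined), it suffices to show that the resolvents are adjoint to one another; indeed, for a densely defined closed operator $A$ with $(\LTp-A)^{-1}$ bounded everywhere, one has $((\LTp-A)^{-1})^* = (\LTp-A^*)^{-1}$, and then the part of $A^*$ in the predual recovers the predual operator. So the first step would be to reduce the statement to the identity
\[
\big\langle (\LTp-\Gen^+,\BC)^{-1}g,\, h\big\rangle = \big\langle g,\, (\LTp-\Gen^-,\BC)^{-1}h\big\rangle
\]
for all $g\in L^1[-1,1]$ and $h\in C_0(\Omega)$, using the pairing $\langle g,h\rangle = \int_{-1}^1 g(x)h(x)\,\dd x$, and then to argue that this forces $(\Gen^-,\BC)\subset (\Gen^+,\BC)^*$ with equality after taking the part in $C_0(\Omega)$ (and symmetrically).

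The key computational step is a Fubini-type adjointness for the building-block operators. The heart of the matter is the relation between $\Il$ and $\Ir$: since $\Il f = k_0^+(\cdot-1)\star f$ and $\Ir f = k_0^-(\cdot+1)\star f$ with $k_0^-(x)=k_0^+(-x)$, the identity \eqref{eq:convolutionsign} gives $\int_{-1}^1 \Il g\cdot h\,\dd x = \int_{-1}^1 g\cdot \Ir h\,\dd x$ for $g,h\in L^1[-1,1]$ (extending by zero outside $[-1,1]$). Consequently $(\Ipm)^n$ are mutually adjoint, and hence so are the Mittag--Leffler operators: $\langle \Elq g, h\rangle = \langle g, \Erq h\rangle$. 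Similarly $I_+$ and $I_-$ are adjoint on $L^1[-1,1]$, and the scale functions $k_j^+$ and $k_j^-$ are reflections of each other. Armed with these, one substitutes the explicit $\varphi = \Epmq[\Ipm g - d\,\omega]$ (or the variant with the extra constant) from Table~\ref{tab:thm9} into both sides of the pairing above and checks, case by case through the six process types, that the scalar coefficients $d$ match up under the reflection $x\mapsto -x$ — e.g. $\Elq\Il g(1)$ pairs with $\frac{\dd}{\dd x}\Erq k_0^-(-1)$ in exactly the way the $\D(\Cr,\mathrm{N^*D})$ row prescribes. This is where the boundary conditions enter: the point evaluations and derivative evaluations at $\pm 1$ defining the constants $c,c_1,c_0,c_{-1}$ are precisely the "adjoint boundary data," and the asymmetry in, say, the $\mathrm{N^*}$ condition (first derivative on the $C_0$ side versus Riemann--Liouville $\RLonel$ on the $L^1$ side) is matched by Lemma~\ref{lem:diphi}, which identifies $\frac{\dd}{\dd x}\Ipm$ with $\pm k_{-1}^\pm(\cdot\mp1)\star(\cdot)$.

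I would organize the case check by grouping the twelve operators into the six forward/backward pairs of Table~\ref{explicitProcesses}: $\mathrm{DD}$, $\mathrm{DN}$, $\mathrm{ND}$, $\mathrm{NN}$, $\mathrm{N^*D}$, $\mathrm{N^*N}$. For each pair one writes down $\varphi^+ = (\LTp-\Gen^+)^{-1}g$ and $\varphi^- = (\LTp-\Gen^-)^{-1}h$ from Table~\ref{tab:thm9}, pairs them, and uses the adjointness of $\Epmq$, $\Ipm$, $I_\pm$ together with the reflection symmetry of the $k_j^\pm$ to collapse both sides to the same expression. The $\mathrm{DD}$, $\mathrm{DN}/\mathrm{ND}$, $\mathrm{NN}$ cases are nearly mechanical; the $\mathrm{N^*}$ cases require invoking Lemma~\ref{lem:diphi} to handle the derivative boundary terms and the ``last identity'' of Lemma~\ref{lem:gML} ($\Erq\Ir g\in C^1[-1,1]$) to make the boundary derivative well-defined.

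The main obstacle I expect is \emph{not} the Fubini/reflection bookkeeping but rather the soft functional-analytic step of upgrading the resolvent adjointness to the operator statement $(\Gen^-,\BC) = (\Gen^+,\BC)^*|_{C_0(\Omega)}$ with the correct domains, and in particular justifying that the part of the dual operator in the predual is exactly the operator in Table~\ref{explicitProcesses} rather than something larger. One must be careful because the dual of the $C_0(\Omega)$-generator lives on the space of signed measures on $\Omega$, which is not itself a space on which our operators were originally defined; the cleanest route is to use the resolvent identity to show the two semigroups are adjoint (via $P_t^+ = \lim (\tfrac{t}{n}(\LTp-\Gen^+)^{-1})^{-n}$-type arguments or the Hille--Yosida exponential formula, each step of which respects adjoints), and then read off the generator statement from the standard fact \cite[Chapter 1.10]{MR710486} that the generator of the adjoint semigroup (restricted to the closure of its domain, which here is all of the predual since the predual operators are densely defined) is the dual of the generator. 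Once semigroup adjointness is in hand, the domain identification is automatic, so the real work is confined to the explicit resolvent computation.
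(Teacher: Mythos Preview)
Your approach has a genuine circularity problem. You propose to verify adjointness of the resolvents $(\LTp-\Gen^\pm)^{-1}$ and then upgrade to the operator statement via semigroup theory (the Hille--Yosida exponential formula, or \cite[Chapter~1.10]{MR710486}). But at this point in the paper neither resolvent bijectivity nor semigroup generation is available: Theorem~\ref{thm:9} gives only \emph{surjectivity} of $(\LTp-\Gen)$, and injectivity is precisely the content of the Corollary that immediately follows the present Proposition---it is proved \emph{using} the duality you are trying to establish. Likewise, dissipativity (hence Lumer--Phillips) is only obtained later, in Theorem~\ref{trotterkatotypetheorem}, via the approximation theorems. So the formulas in Table~\ref{tab:thm9} are at this stage merely specific right inverses, not resolvents, and the standard identity $((\LTp-A)^{-1})^*=(\LTp-A^*)^{-1}$ is not yet licensed.

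The paper avoids this by working directly at the level of the domains rather than the resolvents. It first proves the inclusion $(\Gen^\pm,\BC)\subset (\Gen^\mp,\BC)^*$ by taking $\omega_\pm$ in the explicit parametrisations of $\D(\Gen^\pm,\BC)$ from Table~\ref{tab:ndo} and computing the defect
\[
\Delta=\int_{-1}^1\omega_-\,\Gen^+\omega_+\,\dd x-\int_{-1}^1\omega_+\,\Gen^-\omega_-\,\dd x,
\]
showing $\Delta=0$ case by case. This is in fact simpler than your resolvent-matching, because $\Gen^\pm\omega_\pm=g_\pm+a_\pm$ is immediate from Table~\ref{tab:ndo}, and the adjointness $\int f\,\Il g=\int g\,\Ir f$ (which you correctly identified) together with the reflection identities among the $k_j^\pm$ reduces $\Delta$ to a short expression in the boundary constants $a_\pm,b_\pm,c_\pm,d_+$. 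The functional-analytic upgrade is then the following: from the inclusion one has $I-(\Gen^-,\BC)\subset (I-(\Gen^+,\BC)^*)|_{C_0(\Omega)}$; the left side is surjective by Theorem~\ref{thm:9}, while the right side is injective because it is contained in the dual of a surjective operator. Since a surjective operator contained in an injective one must equal it, the inclusion is an equality. This ``surjective $\subset$ injective $\Rightarrow$ equal'' step is the correct substitute for the semigroup argument you were reaching for, and it needs only surjectivity---exactly what is available.
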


\begin{proof} 
First observe that all dual operators a well-defined by Theorem \ref{thm:5}.
Using the explicit representation in Table \ref{tab:ndo} of the respective domains we first show that operators satisfy $(\Gen^\pm,\BC)\subset (\Gen^\mp,\BC)^*$; i.e. we show that for $\omega_+\in \D(\Gen^+,\BC)$ and $\omega_-\in\D(\Gen^-,\BC)$,
$$\Delta:=\int_{-1}^1 \omega_-(x)\Gen^+\omega_+(x)\,\dd x-\int_{-1}^1\omega_+(x)\Gen^-\omega_-(x)\,\dd x=0.$$ 
In order to simplify the readability of the next computation we  abuse  notation by writing $k^\pm_{i}(\cdot\mp 1)=k^\pm_{i}(\cdot)$ when $k^\pm_{i}$ is being convolved, so that, for example, $ \Ipm  f=k_{0}^\pm\star f$ (cf. Definition \ref{def:ndo}). 
Let $$\omega_-=\Ir  g_-+a_-k^-_{1}+b_-k^-_{0}+c_-\in \D(\Gen^-,\BC)$$ and
$$\omega_+=I^\alpha_+ g_++a_+k^+_{1}+b_+k^+_{0}+c_++d_+k^+_{-1}\in \D(\Gen^+,\BC)$$
for one of the six cases of Table \ref{tab:ndo}.
Using $\int_{-1}^1 f \Il  g= \int_{-1}^1g\Ir f$ and $\int_{-1}^1k_{1}^+ =\int_{-1}^1k_{1}^-$ to cancel four terms in the second equality, and $\int_{-1}^1 k_{i}^\pm\star f=k_{i+1}^\pm\star f(\pm 1)$ and $\int_{-1}^1 k_{i}^\pm f=k_{i}^\mp\star f(\mp 1)$ in the third equality, we obtain (omitting the dependence on $x$ in the second equality)
\begin{equation*}\begin{split}\Delta=&\int_{-1}^1\omega_-(x)\left(g_+(x)+a_+\right)-\omega_+(x)\left(g_-(x)+a_-\right)\,\dd x\\
=&\int_{-1}^1 \left(\Ir g_-+b_-k^-_{0}+c_-\right)a_+-\left(\Il  g_++b_+k^+_{0}+c_++d_+k^+_{-1}\right)a_-\,\dd x\\
&+\int_{-1}^1 \left(a_-k^-_{1}+b_-k^-_{0}+c_-\right)g_+-\left(a_+k^+_{1}+b_+k^+_{0}+c_++d_+k^+_{-1}\right)g_-\,\dd x\\
=&\,a_+k^-_{1}\star g_-(-1)-a_-k^+_{1}\star g_+(1)+b_-a_+k^-_{1}(-1)-b_+a_-k^+_{1}(1)+c_-a_+2\\
&-a_-c_+2-a_-d_+k^+_{0}(1)+a_-k^+_{1}\star g_+(1)+b_-k^+_{0}\star g_+(1)+c_-I_+g_+(1)\\
&-a_+k^-_{1}\star g_-(-1)-b_+k^-_{0}\star g_-(-1)-c_+I_-g_-(-1)-d_+k^-_{-1}\star g_-(-1)\\
=&\,b_-\left(a_+k^-_{1}(-1)+k^+_{0}\star g_+(1)\right)-b_+\left(a_-k^+_{1}(1) +k^-_{0}\star g_-(-1)    \right)\\
&+c_-\left(a_+2+I_+g_+(1)\right)-c_+\left(a_-2+I_-g_-(-1) \right)-d_+\left(a_-k^+_{0}(1)+k^-_{-1}\star g_-(-1)\right).
\end{split}
\end{equation*}
Then for each of the six cases of Table \ref{tab:ndo}  one can verify that $\Delta=0$. For example, for $(\Gen^\pm,{\rm ND})$, we have $a_-,c_-,a_+,b_+,d_+ =0$, $b_-=-I_-g_-(-1)$ and $c_+=-\Il g_+(1)$, so that
\[
\Delta= b_-k^+_{0}\star g_+(1)-c_+I_-g_-(-1)  = -I_-g_-(-1)\, k^+_{0}\star g_+(1)+\Il g_+(1)I_-g_-(-1)  = 0,
\]
or for $(\Gen^\pm,{\rm N^*N})$, we have $b_- ,b_+,c_+ =0$, $d_+,c_-\in\mathbb R$, $a_-=[\Ir g_-]'(-1)/k^-_0(-1)$ and  $a_+=-I_+g_+(1)/2$, so that (using Lemma \ref{lem:diphi})
\[
\Delta=c_-\left(\frac{-I_+g_+(1)}{2}2+I_+g_+(1)\right)-d_+\left(\frac{[\Ir g_-]'(-1)}{k^-_0(-1)}k^+_{0}(1)+k^-_{-1}\star g_-(-1)\right) = 0.
\]
Thus we proved that $(\Gen^-, \BC) \subset (\Gen^+, \BC)^*|_{C_0(\Omega)} $ and $ (\Gen^+, \BC) \subset (\Gen^-, \BC)^*|_{L^1[-1,1]}$. 

As a consequence, 
$$I - (\Gen^-, \BC) \subset \big( I- (\Gen^+, \BC)^*\big)\Big|_{C_0(\Omega)}$$ and  $$ I- (\Gen^+, \BC) \subset \big(I- (\Gen^-, \BC)^*\big)\Big|_{L^1[-1,1]},$$ 
where the identity operator on the respective spaces is denoted by $I$. By Theorem \ref{thm:9}, 
 $I - (\Gen^-, \BC)$ is surjective which implies that $I - (\Gen^+, \BC)^*$ is injective, and  thus $(I - (\Gen^+, \BC)^*)|_{C_0(\Omega)}$ is also injective. This yields 
 $$
 I - (\Gen^-, \BC) = \big( I - (\Gen^+, \BC)^*\big)\Big|_{C_0(\Omega)},
 $$ since if operators $T, S$ are such that $T \subset S$, $T$ is surjective and $S$ is injective, then $T=S$. Hence, $(\Gen^-, \BC) = (\Gen^+, \BC)^* |_{C_0(\Omega)}$. A similar argument holds for the pair $(\Gen^+, \BC)$ and $(\Gen^-, \BC)^*|_{L^1[-1,1]}$ . 
\qquad \end{proof}

\begin{corollary}  Assume \ref{H0}. Then, for each operator $(\Gen,\mathrm{LR})$ in Table \ref{explicitProcesses} and $\LTp >0$, the operator $(\LTp-(\Gen,\BC)):\D(\Gen,\BC)\to X$ is a bijection.
\end{corollary}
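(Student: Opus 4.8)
The plan is to combine the surjectivity result just established (Theorem~\ref{thm:9}) with the injectivity of $(\LTp - \Gen)$ on $\D(\Gen,\BC)$, since a linear map is a bijection precisely when it is both surjective and injective. Theorem~\ref{thm:9} already gives surjectivity of $(\LTp - \Gen)\colon \D(\Gen,\BC)\to X$ for every $\LTp>0$, so the only thing left to verify is that $(\LTp - \Gen)$ is injective; equivalently, that $\Gen\varphi = \LTp\varphi$ with $\varphi\in\D(\Gen,\BC)$ forces $\varphi = 0$.

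To prove injectivity I would argue directly from the explicit domain representations in Table~\ref{tab:ndo} together with the Mittag--Leffler calculus of Lemma~\ref{lem:gML}. Suppose $\varphi\in\D(\Gen,\BC)$ solves $\Gen\varphi=\LTp\varphi$. Writing $\Gen\varphi = g+c_1$ in the notation of Table~\ref{tab:ndo}, the equation $\Gen\varphi=\LTp\varphi$ pins $g$ and the coefficients $c,c_1,c_0,c_{-1}$ in terms of $\varphi$. Applying $\Ipm$ (a right inverse of $\Gen$ by \eqref{eq:inv}) and using \eqref{eq:ker} to identify which $k_j^\pm$ lie in the kernel, one finds that $\varphi$ must be of the form $\Epmq[\text{(homogeneous part)}]$; more precisely $\varphi$ is a linear combination of $\Epmq k_j^\pm$ (and $\Epmq 1$) with coefficients forced to satisfy the homogeneous version of the boundary-condition equations appearing in the proof of Theorem~\ref{thm:9}. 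In each of the twelve cases the relevant scalar coefficient (the ``$d$'' of Table~\ref{tab:thm9}) then solves a homogeneous linear equation whose coefficient is nonzero — e.g.\ $\Epmq k_0^\pm(\pm 1)>0$, $1+\LTp\Epmq I_\pm k_0^\pm(\pm 1)>0$, $\LTp\,I_+\Elq k_{-1}^+(1)>0$, etc., all strictly positive because $\LTp>0$ and the scale functions $k_j^\pm$ and the Mittag--Leffler operators $\Epmq$ are nonnegative and strictly positive in the interior. Hence every free coefficient vanishes, and $\varphi\equiv 0$.

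An alternative and shorter route, which I would actually prefer to present, is to invoke dissipativity: once one knows (as the paper announces will be shown, via the $L^1$ finite-difference approximation, in the results leading to Corollaries~\ref{cor:C0semi} and~\ref{cor:adjointsemi}) that $(\Gen,\BC)$ is dissipative on $X$, injectivity of $(\LTp-\Gen)$ is immediate from $\|(\LTp-\Gen)\varphi\|_X \ge \LTp\|\varphi\|_X$. However, since at this point in the exposition dissipativity has not yet been established, the self-contained argument via Table~\ref{tab:ndo} and Lemma~\ref{lem:gML} is the appropriate one, and it is essentially a bookkeeping exercise: for each row of Table~\ref{tab:ndo} substitute $g\equiv 0$ into the construction of Theorem~\ref{thm:9} and read off that the resulting $\varphi$ is zero.

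The main obstacle is purely organizational rather than conceptual: one must check, case by case through the twelve entries, that the scalar linear equation determining the last free coefficient has a nonvanishing coefficient, i.e.\ that the denominators in Table~\ref{tab:thm9} are strictly positive (or at least nonzero) for all $\LTp>0$. This amounts to verifying positivity/strict positivity of quantities such as $\Epmq k_0^\pm(\pm1)$, $\tfrac{\dd}{\dd x}\Erq k_0^-(-1)$, and $I_+\Elq k_{-1}^+(1)$, which follows from the nonnegativity of the scale functions $k_j^\pm$ (Lemma~\ref{thm:ker}), the strict monotonicity of $W=k_0^+(\cdot-1)$, and the series definition of $\Epmq$ with $\LTp>0$; Lemma~\ref{lem:diphi} handles the cases involving a derivative at the boundary. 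No new estimates are needed beyond what has already been proved.
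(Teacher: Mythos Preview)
Your approach is correct in outline but takes a genuinely different route from the paper. The paper's proof is a two-line duality argument: since $(\LTp-(\Gen^-,\BC))$ is surjective (Theorem~\ref{thm:9}), its adjoint $(\LTp-(\Gen^-,\BC))^*$ is injective, hence so is its restriction to $L^1[-1,1]$; but Proposition~\ref{adjointfractionalderivatives} identifies this restriction with $(\LTp-(\Gen^+,\BC))$, which is therefore injective, and combined with its own surjectivity (Theorem~\ref{thm:9} again) it is a bijection. The symmetric argument handles $(\Gen^-,\BC)$.

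What this buys the paper is that injectivity comes for free from surjectivity of the dual, with no case analysis whatsoever --- the twelve cases were already handled once in Theorem~\ref{thm:9} and once in Proposition~\ref{adjointfractionalderivatives}, and the corollary simply ties them together. Your direct approach via the domain representations and the invertibility of $(I-\LTp\Ipm)$ (with inverse $\Epmq$) is perfectly valid and more self-contained, since it does not rely on the duality proposition; the price is that you must run through the twelve rows of Table~\ref{tab:thm9} and verify that each denominator is nonzero. Most of these are transparent from positivity of the scale functions and of $\Epmq$, but the two $\mathrm{N^*}$ backward cases involving $\tfrac{\dd}{\dd x}\Erq k_0^-(-1)$ and $\tfrac{\dd}{\dd x}\Erq k_1^-(-1)$ require you to actually compute the sign using Lemma~\ref{lem:diphi} and the positivity of $k_{-1}^+$, which is a genuine (if small) extra step you glossed over. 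Your remark that dissipativity is not yet available at this point is exactly right, and it is precisely why the paper reaches for duality here instead.
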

\begin{proof}
By Theorem  \ref{thm:9} $(\LTp-(\Gen^-,\BC))$ is surjective, therefore its dual $(\LTp-(\Gen^-,\BC))^*$ is injective, and so does $(\LTp-(\Gen^-,\BC))^*|_{L^1[-1,1]}$. By Proposition \ref{adjointfractionalderivatives}  $(\LTp-(\Gen^-,\BC))^*|_{L^1[-1,1]}=(\LTp-(\Gen^+,\BC))$, and by Theorem \ref{thm:9} we conclude that $(\LTp-(\Gen^+,\BC))$ is a bijection. The same argument works for $(\LTp-(\Gen^-,\BC))$. 
\end{proof}

\begin{corollary}\label{cor:duality} Assume \ref{H0} and recall  Table \ref{explicitProcesses}. Then the operator $(\Gen^-, \BC)  $ generates a positive strongly continuous contraction semigroup  on $C_0(\Omega)$ if and only if $(\Gen^+, \BC)  $ generates a positive strongly continuous contraction semigroup on $L^1[-1,1]$.  
\end{corollary}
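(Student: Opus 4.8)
The plan is to deduce the equivalence from the Hille--Yosida theorem together with the duality of Proposition~\ref{adjointfractionalderivatives}: the corollary is really the statement that \emph{contractivity} and \emph{positivity} of the resolvents of $(\Gen^-,\BC)$ and $(\Gen^+,\BC)$ transfer from one side to the other. First I would record that each of the twelve operators $(\Gen,\BC)$ is already known to be densely defined (Theorem~\ref{thm:5}) and closed (Theorem~\ref{thm:7}), and that $\LTp-(\Gen,\BC)\colon\D(\Gen,\BC)\to X$ is a bijection for every $\LTp>0$ (Theorem~\ref{thm:9} and the preceding corollary); by the closed graph theorem $(0,\infty)\subset\rho((\Gen,\BC))$, with bounded resolvent $R(\LTp,(\Gen,\BC)):=(\LTp-(\Gen,\BC))^{-1}$. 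By the Hille--Yosida theorem for contraction semigroups, $(\Gen,\BC)$ then generates a strongly continuous contraction semigroup on $X$ if and only if $\|R(\LTp,(\Gen,\BC))\|_{X\to X}\le\LTp^{-1}$ for all $\LTp>0$; and once this holds the semigroup is positive if and only if each $R(\LTp,(\Gen,\BC))$ is a positive operator (one implication from $R(\LTp,\cdot)=\int_0^\infty e^{-\LTp t}P_t\,\dd t$, the converse from the exponential formula $P_tf=\lim_{n\to\infty}\bigl(\tfrac n t R(\tfrac n t,(\Gen,\BC))\bigr)^nf$ and closedness of the positive cone). So it suffices to show that $R(\LTp,(\Gen^-,\BC))$ is a positive contraction for all $\LTp>0$ if and only if $R(\LTp,(\Gen^+,\BC))$ is.

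The second step, which I expect to be the main obstacle, is to identify each of these resolvents with the restriction of the dual of the other. Fix $\LTp>0$. Since $\LTp-(\Gen^+,\BC)$ is onto $L^1[-1,1]$, its dual $\bigl(\LTp-(\Gen^+,\BC)\bigr)^*=\LTp-(\Gen^+,\BC)^*$ is injective on $L^1[-1,1]^*=L^\infty[-1,1]$. Given $g\in C_0(\Omega)$, Theorem~\ref{thm:9} supplies $h\in\D(\Gen^-,\BC)\subset C_0(\Omega)$ with $(\LTp-(\Gen^-,\BC))h=g$; by Proposition~\ref{adjointfractionalderivatives} we have $(\Gen^-,\BC)\subset(\Gen^+,\BC)^*$, so $h$ also solves $(\LTp-(\Gen^+,\BC)^*)h=g$ in $L^\infty[-1,1]$, and injectivity forces $R(\LTp,(\Gen^+,\BC)^*)g=h=R(\LTp,(\Gen^-,\BC))g$. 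Since $R(\LTp,(\Gen^+,\BC)^*)=R(\LTp,(\Gen^+,\BC))^*$, this shows $R(\LTp,(\Gen^-,\BC))=R(\LTp,(\Gen^+,\BC))^*\big|_{C_0(\Omega)}$; the symmetric argument, now with the dual $(\Gen^-,\BC)^*$ acting on the space $C_0(\Omega)^*$ of finite signed Borel measures, gives $R(\LTp,(\Gen^+,\BC))=R(\LTp,(\Gen^-,\BC))^*\big|_{L^1[-1,1]}$, where $L^1[-1,1]$ is viewed inside $C_0(\Omega)^*$ via $g\mapsto g\,\dd x$.

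The final step is bookkeeping about the isometric embeddings $C_0(\Omega)\hookrightarrow L^\infty[-1,1]$ and $L^1[-1,1]\hookrightarrow C_0(\Omega)^*$, $g\mapsto g\,\dd x$: both are isometries (because $\Omega$ differs from $[-1,1]$ only by endpoints of Lebesgue measure zero, and because a continuous function's supremum equals its essential supremum), and in each case the positive cone of the smaller space is the intersection of the subspace with the positive cone of the larger one. Taking adjoints preserves operator norm and positivity, and restricting a bounded operator to an isometrically embedded invariant subspace neither increases its norm nor destroys positivity; hence $\|R(\LTp,(\Gen^-,\BC))\|\le\LTp^{-1}$ and positivity of $R(\LTp,(\Gen^-,\BC))$ transfer, through the identifications of the second step, to $R(\LTp,(\Gen^+,\BC))$, and conversely. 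Combined with the first step this proves the corollary. The one genuinely delicate point is the second step: one must check that the abstract dual resolvent, a priori acting on all of $X^*$, actually leaves the embedded copy of the predual invariant and coincides there with the concrete resolvent constructed in Theorem~\ref{thm:9} --- and this is exactly where the surjectivity of $\LTp-(\Gen,\BC)$, not merely the abstract duality inclusion, is used.
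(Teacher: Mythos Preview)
Your proof is correct and complete, but it takes a genuinely different route from the paper's own argument.

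The paper works at the \emph{semigroup} level: assuming $(\Gen^-,\BC)$ generates a semigroup $P$ on $C_0(\Omega)$, it invokes the sun-dual machinery (Pazy, Theorem~1.10.4) to obtain a strongly continuous contraction semigroup $P^*|_{\SuS^*}$ on the closure $\SuS^*$ of $\D((\Gen^-,\BC)^*)$ in $C_0(\Omega)^*$, with generator $(\Gen^-,\BC)^*|_{\SuS^*}$. Proposition~\ref{adjointfractionalderivatives} and density then give $L^1[-1,1]\subset\SuS^*$, and a resolvent argument combined with Pazy's Theorem~1.8.3 shows that $P^*$ leaves $L^1$ invariant. Positivity is handled by a mollifier approximation of indicator functions.

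You instead work at the \emph{resolvent} level, reducing everything to the Hille--Yosida criterion and the exponential formula. Your key identity $R(\LTp,(\Gen^-,\BC))=R(\LTp,(\Gen^+,\BC))^*|_{C_0(\Omega)}$ (and its symmetric counterpart) does the same job as the paper's invariance argument, but obtained more directly from the inclusion $(\Gen^-,\BC)\subset(\Gen^+,\BC)^*$ and injectivity of the dual. This bypasses the sun-dual theory entirely and makes the transfer of contractivity and positivity a matter of elementary functional analysis (adjoints preserve norm and positivity; restrictions to isometrically embedded invariant subspaces do not increase norm). Your approach is arguably more self-contained and makes explicit exactly where surjectivity of $\LTp-(\Gen,\BC)$ is used, whereas the paper's version packages this inside the cited semigroup results. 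Both rely on the same structural ingredients --- Proposition~\ref{adjointfractionalderivatives} and the bijectivity of $\LTp-(\Gen,\BC)$ --- so neither is deeper than the other; they simply organise the logic differently.
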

\begin{proof} For the ``only if'' direction recall that $L^1[-1,1]$ is a closed subspace of $C_0(\Omega)^\ast$ and denote by $P$ the strongly continuous semigroup generated by $(\Gen^-, \BC)$ and by $P^*$ its dual semigroup. Also denote by  $(\Gen^-,{\rm LR})^*$ the dual operator of the densely defined operator $(\Gen^-,{\rm LR})$.
 By  \cite[Theorem  1.10.4]{MR710486}, $P^\ast|_{\SuS^\ast}$ is a strongly continuous semigroup on $\SuS^\ast$, where $\SuS^\ast$ is the closure  in $C_0(\Omega)^\ast$ of the domain of $(\Gen^-,{\rm LR})^*$, and the generator of $P^\ast|_{\SuS^\ast}$ is $(\Gen^-,LR)^\ast|_{\SuS^*}$. Moreover, $P^\ast|_{\SuS^\ast}$ is a contraction semigroup by \cite[Lemma 1.10.1]{MR710486}. 
 By Proposition \ref{adjointfractionalderivatives}  $(\Gen^-,{\rm LR})^*|_{L^1[-1,1]}=(\Gen^+,LR)$ so that Theorem \ref{thm:5} implies    $L^1[-1,1]\subset \SuS^\ast$. Then  $(\LTp -(\Gen^+,LR)):\mathcal{D}(\Gen^+,LR)\to  L^1[-1,1]$ is a bijection for any $\LTp>0$  
 so that   
 $$
 \int_0^\infty e^{-\LTp t}P^*_t (\cdot)\,\dd t: L^1[-1,1]\to \mathcal{D}(\Gen^+,LR)\subset  L^1[-1,1], 
 $$
 and therefore $P^* L^1[-1,1]\subset  L^1[-1,1]$ by \cite[Theorem 1.8.3]{MR710486}, so  we obtained that $P^* |_{L^1[-1,1]}$ is a strongly continuous contraction semigroup on $L^1[-1,1]$ with generator  $(\Gen^+,LR)$.  Positivity holds because for all  non-negative $f\in C_0(\Omega), \, g\in L^1[-1,1]$ and $ t>0$
 \[
 0\le \int_\Omega P_t f \,g\,\dd x = \int_\Omega f P_t^* g\,\dd x ,
 \]
 so that for any $0\le f\in L^\infty[-1,1]$, by the theory of mollifiers, we can find a sequence $ 0\le f_n \in  C_0(\Omega)$ converging to $f$ a.e. and  with $\sup_n \|f_n\|_{L^\infty[-1,1]}<\infty$ and by Dominated Convergence Theorem we obtain $0\le  \int_\Omega f P_t^* g\,\dd x$. We omit the ``if'' direction as it follows by the same strategy but using  that  $C_0(\Omega)$ is a closed subspace of $L^1[-1,1]^\ast$ and a slightly simpler positivity argument.  
 
\end{proof}

\begin{remark}\label{rmk:knowntwosided}
We recall that the resolvent measures of the processes $\Y{DD}$, $\Y{N^*D}$, $\Y{D N}$ and $\Y{N^*N}$ are known, and they can be computed from \cite[Theorems 8.7, 8.11.i, 8.11.ii]{MR2250061} and \cite[Theorem 1]{MR1995924}, respectively (recalling that in law $\Y{D N}$ equals  $\Y{D N^*}$ and $\Y{N^*N}$ equals the two-sided reflection). These known results combined with   Remark \ref{rem:Zkyp} and Table \ref{tab:thm9} confirm that in Theorem \ref{thm:9} we constructed the resolvent of these processes and their inverse, thus  $( \Cr,{\rm DD})$, $ (\Cr,{\rm N^*D})$, $ (\Cr,{\rm DN})$  and $ (\Cr,{\rm N^*N})$ are       the   Feller generators of the processes $\Y{DD}$, $\Y{N^*D}$, $\Y{D N}$ and $\Y{N^*N}$, respectively. Then Corollary \ref{cor:duality} yields the respective forward generators  $ (\Cl,{\rm DD})$, $ (\RLl,{\rm N^*D})$, $ (\Cl,{\rm DN})$  and $ (\RLl,{\rm N^*N})$, and the proof of   Table \ref{explicitProcesses} for these four cases is complete. For example, in the case $\Y{N^*D}$, $(\LTp-(\Cr,{\rm N^*D}))^{-1} g$ from Table \ref{tab:thm9} rewrites as 
\begin{align*}
&\,\frac{[ \Erq \Ir g]'(-1)}{[  \Erq k_0^-]'(-1)}W^{(\LTp)}(1-x)-W^{(\LTp)}(-\cdot)\star g(x)\\
&\,=\frac{W^{(\LTp)}(1-x)}{[W^{(\LTp)}]'(2)}\int_{-1}^1 [W^{(\LTp)}]'(y+1)g(y) \,\dd y-\int_{x}^1 W^{(\LTp)}(y-x)g(y) \,\dd y,
\end{align*} 
where we used $ \Erq \Ir g(x) = W^{(\LTp)}(-\cdot)\star g(x)$ and  $\Erq k_0^-(x) = W^{(\LTp)}(1-x)$,  which can be proved using $f\star^n (-\cdot)=f(-\cdot)\star^n$. Hence we identified the resolvent measure of \cite[Theorem 8.11.i]{MR2250061} adapted to the interval $[-1,1]$ (note that by the discussion after this theorem there is no atom at the reflecting boundary).
\end{remark}


\section{Gr\"unwald type approximations and interpolation} 
This section is first dedicated to constructing our approximation scheme to nonlocal derivatives (following \cite{MR923707,MR1269502}), studying several related algebraic and Post–Widder convergence properties and identifying the corresponding compound Poisson process $Y^h$. We then embed $Y^h$ into six discrete valued Feller process on the bounded domain $[-1,1]$ via the interpolation technique of \cite{MR3720847} (with the study of the path properties appearing later in Part II). These six processes will approximate the six processes of Table \ref{explicitProcesses}. In the last (technical) subsection, we construct suitable interpolated approximations of the scale functions $k_i$, $i=1,0,-1$. This is necessary to prove the convergence in Theorem \ref{thm:TK}-(ii), as these scale functions appear in most domains of the limit generators in Table \ref{tab:ndo} and convergence fails without ``interpolating''.

\subsection{Gr\"unwald type coefficients} 
In this section we study the coefficients that describe: (i)   a natural finite   difference scheme   approximating our nonlocal operators  (Section \ref{sec:convconvq}) and scale functions  (Section \ref{sec:approx_vartheta}); (ii)     finite difference schemes/discrete Markov processes approximating the spectrally positive process with L\'evy symbol $\sym$ and its restrictions to $[-1,1]$  (Section \ref{sec:interpmat}). It is important to remark that these coefficients depend implicitly on the approximating parameter $h$, contrarily to the Gr\"unwald case \cite{MR3720847}. However, we   tame the dependence on $h$  relying on Post–Widder convergence properties that follow from \ref{H0}. In four of the theorems of Section  \ref{sec:case},   we rely on the stronger assumption \ref{H1} to obtain the required convergence rate. These convergence properties are proved in Lemma \ref{lem:PWthms}.    
\begin{definition} Under assumption \ref{H0}, we define the \textit{Gr\"unwald type  coefficients }for $j\ge 0$ and $h>0$
\begin{equation}\label{eq:GCdef}
\begin{split}\Gru_{j,h}&=\frac{(-1)^j}{j!}\frac{1}{h^j}\sym^{(j)}(1/h),
\\ 
\Gruone_{j,h}&=\frac{(-1)^j}{j!}\frac{1}{h^j}\left(\frac{\sym}{\xi}\right)^{(j)}(1/h),\\
\calG^{k_{1}}_{j,h}&=\frac{(-1)^j}{j!}\frac{1}{h^j}\left(\frac{1}{\xi\sym}\right)^{(j)}(1/h),
\\
\calG^{k_{0}}_{j,h}&=\frac{(-1)^j}{j!}\frac{1}{h^j}\left(\frac{1}{\sym}\right)^{(j)}(1/h),
\\
\calG^{k_{-1}}_{j,h}&=\frac{(-1)^j}{j!}\frac{1}{h^j}\left(\frac{\xi}{\sym}\right)^{(j)}(1/h),\\
\calG^{k_{-2}}_{j,h}&=\frac{(-1)^j}{j!}\frac{1}{h^j}\left(\frac{\xi^2}{\sym}\right)^{(j)}(1/h),
\end{split}
\end{equation}
and  $\Gru_{-j,h},\Gruone_{-j,h},\calG_{-j,h}^{k_{i}}=0$ for all $j\in\mathbb N, \,i\in\{1,0,-1,-2\}$.  We will ease notation by writing $\Gru_{j,h}=\Gru_j,\,\Gruone_{j,h}=\Gruone_j$ and $\calG^{k_{i}}_{j,h}=\calG_j^{k_{i}},\,i\in\{-2,-1,0,1\},$ when the value of $h$ is clear.
\end{definition} 
\begin{remark}$\,$
\begin{enumerate}[(i)]
\item As $\sym$ is analytic on $\mathbb C\cap\{\Re \xi>0\}$, for all  $|\xi|<1$ is holds that
\begin{equation}\label{eq:hatomegas}
\begin{split}
\sym\left(\frac{1-\xi}h\right)=\sum_{j=0}^\infty \Gru_{j,h} \xi^j,\quad \frac{\sym\left(\frac{1-\xi}h\right)}{\frac {1-\xi}h}=\sum_{j=0}^\infty \Gruone_{j,h} \xi^j, \\
\frac{\frac h{1-\xi}}{\sym\left(\frac{1-\xi}h\right)}=\sum_{j=0}^\infty \calG^{k_{1}}_{j,h} \xi^j,\quad  
\frac{\frac{1-\xi}h}{\sym\left(\frac{1-\xi}h\right)}=\sum_{j=0}^\infty \calG^{k_{-1}}_{j,h} \xi^j,\\
 \frac{1}{\sym\left(\frac{1-\xi}h\right)}=\sum_{j=0}^\infty \calG^{k_{0}}_{j,h} \xi^j,\quad  \frac{\left(\frac{1-\xi}{h}\right)^2}{\sym\left(\frac{1-\xi}h\right)}=\sum_{j=0}^\infty \calG^{k_{-2}}_{j,h} \xi^j.
\end{split}
\end{equation}

\item To see why the coefficients $\{\Gru_{j,h}:j\in\mathbb N_0,h>0\}$ are natural candidates, observe that $e^{\xi h}\sym((1-e^{-\xi h})/h)\to \sym(\xi)$ as $h\to0$, where $e^{\xi h}\sym((1-e^{-\xi h})/h)$ is the L\'evy symbol of the finite difference scheme $\Crh$ defined in Section \ref{sec:convconvq}  and $\sym(\xi)$ the L\'evy symbol of $\Mr$. Indeed $e^{\xi h}\sym((1-e^{-\xi h})/h)= \int_\mathbb R (e^{-\xi  y}-1)\phi_h(\dd y)$ for the L\'evy measure $\phi_h(\dd y)=\sum_{j\neq 1} \Gru_{j,h}\delta_{(j-1)h}(\dd y)$ and $\Crh g(x)= \int_\mathbb R (g(x+y)-g(x))\phi_h(\dd y)$, where we used \eqref{eq:grun_phi} and $\delta_{x}(\dd y)$ denotes the Dirac delta measure at $x$.

  \item In the fractional case $\phi(\dd y)=y^{-1-\alpha}/\Gamma(-\alpha)\dd y$, $\alpha\in(1,2)$, $\sym(\xi)=\xi^\alpha$, and the coefficients defined in \eqref{eq:GCdef} reduce to the standard Gr\"unwald coefficients \cite{MR3720847}. Indeed, for each $j\ge0$, $\Gru_{j,h}=h^{-\alpha}(-1)^j\binom\alpha j$, $\Gruone_{j,h}=h^{1-\alpha}(-1)^j\binom{\alpha-1} j$ and $\calG^{k_{i}}_{j,h}=h^{\alpha+i}(-1)^j\binom{-\alpha -i}j$ for $i\in\{1,0,-1,-2\}$.
  \end{enumerate}
\end{remark}

\begin{lemma}\label{lem:grun_phi}  Assume \ref{H0}. Then, for each $h>0$, the coefficients $\{\Gru_{j,h}:j\in\mathbb N_0\}$ satisfy 
\begin{equation}
\text{$-\Gru_{1,h},\Gru_{0,h},\Gru_{j,h}>0$, for $j\ge 2$, and $-\Gru_{1,h}=\sum_{j\neq 1}\Gru_{j,h}$,}
\label{eq:grun_phi}
\end{equation}
and the coefficients $\{\Gruone_{j,h}:j\in\mathbb N_0\}$ satisfy 
\begin{equation}
\text{$\Gruone_{0,h},-\Gruone_{j,h}>0$, for $j\ge 1$, and $-\Gruone_{0,h}=\sum_{j\ge 1}\Gruone_{j,h}$.}
\label{eq:grun_phi-1}
\end{equation}
Moreover,  for all $j\ge0 $, $i\in\{-1,0,1\}$ and $h>0$  we have the identities 
\begin{equation}\label{eq:phi+1tophi}
\Gruone_{j,h}= h\sum_{m=0}^j \Gru_{m,h}\quad \text{and}\quad \calG^{k_i}_{j,h}= h\sum_{m=0}^j \calG^{k_{i-1}}_{m,h},
\end{equation}

and the inequalities 
 \begin{equation}
\calG^{k_i}_{j,h}>0.
\label{eq:grun_ki}
\end{equation}

\end{lemma}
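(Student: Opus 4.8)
I would establish the four assertions in the order: the telescoping identities \eqref{eq:phi+1tophi}; then the sign patterns \eqref{eq:grun_phi}--\eqref{eq:grun_phi-1}; then the two summation identities; and finally the positivity \eqref{eq:grun_ki}. For \eqref{eq:phi+1tophi} the plan is purely algebraic, working from the generating function identities \eqref{eq:hatomegas}, all of whose series converge absolutely on $|\xi|<1$ (being Taylor expansions of functions analytic there). The key observation is that each relevant pair of generating functions differs by the factor $h/(1-\xi)$: indeed $\sum_{j\ge0}\Gruone_{j,h}\xi^j=\tfrac{h}{1-\xi}\sum_{j\ge0}\Gru_{j,h}\xi^j$, and likewise $\sum_{j\ge0}\calG^{k_i}_{j,h}\xi^j=\tfrac{h}{1-\xi}\sum_{j\ge0}\calG^{k_{i-1}}_{j,h}\xi^j$ for $i\in\{-1,0,1\}$. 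Multiplying through by $(1-\xi)$ and matching coefficients of $\xi^j$ (legitimate by absolute convergence) gives $\Gruone_{j,h}-\Gruone_{j-1,h}=h\Gru_{j,h}$ and $\calG^{k_i}_{j,h}-\calG^{k_i}_{j-1,h}=h\calG^{k_{i-1}}_{j,h}$; summing over $j$ and using the convention that the $(-1)$-indexed coefficients vanish yields \eqref{eq:phi+1tophi}.

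For the signs, I would use the L\'evy representation \eqref{eq:sym_intro} together with \ref{H0}. Starting from the formulas for $\sym$ and $\sym'$ recorded in Remark \ref{rmk:phi}(ii) and differentiating under the integral sign (the $j$-th integrand derivative being dominated, on $\{\Re\xi\ge c\}$, by a constant multiple of $z^2\wedge z$, which is $\phi$-integrable by \ref{H0}), one obtains $\sym(\xi)>0$, $\sym'(\xi)>0$ and $(-1)^j\sym^{(j)}(\xi)=\int_{(0,\infty)} y^je^{-\xi y}\,\phi(\dd y)>0$ for $j\ge2$, for every $\xi>0$ (strict positivity since $\phi\neq0$). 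Evaluating at $\xi=1/h$ in \eqref{eq:GCdef} gives $\Gru_{0,h}>0$, $-\Gru_{1,h}>0$ and $\Gru_{j,h}>0$ for $j\ge2$. For the $\Gruone$-coefficients I would set $B:=\sym/\xi$; by Remark \ref{rmk:phi}(ii) and \ref{H0}, $B(\xi)=\xi\widehat\Phi(\xi)$ is a Bernstein function, and its L\'evy--Khintchine representation has vanishing killing and drift terms since $B(0^+)=\sym'(0^+)=0$ (recurrence, Remark \ref{rmk:phi}(i)) and $\lim_{\xi\to\infty}B(\xi)/\xi=\lim_{\xi\to\infty}\sym(\xi)/\xi^2=0$ by \eqref{eq:convhomega}, so $B(\xi)=\int_0^\infty(1-e^{-\xi y})\phi(y,\infty)\,\dd y$. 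Hence $B>0$ on $(0,\infty)$ and $(-1)^{j+1}B^{(j)}(\xi)=\int_0^\infty y^je^{-\xi y}\phi(y,\infty)\,\dd y>0$ for $j\ge1$; substituting into $\Gruone_{j,h}=\tfrac{(-1)^j}{j!h^j}B^{(j)}(1/h)$ gives $\Gruone_{0,h}>0$ and $\Gruone_{j,h}<0$ for $j\ge1$.

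The two summation identities I would get by letting $\xi\uparrow1$ in \eqref{eq:hatomegas}. Writing $\sym\big((1-\xi)/h\big)=\Gru_{1,h}\xi+\sum_{j\neq1}\Gru_{j,h}\xi^j$, the left side tends to $\sym(0^+)=0$ (by \ref{H0}; cf. \eqref{eq:convhomega}), the isolated term tends to $\Gru_{1,h}$, and, since $\Gru_{j,h}\ge0$ for all $j\neq1$ by the previous step, monotone convergence gives $\sum_{j\neq1}\Gru_{j,h}\xi^j\uparrow\sum_{j\neq1}\Gru_{j,h}$; thus $\sum_{j\neq1}\Gru_{j,h}=-\Gru_{1,h}$ (so in particular the series converges), which completes \eqref{eq:grun_phi}. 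The same computation with $B\big((1-\xi)/h\big)=\Gruone_{0,h}+\sum_{j\ge1}\Gruone_{j,h}\xi^j$, using $B(0^+)=0$ and $\Gruone_{j,h}<0$ for $j\ge1$, gives $\sum_{j\ge1}\Gruone_{j,h}=-\Gruone_{0,h}$, completing \eqref{eq:grun_phi-1}. Finally, for \eqref{eq:grun_ki}: by Lemma \ref{thm:ker} the functions $y\mapsto k_1^+(y-1)$, $y\mapsto k_0^+(y-1)$ and $y\mapsto k_{-1}^+(y-1)$ are non-negative on $(0,\infty)$ and not a.e.\ zero, with Laplace transforms $1/(\xi\sym(\xi))$, $1/\sym(\xi)$ and $\xi/\sym(\xi)$ respectively (for the first one may equally note it is the product of the completely monotone functions $1/\xi$ and $1/\sym$); each is therefore completely monotone and non-trivial on $(0,\infty)$, so $(-1)^j$ times its $j$-th derivative is strictly positive there, and recalling \eqref{eq:GCdef} this gives $\calG^{k_i}_{j,h}>0$ for $i\in\{1,0,-1\}$.

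I do not expect a real obstacle here: the argument is bookkeeping built on Lemma \ref{thm:ker} and the Bernstein property already recorded in Remark \ref{rmk:phi}. The only two places that need a little care are the interchange of the limit $\xi\uparrow1$ with the infinite sum in the third step (handled by first isolating the single coefficient of the ``wrong'' sign and then invoking monotone convergence on the remaining non-negative series) and the justification of differentiation under the integral sign in the second step, both of which are routine consequences of \ref{H0}.
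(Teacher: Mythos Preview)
Your proposal is correct and follows essentially the same approach as the paper: generating-function/Cauchy-product manipulations for \eqref{eq:phi+1tophi}, Bernstein/complete-monotonicity arguments for the sign patterns, monotone convergence as $\xi\uparrow1$ for the summation identities, and positivity of the $k_i^+$ from Lemma~\ref{thm:ker} for \eqref{eq:grun_ki}. The only cosmetic difference is that the paper obtains the signs of $\Gruone_{j,h}$ by combining \eqref{eq:phi+1tophi} with the already-established signs and sum of the $\Gru_{j,h}$, whereas you argue directly from the Bernstein property of $\sym/\xi$; both routes are immediate from facts already recorded in Remark~\ref{rmk:phi}.
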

\begin{proof} It is clear that  $\Gru_{0,h}>0$, as $\sym(x)>0$ for all $x\in(0,\infty)$, and the remaining inequalities in \eqref{eq:grun_phi} follow from  the definitions in \eqref{eq:GCdef} combined with \ref{H0} and
\[
\sym^{(1)}(\xi)=\int_0^\infty (1-e^{-\xi y})y\,\phi({\rm d}y)
\]
being Bernstein, by  \cite[Theorem 3.2]{MR2978140}. Then by the Monotone Convergence Theorem and \eqref{eq:convhomega},  we let $\xi\uparrow 1$ with $|\xi|<1$ in the first identity in \eqref{eq:hatomegas} and we proved the identity in \eqref{eq:grun_phi}. The identities in  \eqref{eq:phi+1tophi} are an immediate consequence of the identities in \eqref{eq:hatomegas} and Cauhcy's product rule. Combining \eqref{eq:phi+1tophi} and \eqref{eq:grun_phi} we obtain the inequalities in \eqref{eq:grun_phi-1}, and recalling that
\[ 
\frac{\sym(x)}x = \int_0^\infty (1-e^{-x y})\phi(y,\infty)\,\dd y\to 0\quad\text{as }x\to0,
\]
we can apply the Monotone Converge Theorem for  $\xi\uparrow 1$ with $|\xi|<1$ in the second identity of the first line of \eqref{eq:hatomegas}, and we proved the identity in \eqref{eq:grun_phi-1}. The inequalities in \eqref{eq:grun_ki} follow from Lemma \ref{thm:ker}, which proves that $k_i^+\ge 0$ and $\int_{0}^\infty e^{-x y}k_{i}^+(y-1)\,{\rm d}y=x^{-i}/\sym(x) >0$ for $x\in(0,\infty)$, so that
\[
\calG^{k_i}_{j,h}= \frac1{h^jj!}\int_{0}^\infty e^{-\frac1h y}y^jk_{i}^+(y-1)\,{\rm d}y>0.
\]
\end{proof}


\begin{lemma}\label{lem:convid} Assume \ref{H0}. Then the following discrete convolution identities hold
\begin{equation}
\sum_{j=0}^m \calG_{m-j}^{k_{-1}} \Gru_j =
\left\{\begin{split}&0,  &m\ge 2,\\
&1/h,  & m=0,\\
&-1/h,  & m=1,\\
\end{split}\right.
\label{eq:dconvk-1}
\end{equation} 
\begin{equation*}
\sum_{j=0}^m  \calG_{m-j}^{k_{0}} \Gru_j  = \sum_{j=0}^m  \calG_{m-j}^{k_{-1}} \Gruone_j = 
\left\{\begin{split}&0,  &m\ge 1,\\
&1,  & m=0,
\end{split}\right.
\end{equation*}

\begin{equation}
\sum_{j=0}^m  \calG_{m-j}^{k_{1}} \Gru_j =\sum_{j=0}^m  \calG_{m-j}^{k_{0}} \Gruone_j=
h, \quad m\ge 0,
\label{eq:dconvk1}
\end{equation}

\begin{equation*}
\sum_{j=0}^m  \calG_{m-j}^{k_{1}} \Gruone_j =
(m+1)h^2, \quad m\ge 0.
\end{equation*}
\end{lemma}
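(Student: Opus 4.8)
The plan is to derive all four convolution identities in Lemma~\ref{lem:convid} from the generating function identities in \eqref{eq:hatomegas} by Cauchy's product rule. The key observation is that each identity asserts that a certain Cauchy product of two of the power series in \eqref{eq:hatomegas} equals a third (known) power series, so matching coefficients of $\xi^m$ gives the claim. First I would fix $h>0$ and work with $|\xi|<1$, where all the series in \eqref{eq:hatomegas} converge (as noted there, since $\psi$ is analytic on $\{\Re\xi>0\}$).

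For \eqref{eq:dconvk-1}, multiply the series for $\tfrac{(1-\xi)/h}{\psi((1-\xi)/h)}=\sum_j \calG^{k_{-1}}_{j,h}\xi^j$ by the series for $\psi((1-\xi)/h)=\sum_j \Gru_{j,h}\xi^j$; the product telescopes to $(1-\xi)/h$, so reading off coefficients of $\xi^0$, $\xi^1$ and $\xi^m$ ($m\ge2$) yields $1/h$, $-1/h$ and $0$ respectively. Similarly, the second display follows from multiplying $\tfrac{1}{\psi((1-\xi)/h)}=\sum_j\calG^{k_0}_{j,h}\xi^j$ by $\psi((1-\xi)/h)=\sum_j\Gru_{j,h}\xi^j$ (product equal to $1$), and the alternative form from multiplying $\tfrac{(1-\xi)/h}{\psi((1-\xi)/h)}$ by $\tfrac{\psi((1-\xi)/h)}{(1-\xi)/h}=\sum_j\Gruone_{j,h}\xi^j$ (again product equal to $1$). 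For \eqref{eq:dconvk1}, multiply $\tfrac{h/(1-\xi)}{\psi((1-\xi)/h)}=\sum_j\calG^{k_1}_{j,h}\xi^j$ by $\psi((1-\xi)/h)=\sum_j\Gru_{j,h}\xi^j$; the product is $\tfrac{h}{1-\xi}=h\sum_{m\ge0}\xi^m$, giving the constant $h$ for every $m\ge0$. The same value arises from multiplying $\tfrac{h/(1-\xi)}{\psi((1-\xi)/h)}$ by $\tfrac{\psi((1-\xi)/h)}{(1-\xi)/h}=\sum_j\Gruone_{j,h}\xi^j$, since the product is again $h/(1-\xi)$. Finally, for the last display, multiply $\tfrac{h/(1-\xi)}{\psi((1-\xi)/h)}=\sum_j\calG^{k_1}_{j,h}\xi^j$ by $\tfrac{\psi((1-\xi)/h)}{(1-\xi)/h}=\sum_j\Gruone_{j,h}\xi^j$; the product is $\tfrac{h^2}{(1-\xi)^2}=h^2\sum_{m\ge0}(m+1)\xi^m$, giving $(m+1)h^2$ for $m\ge0$. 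Alternatively, all of these follow purely algebraically from the two relations in \eqref{eq:phi+1tophi}, which express $\Gruone$ and $\calG^{k_i}$ as partial sums, by a short double-summation/summation-by-parts manipulation; I would mention this as an equivalent route but carry out the generating-function version as the cleaner one.

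There is essentially no serious obstacle here: the only thing to be careful about is the convergence of the power series and the legitimacy of Cauchy's product rule, which is guaranteed for $|\xi|<1$ by analyticity of $\psi$ on $\{\Re\xi>0\}$ as recorded after \eqref{eq:hatomegas}, together with the fact that $\psi$ does not vanish there (so the reciprocals are analytic too). Once that is in place, every identity is just "coefficient of $\xi^m$ in a product of two convergent series equals coefficient of $\xi^m$ in a third", and the closed forms on the right-hand sides come from the elementary expansions $\tfrac{1}{1-\xi}=\sum\xi^m$ and $\tfrac{1}{(1-\xi)^2}=\sum(m+1)\xi^m$. So the proof will be short; the main care is bookkeeping the six generating functions correctly and matching them to the right pairs.
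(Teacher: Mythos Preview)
Your approach is exactly the paper's: multiply the appropriate pair of power series from \eqref{eq:hatomegas}, use Cauchy's product rule, and read off the coefficient of $\xi^m$. One bookkeeping slip to fix: for the second equality in \eqref{eq:dconvk1} you wrote that you would multiply $\tfrac{h/(1-\xi)}{\psi((1-\xi)/h)}$ by $\tfrac{\psi((1-\xi)/h)}{(1-\xi)/h}$ and claimed the product is $h/(1-\xi)$, but that product is $h^2/(1-\xi)^2$ (indeed it is the pair you correctly use for the last display); the correct pair for $\sum\calG^{k_0}_{m-j}\Gruone_j=h$ is $\tfrac{1}{\psi((1-\xi)/h)}$ times $\tfrac{\psi((1-\xi)/h)}{(1-\xi)/h}$.
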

\begin{proof}
We know that for all $|\xi|<1$
\[
\frac{1}{h}-\frac{1}{h}\xi =\sym((1-\xi)/h)\frac{(1-\xi)/h}{\sym((1-\xi)/h)}=\sum_{m=0}^\infty\left( \sum_{j=0}^m \calG_{m-j}^{k_{-1}} \Gru_j\right)\xi^m,
\]
using Cauchy's product rule, and \eqref{eq:dconvk-1} is proved. The remaining identities follow by the same argument and we omit the proof.
\end{proof}

\begin{lemma}[Post–Widder]\label{lem:PWthms} Recall the definitions of the  Gr\"unwald type coefficients in \eqref{eq:GCdef}. 
If \ref{H0} holds, then, for any $\delta>0$, $i\in\{-1,0,1\}$ and $j\in\{0,1,2,...\}$, as $m\to\infty$
$$ 
\frac{m}{x+1}\calG_{m-j,\frac{x+1}m}^{k_{i}}\to k_{i}^+(x),\quad\text{ in }\quad C[-1+\delta,1]. $$ 
If \ref{H1}   holds, then above statement holds also for $i=-2$. Concerning pointwise limits, if \ref{H0} holds, then  as $h=2/(n+1)\to 0$  
\begin{align}
\Gru_{n+1,h}&\to 0, \quad \sum_{j=0}^n\Gruone_{j,h}\to \Phi(2),\quad  \sum_{j=0}^n\Gru_{j,h}\to -\frac{\phi((2,\infty))+\phi([2,\infty))}{2},  \label{eq:GClimphi}\\
\frac{\calG^{k_{-1}}_{n+1,h}}{\calG^{k_{1}}_{n+1,h}}&\to \frac{k^+_{-1}(1)}{k_{1}^+(-1)},  \quad\text{and} \quad  \frac{\calG^{k_{-1}}_{n,h}}{\calG^{k_{0}}_{n,h}}\to \frac{k^+_{-1}(1)}{k_{0}^+(-1)}.\label{eq:GClimk-1k1}\\ 
\intertext{If \ref{H1}  holds, then as $h=2/(n+1)\to 0$}
\frac{\calG^{k_{-2}}_{n+1,h}}{\calG_{n,h}^{k_{0}}}&\to \frac{k^+_{-2}(1)}{k^+_{0}(1)} , \quad  \frac{\calG^{k_{-2}}_{n+1,h}}{\calG_{n,h}^{k_{-1}}}\to \frac{k^+_{-2}(1)}{k^+_{-1}(1)}, \quad\text{and} \label{eq:GClimk-2k0}\\
  \frac{\calG^{k_{0}}_{n-1,h}}{h}&=k^+_0(1)+O(h) \label{eq:GClimk-2k0Tricky}.
\end{align}

\end{lemma}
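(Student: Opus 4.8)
The plan is to read every displayed limit as a Post–Widder inversion. By Lemma \ref{thm:ker} with \eqref{eq:psikn}, for $i\in\{-1,0,1\}$ the shifted scale function $g_i(u):=k_i^+(u-1)$ on $[0,\infty)$ is of sub-exponential growth and has Laplace transform $\widehat{g_i}(\xi)=\xi^{-i}/\psi(\xi)$ on $\{\Re\xi>0\}$; under \ref{H1}, Proposition \ref{prop:y^2ker2} plays the same role for $g_{-2}(u):=k_{-2}^+(u-1)$, the point being that, since $u\mapsto u^2k_{-2}^+(u-1)$ lies in $L^1(0,1)$ and is sub-exponential with transform $(\xi^2/\psi)''$, one has $(\xi^2/\psi)^{(k)}(\xi)=(-1)^k\int_0^\infty u^k e^{-\xi u}k_{-2}^+(u-1)\,\dd u$ already for $k\ge2$. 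A direct manipulation of the definitions \eqref{eq:GCdef}, with $s:=1/h$ and $k:=m-j$, then gives
\[
\frac{m}{x+1}\,\calG^{k_i}_{m-j,(x+1)/m}=\frac{(-1)^{k}}{k!}\,s^{k+1}\,\widehat{g_i}^{(k)}(s)=\int_0^\infty\frac{s^{k+1}u^k e^{-su}}{k!}\,g_i(u)\,\dd u=\mathbb E\big[g_i(\Gamma_{k+1,s})\big],\qquad s=\frac{k+j}{x+1},
\]
with $\Gamma_{k+1,s}$ a Gamma$(k+1,s)$ variable of mean $(k+1)(x+1)/(k+j)\to x+1$ and variance $(k+1)(x+1)^2/(k+j)^2\to 0$, uniformly for $x\in[-1+\delta,1]$. (For $i=-2$ the factor $u^k$, $k\ge2$, absorbs the integrable singularity of $k_{-2}^+(\cdot-1)$ at $0$, so the expectation is well defined.)

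The first and main step is a uniform generalized Post–Widder lemma: if $g\in L^1_{\mathrm{loc}}[0,\infty)$ is of sub-exponential growth and continuous on a compact $[a,b]\subset(0,\infty)$, then $\mathbb E[g(\Gamma_{k+1,s_k})]\to g(t)$ uniformly for $t\in[a,b]$ whenever $s_k\to\infty$ and $(k+1)/s_k\to t$ uniformly. I would prove this by splitting the Gamma integral into $[0,\varepsilon]$, a shrinking window about the mean, and the far tail: on the window uniform continuity of $g$ on a slightly enlarged compact set gives the limit; on $[0,\varepsilon]$ one uses $g\in L^1$ and the fact that the Gamma$(k+1,s_k)$ mass there is $O\big((s_k e/k)^k\big)\to0$; on the far tail the sub-exponential bound on $g$ is beaten by the exponential decay of the density, all estimates being uniform in the perturbed node $s_k$. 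Applying this with $g=g_i$ on $[\delta/2,3]$ — legitimate since $g_i$ is continuous off $0$ (recall $k_{-1}^+$ may be singular only at $-1$), and under \ref{H1} likewise $g_{-2}$ — and noting that as $x$ ranges over $[-1+\delta,1]$ the means $x+1$ sweep $[\delta,2]$ with variances $\to0$ uniformly, yields the claimed convergence in $C[-1+\delta,1]$ for $i\in\{-1,0,1\}$, and under \ref{H1} for $i=-2$.

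For the pointwise ratios \eqref{eq:GClimk-1k1} and \eqref{eq:GClimk-2k0} I would specialize the uniform statement to the endpoint $x=1$ (so $t=2$, $h=2/m$) with the relevant index ($m$, $m-1$ or $m+1$), using continuity at $1$ of $k_0^+,k_1^+,k_{-1}^+$ and, under \ref{H1}, $k_{-2}^+$, together with the reflection $k_n^-(x)=k_n^+(-x)$ of Lemma \ref{thm:ker} whenever a left-endpoint value is recorded; the quotients converge because every limiting denominator is a strictly positive scale-function value. The three limits \eqref{eq:GClimphi} are cleanest without scale functions: combining the telescoping identities \eqref{eq:phi+1tophi} with $\sum_{j\ge0}\Gru_{j,h}=0$ and $\sum_{j\ge0}\Gruone_{j,h}=0$ from Lemma \ref{lem:grun_phi}, one rewrites $\sum_{j=0}^n\Gru_{j,h}$ and $\sum_{j=0}^n\Gruone_{j,h}$ as single Gamma$(n+1,(n+1)/2)$ expectations of $y\mapsto\phi(y,\infty)$ and of $\Phi$ respectively (the residual tail telescoping to $0$ because both functions vanish at $+\infty$) and concentrates at the mean $2$; as $y\mapsto\phi(y,\infty)$ is only right-continuous, at a possible atom of $\phi$ at $2$ the value comes out as the midpoint $\tfrac12\big(\phi(2,\infty)+\phi[2,\infty)\big)$, whereas $\Phi$ is continuous at $2$, giving $\Phi(2)$; and $\Gru_{n+1,h}=b_n-b_{n+1}$ with $b_n,b_{n+1}$ Gamma expectations both concentrating at $2$ forces $\Gru_{n+1,h}\to0$.

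The hardest point is the $O(h)$ rate in \eqref{eq:GClimk-2k0Tricky}. Here $\calG^{k_0}_{n-1,h}/h=\mathbb E[k_0^+(\Gamma_{n,(n+1)/2}-1)]$, with mean $2-2/(n+1)$ and variance $O(1/n)$, so Taylor expanding $k_0^+$ about $1$ produces $k_0^+(1)$, a first-order term of size $O(h)$, and a remainder controlled by $\tfrac12\sup_{[1/2,1]}|(k_0^+)''|\cdot O(1/n)$; the last term is $O(h)$ only because $(k_0^+)'=k_{-1}$ is itself differentiable near $1$, which is precisely what \ref{H1} provides via Proposition \ref{prop:y^2ker2} (and in the branch \ref{H1}-(i) the integrability hypothesis there is what upgrades bare convergence to a quantitative Post–Widder estimate, cf. its use in Theorem \ref{thm:conv_C_N*N}). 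I expect the bookkeeping in the uniform concentration estimate — keeping all error terms uniform in both the perturbed node $s_k=(k+j)/(x+1)$ and $x$, and handling the singularity at $0$ in the $i=-2$ case — to be the most delicate, if routine, part.
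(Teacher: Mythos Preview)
Your approach is correct and, for the uniform Post--Widder convergence and the limits \eqref{eq:GClimphi}--\eqref{eq:GClimk-2k0}, it is essentially the paper's argument rephrased probabilistically: where you build a Gamma-concentration lemma from scratch, the paper simply invokes Widder's Theorems~VII.5a and VII.3c.1 (which are precisely such statements). Your handling of $\Gru_{n+1,h}\to0$ as a difference of two partial sums with a common limit is clean and equivalent to the paper's integration-by-parts computation.

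For the $O(h)$ rate \eqref{eq:GClimk-2k0Tricky} you take a genuinely different route. The paper does \emph{not} Taylor-expand: under \ref{H1}-(ii) it cites Lubich's convolution-quadrature error estimate directly, and under \ref{H1}-(i) it proves a new Laplace-inversion result (Proposition~\ref{prop:H1O(h)}) showing $L^1(1+i\mathbb R)$ convergence of the transform with an explicit first-order term. Your Taylor argument is more elementary and does work, but two points need tightening: the interval for $\sup|(k_0^+)''|$ must be two-sided around $1$ (the Gamma variable overshoots $2$ with positive probability), and the contribution from $\{|\Gamma-2|>\epsilon\}$ must be shown to be $o(h)$, which follows from large-deviation tail bounds for the Gamma together with the sub-exponential growth of $k_0^+$. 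Once those are in place, only the continuity of $(k_0^+)''=k_{-2}^+$ near $1$ is used---and this follows from Proposition~\ref{prop:y^2ker2} under either branch of \ref{H1}. In particular, the integrability condition in \ref{H1}-(i) is \emph{not} needed for your Taylor argument; it enters the paper's proof only through the Laplace-inversion machinery of Proposition~\ref{prop:H1O(h)}, so your parenthetical attribution of its role is slightly off. Your route is shorter; the paper's route yields a sharper statement (identifying the first-order correction explicitly).
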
 

\begin{proof}   
Recalling Propositions \ref{thm:ker} and \ref{prop:y^2ker2}, the first statement is an immediate consequence of \cite[Theorem VII.5a]{MR0005923}. Indeed note that for any $x\in[-1+\delta,1]$
\begin{align*}
\frac{m}{x+1}\calG_{m-j,\frac {x+1}m}^{k_{i}}&=\frac{1}{(m-j)!}\left(\frac{m}{x+1}\right)^{m-j+1}\int_0^\infty e^{-\frac m{x+1}y}y^m y^{-j}k_{i}^+(y-1)\,\dd y\\
&=\left[(x+1)^j\frac{\prod_{l=0}^{j-1}(m-l) }{m^j}\right]\\
&\quad\times \frac{1}{m!}\left(\frac{m}{x+1}\right)^{m+1}\int_0^\infty e^{-\frac m{x+1} y}y^m y^{-j}k_{i}^+(y-1)\,\dd y,
\end{align*}
 and the first bracket converges uniformly to $(x+1)^j$, while the the second converges uniformly to $(x+1)^{-j}k_{i}^+(x)$ because  $y^{-j}k_i^+(y-1)$ satisfies the four conditions of \cite[Theorem VII.5a]{MR0005923}.  We now prove the pointwise limits. We compute for $n\ge3$
 
\begin{align*}
\Gru_{n+1,h}&=\frac{(-1)^{n+1}}{(n+1)!}\frac{\sym^{(n+1)}(1/h)}{h^{n+1}}\\
&=\frac{1}{(n+1)!}\left(\frac {n+1}{2}\right)^{n+1}\int_{(0,\infty)} e^{-\frac{n+1}{2}z}z^{n+1} \,\phi(\dd z) \\
&=\frac{1}{(n+1)!}\left(\frac {n+1}{2}\right)^{n+1}\int_{(0,\infty)} \int_{(0,z)} \left[e^{-\frac{n+1}{2}y}y^{n+1} \right]'\dd y\,\phi(\dd z) \\
&=\frac{1}{(n+1)!}\left(\frac {n+1}{2}\right)^{n+1}\int_0^\infty  \left[e^{-\frac{n+1}{2}y}y^{n+1}\right]'\phi(y,\infty)\,\dd y \\
&=-\frac{1}{(n+1)!}\left(\frac {n+1}{2}\right)^{n+2}\int_0^\infty  e^{-\frac{n+1}{2}y}y^{n+1}\phi(y,\infty)\,\dd y \\
&\quad+\frac{1}{(n+1)!}\left(\frac {n+1}{2}\right)^{n+2}\int_0^\infty e^{-\frac{n+1}{2}y}y^{n+1}2y^{-1}\phi(y,\infty)\,\dd y \\
&\to -\frac{\phi((2,\infty))+\phi([2,\infty))}{2}+\frac22\frac{\phi((2,\infty))+\phi([2,\infty))}{2}=0,
\end{align*}
as $n\to \infty$, by \cite[Theorem VII.3c.1]{MR0005923} where we used Fubini's Theorem in the fourth equality. 
Using  $\sym/\xi =\int_0^\infty (1-e^{-\xi y})\,\phi(y,\infty)\,\dd y$ and \eqref{eq:phi+1tophi} we can compute
\begin{align*}\nonumber
\sum_{j=0}^n\Gru_j&=\frac{1}{n!}\frac1h\left(\frac{-1}{h}\right)^{n} \left[\sym/\xi\right]^{(n)}(1/h)\\ \nonumber
&=-\frac{1}{n!}\left(\frac{n+1}{2}\right)^{n+1}\int_{0}^\infty e^{-\frac{n+1}{2}y} y^{n}\phi(y,\infty)\,\dd y\\ \nonumber
&=-2\frac{1}{(n+1)!}\left(\frac{n+1}{2}\right)^{n+2}\int_{0}^\infty e^{-\frac{n+1}{2}y} y^{n+1}y^{-1}\phi(y,\infty)\,\dd y\\ 
&\to -\frac{\phi([2,\infty))+\phi((2,\infty))}{2}, 
\end{align*}
as $n\to \infty$ by \cite[Corollary VII.3c.1]{MR0005923}. The proof for the second limit in  \eqref{eq:GClimphi} is just like the one for the third limit and we omit it. By the first part of the current lemma we immediately obtain    \eqref{eq:GClimk-1k1} and   \eqref{eq:GClimk-2k0}. To prove   \eqref{eq:GClimk-2k0Tricky} under \ref{H1}-(ii), we only need to apply \cite[Theorem 4.1 with $\mu=p=1$]{MR923707}. To see this, observe that, by the proof of Proposition \ref{prop:y^2ker2} $-k_{-2}(\cdot-1)$ is completely monotone, thus $k_{0}(\cdot-1)$ is Bernstein and so we can extend it analytically on $\Re \xi>0$ and it is continuous at the origin \cite[Proposition 3.6]{MR2978140}, yielding $k_0(\xi-1)=O(1)$ as $\xi\to 0$. Then by the remark below \cite[condition (1.5)]{MR923707} we know \cite[condition (1.5)]{MR923707} holds. As the Post–Widder approximation corresponds to $\delta(\xi)=1-\xi$ in  \cite{MR923707}, \cite[condition (1.10)]{MR923707} holds for $p=1$. Under assumption \ref{H1}-(i), \eqref{eq:GClimk-2k0Tricky} follows by  Proposition  \ref{prop:H1O(h)} observing that $k_0(x-1)\in C^2(0,\infty)$ by Proposition \ref{prop:y^2ker2} and it is 2-regular, which follows by \eqref{eq:2mon} and \eqref{eq:3mon} (consequences of 2-regularity of $\Phi$).  
\end{proof}
\begin{remark}
In the proof above we used the Post–Widder $O(h)$ convergence proved in the end of   Proposition \ref{prop:H1O(h)}. As far as we know this result is new, and we refer to   \cite{MR923707,MR1158482} and references therein for related results.
\end{remark}

%

\subsection{Convergence of the convolution quadrature}\label{sec:convconvq}
For each $h>0$, we define the approximating convolution quadratures for a suitable $g\in C(\mathbb R)$ 
\begin{align}\label{eq:phiconvq}
\Cpmhzero  g(x)&=\sum_{j=0}^\infty \Gru_{j,h} g(x\mp jh), &x\in\mathbb R,\\ \nonumber
\Conepmhzero g(x)&=\sum_{j=0}^\infty \Gruone_{j,h} g(x\mp jh), &x\in\mathbb R.
\end{align}
 We   define the  shift operator $S_{a}g(x)=g(x+a)$ and for suitable $g\in C(\mathbb R)$ we define
\[
\Cpmh g(x):=S_{\pm h}\Cpmhzero g(x)= \sum_{j=0}^\infty \Gru_{j,h} g(x\mp(j\pm 1)h),\quad x\in\mathbb R.
 \]
The following result is an immediate consequence of \cite[Theorem 3.1]{MR1269502}.
\begin{proposition}\label{thm:L94} Assume \ref{H0}. Let $g\in  C^m[a,b]$ such that $g^{(j)}(a)=0$ for $j=0,1,...,m-1$, where $m\ge 5$. Then there exist constants $C,h_0>0$  such that 
	\begin{equation}\label{eq:L94}
	\left|\Cpmhzero  [g(\pm\cdot)](\pm x)-\Mpm [g(\pm\cdot)](\pm x)\right|\le C h \int_a^b |g^{(m)}(y)|\,\dd y,
	\end{equation}
	 for all $\pm x\in[a,b]$ and $h\in(0,h_0].$
\end{proposition}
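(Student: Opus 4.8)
The plan is to reduce the statement to the classical convolution quadrature error estimate of Lubich--Schneider, specifically \cite[Theorem 3.1]{MR1269502}, by checking that the symbol $\sym$ and the quadrature generating function $\delta(\xi)=1-\xi$ satisfy the required sectoriality and consistency hypotheses. First I would treat the `$+$' case (the `$-$' case follows by the reflection identity \eqref{eq:convolutionsign}, since $\Ml[g(-\cdot)](-x)$ and $\Mr g(x)$ are related by $x\mapsto -x$, and likewise for $\Clhzero$ versus $\Crhzero$). Write $F(\xi)=\sym(\xi)$; by Remark \ref{rmk:phi}-(ii) we know $\xi\widehat\Phi(\xi)$ is a Bernstein function and $\sym(\xi)=\xi^2\widehat\Phi(\xi)$, so $\sym$ is analytic on $\Re\xi>0$, and from \eqref{eq:convhomega} we have the growth bound $\sym(\xi)=o(|\xi|^2)$ and $\sym(\xi)/\xi\to\infty$ along the real axis; more precisely $\sym$ is a Bernstein-type symbol that is sectorial of order $\le 2$. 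The point is that $\sym$ maps a right half-plane into a sector of angle $<\pi$ (because $\sym(\xi)/\xi$ is Bernstein, hence has nonnegative real part image restrictions), which is exactly the structure \cite{MR1269502} needs for a backward-difference (order one) quadrature.

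The key steps, in order, would be: (1) verify that $\sym$ is analytic and polynomially bounded of order $\le 2$ in a sector $|\arg\xi|<\pi/2+\epsilon$, using the integral representation $\sym(\xi)=\int_{(0,\infty)}(e^{-\xi y}-1+\xi y)\,\phi(\dd y)$ and \ref{H0}; (2) identify the quadrature: the symbol of $\Cpmhzero$ is $h^{-?}\,$—more precisely, by \eqref{eq:hatomegas}, $\sum_j \Gru_{j,h}\xi^j=\sym((1-\xi)/h)$, so the generating function is $\delta(\xi)=1-\xi$, the backward Euler / order-one consistent choice, and $\Cpmhzero$ is exactly the Lubich convolution quadrature associated to the symbol $\sym$ and step $h$; (3) invoke \cite[Theorem 3.1]{MR1269502}, whose hypotheses are: $\sym$ sectorial of order $\mu$ (here $\mu=2$), $\delta(\xi)$ consistent of order $p$ (here $p=1$, as $\delta(e^{-h})= h+O(h^2)$) and $A$-stable, and $g$ vanishing to sufficient order at the left endpoint so that the zero extension $\Pi^{-1}_{[a,b]}g$ lies in the required smoothness class; the conclusion is the $O(h^p)=O(h)$ bound with the stated $\int_a^b|g^{(m)}|$ on the right, provided $m$ exceeds $\mu+p+$(a fixed constant), which forces $m\ge 5$; (4) transport the estimate from the half-line quadrature to the operator $\Mpm$ restricted to $[a,b]$, using that for $g\in C^m[a,b]$ with $g^{(j)}(a)=0$, $j<m$, the zero extension is $C^{m-1}(\mathbb R)$ and $\Mpm[\Pi^{-1}_{[a,b]}g](x)=\frac{\dd^2}{\dd x^2}\Phi^\pm\star g(x)$ agrees with $\int(g(x\pm y)-g(x)\mp yg'(x))\phi(\dd y)$ for $x\in(a,b)$; (5) uniformity in $x\in[a,b]$ and $h\in(0,h_0]$ comes directly from the cited theorem.

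The main obstacle I expect is step (1)/(3): matching the precise \emph{sectoriality} hypothesis of \cite{MR1269502} to our class of symbols. The paper \cite{MR1269502} is stated for symbols that are analytic and bounded by $C|\xi|^\mu$ in a sector strictly containing the right half-plane; our $\sym$ is only guaranteed analytic on $\Re\xi>0$ a priori, but $\xi\widehat\Phi(\xi)$ being a Bernstein function gives analytic extension to $\mathbb C\setminus(-\infty,0]$ with the correct sector bound (a Bernstein function maps $|\arg\xi|<\pi/2$ into $|\arg\xi|<\pi/2$, and is $O(|\xi|)$), so $\sym=\xi\cdot(\xi\widehat\Phi(\xi))$ is $O(|\xi|^2)$ in a sector of half-angle up to $\pi/2$; since backward Euler's stability region is the exterior of a disk in the left half-plane, a sector of half-angle $\pi/2$ suffices. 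The second subtlety is bookkeeping the derivative count: one must confirm that $m\ge 5$ is what \cite[Theorem 3.1]{MR1269502} demands given $\mu=2$, $p=1$—this is exactly the "$m\ge 5$" in the statement, so it is just a matter of quoting the theorem with the right indices. Everything else (the endpoint-vanishing hypothesis ensuring the extension is smooth enough, the identification of $\Cpmhzero$ as the Lubich quadrature via \eqref{eq:hatomegas}, and the reflection reduction) is routine.

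\textbf{Remark on scope.} I would not reprove the convolution quadrature theorem; the contribution here is purely the verification that our one-sided L\'evy symbols, characterized by \ref{H0}, fall within its hypotheses, together with the translation into the notation $\Mpm$, $\Cpmhzero$ of Definition \ref{def:ndo}. If one wanted a self-contained argument one could instead expand $\Mpm[\Pi^{-1}_{[a,b]}g](x)-\Cpmhzero[\,\cdot\,](x)$ using the Taylor remainder of $g$ to order $m$ and the moment bounds $\int(y^2\wedge y)\phi(\dd y)<\infty$, but invoking \cite{MR1269502} is far cleaner and is precisely what the sentence preceding the proposition signals.
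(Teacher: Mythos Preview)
Your proposal is correct and follows essentially the same route as the paper: shift to the half-line, identify $\Mpm$ and $\Cpmhzero$ with Lubich's $K(\partial_t)$ and $K(\partial_t^h)$ for the symbol $K(s)=\sym(s)=s^2\widehat\Phi(s)$ and the backward-Euler generating function $\delta(\xi)=1-\xi$, then invoke \cite[Theorem 3.1]{MR1269502} with $\mu=2$, $p=1$, and handle the `$-$' case by reflection. The only difference is emphasis: the paper dispatches the hypothesis check in a single line (noting $|s^2\widehat\Phi(s)|\le M|s|^2$ for $\Re s\ge 1$, which is all \cite{MR1269502} requires), whereas you argue for a sector extension via the Bernstein property of $\xi\widehat\Phi(\xi)$---this is more than necessary but not wrong.
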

\begin{proof} Consider the  canonical   extension of $g$ to $C^m(\mathbb R)$ (Remark \ref{rmk:canon}).
 Define $ \tilde g :=S_{ a}g$. Then  $\tilde g\in C^m[0,\infty)$,  $\tilde g^{(j)}(0)=0$ for $j=0,1,...,m-1$, and $\tilde g=0$ on $(-\infty,0)$. Also, borrowing the notation $K(\partial_t)$ and  $K(\partial_t^h)$   from \cite[(2.2) and (3.1) with $k_m=\Phi$ ]{MR1269502}, with $x=\tilde x+a$,
\begin{align*}
\Ml g(x)&=\frac{\dd^2}{\dd x^2}\int_{a}^{x} \Phi(x-y)g(y)\,\dd y\quad\quad (a<x<\infty)\\
&=\int_{0}^{\tilde x} \Phi(y)\tilde g''(\tilde x-y)\,\dd y\quad\quad (0<\tilde x<\infty)\\
&=K(\partial_t)\tilde g(\tilde x),\\
\intertext{and }
\Clhzero  g(x)&=\sum_{n=0}^\infty \Gru_{n} g(x-nh)\quad\quad (a<x<\infty)\\
&=\sum_{n=0}^\infty \Gru_{n}\tilde g(\tilde x-nh)\quad\quad (0<\tilde x<\infty)\\
&=K(\partial_t^h)\tilde g(\tilde x).
\end{align*}
 Because $|s^2\widehat \Phi(s)|\le M |s|^2$ for all $\Re s\ge1 $, \cite[Theorem 3.1 with $\mu=2,p=1,\delta(\xi)=1-\xi,T=b-a$, $K(s)=s^2\widehat \Phi(s)$]{MR1269502} yields the existence of  $C,h_0>0$ such that
 \[|K(\partial_t^h)\tilde g(\tilde x)-K(\partial_t)\tilde g(\tilde x)|\le C h \int_0^{\tilde x} |\tilde g^{(m)}(y)|\,\dd y,\quad \tilde x\in[0,b-a],\,h\le h_0,\]
 which rewrites as \eqref{eq:L94} for the `$+$' case. The `$-$' case follows by the two identities $\Ml f(x)=\Mr [f(-\cdot)](-x)$ and  $\Clhzero  g( x)=\Crhzero  [g(-\cdot)](- x)$.
 
\end{proof}

\begin{corollary}\label{cor:L94} Assume \ref{H0}. Let $f\in C^m[-1,1]$ with $f^{(j)}(-1)=0$ for $j=0,1,...,m-1$, $m\ge6$. Then 
		\begin{equation*}
	\Cpmh  \Ipm  f(\pm \cdot)\to \Cpm  \Ipm  f(\pm \cdot)=f(\pm \cdot) \quad \text{as }h\to  0, \quad \text{in }C[-1,1].
	\end{equation*}
	\end{corollary}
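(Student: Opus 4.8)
The plan is to deduce the corollary from Proposition~\ref{thm:L94} applied to the function $g:=\Ipm f$. First I would check that $g$ has enough smoothness and enough vanishing derivatives at the relevant endpoint. Recall from Lemma~\ref{lem:diphi} that $\Ipm$ maps $C[-1,1]$ into $C^1[-1,1]$, with $\tfrac{\dd}{\dd x}\Ipm f = \pm k_{-1}^\pm(\cdot\mp 1)\star f$. Since $f\in C^m[-1,1]$ with $f^{(j)}(-1)=0$ for $j\le m-1$, one iterates this identity together with the definition $k_{n-1}^\pm = I_\pm^n k_{-1}^\pm$ to see that $\Ipm f$ gains one order of smoothness over $f$ in a way that effectively reduces to convolution with the locally integrable kernel $k_{-1}^\pm$; using the representation of $\Ipm f$ from the domain tables (i.e. writing $\Ipm f$ in terms of repeated integration against $k_{-1}^\pm$) one checks that $g=\Ipm f\in C^{m+1}[-1,1]\subset C^m[-1,1]$, and, crucially, that $g^{(j)}(\mp 1)=0$ for $j=0,1,\dots,m-1$ (for the `$+$' case at $-1$, for the `$-$' case at $1$). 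The vanishing at the left endpoint comes from $\Ipm f(\mp 1)=0$ (Remark~\ref{rmk:ndo}-(ii)) and then successive differentiation picking up the vanishing of $f$ and its derivatives there. Since $m\ge 6 \ge 5$, Proposition~\ref{thm:L94} applies to $g$ on $[-1,1]$.

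Next I would invoke Proposition~\ref{thm:L94} with $a=-1$, $b=1$: there exist $C,h_0>0$ such that
\begin{equation*}
\left|\Cpmhzero[g(\pm\cdot)](\pm x)-\Mpm[g(\pm\cdot)](\pm x)\right|\le C h\int_{-1}^1|g^{(m)}(y)|\,\dd y
\end{equation*}
for all $\pm x\in[-1,1]$ and $h\in(0,h_0]$. Now I relate $\Cpmhzero g$ to $\Cpmh g$ via the shift: by definition $\Cpmh g(x)=S_{\pm h}\Cpmhzero g(x)$, and $\Mpm g$ is continuous on $[-1,1]$ (indeed $\Mpm\Ipm f = \RLpm\Ipm f = f$ on $[-1,1]$ by \eqref{eq:inv}, after noting $\Ipm f$ is supported appropriately so the projection is transparent), hence uniformly continuous, so $S_{\pm h}\Mpm g\to \Mpm g$ uniformly on $[-1,1]$ as $h\to 0$. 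Combining the triangle inequality
\begin{equation*}
\|\Cpmh\Ipm f - f\|_{C[-1,1]}\le \|S_{\pm h}(\Cpmhzero g-\Mpm g)\|_{C[-1,1]} + \|S_{\pm h}\Mpm g - \Mpm g\|_{C[-1,1]}
\end{equation*}
with the Post–Widder/quadrature bound for the first term (the shift does not affect the sup norm on a slightly enlarged interval, after extending $g$ canonically as in Remark~\ref{rmk:canon}) and uniform continuity for the second term, both go to zero, which gives the claimed convergence. The identity $\Cpm\Ipm f = f$ used on the right is \eqref{eq:inv}.

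The main obstacle I anticipate is the bookkeeping in the first paragraph: verifying that $g=\Ipm f$ really has $m$ vanishing derivatives at the endpoint $\mp 1$ and lies in $C^m$, since $k_{-1}^\pm$ itself is only continuous and locally integrable (not differentiable in general), so one cannot naively differentiate under the convolution $m$ times. The clean way around this is to use the explicit representation $\Ipm f = k_0^\pm(\cdot\mp 1)\star f$ together with $f^{(j)}(\mp 1)=0$ to integrate by parts repeatedly, transferring derivatives from $k_0^\pm$ onto $f$: each integration by parts produces boundary terms that vanish precisely because of the hypotheses on $f$, and after enough steps one expresses $(\Ipm f)^{(j)}$ for $j\le m+1$ as a convolution of $k_{-1}^\pm$ (or $k_0^\pm$) with a derivative of $f$, which is continuous and, at the endpoint, vanishes for the appropriate range of $j$. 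This is routine but must be done carefully; once it is in place, the rest follows mechanically from Proposition~\ref{thm:L94} and uniform continuity. A minor secondary point is handling the canonical extension so that the shifted sums $\Cpmh$ make sense near the endpoints; this is exactly the device used in the proof of Proposition~\ref{thm:L94} and can be reused verbatim.
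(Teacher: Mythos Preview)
Your approach is correct and tracks the paper's proof closely: both reduce to Proposition~\ref{thm:L94} applied to $g=\Ipm f$ after canonical extension, and your integration-by-parts argument for why $g^{(j)}=\Ipm f^{(j)}$ (hence $g\in C^{m+1}$ with $g^{(j)}(\mp1)=0$ for $j\le m$) is exactly the ``direct differentiation'' the paper leaves implicit.

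The one genuine difference is in handling the shift $S_{\pm h}$. You split
\[
\Cpmh g - \Mpm g = S_{\pm h}(\Cpmhzero g - \Mpm g) + (S_{\pm h}\Mpm g - \Mpm g),
\]
bounding the first piece by Proposition~\ref{thm:L94} on an enlarged interval and the second by uniform continuity of $\Mpm g=f$. The paper instead applies Proposition~\ref{thm:L94} to \emph{both} $g$ and $g'$ on $[-1,2]$, obtaining $\Clhzero g\to\Cl g$ in $C^1[-1,2]$, and then controls the shift via the mean-value bound $|S_h\Clhzero g(x)-\Clhzero g(x)|\le h\|\Clhzero g\|_{C^1[-1,2]}$. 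Your route is slightly more economical (one invocation of the proposition rather than two) and actually works under the weaker hypothesis $m\ge 4$; the paper's route yields an explicit $O(h)$ rate for the whole expression, which is what feeds into the convergence-rate statements recorded later in Table~\ref{tab:convergencerates}. For the bare convergence claimed in the corollary, either argument suffices.
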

\begin{proof}
	Consider the  canonical extension of $f$ to $C^m(\mathbb R)$ (Remark \ref{rmk:canon}). Direct differentiation shows that $g:=\Il f$ and $g'$ satisfy the conditions of Proposition \ref{thm:L94} for $a=-1$ and $b=2$. Also note that $ \Clhzero  [g']=[\Clhzero  g]'\in C_0(-1,2]$ and $ \Cl  [g']=[\Cl g]'\in C_0(-1,1]$. Then, by Proposition \ref{thm:L94}, $\Clhzero g\to \Cl  g$ in $C^1[-1,2]$. We can now conclude with the inequalities for all $x\in [-1,1]$
\begin{align*}
	|S_h  \Clhzero g(x)- \Cr  g(x)|&\le |S_h\Clhzero g(x)- \Clhzero  g(x)|+|\Clhzero g(x)- \Cl  g(x)|\\
	&\le h\|\Clhzero g\|_{C^1[-1,2]}+\|\Clhzero g- \Cl  g\|_{C[-1,1]}.
\end{align*}
The `$-$' case follows immediately by $\Cl f(x)=\Cr [f(-\cdot)](-x)$ and  
$$\Clh   \Il  f( x)=\Crh  [\Il  f(-\cdot)](- x)=\Crh  [\Ir  \left[f(-\cdot)]\right](- x).$$
\end{proof}

When taking the limit $\lim_{h\to0} \Conepmhzero \Ipm  g=I_\pm g$ we  will need the exact first order approximation term, which we derive  below.

\begin{lemma}\label{lem:firtorderapprox} 
Assuming \ref{H0}, if $g\in C_0^{m}(-1,1]$ for some $m\ge 6$, then the following identity holds   for every $x\in [-1,1]$ and $h\in(0,1]$
\begin{align*}
\Conepmhzero \Ipm  [g(\pm \cdot)] (x)&=I_\pm [g(\pm \cdot)](x) +h F_\pm[g(\pm \cdot)](x)+O(h^2),
\end{align*}
and $F_+[g]\in C_0(-1,1]$ and  $F_-[g(-\cdot)]\in C_0[-1,1)$,
where \begin{equation}
F_\pm[g(\pm y)](x) =-2^{-1}\left[[y\phi(y,\infty)]^\pm \star k_{-1}^{\pm}(\cdot\mp1)\star g'(\pm \cdot)\right](x),
\end{equation}
defining $[y\phi(y,\infty)]^\pm(x)=\pm x\phi(\pm x,\infty)\mathbf 1_{\{\pm x>0\}}$. 
\end{lemma}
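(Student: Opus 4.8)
The plan is to work at the level of Laplace transforms / generating functions, since both sides of the claimed identity are explicit convolutions and the hypothesis $g\in C_0^m(-1,1]$ with $m\ge6$ gives enough smoothness to justify interchanging limits, differentiation under the integral, and Taylor expansion of the symbol. I would treat only the `$+$' case, the `$-$' case following by the reflection identities $\Conel f(x)=\Coner[f(-\cdot)](-x)$ and $\Conelhzero\Il f(x)=\Conerhzero[\Ir[f(-\cdot)]](-x)$ exactly as in Corollary~\ref{cor:L94}. The starting point is the generating-function identity from \eqref{eq:hatomegas}: for $|\xi|<1$,
\[
\frac{\sym((1-\xi)/h)}{(1-\xi)/h}\cdot\frac{(1-\xi)/h}{\sym((1-\xi)/h)}=1,
\]
but more usefully, $\Conelhzero$ has symbol $\sum_j\Gruone_{j,h}\xi^j=\psi((1-\xi)/h)/((1-\xi)/h)$, and $\Il$ has (formally) the ``symbol'' $(1-\xi)/(h\,\psi((1-\xi)/h))\cdot h$ via the coefficients $\calG^{k_0}_{j,h}$; their product is $\sum_j\calG^{k_{-1}}_{j,h}\xi^j\cdot h = h\cdot(1-\xi)/(h\psi((1-\xi)/h))\cdot\psi((1-\xi)/h)/((1-\xi)/h)\cdot\ldots$ — I will instead use the convolution identity \eqref{eq:dconvk1}, $\sum_{j=0}^m\calG^{k_0}_{m-j}\Gruone_j=h$, which says precisely that $\Conelhzero\Il$ acts on a discretised function like multiplication by $h$ composed with the discrete analogue of $I_+$. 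The cleaner route: write $\Conelhzero\Il g = h\sum_{j}\mathcal H_{j,h}\,g(\cdot-jh)$ where $\sum_j\mathcal H_{j,h}\xi^j = \bigl(\sum_j\Gruone_{j,h}\xi^j\bigr)\bigl(\sum_j\calG^{k_0}_{j,h}\xi^j\bigr)=\psi((1-\xi)/h)/((1-\xi)/h)\cdot 1/\psi((1-\xi)/h)= (1-\xi)^{-1}h$, i.e. $\mathcal H_{j,h}=1$ for all $j\ge0$ — wait, that would give $\Conelhzero\Il g(x)=h\sum_{j\ge0}g(x-jh)$, a pure Riemann sum for $I_+g$, with no correction term. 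The point is that this is exactly $\Conepmhzero\Ipm$ only if one is allowed to commute the two quadratures; the actual operator $\Conelhzero\Il g$ is $\Conelhzero$ applied to the \emph{continuum} convolution $\Il g = k_0^+(\cdot-1)\star g$, not to a discretisation of it, and the mismatch between discretising-then-convolving and convolving-then-discretising is precisely the $hF_+$ term.

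So the key steps are as follows. First, use Proposition~\ref{thm:L94}'s machinery (or rather \cite[Theorem~3.1]{MR1269502} with $\mu=1$, $p=2$, $\delta(\xi)=1-\xi$) applied to the kernel $K(s)=s^2\widehat\Phi(s)/s = s\widehat\Phi(s)$, whose associated quadrature is exactly $\Conelhzero$, to get the second-order expansion
\[
\Conelhzero[g]= \Conel[g] + h\,(\text{first-order term}) + O(h^2)
\]
with the first-order term determined by the expansion of the symbol $e^{\xi h}\psi((1-e^{-\xi h})/h)/((1-e^{-\xi h})/h)$ in powers of $h$; the sub-leading coefficient is governed by $\psi''(s)=\int_0^\infty ye^{-sy}y\phi(\dd y)$ type quantities, which is where the weight $y\phi(y,\infty)$ enters after an integration by parts as in the proof of Lemma~\ref{lem:PWthms}. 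Second, apply this with $g$ replaced by $\Il f$ (writing $f$ for what the lemma calls $g$): by Lemma~\ref{lem:diphi} and the smoothing of $\Ipm$, $\Il f\in C^{m+1}[-1,1]$ with the right number of vanishing derivatives at $-1$, so Proposition~\ref{thm:L94}-type estimates are legitimate, and $\Conel\Il f = I_+ f$ by \eqref{eq:phi-1}. Third, identify the first-order coefficient: it should come out to $-\tfrac12 [y\phi(y,\infty)]^+\star (\Il f)''$, and since $(\Il f)'' = (k_0^+(\cdot-1)\star f)'' = k_{-1}^+(\cdot-1)\star f'$ (differentiate the convolution, using Lemma~\ref{lem:diphi} twice and the fact that $f(-1)=0$ so no boundary term), this is exactly $F_+[f](x)=-\tfrac12\,[y\phi(y,\infty)]^+\star k_{-1}^+(\cdot-1)\star f'$. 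Fourth, check the regularity claim $F_+[g]\in C_0(-1,1]$: this is immediate because $[y\phi(y,\infty)]^+\in L^1_{\mathrm{loc}}[0,\infty)$ (finite near $0$ by $\int_0^1 y\,\phi(\dd y)$ — careful, this is $\infty$ under \ref{H0}! but $\int_0^1 y\phi(y,\infty)\dd y$ is finite after integration by parts, since $y\phi(y,\infty)=o(1)$ near $0$ and $\int_0^1\phi(y,\infty)\dd y=\Phi(0+)$ can be $\infty$ too, so one must be a little careful and use that the whole convolution $[y\phi(y,\infty)]^+\star k_{-1}^+(\cdot-1)$ is the kernel with Laplace transform $-\psi''(\xi)\cdot\xi/\psi(\xi)$ or similar, which is locally integrable), convolved with the continuous compactly-ish-supported $f'$; and $F_+[g](-1)=0$ because the convolution of things supported on $[0,\infty)$ vanishes at $-1$.

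The main obstacle is making the second-order convolution-quadrature expansion rigorous with a genuine $O(h^2)$ remainder (not merely $o(h)$), uniformly on $[-1,1]$, for the kernel $K(s)=s\widehat\Phi(s)$ under only \ref{H0}. \cite[Theorem~3.1]{MR1269502} requires a bound $|K(s)|\le M|s|^\mu$ on $\Re s\ge 1$ with the relevant $\mu$; here $|s\widehat\Phi(s)|\le |s|\cdot\|\Phi\|_{L^1}$ near infinity is false since $\widehat\Phi$ need not be integrable, but $|s\widehat\Phi(s)|=|\psi(s)/s|\le$ (something like $|s|$) does hold because $\psi(s)/s^2\to0$, so $\mu=1$ works — one needs to double-check the precise hypothesis and that taking $p=2$ (second order) is permitted, which requires two derivatives' worth of decay of $K$ and is where a mild use of the structure of $\psi$ (rather than \ref{H1}) is needed; if $p=2$ is not available under \ref{H0} alone, an alternative is to expand directly: write $\Conelhzero\Il f(x)-I_+f(x)$ as a contour integral $\frac{1}{2\pi i}\int (e^{\xi h}\widehat{\Conel\Il f}(\xi) - \widehat{I_+f}(\xi))\,[\text{symbol ratio}]\,e^{\xi x}\dd\xi$ over a vertical line and Taylor-expand the symbol $e^{\xi h}\psi((1-e^{-\xi h})/h)/((1-e^{-\xi h})/h)$ in $h$ to two orders with an integral remainder, controlling the tail via $\int_{c+i\R}|\xi|^2/|\psi(\xi)|\,\dd\xi<\infty$ — but that last integrability is part of \ref{H1}, not \ref{H0}, so under \ref{H0} alone one really does want to lean on \cite{MR1269502} or \cite[Theorem~4.1]{MR923707}. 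I would present the proof via Proposition~\ref{thm:L94}'s template, stating explicitly the $K$, $\mu$, $p$, $\delta$ used, and relegating the identification of the $h^1$-coefficient to a short computation mirroring the Post–Widder integration-by-parts already done in Lemma~\ref{lem:PWthms}.
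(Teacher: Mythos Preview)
Your primary plan has a real gap: in \cite[Theorem~3.1]{MR1269502} the order $p$ is not a free parameter but the consistency order of the discretisation $\delta$, and for $\delta(\xi)=1-\xi$ (backward Euler / Post--Widder) one has $p=1$. So the black-box application ``with $\mu=1$, $p=2$'' is simply not available: at best you recover an $O(h)$ bound on $\Conelhzero\Il g-I_+g$, with no explicit first-order correction and no $O(h^2)$ remainder. A first-order scheme cannot be coaxed into producing a second-order asymptotic expansion by changing a label.

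Your alternative --- Taylor-expand the symbol directly on a vertical contour --- is exactly what the paper does, and your worry that it requires \ref{H1} is misplaced. The decay along the contour is supplied by the smoothness of $g$, not by integrability of $|\xi|^2/|\psi(\xi)|$. Writing $f(\cdot)=\Il g(\cdot-1)$, repeated integration by parts (using $g^{(j)}(-1)=0$ for $j\le m$) gives $f^{(j)}=k_0^+(\cdot-1)\star\tilde g^{(j)}$ for $j\le m$, hence $|\widehat f(\xi)|\le C/|\xi|^6$ on $\Re\xi=c>0$. One then Taylor-expands $[\psi/\zeta]\bigl((1-e^{-\xi h})/h\bigr)$ about $\zeta=\xi$: the second derivative $[\psi/\zeta]''(\zeta)=-\int_0^\infty e^{-\zeta y}y^2\phi(y,\infty)\,\dd y$ is bounded on $\{\Re\zeta\ge\epsilon\}$ under \ref{H0} alone (Fubini and $\int(z^2\wedge z)\,\phi(\dd z)<\infty$ give $\int_0^1 y^2\phi(y,\infty)\,\dd y<\infty$), so the Lagrange remainder is $h^2 O(|\xi|^4)$ uniformly in $h\in(0,1]$, which against $C/|\xi|^6$ is integrable. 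No \ref{H1} enters. Your identification of the $h^1$-coefficient via $[\psi(\xi)/\xi]'=\widehat{[y\phi(y,\infty)]^+}(\xi)$ and $(\Il g)''=k_{-1}^+(\cdot-1)\star g'$ is correct and matches the paper's computation; your local-integrability worry about $y\phi(y,\infty)$ is likewise resolved by the same Fubini calculation, $\int_0^T y\phi(y,\infty)\,\dd y=\tfrac12\int_{(0,\infty)}(T\wedge z)^2\,\phi(\dd z)<\infty$.
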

\begin{proof}
See Appendix \ref{app:firtorderapprox}.
\end{proof}

\begin{remark} Note that in the fractional case $y\phi(y,\infty)= -y^{1-\alpha}/\Gamma(1-\alpha)=-(1-\alpha)\Phi(y)$, so that $2 F_+[g ](x)=-(\alpha-1)g(x)$, in agreement with the version for $\alpha-1$ of \cite[Remark 4.5.6]{H14}. Also, note that $\Conepmhzero f\to \pm \Conepm f$,   because $\Conel f(x)=-\Coner [f(-\cdot)](-x)$ but $\Conelhzero f(x)=\Conerhzero [f(-\cdot)](-x)$.
\end{remark}


\subsection{Interpolation matrices}\label{sec:interpmat}
 
Our aim in  this section is two fold. We first show that the convolution quadrature $\Crh$
generates a Feller semigroup
on $C_0(\R)$ for each $h>0$, and we denote the corresponding discrete valued compound Poisson process by $Y^h$.  
Secondly, recalling the interpolation matrices framework from  \cite{MR3720847}, we embed   $Y^h$ into six Feller semigroups on $C_0(\Omega)$ and we identify their dual semigroups on $L^1[-1,1]$. Later in Part II we  identify the path properties of these embedded processes on grid points in terms of pathwise restrictions  of $Y^h$ to $[-1,1]$ (for the same   six pathwise restrictions of $Y$ listed in Table \ref{explicitProcesses}).\\
Divide $\R$ up into grids of length $h$ and consider  the space of two-sided sequences going to zero at infinity, $C_0(\mathbb Z)$. We define the projection  operator
$\Pi:C_0(\R)\to C([0,1);C_0(\mathbb Z))$ via 
$$\left(\Pi f\right)_j(\lambda)=f((j+\lambda)h)$$
where $j\in\mathbb Z$, $f\in C_0(\R)$, and let $\Pi^{-1}$ and $\lambda$ (which takes values in $[0,1)$) respectively be the natural extensions of $\Pi^{-1}_{n+1}$ and $\lambda$   in Definition \ref{definitiongridcoordinatefunctions} from the interval $[-1,1]$ to  $\mathbb R$.  Then  we can write
\begin{equation}\label{eq:Gh}
\Crh f=\Pi^{-1}G_h\Pi f,
\end{equation}
with
\begin{equation*}
G_h=
  \begin{pmatrix} 
     \ddots  & \ddots & \ddots & \ddots  &  \ddots& \ddots & \ddots  \\
     \ddots &  \Gru_1 & \Gru_2 & \ddots & \Gru_{n-1}  & \Gru_{n}  & \ddots   \\
     \ddots & \Gru_0 & \Gru_1 &    \ddots &\Gru_{n-2} & \Gru_{n-1}&  \ddots \\
     \ddots & 0&\Gru_0&\ddots    & \ddots &\ddots & \ddots   \\
    \ddots  & \ddots&\ddots&\ddots&\Gru_1&\Gru_2& \ddots  \\
    \ddots & 0 &\ddots&0&\Gru_0&\Gru_1&  \ddots \\
    \ddots  & \ddots  & \ddots & \ddots  &\ddots & \ddots& \ddots 
  \end{pmatrix}.
\end{equation*}
Note that the entries of $G_h$ are negative on the main diagonal and non-negative everywhere else and that the row sums and column sums add to zero by \eqref{eq:grun_phi}. Then $G_h$ is a transition rate matrix on $\mathbb Z$, and  $\Crh $ is the (bounded) generator of a compound Poisson processes $(Y^h_t)_{t>0}$. In particular, if $Y^h_0=x$, then $Y_t^h\in x+h\mathbb Z$ for all $t\ge 0$.  We call this process the \emph{Gr\"unwald type process}. We state this fact below.

 \begin{proposition}\label{prop:Ghgen}
\label{prop:Ghfree} Assume \ref{H0}. Then $(\Crh   ,C_0(\mathbb R)  )$  is the generator  of a   Feller semigroup on $C_0(\mathbb R)$, for each $h>0$, and the corresponding process $Y^h$ is a  compound Poisson process.
\end{proposition}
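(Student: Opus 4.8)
The plan is to verify the three claims that make $(\Crh, C_0(\mathbb R))$ a Feller generator and then invoke the standard characterisation of bounded generators of Feller semigroups (equivalently, transition rate matrices that generate conservative jump processes). First I would record that $\Crh$ is a bounded linear operator on $C_0(\mathbb R)$: indeed $\Crh g(x) = \sum_{j\ge0}\Gru_{j,h} g(x-(j-1)h)$, and since by Lemma \ref{lem:grun_phi} the coefficients satisfy $\Gru_{0,h}>0$, $-\Gru_{1,h}>0$, $\Gru_{j,h}>0$ for $j\ge2$ and $\sum_{j\ne1}\Gru_{j,h}=-\Gru_{1,h}<\infty$, the operator norm is bounded by $\sum_{j\ge0}|\Gru_{j,h}| = -2\Gru_{1,h}<\infty$. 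Boundedness also shows $\Crh$ maps $C_0(\mathbb R)$ to $C_0(\mathbb R)$ (a uniformly convergent sum of shifts of a function vanishing at infinity still vanishes at infinity), and that the generated semigroup is $e^{t\Crh} = \sum_{k\ge0}(t\Crh)^k/k!$, strongly continuous automatically.

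Next I would check the two defining structural properties via \eqref{eq:Gh}: that $G_h$ is a transition rate matrix (conservative $Q$-matrix) on $\mathbb Z$, i.e.\ off-diagonal entries $\ge0$, diagonal entries $\le0$, rows summing to zero. This is exactly \eqref{eq:grun_phi}: the only negative coefficient is $\Gru_{1,h}$, which sits on the main diagonal of $G_h$, every other entry is $\Gru_{j,h}\ge0$ for $j\ne1$, and each row is a shifted copy of $(\dots,\Gru_0,\Gru_1,\Gru_2,\dots)$ so it sums to $\sum_{j\ge0}\Gru_{j,h}=0$. Since moreover $\sup_j |(G_h)_{jj}| = -\Gru_{1,h}<\infty$, the $Q$-matrix is uniformly bounded, hence the associated continuous-time Markov chain is non-explosive and $e^{tG_h}$ is an honest (conservative) stochastic matrix for every $t\ge0$; this is the standard theory of bounded $Q$-matrices (e.g.\ Norris, or \cite{MR3156646}). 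Positivity of $e^{tG_h}$ together with conservativeness of the row sums then transfers through the conjugation $\Crh = \Pi^{-1}G_h\Pi$ to give positivity and contractivity of $e^{t\Crh}$ on $C_0(\mathbb R)$, so $(\Crh,C_0(\mathbb R))$ generates a positive strongly continuous contraction semigroup on $C_0(\mathbb R)$, i.e.\ a Feller semigroup.

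Finally I would identify the process: because $G_h$ is spatially homogeneous (a Laurent/Toeplitz matrix, $(G_h)_{j,k}$ depends only on $k-j$), the chain it generates has stationary, independent increments taking values in $h\mathbb Z$, with jump rate $-\Gru_{1,h}<\infty$ and jump-size law $\Gru_{j,h}/(-\Gru_{1,h})$ on $\{(j-1)h : j\ne1\}$; this is precisely a compound Poisson process, and $Y^h_0=x$ forces $Y^h_t\in x+h\mathbb Z$ for all $t$. Equivalently one sees this directly from the L\'evy--Khintchine form noted in the remark after the definition: $e^{\xi h}\sym((1-e^{-\xi h})/h) = \int_{\mathbb R}(e^{-\xi y}-1)\,\phi_h(\dd y)$ with finite L\'evy measure $\phi_h(\dd y)=\sum_{j\ne1}\Gru_{j,h}\delta_{(j-1)h}(\dd y)$. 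The only mildly delicate point — and the one I would be most careful about — is justifying that all the relevant manipulations (summation, semigroup exponential, conjugation by the unbounded-looking but entrywise-defined $\Pi,\Pi^{-1}$) are legitimate given that $\mathbb Z$ is infinite; this is handled entirely by the uniform bound $\|\Crh\|\le -2\Gru_{1,h}<\infty$, which makes everything a convergent operator-norm computation on the Banach space $C_0(\mathbb R)$, so there is no genuine obstacle.
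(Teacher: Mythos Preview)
Your proposal is correct and follows exactly the reasoning the paper gives in the paragraph immediately preceding the proposition (the paper does not supply a separate formal proof, treating the result as immediate from the transition-rate-matrix observation and \eqref{eq:grun_phi}). One inconsequential slip: your displayed formula $\Crh g(x)=\sum_{j\ge0}\Gru_{j,h} g(x-(j-1)h)$ has the wrong sign in the shift---by the definition $\Crh g(x)=\sum_{j\ge0}\Gru_{j,h} g(x+(j-1)h)$---but nothing in your argument depends on this.
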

Restricting this process to a finite domain is at the heart of this article. Philosophically, boundary conditions should only influence the process at the boundary; i.e., if the process moves across a boundary, it can be restarted somewhere (or killed). We will therefore restrict ourselves to finite state processes with transition rate matrix being the central square of $G_h$, where we can modify the first and last row and column to suit a particular boundary condition. For the six cases of Table \ref{explicitProcesses}; i.e., killing, fast-forwarding, or reflecting at the boundary, we modify the countable state transition matrix $G_h$ such that the resulting finite state process is obtained by killing, fast-forwarding, or reflecting at the respective boundary. \\

Recall that the $(i,j)$-th entry of a (backward) rate matrix represents the rate at which particles move from state $i$ to state $j$. As in \cite{MR3720847}, the combinations of boundary conditions are defined  using the generic matrix
\begin{equation}\label{Glr}
G_{n\times n}^{\mathrm{LR}}=
 \begin{pmatrix} 
       b_1^l & b_2^l & \cdots & b_{n-1}^l & b_n    \\
    \Gru_0 & \Gru_1 &    \cdots &\Gru_{n-2} & b_{n-1}^r \\
     0&\Gru_0&\ddots    & \vdots &\vdots    \\
    \vdots&\ddots&\ddots&\Gru_1&b_2^r \\
    0 &\cdots&0&\Gru_0& b_1^r\\
  \end{pmatrix}.
\end{equation}
with the coefficients $\{b_j^l,b_j^r, b_n: j=1,2,...,n-1\}$  determined for each boundary condition in Table \ref{mainTable}. The intuition for the coefficients corresponding to killing and reflecting boundary conditions is clear and we refer to  \cite{MR3720847} for a discussion. The choice of rates for fast-forwarding at the left boundary is less intuitive and we compute it explicitly in Part II. We now embed these candidate discrete states processes into Feller semigroups on $C_0(\Omega)$, which requires a great deal of care (see Example \ref{exp:killB}).   We perform this embedding by employing the interpolated transition matrices theory developed in \cite{MR3720847}.

\begin{example}\label{exp:killB} The fact that killing (and fast-forwarding) our compound Poisson processes yields a process that is not Feller is the technical difficulty that leads us to implement the idea of interpolated transition matrices.    Here we explain this difficulty and how we resolve it in the simple case of a killed Brownian motion. First recall that we want to identify a   finite difference approximation that   characterises in the limit the generator of a Feller semigroup (Theorem \ref{thm:TK}-(ii)) and its corresponding process (Theorem \ref{thm:TK}-(iii)), and thus obtaining at once a full picture of the boundary conditions and their probabilistic meaning. Recall also that $C^2_0(0,\infty)\cap C_c(0,T]$  is a core for the Feller generator $(\partial^2,C^2_0(0,\infty))$  of the Brownian motion killed upon leaving $(0,\infty)$ (easily deductible from \cite[Example 3.17, p. 88]{T20}). We  denote by $B$ and $B^{{\rm D}}$ the Brownian motion and its killed version, respectively. Let $\partial^2_hf(x)=(f(x-h)-2f(x)+f(x+h))/h^2$ be the second order finite difference scheme converging to $\partial^2$, which generates a continuous compound Poisson (Feller) process $B^h$ that converges to $B$, with convergences in the sense of Theorem \ref{thm:TK}. Denote by $B^{h,{\rm D}}$ the process obtained by killing $B^h$ the first time it leaves $(0,\infty)$. Then $B^{h,{\rm D}}$ still convergences to $B^{{\rm D}}$ for every starting point in the Skorokhod sense (by a careful application of the CMT), but we lose the convergence in the sense of generators Theorem \ref{thm:TK}-(ii). This is because  $B^{h,{\rm D}}$ is not a Feller process. Indeed  killing provokes a discontinuity with respect to initial conditions  at $ 0$. This is because if $B^{h,{\rm D}}_0=0$ the process is instantaneously killed, meanwhile for any $B^{h,{\rm D}}_0>0$ the process   performs half of the first jumps away from 0 (or one could say that first hitting time of $0$ does not vanish as $B^h_0\downarrow 0$). This translates to the pointwise generator of $B^{h,{\rm D}}$, say $\partial^{2,{\rm D}}_h$, not preserving the set $C_0(0,\infty)$. Indeed,   $\partial^{2,{\rm D}}_h f(x)=\partial^2_h f(x)$ for $x>h$, but  $\partial^{2,{\rm D}}_hf(x)=(-2f(x)+f(x+h))/h^2$ for all $x\in(0,h]$ implying that
\[ 
\lim_{x\downarrow 0} \partial^2_hf(x)=\frac{f( h)}{h^2},
\] 
and thus $\partial^{2,{\rm D}}_h C_0(0,\infty)\not\subset C_0(0,\infty) $, i.e. the Feller property is lost. The idea introduced in \cite{MR3720847} is to \textit{interpolate the coefficients close to the   boundary }to guarantee  the Feller property, i.e.  define the operator $G^{2,{\rm D}}_h=\partial^2_h$ on $[h,\infty)$, and 
\[
G^{2,{\rm D}}_hf(x)=\frac{ -2f(x)+\lambda(x)f(x+h)}{h^2}, \quad x\in[0,h), \]
for some $\lambda:[0,h)\to [0,1)$ continuous, increasing, with $\lambda(0)=0$ and $\lambda(h-)=1$. Then $G^{2,{\rm D}}_h C_0(0,\infty) \subset C_0(0,\infty)$, $G^{2,{\rm D}}_h$ defines a Feller generator, and on grid points $h\mathbb N_0$ the generated process is   $B^{h,{\rm D}}$. In this case, the convergence of the interpolated generators (in the sense of Theorem \ref{thm:TK}-(ii)) is straightforward, because   $B^{{\rm D}}$ allows $ C^2_0(0,\infty)\cap C_c(0,T]$ as a core. But for our restrictions of L\'evy processes, any core of the generator of $\Y{LR}$ must contain functions that are not smooth at the boundary (see Table \ref{tab:ndo}). This results in the interpolating function $\lambda$ creating a singularity as we let $h\to 0$, and finding suitable test  functions in the approximations is the main difficulty to overcome for the proofs in Section \ref{sec:case}. \\
Similarly,   fast-forwarding a compound Poisson processes does not always yield a Feller processes, as we show in Part II. Intuitively, suppose we fast-forward $B^h$ outside of $(0,\infty)$, then, if $B^h_0= h$ the process can only jump to the right of $h$, but for any $ B^h_0\in(h,2h)$ half of the first jumps will fall  to the left of $h$ (on $B^h_0-h$), provoking a discontinuity with respect to the initial condition, and thus we loose the Feller property.  
\end{example}

We now introduce some technical notation from \cite{MR3720847}.

\begin{definition}
\label{definitiongridcoordinatefunctions}
Let $n \in \mathbb{N}$, then we divide the interval $[-1,1]$ into $n+1$ grids, each of width $h=\frac{2}{n+1}$,  such that the first $n$ grids are half open (on the right) while the $(n+1)^{\mathrm{st}}$ (last) grid is closed. 
\begin{itemize}
\item \emph{Grid Number:} Let $\iota : [-1,1] \to \{1,2,\ldots  , n+1\}$ denote the grid number of $x$,  \[
\iota (x) = \left\lfloor \frac{x+1}{h}\right\rfloor +1
\]
with $\iota(1) = n+1$ where $\left\lfloor \frac{x+1}{h}\right\rfloor$ denotes the largest integer not greater than $\frac{x+1}{h}$. 
\item \emph{Location within Grid:} Let $\lambda: [-1,1]  \to [0,1]$ denote the location within the grid of $x$ given by \[\lambda(x) = \frac{x+1}{h} - \big( \iota(x)-1\big),\] observing that $\lambda(1)=1$.  Also, let $\barlambda(x)=1-\lambda(x)$.
\item \emph{Grid Projection Operator:}
Let $L^1\left([-1,1];\mathbb{R}^{n+1}\right)$ denote the space of vector-valued integrable functions $v: [-1,1] \rightarrow \mathbb{R}^{n+1}$. The projection operator $\Pi_{n+1}:L^1[-1,1]\to L^1\left([0,1];\mathbb{R}^{n+1}\right)$ is defined by \[\left(\Pi_{n+1} f\right)_j(\lambda) =f((\lambda + j-1)h-1), \quad f \in L^1[-1,1],\]
where $\lambda \in [0,1]$ and $j \in \{1,2,\ldots , n+1\}$. 
\item \emph{Grid Embedding Operator:} The grid embedding operator $\Pi^{-1}_{n+1}$ embeds integrable functions defined on the grids onto $L^1[-1,1]$;  $$\left(\Pi^{-1}_{n+1} v\right)(x)=v_{\iota(x)}(\lambda (x)).$$ 
\end{itemize}
\end{definition}

Armed with the transition rate matrices \eqref{Glr} for the finite state processes we build the interpolation matrices. That is, we   define the functions $D^{l,r}$ and $N^{l,r}$ for the respective six cases and we choose them according to  Table \ref{mainTable} to obtain the interpolation matrices

\begin{equation}
\label{interpolationmatrixGBC}
  G^{\BC}_{n+1}(\lambda)=\begin{pmatrix}
    b_1^l& D^l(\lambda) b_2^l&\cdots& \cdots&D^l(\lambda)b_n & 0\\
    N^l(\lambda) \Gru_0  & \lambda \Gru_1 +\barlambda b_1^l &\cdots&\cdots&\lambda b^r_{n-1}+\barlambda b_{n-1}^l & N^r(\lambda) b_n\\
    0 & \Gru_0  & \cdots&\Gru_{n-3} &\vdots & \vdots \\
    \vdots & \ddots &\ddots& \vdots &\vdots&\vdots\\
   \vdots &  &\ddots&\Gru_0  & \lambda b_1^r+\barlambda \Gru_1  & N^r(\lambda) b_2^r\\
    0 &\cdots &\cdots& 0& D^r(\lambda) \Gru_0 & b_1^r
  \end{pmatrix}
\end{equation}
recalling that $\barlambda =1-\lambda$, leading to the transition operators
\begin{equation}\label{G-hBC}G_{-h}^{\BC}f(x)  = \Big[ G_{n+1}^{\BC}(\lambda (x)) (\Pi_{n+1}f)(\lambda (x))\Big]_{\iota (x)}
\end{equation}
on $C_0(\Omega)$ and
\begin{equation}\label{G+hBC}G_{+h}^{\BC}f(x)  = \left[ \left(G_{n+1}^{\BC}(\lambda (x))\right)^T (\Pi_{n+1}f)(\lambda (x))\right]_{\iota (x)}
\end{equation}
on $L^1[-1,1]$.

We conclude the section by obtaining positive strongly continuous contraction semigroups from the bounded operators $G^{\mathrm{LR}}_{\mp h}$ on the bounded domain $[-1,1]$.
 \begin{lemma}
\label{Ghdissipative}
The bounded operators $G^{\mathrm{LR}}_{\mp h}$ defined in \eqref{G-hBC} and \eqref{G+hBC} via Table \ref{mainTable} are generators  of positive strongly continuous contraction semigroups on $C_0(\Omega)$ (i.e. Feller semigroups) and on $L^1[-1,1]$, respectively. 
\end{lemma}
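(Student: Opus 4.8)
The plan is to reduce the statement to two purely algebraic properties of the interpolation matrices $G^{\mathrm{LR}}_{n+1}(\lambda)$ and then invoke standard semigroup theory. I would first note that each $G^{\mathrm{LR}}_{\mp h}$ is a \emph{bounded} operator on the relevant space $X$: it is the composition of the bounded grid projection and embedding operators of Definition \ref{definitiongridcoordinatefunctions} with multiplication by $\lambda\mapsto G^{\mathrm{LR}}_{n+1}(\lambda)$, whose entries are continuous, hence uniformly bounded, on $[0,1]$ (that $G^{\mathrm{LR}}_{-h}$ preserves $C_0(\Omega)$ is exactly what the interpolation achieves, cf.\ Example \ref{exp:killB}). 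Consequently $G^{\mathrm{LR}}_{\mp h}$ automatically generates a uniformly — in particular strongly — continuous semigroup, and $\lambda-G^{\mathrm{LR}}_{\mp h}$ maps $X$ onto $X$ for $\lambda>\|G^{\mathrm{LR}}_{\mp h}\|$, so only positivity and contractivity remain. For those, the useful observation is that since $c\,f(x)=c\,(\Pi_{n+1}f)_{\iota(x)}(\lambda(x))$ one has, for every $c\in\mathbb R$, $f\in X$ and $x\in[-1,1]$,
\[
\big(cI+G^{\mathrm{LR}}_{-h}\big)f(x)=\Big[\big(cI_{n+1}+G^{\mathrm{LR}}_{n+1}(\lambda(x))\big)(\Pi_{n+1}f)(\lambda(x))\Big]_{\iota(x)},
\]
and likewise $(cI+G^{\mathrm{LR}}_{+h})f(x)=\big[(cI_{n+1}+G^{\mathrm{LR}}_{n+1}(\lambda(x)))^{T}(\Pi_{n+1}f)(\lambda(x))\big]_{\iota(x)}$.

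The heart of the proof — and the step I expect to be the main obstacle — is the elementary but case-heavy claim that for each of the six boundary combinations $\mathrm{LR}$ and every $\lambda\in[0,1]$, (i) all off-diagonal entries of $G^{\mathrm{LR}}_{n+1}(\lambda)$ are nonnegative and (ii) every row sum of $G^{\mathrm{LR}}_{n+1}(\lambda)$ is $\le0$. This mirrors the corresponding verification in \cite{MR3720847}, but now it must be done by reading the coefficients $\{b^l_j,b^r_j,b_n\}$ off Table \ref{mainTable} and combining them with the sign information of Lemma \ref{lem:grun_phi} — namely $\Gru_{0,h}>0$, $\Gru_{j,h}>0$ for $j\ge2$, $\Gru_{1,h}<0$, the vanishing of $\sum_{j\ge0}\Gru_{j,h}$, and the analogous statements for the $\Gruone_{j,h}$ — together with the fact that the interpolating functions $D^l,D^r,N^l,N^r$ are $[0,1]$-valued. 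The six cases have to be checked one by one; the fast-forwarding-at-the-left rates (derived in Part~II) are the least transparent and are in fact defined exactly so that (i)--(ii) continue to hold, so this is where the argument is most delicate.

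Granting (i)--(ii), the remaining steps are routine. For \emph{positivity}, take $c:=\sup_{\lambda\in[0,1]}\max_{i}\big|(G^{\mathrm{LR}}_{n+1}(\lambda))_{ii}\big|<\infty$; by (i) the matrix $cI_{n+1}+G^{\mathrm{LR}}_{n+1}(\lambda)$ and its transpose are entrywise nonnegative, so the displays above give $(cI+G^{\mathrm{LR}}_{\mp h})f\ge0$ for $f\ge0$, whence $e^{tG^{\mathrm{LR}}_{\mp h}}=e^{-ct}\sum_{k\ge0}\tfrac{t^k}{k!}(cI+G^{\mathrm{LR}}_{\mp h})^k$ is positive. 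For the \emph{$C_0(\Omega)$-contraction} of $G^{\mathrm{LR}}_{-h}$, I would verify dissipativity via the positive maximum principle: if $f\in C_0(\Omega)$ attains $\|f\|_\infty$ at $x_0$ (a negative minimum being symmetric), then because $(\Pi_{n+1}f)_j(\lambda(x_0))\le\|f\|_\infty$ with equality at $j=\iota(x_0)$ and the off-diagonal entries are $\ge0$ by (i), the first display yields $G^{\mathrm{LR}}_{-h}f(x_0)\le\|f\|_\infty\sum_j(G^{\mathrm{LR}}_{n+1}(\lambda(x_0)))_{\iota(x_0),j}\le0$ by (ii); dissipativity together with the range condition then gives a contraction semigroup by Lumer--Phillips. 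For the \emph{$L^1[-1,1]$-contraction} of $G^{\mathrm{LR}}_{+h}$, I would combine positivity with a mass estimate: a change of variables $x=(\lambda+j-1)h-1$ on the $j$-th grid turns $\int_{-1}^{1}G^{\mathrm{LR}}_{+h}f\,\dd x$ into $h\int_0^1\sum_j(\Pi_{n+1}f)_j(\lambda)\,\big(\sum_i(G^{\mathrm{LR}}_{n+1}(\lambda))_{ji}\big)\,\dd\lambda\le0$ for $f\ge0$ by (ii), so $t\mapsto\int_{-1}^{1}e^{tG^{\mathrm{LR}}_{+h}}f\,\dd x$ is nonincreasing and $\|e^{tG^{\mathrm{LR}}_{+h}}f\|_{L^1}\le\|f\|_{L^1}$ for $f\ge0$; the general case follows by writing $f=f^+-f^-$ and using positivity. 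Together with the strong continuity already noted, this establishes the lemma.
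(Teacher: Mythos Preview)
Your proposal is correct and follows essentially the same route as the paper: verify that each $G^{\mathrm{LR}}_{n+1}(\lambda)$ is a transition rate matrix (off-diagonal entries nonnegative, row sums nonpositive) using Lemma~\ref{lem:grun_phi} and Table~\ref{mainTable}, then deduce positivity and contractivity of the generated semigroups. The only difference is packaging: the paper invokes \cite[Proposition~15]{MR3720847} for the semigroup-theoretic conclusions, whereas you spell out that proposition's content (the $cI$-shift for positivity, the positive maximum principle for $C_0$-dissipativity, and the mass estimate for the $L^1$-contraction).
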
 
\begin{proof}
Easy computations using \eqref{eq:grun_phi}, \eqref{eq:grun_phi-1}, Table \ref{mainTable} and Definition \ref{definitiongridcoordinatefunctions}, prove that for each $\lambda$ all matrices $G^{\BC}_{n+1}(\lambda)$ define transition rate matrices; i.e.,   the row sums and the diagonal entries are non-positive and the off-diagonal entries are non-negative. Then   \cite[Proposition 15]{MR3720847} proves the lemma.
\end{proof}


\begin{table}
\begin{tabular}{|c|c|c|c|}
  \hline
  Case  & $b^{l,r}$ &$D^{l,r}$& $N^{l,r}$ \\ 
  \hline 
  $\mathrm{DR}$ & $b_i^l= \Gru_i$ & $D^l(\lambda)=\frac{ \lambda \Gru_0\calG_1^{k_0} }{\lambda \Gru_0 \calG_1^{k_0} +\barlambda }$ & $N^l=\mathbf{1}$\\
  $\mathrm{NR}$ & $b_i^l=-\sum_{j=0}^{i-1}\Gru_{j}$ & $D^l=\mathbf{1}$ & $N^l(\lambda)=\lambda$\\
  $\mathrm{N^*R}$ & $b_1^l=\sum_{j=0}^1\Gru_{j} ,b_i^l=\Gru_i ,i\ge2$ & $D^l=\mathbf{1}$ & $N^l(\lambda)=\lambda$\\
  $\mathrm{LD}$ & $b_i^r=\Gru_i$, $b_n=b_n^l$ & $D^r(\lambda)=\frac{ \barlambda  \Gru_0 \calG_1^{k_0} }{\barlambda \Gru_0 \calG_1^{k_0} +\lambda}$&$N^r=\mathbf{1}$\\
  $\mathrm{LN}$ & $b_i^r=-\sum_{j=0}^{i-1}\Gru_{j} , b_n=-\sum_{i=0}^{n-1}b_i^l$& $D^r=\mathbf{1}$&$N^r(\lambda)=\barlambda $\\
\hline
\end{tabular}
\caption{\label{mainTable}Table of boundary weights and interpolating functions used to build the interpolation matrix \eqref{interpolationmatrixGBC}, assuming $b_0^l=0$ for NR and ${\rm N^*R}$.}
\end{table}
\subsection{The approximating functions $\vartheta_{\pm h}^{k_{i}}$}\label{sec:approx_vartheta}
To apply Trotter–Kato Theorem we need, for $f$ in a core of $(\Gen^\pm,\BC)$, a sequence $f_h\to f$ such that the interpolated bounded generators $G^{\BC}_{\pm h} f_h$ converge strongly to $(\Gen^\pm,\BC)f$. As (essentially) all functions in  $\D(\Gen^\pm,\BC)$ feature at least one of the scale functions $k_i^\pm$, $i\in\{-1,0,1\}$ (Table \ref{tab:ndo}), we   construct  natural approximations  $\vartheta^{k_i}_{\pm h}$  as  interpolations of the Post–Widder coefficients of Lemma \ref{lem:PWthms}. In Lemma \ref{lem:varthetaDD} we show two facts: the desired convergence $\vartheta^{k_i}_{\pm h}\to k^\pm_i$; and that the interaction of $\vartheta^{k_i}_{\pm h}$ with  $\Cpmh$ mimics the one of     $k_i^\pm$ with  $\Cpm $ away from the boundaries (for example, compare \eqref{eq:partialphithetah0} and \eqref{eq:ker}). It turns out that we need to carefully weight the interpolation structure of each $\vartheta^{k_i}_{\pm h}$ in order to compensate for the singularities arising in the limit at the first (last) interval of the grid, due to the interpolation structure of $G^{\mathrm{LR}}_{\ph}$ ($G^{\mathrm{LR}}_{\mh}$). This delicate selection of interpolating coefficients in Table \ref{tab:varthetadetails}  essentially derives from reverse engineering the statements of Lemmata \ref{lem:varthetaDD}, \ref{lem:varthetaDD2},  and \ref{lem:NRk-1}. 
\begin{definition}\label{def:varthetadetails}
Recall \eqref{eq:GCdef} and Definition \ref{definitiongridcoordinatefunctions}. For each $i\in\{0,1,-1\}$, we define $\vartheta^{k_{i}}_{\cdot\, h}:(-\infty,1]\to\mathbb R$ as 
\begin{equation*}
\vartheta^{k_{i}}_{\cdot\, h}(x) =\left\{ \begin{split} &\frac1h \Big( \big(1- \theta (\lambda) \big) \mathcal{G}_{\iota( x)-2-\tau}^{k_i}+ \theta (\lambda) \mathcal{G}_{\iota( x)-1-\tau}^{k_i}\Big), &\iota( x) \neq 1,\\
&\text{see Table \ref{tab:varthetadetails}}, &\iota(x)=1, \\
&0, &x\in (-\infty,-1),
\end{split}\right.
\end{equation*} 
where the parameters $\theta(\lambda)
$ and $\tau$  are defined in Table \ref{tab:varthetadetails}.  Then,  we define
\begin{align*}
\vartheta^{k_{i}}_{\mh}(x)&=\vartheta^{k_{i}}_{\cdot\, h}(-x) &  \text{if }X=C(\Omega),\\
\vartheta^{k_{i}}_{\ph}(x)&=\vartheta^{k_{i}}_{\cdot\, h}(x)&  \text{if }X= L^1[-1,1],
\end{align*} 
and     $\vartheta^{0}_{\ph}(x)=1\; \mathrm{if} \;\iota (x) \neq 1$, $ \vartheta^{0}_{\ph}=\lambda \; \mathrm{if} \;\iota (x) =1$ and $\vartheta^{0}_{\ph}=0$ on $(-\infty,-1)$. 

\begin{table}
\centering
\vline
\begin{tabular}{c|c|c|}
  \hline
  $\,$  & $L^1[-1,1]$ & $C_0(\Omega)$ \\ 
\hline
$i=1$ & $\begin{matrix} \theta (\lambda)= 1,\; \tau =1\hfill\\
                                               \vartheta^{k_1}_{\cdot\, h} (x) = - h^{-1}\barlambda  \mathcal{G}^{k_1}_{0} ,\; \mathrm{if} \;\iota (x) =1 \end{matrix}$ & $\begin{matrix} \theta (\lambda)= \lambda,\;\tau =1 \hfill\\
                                               \vartheta^{k_1}_{\cdot\, h} (x) = - h^{-1} \barlambda  \mathcal{G}^{k_1}_{0} ,\; \mathrm{if} \; \iota (x) =1 \end{matrix}$\\
\hline
$i=0$ & $\begin{matrix} \theta (\lambda)= \lambda,\;\tau =0 \hfill\\
                                               \vartheta^{k_0}_{\cdot\, h} (x) =-\frac{\Gru_{0}}{h\Gru_{1}}(\barlambda  \mathcal{G}^{k_{0}}_{0} +\lambda \mathcal{G}^{k_0}_{1})
											
																							,\; \mathrm{if} \;\iota (x) =1 \end{matrix}$ & $\begin{matrix} \theta (\lambda)= \lambda,\;\tau =0 \hfill\\
                                               \vartheta^{k_{0}}_{\cdot\, h}(x) = h^{-1} \lambda \mathcal{G}^{k_0}_{0} ,\; \mathrm{if} \; \iota (x) =1 \end{matrix}$\\
\hline

$i=-1$ & $\begin{matrix} \theta (\lambda)= \frac{\lambda}{\lambda+\barlambda h\Gru_{0} \calG^{k_{-1}}_{1} }, \;\tau =0\hfill\\
                                               \vartheta^{k_{-1}}_{\cdot\, h} (x) = h^{-1} \theta (\lambda ) \mathcal{G}^{k_{-1}}_{0} ,\; \mathrm{if} \;\iota (x) =1 \end{matrix}$ & \\
\hline
\end{tabular}
\caption{\label{tab:varthetadetails}
Definitions of the parameters $\theta(\lambda),\tau$ and of $\vartheta^{k_{i}}_{\cdot\, h} $ for $ \iota( x) = 1$ . }
\end{table}

\end{definition}

\begin{remark}[Canonical extensions]\label{rmk:canon} 
In the proofs of the theorems of Section \ref{sec:caseC} (resp. Section  \ref{sec:caseL}) we sometimes need to smoothly extend our functions to the interval $[-1-h,- 1)$ (resp. $(1, 1 + h]$). The \textit{canonical extension for the approximating functions} $\vartheta^{k_{i}}_{\pm h}$    is defined as in Definition \ref{def:varthetadetails} for $\iota(x)\neq1$ by extending $\vartheta^{k_{i}}_{\cdot\, h}$ to $(1,1+h]$ by extending  the definitions of $\iota$ and $\lambda$ to  $\iota (x) = \left\lfloor \frac{x+1}{h}\right\rfloor +1$  for $x\in(1, 1 + h) $, $\iota(1 + h) = n + 2$ and $\lambda(x) = \frac{x+1}{h} - ( \iota(x)-1) $ for $x\in(1, 1 + h]$, and then letting $\vartheta^{k_{i}}_{\cdot\, h}=0$ on $( 1+h,\infty)$.    Meanwhile, the \textit{canonical extension for a generic function }$g\in C^m[a,b]$, $m\in \mathbb N$ is the     extension   to $ g\in  C^m(\mathbb R)$ by the $m$-th order Taylor expansions at $a$ and $b$ of $g$, namely $ g(x)=\sum_{j=0}^m g^{(j)}(b)(x-b)^j/j!$ for $x>b$, and $ g(x)=\sum_{j=0}^m g^{(j)}(a)(x-a)^j/j!$ for $x<a$. If $g\in C^\infty[a,b]$ we apply the canonical extension for $m=7$.
\end{remark}

\begin{remark}\label{rmk:Dphi_h}
In analogy with Remark \ref{rem:zeroext}, if $g:[-1,1]\to\mathbb R$  we   understand the operators $\Cpmh  $ and $\Conepmhzero  $ applied to    $\Pi^{-1}_{[-1,1]}g$, unless we explicitly apply the canonical extension of Remark \ref{rmk:canon}. In all these cases the test functions will satisfy $g(  x)=0$ for $\pm x\in(-\infty,-1)$, so that 
\begin{align*} 
\Cpmh  g( x)&=\sum_{j=0}^{\iota(\pm x)} \Gru_{j} g(x\mp (j-1)h), &\pm x\in[-1,1),\\
\Conepmhzero g( x)&=\sum_{j=0}^{\iota(\pm x)-1} \Gruone_{j} g(  x\mp jh), &\pm x\in[-1,1),
\end{align*}
and if $g(\mp 1)=0$, then the above identities also holds for $\pm x= 1$.  As if $g(-1)\neq 0$,   because   $\iota(1)=n+1\neq n+2$, we would have 
\[  
\Cpmh  g( \pm1)=\sum_{j=0}^{n+2} \Gru_{j} g(\pm 1\mp (j-1)h) \neq \sum_{j=0}^{\iota(\pm 1)} \Gru_{j} g(\pm 1\mp (j-1)h).
\] 
\end{remark}
\begin{remark}\label{rmk:coeftheta}
Note that   $\theta(\lambda)$ for $i=-1$ in Table \ref{tab:varthetadetails} satisfies $0\le\theta(\lambda)\le 1$, and $\theta(\lambda)(x)=0$ and $\theta(\lambda)(x-)=1$ on grid points $x$, which is a consequence of  $h\Gru_{0} \calG^{k_{-1}}_{1} > 0$, by  \eqref{eq:grun_phi} and \eqref{eq:grun_ki}.

\end{remark}

\begin{lemma}\label{lem:varthek0}
Assume \ref{H0}. Then   $\vartheta^{k_{i}}_{\mh}\to k_{i}^-$, as $h\to0$ in $C_0[-1,1)$ for $i\in\{0,1\}$,  and  $\vartheta^{k_{i}}_{\ph}\to k_{i}^+$, as $h\to0 $ in $L^1[-1,1]$ for $i\in\{-1,0,1\}$. Also,  the following identities hold
\begin{align}\label{eq:partialphithetah0}
\Crh \vartheta^{k_{0}}_{\mh}(x)&=0,\quad &x\in  [-1 +h ,1-h],\\
\label{eq:partialphithetah1}
\Crh \vartheta^{k_{1}}_{\mh}(x)&=1 - \frac{\lambda}h\calG^{k_1}_0\Gru_{\iota(-x)},\quad &x\in  [-1  +h,1-h],\\ 
\label{eq:partialphi-1thetah1}
 \Conerhzero \vartheta^{k_{1}}_{\mh}(x)&= (\iota(-x)-2+\barlambda )h -\frac{ \lambda}h \calG^{k_1}_0 \Gruone_{\iota(-x)-1},\quad &x\in (-1 ,1-2h],\\
\label{eq:partialphi-1thetah0}
 \Conerhzero \vartheta^{k_0}_{\mh}(x)&=1, &x\in [-1 ,1-h],\\
\label{eq:partialphithetah0+}
\Clh \vartheta^{k_{0}}_{\ph}(x)&= \Gru_{\iota(x)} \left(-\frac\lambda h  \calG^{k_0}_{0} +\frac{-\Gru_0}{h\Gru_1}\left(\barlambda  \calG^{k_0}_{0}+ \lambda \calG^{k_0}_{1}\right)\right), &x\in [-1+h,1  -h],\\
\label{eq:partialphi-1thetah0+}
 \Conelhzero \vartheta^{k_{0}}_{\ph}(x)&=1+\Gruone_{\iota(x)-1} \left(-\frac\lambda h \calG^{k_0}_{0}+ \frac{-\Gru_0}{h\Gru_1}\left(\barlambda  \calG^{k_0}_{0}+ \lambda \calG^{k_0}_{1}\right)\right),&x\in [-1+h,1),\\
\label{eq:partialphi-1thetah-1+}
 \Conelhzero \vartheta^{k_{-1}}_{\ph}(x)&=0,&x\in [-1+2h,1],\\
\label{eq:partialphithetah1+}
\Clh \vartheta^{k_{1}}_{\ph}(x)&=1-\frac{\barlambda }h\Gru_{\iota(x)} \calG^{k_1}_{0},&x\in [-1,1 -h],\\
\label{eq:partialphi-1thetah1+}
 \Conelhzero \vartheta^{k_{1}}_{\ph}(x)&=(\iota(x)-1)h-\frac{\barlambda }h\Gruone_{\iota(x)-1} \calG^{k_1}_{0},&x\in [-1+h,1),\\
\label{eq:partialphithetah-1+}
\Clh \vartheta^{k_{-1}}_{\ph}(x)&=0,  &x\in [-1+2h,1  -h],
\end{align}
and for the canonical extensions of $\vartheta^{k_{0}}_{\mh}$, $i\in\{0,1\}$, for $x\in [-1 , 1-h]$
\begin{equation}\label{eq:diffvarth}
\begin{split}&\vartheta^{k_{0}}_{\mh}(x)-\vartheta^{k_{0}}_{\mh}(x-h)=- \calG^{k_{-1}}_{\iota(-x)-1} -\barlambda h \calG^{k_{-2}}_{\iota(-x)},\\
&\vartheta^{k_{1}}_{\mh}(x)-\vartheta^{k_{1}}_{\mh}(x-h)=- \calG^{k_{0}}_{\iota(-x)-2} -\barlambda h \calG^{k_{-1}}_{\iota(-x)-1}.
\end{split}
\end{equation}
\end{lemma}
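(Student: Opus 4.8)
## Proof plan for Lemma \ref{lem:varthek0}

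\textbf{Overall strategy.} The lemma bundles together a convergence statement ($\vartheta^{k_i}_{\pm h}\to k_i^\pm$) and a long list of exact algebraic identities for $\Crh$, $\Clh$, $\Conerhzero$ and $\Conelhzero$ applied to the interpolated functions $\vartheta^{k_i}_{\pm h}$. I would treat these as two essentially separate tasks. The convergence part is an application of the Post–Widder limits of Lemma \ref{lem:PWthms} (the uniform convergence $\frac{m}{x+1}\calG^{k_i}_{m-j,(x+1)/m}\to k_i^+(x)$ in $C[-1+\delta,1]$) together with control of the boundary interval $\iota(x)=1$ and a domination argument near $x=-1$; the algebraic identities are a matter of expanding the definitions of $\Cpmh$, $\Conepmhzero$ (Remark \ref{rmk:Dphi_h}) and $\vartheta^{k_i}_{\cdot h}$ (Definition \ref{def:varthetadetails}) and collapsing the resulting double sums via the discrete convolution identities of Lemma \ref{lem:convid}.

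\textbf{The convergence statements.} Fix $i\in\{0,1\}$ for the $C_0[-1,1)$ case. For $x$ with $\iota(x)\neq 1$, write $\vartheta^{k_i}_{\cdot h}(x)=\frac1h\big((1-\theta(\lambda))\calG^{k_i}_{\iota(x)-2-\tau}+\theta(\lambda)\calG^{k_i}_{\iota(x)-1-\tau}\big)$. Since $\iota(x)-1=\lfloor (x+1)/h\rfloor$ and $h=2/(n+1)$, one has $\iota(x)-1-\tau = m - j$ with $m$ comparable to $(x+1)/h$ and $j$ a fixed shift, so Lemma \ref{lem:PWthms} gives $\frac1h\calG^{k_i}_{\iota(x)-1-\tau}\to k_i^+(x)$ uniformly on $[-1+\delta,1]$ for every $\delta>0$ (after checking that $\frac{m}{x+1}\to\frac1h$ discrepancies and the $(x+1)^{\pm j}$ factors are absorbed exactly as in the proof of Lemma \ref{lem:PWthms}); the same holds for the $\calG^{k_i}_{\iota(x)-2-\tau}$ term, and the convex combination with weights $\theta(\lambda),1-\theta(\lambda)\in[0,1]$ converges to the same limit. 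This gives uniform convergence on $[-1+\delta,1-?]$; near $x=1$ one uses $k_i^+\in C_0(-1,1]$ (for $i=0$) together with the explicit $\iota(x)=n+1$ formula, and near $x=-1$ one uses that both $\vartheta^{k_i}_{\cdot h}$ and $k_i^+$ vanish there plus a uniform bound $|\vartheta^{k_i}_{\cdot h}(x)|\le C\,\frac{(1+x)^{i+1}}{}$-type estimate coming from positivity \eqref{eq:grun_ki} and the Post–Widder representation, to make the $\sup$ over $[-1,-1+\delta]$ small uniformly in $h$. For the $L^1[-1,1]$ statement and $i\in\{-1,0,1\}$ the argument is the same but easier: uniform convergence on compacts in $(-1,1]$ plus a uniform $L^1$-domination near $-1$ (again from \eqref{eq:grun_ki} and the identity $\sum_j \calG^{k_i}_j$-type bounds) gives $L^1$ convergence by dominated convergence. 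The boundary-interval values in Table \ref{tab:varthetadetails} are $O(1)$ on a set of measure $O(h)$, hence negligible in $L^1$ and handled separately in the $\sup$-norm case by the definitions there (they are built precisely so that the one-sided limits match).

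\textbf{The algebraic identities.} Each identity \eqref{eq:partialphithetah0}–\eqref{eq:diffvarth} is proved by the same mechanism: plug the piecewise definition of $\vartheta^{k_i}_{\cdot h}$ into the finite sums for $\Crh g(x)=\sum_{j=0}^{\iota(-x)}\Gru_j g(x+(j-1)h)$ (for the `$-$' case) or $\Clh g(x)=\sum_{j=0}^{\iota(x)}\Gru_j g(x-(j-1)h)$ and $\Conepmhzero$ analogously, shift indices so the argument's grid number runs over a block, and recognize the inner sum as $\sum_{j} \calG^{k_i}_{m-j}\Gru_j$ or $\sum_j \calG^{k_i}_{m-j}\Gruone_j$, which Lemma \ref{lem:convid} evaluates to $0$, $h$, $(m+1)h^2$, or $\pm 1/h$ as appropriate. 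For \eqref{eq:partialphithetah0} (i.e. $\Crh\vartheta^{k_0}_{\mh}=0$ on the interior) the relevant identity is $\sum_{j=0}^m\calG^{k_0}_{m-j}\Gru_j=0$ for $m\ge1$; the restriction to $x\in[-1+h,1-h]$ is exactly what guarantees that the full (untruncated) convolution appears, with no boundary correction. The error terms like $-\frac{\lambda}{h}\calG^{k_1}_0\Gru_{\iota(-x)}$ in \eqref{eq:partialphithetah1} come from the single "seam" term where the two pieces $(1-\theta(\lambda))\calG^{k_i}_{\cdot}$ and $\theta(\lambda)\calG^{k_i}_{\cdot}$ meet — one must track that one term carefully rather than letting the telescoping absorb it. The last pair \eqref{eq:diffvarth} is a direct subtraction using $\calG^{k_i}_j = h\sum_{m=0}^j\calG^{k_{i-1}}_m$ from \eqref{eq:phi+1tophi}, which turns differences of $\vartheta^{k_i}$ at adjacent grid offsets into single $\calG^{k_{i-1}},\calG^{k_{i-2}}$ coefficients. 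The `$+$' identities \eqref{eq:partialphithetah0+}–\eqref{eq:partialphithetah-1+} follow the same pattern with the transpose/shift conventions of Remark \ref{rmk:Dphi_h}, and the boundary-interval ($\iota=1$) definitions in Table \ref{tab:varthetadetails} are chosen precisely so that the seam term there produces the stated value.

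\textbf{Main obstacle.} The genuine difficulty is bookkeeping at the "seam" — the interface between the two interpolated pieces and, more delicately, the interaction with the very first/last grid interval where $\vartheta^{k_i}_{\cdot h}$ is given by the special Table \ref{tab:varthetadetails} formula rather than the generic one. Getting the index shifts exactly right so that Lemma \ref{lem:convid}'s convolution sums close up, and verifying that the leftover seam terms are precisely the claimed $O(1)$ expressions (not off by a $\Gru_0=\psi(1/h)$-sized amount), is where all the care goes; this is also exactly the point the text flags ("reverse engineering the statements of Lemmata ..."). I would do one representative case in full — say \eqref{eq:partialphithetah1} — carefully exhibiting the seam term, and then assert the rest follow by the same computation, as is done elsewhere in the paper.
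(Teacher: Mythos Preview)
Your plan is correct and matches the paper's own proof: convergence is obtained from the Post--Widder limits of Lemma~\ref{lem:PWthms} on $[-1+\delta,1]$ together with an explicit near-boundary estimate (the paper uses the monotonicity $\calG^{k_i}_{m}\le\calG^{k_i}_{m+1}$ from \eqref{eq:phi+1tophi}--\eqref{eq:grun_ki} rather than a generic domination bound, but this is the same idea), and each algebraic identity is proved by expanding the definition of $\vartheta^{k_i}_{\cdot h}$, splitting into the $\theta(\lambda)$ and $1-\theta(\lambda)$ (or $\lambda,\barlambda$) parts, and collapsing via the discrete convolution identities of Lemma~\ref{lem:convid}, with the seam/boundary-interval term producing the stated residual. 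Your identification of the main obstacle---tracking the single leftover term at the interface with the $\iota=1$ interval so that no $\Gru_0$-sized error survives---is exactly the point the paper handles by direct case-by-case computation in the appendix.
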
 
\begin{proof} Recall Definition \ref{def:varthetadetails}, Remarks \ref{rmk:canon} and  \ref{rmk:coeftheta} and that $h=2/(n+1)$. A direct check confirms that for each $h>0$, $\vartheta^{k_{i}}_{\mh}\in C_0[-1,1)$ for $i\in\{0,1\}$, and $\vartheta^{k_{i}}_{\ph}\in X$ in the case $X=L^1[-1,1]$ for $i\in\{-1,0,1\}$. We now prove the convergences. 
For all cases, for an arbitrary $\epsilon>0$, let $\delta>0$ such that $\|k_i^+\|_{C[-1,-1+\delta]}\le \epsilon$ for both $i\in\{0,1\}$.  Then  for each $i\in\{-1,0,1\}$ and all $\pm x\in [-1+\delta,1]$ and $h<\delta$, recalling that $\vartheta^{k_{i}}_{\ph}(-x)=\vartheta^{k_{i}}_{\mh}(x)$, we have 
\begin{align*}
&\,\left|\vartheta^{k_{i}}_{\pm h}(x)-k^\pm_i(x)\right|\\
\le &\,\left|\frac1h\calG^{k_i}_{\iota(x)-2-\tau,h}-k^+_i(x)\right|+ \left|\frac1h\calG^{k_i}_{\iota(x)-1-\tau,h}-k_i^+(x)\right|\\
\le&\,  \left|\frac1h\calG^{k_i}_{\iota(x)-2-\tau,h}-k_i^+((\iota(x)-1)h-1)\right|+  \left|\frac1h\calG^{k_i}_{\iota(x)-1-\tau,h}-k_i^+((\iota(x)-1)h-1)\right|\\
&\,+2\left|k_i^+((\iota(x)-1)h-1)-k_i^+(x)\right|,
\end{align*}
and recalling that    $x=h(\iota(x)-1)-1+h\lambda(x)$, we conclude that the last term is $o(1)$ as $h\to 0$ uniformly in $ x\in [-1+\delta,1]$   by uniform continuity of $k_i^+$ on $[-1+\delta,1]$, and the first two terms are also $o(1)$ as $h\to 0$ uniformly in $ x\in [-1+\delta,1]$ by Lemma \ref{lem:PWthms}. It remains to show the convergence for  $\pm x\in [-1,-1+\delta)$. For $i=1$ and any $X$ or $i=0$ and $X=C_0(\Omega)$, recalling that $\calG^{k_{i}}_{m,h}\le \calG^{k_{i}}_{m+1,h}$ for any $m\in\mathbb N$ by \eqref{eq:phi+1tophi} and \eqref{eq:grun_ki}, we have  for any $\pm x\in[-1,-1+\delta)$
\begin{align*}
|\vartheta^{k_{i}}_{\pm h}(x)-k^\pm_i(x)|&\le  \frac1h\calG^{k_i}_{\iota(x)-2-\tau,h}+  \frac1h\calG^{k_i}_{\iota(x)-1-\tau,h}+\|k_i^+\|_{C[-1,-1+\delta]}\\
&\le  \frac1h\calG^{k_i}_{\iota(-1+\delta)-2-\tau,h}+  \frac1h\calG^{k_i}_{\iota(-1+\delta)-1-\tau,h}+\epsilon,
\end{align*} 
which is bounded by $5\epsilon$ for small $h$ due to   Lemma \ref{lem:PWthms}.  For $i=0$ and $X=L^1[-1,1]$ we use the above inequalities on $[-1+h,-1+\delta)$ meanwhile on the first  interval $[-1,-1+h)$, we use  \eqref{eq:GCdef} and \eqref{eq:convhomega} to obtain
\begin{align*}
|\vartheta^{k_{0}}_{\ph}(x)-k^+_0(x)|&\le \left|-\frac{\Gru_0}{h\Gru_1}\left(\barlambda \calG^{k_0}_0  +\lambda\calG^{k_0}_1 \right)\right|+k_0^+(h-1)\\
&= \frac{\sym(1/h)}{\sym'(1/h)}\left(\barlambda  \frac{1}{\sym(1/h)}  +\lambda\frac{\sym'(1/h)}{h(\sym(1/h))^2} \right)+k_0^+(h-1)\\
&=\barlambda  \frac{1}{\sym'(1/h)}  +\lambda\frac{1}{h\sym(1/h)}+k_0^+(h-1)\to 0 ,\quad \text{as }h\to 0.
\end{align*}
For the case $i=-1$, we compute using \eqref{eq:phi+1tophi} and \eqref{eq:grun_ki}
\begin{align*}
\int_{-1}^{-1+\delta}|\vartheta^{k_{-1}}_{\ph}(x)-k^+_{-1}(x)|\,\dd x &\le \frac2h\sum_{j=0}^{\iota(-1+\delta)-1} \calG^{k_{-1}}_{j,h} h + \|k_0^+\|_{C[-1,-1+\delta]}\\
&= \frac2h \calG^{k_0}_{\iota(-1+\delta)-1,h} + \|k_0^+\|_{C[-1,-1+\delta]},
\end{align*}
and we conclude by Lemma \ref{lem:PWthms} as above. The proofs of \eqref{eq:partialphithetah0}-\eqref{eq:diffvarth} are easily obtained by    rewriting each function to recover a convolution  term (to apply Lemma \ref{lem:convid}) plus a remainder, and the proof can be found the Appendix \ref{sec:app_lem:varthek0}.  
\end{proof}
\begin{remark}\label{rmk:canon_extra}
If $\Cpmh $ is applied to the canonical extension   of $\vartheta^{k_{0}}_{\pm h},\,\vartheta^{k_{1}}_{\pm h}$ or $\vartheta^{k_{-1}}_{\ph}$ (Remark \ref{rmk:canon}), then \eqref{eq:partialphithetah0} and \eqref{eq:partialphithetah-1+}  hold for $\pm x\in (1-h,1]$, meanhwile  \eqref{eq:partialphithetah1}, \eqref{eq:partialphithetah0+} and \eqref{eq:partialphithetah1+} hold on $\pm x\in (1-h,1)$.
\end{remark}
We conclude this section with some key properties of the approximating functions, showing that the interaction of $\vartheta^{k_{i}}_{\pm h}$ with $ \Gen^{{\rm LR}}_{\pm h}$   can resemble the one with $\Cpmh $, partially thanks to the specific interpolation weights of the $\vartheta^{k_{i}}_{\pm h}$'s. (The explicit entries of the matrices $\Gen^{\mathrm{LR}}_{n+1}$ involved  in the next proofs can be found in Section \ref{sec:caseC}.)

\begin{lemma}\label{lem:varthetaDD}
Assume \ref{H0}. Then  
\begin{equation}\label{eq:partialthetah0}
G^{\mathrm{DD}}_{\mh}\vartheta^{k_{0}}_{\mh}(x)=0,\quad 1\le\iota(-x)\le n,
\end{equation}
\end{lemma}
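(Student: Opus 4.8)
I would split according to the grid number, using that the hypothesis $1\le\iota(-x)\le n$ is equivalent to $x\in(-1+h,1]$, i.e.\ $\iota(x)\in\{2,\dots,n+1\}$. The plan is to treat the interior grids $\iota(x)\in\{2,\dots,n\}$ by reduction to the free identity \eqref{eq:partialphithetah0}, and the last grid $\iota(x)=n+1$ by a direct computation with the $D^r$-interpolated last row of $G^{\mathrm{DD}}_{n+1}$. The inputs are: the $\mathrm{DD}$ weights of Table \ref{mainTable}, the matrix \eqref{interpolationmatrixGBC} (and its explicit entries in Section \ref{sec:caseC}), Table \ref{tab:varthetadetails}, and the discrete convolution identity $\sum_{j=0}^m\calG^{k_0}_{m-j}\Gru_j=\mathbf 1_{\{m=0\}}$ from Lemma \ref{lem:convid}, which gives $\calG^{k_0}_0=1/\Gru_0$ and $\calG^{k_0}_1=-\Gru_1/\Gru_0^2$.

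\textbf{Interior grids.} With the $\mathrm{DD}$ weights one has $b_j^l=b_j^r=\Gru_j$ and $N^l=N^r=\mathbf 1$, and the factors $D^l(\lambda),D^r(\lambda)$ appear only in rows $1$ and $n+1$ of \eqref{interpolationmatrixGBC}; hence for $2\le\iota(x)\le n$ the $\iota(x)$-th row of $G^{\mathrm{DD}}_{n+1}(\lambda(x))$ is the truncated Gr\"unwald band, with $(i,j)$-entry $\Gru_{j-i+1}$. Since $\vartheta^{k_0}_{\mh}$ is supported in $[-1,1]$ with $\vartheta^{k_0}_{\mh}(1)=\vartheta^{k_0}_{\cdot\,h}(-1)=0$ (Table \ref{tab:varthetadetails}), applying this row to $\Pi_{n+1}\vartheta^{k_0}_{\mh}$ at $x$ reproduces exactly $\sum_{m\ge0}\Gru_m\vartheta^{k_0}_{\mh}(x+(m-1)h)=\Crh\vartheta^{k_0}_{\mh}(x)$, the terms omitted by the truncation being those with argument $\ge1$, which vanish. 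By \eqref{eq:partialphithetah0} this is $0$, since then $x\in(-1+h,1-h)$.

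\textbf{Last grid.} For $\iota(x)=n+1$, i.e.\ $x\in[1-h,1]$, the last row of $G^{\mathrm{DD}}_{n+1}(\lambda)$ is $(0,\dots,0,D^r(\lambda)\Gru_0,\Gru_1)$, so with $\lambda=\lambda(x)$,
\[
G^{\mathrm{DD}}_{\mh}\vartheta^{k_0}_{\mh}(x)=D^r(\lambda)\,\Gru_0\,\vartheta^{k_0}_{\mh}(x-h)+\Gru_1\,\vartheta^{k_0}_{\mh}(x).
\]
I would read off $\vartheta^{k_0}_{\mh}(x)=\vartheta^{k_0}_{\cdot\,h}(-x)$ and $\vartheta^{k_0}_{\mh}(x-h)=\vartheta^{k_0}_{\cdot\,h}(h-x)$ from the $C_0(\Omega)$-column of Table \ref{tab:varthetadetails} for $i=0$ (the $\iota=1$ formula applied at $-x$, and the $\iota\neq1$ formula with $\iota=2$ applied at $h-x$), using the elementary identity $\lambda(-x)=\barlambda(x)=\lambda(h-x)$ valid on the last grid; this gives $\vartheta^{k_0}_{\mh}(x)=h^{-1}\barlambda\,\calG^{k_0}_0$ and $\vartheta^{k_0}_{\mh}(x-h)=h^{-1}(\lambda\calG^{k_0}_0+\barlambda\calG^{k_0}_1)$. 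Substituting $\calG^{k_0}_0=1/\Gru_0$, $\calG^{k_0}_1=-\Gru_1/\Gru_0^2$ and $D^r(\lambda)=\frac{\barlambda\Gru_0\calG^{k_0}_1}{\barlambda\Gru_0\calG^{k_0}_1+\lambda}=\frac{-\barlambda\Gru_1}{-\barlambda\Gru_1+\lambda\Gru_0}$, the factor $\lambda\Gru_0-\barlambda\Gru_1$ occurring in $\Gru_0\vartheta^{k_0}_{\mh}(x-h)$ cancels the denominator of $D^r(\lambda)$, so that $D^r(\lambda)\Gru_0\vartheta^{k_0}_{\mh}(x-h)=-h^{-1}\barlambda\Gru_1/\Gru_0=-\Gru_1\vartheta^{k_0}_{\mh}(x)$, and the sum vanishes.

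\textbf{Main obstacle.} The cancellation itself is a two-line computation; the work is the bookkeeping on the last two grids — verifying that the reflected $\vartheta^{k_0}_{\mh}$ there takes precisely the stated form (this is where the reflection identity $\lambda(-x)=\barlambda(x)$ and the exact interpolation weights $\theta(\lambda),\tau$ of Table \ref{tab:varthetadetails} are used) — and confirming that on the interior grids the truncated band genuinely equals $\Crh\vartheta^{k_0}_{\mh}(x)$, so that \eqref{eq:partialphithetah0} applies. The remaining details (e.g.\ the trivial edge case $\lambda(x)=0$ at $x=1-h$) are routine.
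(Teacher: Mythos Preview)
Your proposal is correct and follows essentially the same approach as the paper's proof: split into interior grids (reduce to $\Crh\vartheta^{k_0}_{\mh}$ and invoke \eqref{eq:partialphithetah0}) and the boundary grid (direct computation showing the $D^r$-weight forces cancellation). The only cosmetic difference is that the paper indexes the cases by $\iota(-x)$ rather than $\iota(x)$, and keeps the boundary algebra in terms of $\calG^{k_0}_0,\calG^{k_0}_1$ symbolically instead of substituting $\calG^{k_0}_0=1/\Gru_0$, $\calG^{k_0}_1=-\Gru_1/\Gru_0^2$; your route through the explicit substitution is equally valid. One very minor slip: the equivalence ``$1\le\iota(-x)\le n \Leftrightarrow \iota(x)\in\{2,\dots,n+1\}$'' fails at the grid point $x=-1+h$ (where $\iota(-x)=n+1$), but this point is not in the lemma's scope anyway.
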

\begin{proof} For $2\le\iota(-x)\le n$, 
  $G^{\mathrm{DD}}_{\mh}\vartheta^{k_{0}}_{\mh}(x)=\Crh \vartheta^{k_{0}}_{\mh}(x)=0$ by \eqref{eq:partialphithetah0}. For $\iota(-x)=1$, we compute
\begin{align*}
G^{\mathrm{DD}}_{\mh}\vartheta^{k_{0}}_{\mh}(x)&=D^r(\lambda)\Gru_0  \vartheta^{k_{0}}_{\mh}(x-h) + \Gru_1 \vartheta^{k_{0}}_{\mh}(x)\\
&=\frac1h\left[-\frac{ \barlambda  \Gru_1  \calG_0^{k_0} }{\Gru_0 (\barlambda \calG_1^{k_0} +\lambda \calG_0^{k_0})}\Gru_0  \left( \lambda\calG^{k_{0}}_{0}+\barlambda \calG^{k_{0}}_{1}\right)+\Gru_1   \barlambda \calG^{k_{0}}_{0} \right]\\
&=0.
\end{align*}

\end{proof}

\begin{lemma}\label{lem:varthetaDD2} Assume \ref{H0}. Then, as $h\to0$,
\begin{equation}\label{eq:DDk0+}
G^{\mathrm{DD}}_{\ph}\vartheta^{k_{0}}_{\ph}(x)=
\left\{\begin{split}&0, &1\le\iota(x)\le n-1,\\
&(D^r(\lambda)-1)\Gru_{0} \vartheta^{k_{0}}_{\ph}(x+h)+o(1), &\iota(x)=n,\\
&- \Gru_{0}\frac1h \left(\barlambda  \calG^{k_0}_{n}+ \lambda \calG^{k_0}_{n+1}\right)+o(1), & \iota(x)=n+1.
\end{split}\right.
\end{equation}

\end{lemma}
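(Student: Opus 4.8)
The plan is to compute $G^{\mathrm{DD}}_{\ph}\vartheta^{k_{0}}_{\ph}(x)$ straight from its definition \eqref{G+hBC} — as the $\iota(x)$-th entry of $\big(G^{\mathrm{DD}}_{n+1}(\lambda(x))\big)^{T}$ applied to $(\Pi_{n+1}\vartheta^{k_{0}}_{\ph})(\lambda(x))$ — and to compare the result with $\Clh\vartheta^{k_{0}}_{\ph}(x)$, which is already recorded in \eqref{eq:partialphithetah0+}. Using the explicit entries of $G^{\mathrm{DD}}_{n+1}$ (Section \ref{sec:caseC}) one first notes that, for the ${\rm DD}$ boundary weights, this matrix agrees in all but its first and last rows with the central $(n+1)\times(n+1)$ block of the free Gr\"unwald matrix $G_h$ from \eqref{eq:Gh}: the first row is that of $G_h$ with its entries in columns $2,\dots,n$ scaled by $D^{l}(\lambda)$ and its last entry replaced by $0$, while in the last row the sub-diagonal entry $\Gru_{0}$ is scaled by $D^{r}(\lambda)$. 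Reading off the $\iota(x)$-th column, using $(\Pi_{n+1}\vartheta^{k_{0}}_{\ph})_{j}(\lambda)=\vartheta^{k_{0}}_{\ph}\big(x+(j-\iota(x))h\big)$, and re-indexing, one obtains for $2\le\iota(x)\le n-1$
\[
G^{\mathrm{DD}}_{\ph}\vartheta^{k_{0}}_{\ph}(x)=\Clh\vartheta^{k_{0}}_{\ph}(x)+\big(D^{l}(\lambda)-1\big)\Gru_{\iota(x)}\,\vartheta^{k_{0}}_{\ph}\big(x-(\iota(x)-1)h\big),
\]
for $\iota(x)=n$ the additional term $\big(D^{r}(\lambda)-1\big)\Gru_{0}\,\vartheta^{k_{0}}_{\ph}(x+h)$ coming from the modified last row, and for $\iota(x)=n+1$ the modified first row contributes $-\Gru_{n+1}\,\vartheta^{k_{0}}_{\ph}(x-nh)$ while the term $\Gru_{0}\vartheta^{k_{0}}_{\ph}(x+h)$ is absent (this absence being precisely the killing at $1$), so that, with the canonical extension of $\vartheta^{k_{0}}_{\ph}$ onto $(1,1+h]$ (Remark \ref{rmk:canon}),
\[
G^{\mathrm{DD}}_{\ph}\vartheta^{k_{0}}_{\ph}(x)=\Clh\vartheta^{k_{0}}_{\ph}(x)-\Gru_{n+1}\,\vartheta^{k_{0}}_{\ph}(x-nh)-\Gru_{0}\,\vartheta^{k_{0}}_{\ph}(x+h).
\]
The boundary interval $\iota(x)=1$ is not covered by \eqref{eq:partialphithetah0+} but is a direct two-term computation from Table \ref{tab:varthetadetails} and Definition \ref{def:varthetadetails}, giving $0$ at once.

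Next I would substitute the two inputs. By \eqref{eq:partialphithetah0+}, extended onto $(1-h,1)$ via Remark \ref{rmk:canon_extra}, $\Clh\vartheta^{k_{0}}_{\ph}(x)=\Gru_{\iota(x)}\big(-\tfrac{\lambda}{h}\calG^{k_{0}}_{0}+A\big)$ with $A:=-\tfrac{\Gru_{0}}{h\Gru_{1}}\big(\barlambda\,\calG^{k_{0}}_{0}+\lambda\,\calG^{k_{0}}_{1}\big)$, and since $x-(\iota(x)-1)h=\lambda h-1$ (and $x-nh=\lambda h-1$ when $\iota(x)=n+1$) lies in the first grid, Table \ref{tab:varthetadetails} gives $\vartheta^{k_{0}}_{\ph}(\lambda h-1)=A$. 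The crux is then the elementary identity $-\tfrac{\lambda}{h}\calG^{k_{0}}_{0}+D^{l}(\lambda)\,A=0$, which follows from the $m=0$ and $m=1$ cases of the convolution identities in Lemma \ref{lem:convid} (i.e.\ $\calG^{k_{0}}_{0}\Gru_{0}=1$ and $\calG^{k_{0}}_{1}\Gru_{0}+\calG^{k_{0}}_{0}\Gru_{1}=0$) together with the definition of $D^{l}(\lambda)$ in Table \ref{mainTable}. Hence the $D^{l}$-correction exactly cancels the $-\tfrac{\lambda}{h}\calG^{k_{0}}_{0}$ part of \eqref{eq:partialphithetah0+}, leaving $G^{\mathrm{DD}}_{\ph}\vartheta^{k_{0}}_{\ph}(x)=0$ for $1\le\iota(x)\le n-1$ and $\big(D^{r}(\lambda)-1\big)\Gru_{0}\,\vartheta^{k_{0}}_{\ph}(x+h)$ for $\iota(x)=n$.

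For $\iota(x)=n+1$ the same cancellation leaves $-\Gru_{n+1}\tfrac{\lambda}{h}\calG^{k_{0}}_{0}-\Gru_{0}\,\vartheta^{k_{0}}_{\ph}(x+h)$; since $\calG^{k_{0}}_{0}=1/\sym(1/h)$ with $h\,\sym(1/h)\to\infty$ (by \eqref{eq:convhomega}) and $\Gru_{n+1}=\Gru_{n+1,h}\to0$ (by \eqref{eq:GClimphi}), the first term is $o(1)$, and the canonically extended value $\vartheta^{k_{0}}_{\ph}(x+h)=\tfrac1h\big(\barlambda\,\calG^{k_{0}}_{n}+\lambda\,\calG^{k_{0}}_{n+1}\big)$ (Definition \ref{def:varthetadetails}, $\iota(x+h)=n+2$) gives the third line of the claim. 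The routine but lengthy part is the grid-index bookkeeping — translating column entries of $G^{\mathrm{DD}}_{n+1}$ into shifted evaluations of $\vartheta^{k_{0}}_{\ph}$ and tracking which entries differ from those of $G_h$ — which is mechanical; the one genuinely delicate step is the last interval $\iota(x)=n+1$, where \eqref{eq:partialphithetah0+} is usable only after passing to the canonical extension and the spurious $\Gru_{n+1}$-term must be discarded via \eqref{eq:convhomega}--\eqref{eq:GClimphi} (more generally the Post--Widder limits of Lemma \ref{lem:PWthms}); this is the source of the ``$+o(1)$'' in the statement.
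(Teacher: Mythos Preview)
Your argument is correct and follows essentially the same route as the paper: a case-by-case evaluation of $G^{\mathrm{DD}}_{\ph}\vartheta^{k_{0}}_{\ph}$ via \eqref{eq:partialphithetah0+} and the explicit values of $\vartheta^{k_{0}}_{\ph}$ on the first grid, with the $D^{l}(\lambda)$-term cancelling the $-\tfrac{\lambda}{h}\calG^{k_{0}}_{0}$ residue. Your framing ``$\Clh+\text{corrections}$'' is slightly more systematic than the paper's direct computation, and you in fact observe that for $\iota(x)=n$ the two residual terms cancel \emph{exactly} (the paper bounds them separately as $o(1)$ via \eqref{eq:GClimphi}--\eqref{eq:convhomega}), so your statement there is marginally sharper than the lemma's ``$+o(1)$''; for $\iota(x)=n+1$ the cancellation is not ``the same'' one (there is no $D^{l}(\lambda)$-factor, the first-row entry being $0$) but the arithmetic you write down is correct.
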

\begin{proof} 
For $\iota(x)=1$
\begin{align*}
G^{\mathrm{DD}}_{\ph}\vartheta^{k_{0}}_{\ph}(x)&=\Gru_{1} \vartheta^{k_{0}}_{\ph}(x)+ \Gru_{0} \vartheta^{k_{0}}_{\ph}(x+h)\\ 
&=\Gru_{1} \frac1h \frac{-\Gru_0}{\Gru_1} \left(\barlambda  \calG^{k_0}_{0}+ \lambda \calG^{k_0}_{1}\right)+\frac1h \Gru_{0}\left(\barlambda  \calG^{k_0}_{0}+ \lambda \calG^{k_0}_{1}\right)
=0.
\end{align*}
For $2\le\iota(x)\le n-1$, using \eqref{eq:partialphithetah0+} in the second identity and the definitions \eqref{eq:GCdef} in the third,
\begin{align}\nonumber
G^{\mathrm{DD}}_{\ph}\vartheta^{k_{0}}_{\ph}(x)&=\sum_{j=0}^{\iota(x)-1} \Gru_{j} \vartheta^{k_{0}}_{\ph}(x-(j-1)h)+  D^l(\lambda) \Gru_{\iota(x)} \vartheta^{k_{0}}_{\ph}(x-(\iota(x)-1)h) \\ \nonumber
&=-\frac\lambda h \Gru_{\iota(x)} \calG^{k_0}_{0} + D^l(\lambda) \Gru_{\iota(x)} \vartheta^{k_{0}}_{\ph}(x-(\iota(x)-1)h) \\ \nonumber
&=-\frac\lambda h \Gru_{\iota(x)} \calG^{k_0}_{0}+\lambda \Gru_{\iota(x)} \frac{ \calG^{k_0}_0}{h} =0.
\end{align}
For $\iota(x)=n$,  using \eqref{eq:partialphithetah0+} again,
\begin{align*}\nonumber
&\,G^{\mathrm{DD}}_{\ph}\vartheta^{k_{0}}_{\ph}(x)\\
=&\,D^r(\lambda)\Gru_{0} \vartheta^{k_{0}}_{\ph}(x+h)+\sum_{j=1}^{n-1} \Gru_{j} \vartheta^{k_{0}}_{\ph}(x-(j-1)h)+  D^l(\lambda) \Gru_{n} \vartheta^{k_{0}}_{\ph}(x-(n-1)h) \\ \nonumber
=&\,(D^r(\lambda)-1)\Gru_{0} \vartheta^{k_{0}}_{\ph}(x+h)-\frac\lambda h \Gru_{n} \calG^{k_0}_{0} +  D^l(\lambda) \frac{\Gru_{n}}h \frac{-\Gru_0}{\Gru_1}\left(\barlambda  \calG^{k_0}_{0}+ \lambda \calG^{k_0}_{1}\right)\\ \nonumber
=&\,(D^r(\lambda)-1)\Gru_{0} \vartheta^{k_{0}}_{\ph}(x+h)+o(1),
\end{align*}
where the  $o(1)$ term is a consequence of \eqref{eq:GClimphi} and \eqref{eq:convhomega} with  $  \calG^{k_0}_{0}/h=   (h\sym(1/h))^{-1}=o(1)$ 
and  
\[
\frac1h\frac{-\Gru_0}{\Gru_1} \left(\barlambda  \calG^{k_0}_{0}+ \lambda \calG^{k_0}_{1}\right) = \frac{\sym(1/h)}{\sym'(1/h)}\left(\barlambda  \frac{1}{\sym(1/h)}+ \lambda \frac1h \frac{\sym'(1/h)}{(\sym(1/h))^2}\right)=o(1).
\]
 For $\iota(x)=n+1$, using   \eqref{eq:partialphithetah0+}   for the canonical extension (Remark \ref{rmk:canon_extra}\footnote{Which holds for $x=1$ for the current summation, by observations as in Remark \ref{rmk:Dphi_h}.})
\begin{align}\nonumber
G^{\mathrm{DD}}_{\ph}\vartheta^{k_{0}}_{\ph}(x)&=\frac{- \Gru_{0}}h \left(\barlambda  \calG^{k_0}_{n}+ \lambda \calG^{k_0}_{n+1}\right) -\frac\lambda h \Gru_{n+1} \calG^{k_0}_{0}=\frac{- \Gru_{0}}h \left(\barlambda  \calG^{k_0}_{n}+ \lambda \calG^{k_0}_{n+1}\right)+o(1).
\end{align}
\end{proof}

\begin{lemma}\label{lem:NRk-1} 
Assume \ref{H0}. Then $G^{\mathrm{N^*R}}_{\ph} \vartheta^{k_{-1}}_{\ph}(x)=0$ for $1\le\iota(x)\le n-1$.
\end{lemma}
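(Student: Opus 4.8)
The plan is to mimic the computation in Lemma~\ref{lem:varthetaDD2} for the interior case, but now tracking the effect of the $\mathrm{N^*R}$ left-boundary weights instead of the $\mathrm{DD}$ ones. First I would split into the two cases $2\le\iota(x)\le n-1$ and $\iota(x)=1$. For $2\le\iota(x)\le n-1$, the relevant row of $G^{\mathrm{N^*R}}_{n+1}$ has the off-diagonal entries $\Gru_0,\Gru_1,\dots$ in the usual positions, with only the leftmost entry modified according to Table~\ref{mainTable} ($b_1^l=\Gru_0+\Gru_1$, $b_i^l=\Gru_i$ for $i\ge2$, and $D^l=\mathbf1$, $N^l(\lambda)=\lambda$). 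Since $D^l(\lambda)=1$ for the $\mathrm{N^*R}$ case, the interpolation does not alter the weights away from the corner entry, so for these $\iota(x)$ one gets
\[
G^{\mathrm{N^*R}}_{\ph}\vartheta^{k_{-1}}_{\ph}(x)=\sum_{j=0}^{\iota(x)-1}\Gru_j\,\vartheta^{k_{-1}}_{\ph}(x-(j-1)h)+\Gru_{\iota(x)}\,\vartheta^{k_{-1}}_{\ph}(x-(\iota(x)-1)h)=\Clh\vartheta^{k_{-1}}_{\ph}(x),
\]
which equals $0$ by \eqref{eq:partialphithetah-1+} once $\iota(x)\ge3$ (i.e.\ $x\ge -1+2h$). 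For $\iota(x)=2$ the only difference from $\Clh\vartheta^{k_{-1}}_{\ph}(x)$ is that the coefficient in front of $\vartheta^{k_{-1}}_{\ph}(x-h)=\vartheta^{k_{-1}}_{\ph}$ evaluated on the first grid interval is $b_1^l=\Gru_0+\Gru_1$ rather than $\Gru_1$; so I would write $G^{\mathrm{N^*R}}_{\ph}\vartheta^{k_{-1}}_{\ph}(x)=\Clh\vartheta^{k_{-1}}_{\ph}(x)+\Gru_0\,\vartheta^{k_{-1}}_{\ph}(x-h)$ and then substitute the $\iota=1$ value of $\vartheta^{k_{-1}}_{\ph}$ from Table~\ref{tab:varthetadetails}, $\vartheta^{k_{-1}}_{\cdot\,h}=h^{-1}\theta(\lambda)\calG^{k_{-1}}_0$ with $\theta(\lambda)=\lambda/(\lambda+\barlambda h\Gru_0\calG^{k_{-1}}_1)$, being careful that the argument $x-h$ has grid location $\lambda(x)$ so the same $\theta(\lambda)$ appears. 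One then checks the two terms cancel, using $\Clh\vartheta^{k_{-1}}_{\ph}(x)$ expanded explicitly via Remark~\ref{rmk:Dphi_h} together with the convolution identity \eqref{eq:dconvk-1}.

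For $\iota(x)=1$ the computation is the genuinely new one. Here the relevant row of $G^{\mathrm{N^*R}}_{n+1}$ is the \emph{second} row, whose first two entries are $N^l(\lambda)\Gru_0=\lambda\Gru_0$ and $\lambda\Gru_1+\barlambda b_1^l=\lambda\Gru_1+\barlambda(\Gru_0+\Gru_1)=\Gru_1+\barlambda\Gru_0$ (the remaining entries being $\Gru_2,\Gru_3,\dots$, irrelevant here since $\vartheta^{k_{-1}}_{\ph}$ vanishes to the left of $-1$). So
\[
G^{\mathrm{N^*R}}_{\ph}\vartheta^{k_{-1}}_{\ph}(x)=\lambda\Gru_0\,\vartheta^{k_{-1}}_{\ph}(x-h)+(\Gru_1+\barlambda\Gru_0)\,\vartheta^{k_{-1}}_{\ph}(x)+\Gru_2\,\vartheta^{k_{-1}}_{\ph}(x+h)+\cdots
\]
but $x-h\in(-\infty,-1)$ so the first term is $0$, and I would then plug in $\vartheta^{k_{-1}}_{\ph}(x)=h^{-1}\theta(\lambda)\calG^{k_{-1}}_0$ on the first interval and $\vartheta^{k_{-1}}_{\ph}=h^{-1}((1-\theta(\lambda))\calG^{k_{-1}}_{\iota-2}+\theta(\lambda)\calG^{k_{-1}}_{\iota-1})$ with $\tau=0$ on the later intervals, i.e.\ $\vartheta^{k_{-1}}_{\ph}(x+(j-1)h)=h^{-1}((1-\theta)\calG^{k_{-1}}_{j-1}+\theta\,\calG^{k_{-1}}_j)$ for $j\ge1$. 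Grouping the sum by the coefficients of $(1-\theta)$ and of $\theta$, the $(1-\theta)$-part is $h^{-1}(1-\theta)\sum_{j\ge2}\Gru_j\calG^{k_{-1}}_{j-1}$ and, combined with the $\Gru_1$ piece of the diagonal and the shifted $\theta$-part, one recognizes two copies of the discrete convolution $\sum_{m}\calG^{k_{-1}}_{m-j}\Gru_j$. Using \eqref{eq:dconvk-1} (which gives $1/h$ at $m=0$, $-1/h$ at $m=1$, and $0$ beyond), the bulk terms collapse, leaving a handful of boundary terms in $\calG^{k_{-1}}_0$ and $\Gru_0,\Gru_1$; plugging in the explicit $\theta(\lambda)$ and using $\calG^{k_{-1}}_0=1/\sym(1/h)$, $\Gru_0=\sym(1/h)$, so that $\Gru_0\calG^{k_{-1}}_0=1$, these should cancel to $0$.

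I expect the main obstacle to be the $\iota(x)=1$ bookkeeping: correctly assembling which shifted evaluations of $\vartheta^{k_{-1}}_{\ph}$ land in which grid interval, getting the index offsets in the two-term interpolation formula right (the $\tau=0$ convention and the fact that the $j=1$ shifted value sits in the \emph{first} non-trivial interval, where $\vartheta^{k_{-1}}_{\ph}$ itself already uses the special corner formula from Table~\ref{tab:varthetadetails}), and then matching the leftover corner terms against the precise value of $\theta(\lambda)$ so that everything telescopes. This is exactly the kind of ``reverse-engineered interpolation weight'' cancellation flagged in the discussion before Definition~\ref{def:varthetadetails}, so the identity is expected to hold on the nose, but verifying it requires care with the discrete convolution identity \eqref{eq:dconvk-1} and with the shift operator $S_{+h}$ implicit in $\Clh$ versus $\Clhzero$. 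The case $2\le\iota(x)\le n-1$ with $\iota(x)\ge3$ is immediate from \eqref{eq:partialphithetah-1+}, and $\iota(x)=2$ is a short explicit check, so the write-up will be dominated by the $\iota(x)=1$ calculation.
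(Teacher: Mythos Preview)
Your overall plan---split into $\iota(x)\ge3$, $\iota(x)=2$, and $\iota(x)=1$, then exploit the discrete convolution identity \eqref{eq:dconvk-1} together with the special choice of $\theta(\lambda)$---is exactly the paper's. The case $\iota(x)\ge3$ is fine: there $G^{\mathrm{N^*R}}_{+h}\vartheta^{k_{-1}}_{+h}=\Clh\vartheta^{k_{-1}}_{+h}=0$ by \eqref{eq:partialphithetah-1+}.

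There is, however, a genuine setup error in your boundary cases: the forward operator $G^{\BC}_{+h}$ is defined in \eqref{G+hBC} via the \emph{transpose} $(G^{\BC}_{n+1})^T$, so for $\iota(x)=i$ you must read off the $i$-th \emph{column} of $G^{\mathrm{N^*R}}_{n+1}$, not a row. For $\iota(x)=1$ the first column of \eqref{interpolationmatrixGBC_N*D} has only two nonzero entries, giving
\[
G^{\mathrm{N^*R}}_{+h}\vartheta^{k_{-1}}_{+h}(x)=(\Gru_0+\Gru_1)\,\vartheta^{k_{-1}}_{+h}(x)+\lambda\Gru_0\,\vartheta^{k_{-1}}_{+h}(x+h),
\]
much simpler than you expect; it vanishes using $\Gru_0\calG^{k_{-1}}_0=1/h$, $\Gru_0\calG^{k_{-1}}_1+\Gru_1\calG^{k_{-1}}_0=-1/h$, and the consequence $(1-\theta)\calG^{k_{-1}}_0=(\barlambda/\lambda)\theta\calG^{k_{-1}}_1$ of the definition of $\theta$. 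For $\iota(x)=2$ the second column yields
\[
G^{\mathrm{N^*R}}_{+h}\vartheta^{k_{-1}}_{+h}(x)=\Gru_0\,\vartheta^{k_{-1}}_{+h}(x+h)+(\Gru_1+\barlambda\Gru_0)\,\vartheta^{k_{-1}}_{+h}(x)+\Gru_2\,\vartheta^{k_{-1}}_{+h}(x-h),
\]
so the deviation from $\Clh$ is $+\barlambda\Gru_0\,\vartheta^{k_{-1}}_{+h}(x)$ at $x$, not $+\Gru_0\,\vartheta^{k_{-1}}_{+h}(x-h)$; this case cannot be reduced to \eqref{eq:partialphithetah-1+} (which needs $\iota(x)\ge3$) and must be checked directly using \eqref{eq:dconvk-1} for $m=0,1,2$ together with $\lambda(1-\theta)/h=\barlambda\Gru_0\calG^{k_{-1}}_1\theta$. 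With the row-based formulae you wrote, the required cancellations do not occur.
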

\begin{proof}
From \eqref{eq:dconvk-1} we have the three identities
\begin{equation}\label{eq:num1}
\Gru_0 \calG^{k_{-1}}_0=\frac1h,\quad\Gru_0 \calG^{k_{-1}}_1+\Gru_1 \calG^{k_{-1}}_0= -\frac1h,\quad \Gru_0\calG^{k_{-1}}_2+\Gru_2\calG^{k_{-1}}_0 =-\Gru_1\calG^{k_{-1}}_1,
\end{equation}
and as $\theta= \lambda/(h\Gru_0 \calG^{k_{-1}}_1\barlambda +\lambda )$ we easily obtain
\begin{align}\label{eq:num3}
(1-\theta)\calG^{k_{-1}}_0&=\frac{\barlambda }\lambda \theta \calG^{k_{-1}}_1,\\ \label{eq:num2}
\frac\lambda h(1-\theta)&=\barlambda \Gru_0\calG^{k_{-1}}_1\theta,
\end{align}
Thus, for $\iota(x)=1$, using \eqref{eq:num1} and \eqref{eq:num3},
\begin{align*}
G^{\mathrm{N^*R}}_{\ph} \vartheta^{k_{-1}}_{\ph}(x)&=\lambda \Gru_0\vartheta^{k_{-1}}_{\ph}(x+h)+(\Gru_0+\Gru_1)\vartheta^{k_{-1}}_{\ph}(x)\\
&=\lambda \Gru_0\frac1h \left((1-\theta)\calG^{k_{-1}}_0+\theta \calG^{k_{-1}}_1\right)+(\Gru_0+\Gru_1)\theta\calG^{k_{-1}}_0\frac1h\\
&=\lambda \Gru_0\frac1h \frac{\theta}\lambda\calG^{k_{-1}}_1+(\Gru_0+\Gru_1)\theta\calG^{k_{-1}}_0\frac1h\\
&=\frac\theta h\left[\Gru_0  \calG^{k_{-1}}_1+\Gru_0\calG^{k_{-1}}_0+\Gru_1\calG^{k_{-1}}_0\right]\\
&=0.
\end{align*}
 For $\iota(x)=2$, using \eqref{eq:num1} for the third and fifth identity, and  \eqref{eq:num2} for the last identity, 
\begin{align*}
&\, hG^{\mathrm{N^*R}}_{\ph} \vartheta^{k_{-1}}_{\ph}(x)\\
=&\,h\left[\Gru_0\vartheta^{k_{-1}}_{\ph}(x+h)+(\lambda\Gru_1+\barlambda (\Gru_1+\Gru_0))\vartheta^{k_{-1}}_{\ph}(x)+\Gru_2\vartheta^{k_{-1}}_{\ph}(x-h)\right]\\
=&\,\Gru_0\left((1-\theta)\calG^{k_{-1}}_1+\theta \calG^{k_{-1}}_2\right)+(\Gru_1+\barlambda \Gru_0)\left((1-\theta)\calG^{k_{-1}}_0+\theta \calG^{k_{-1}}_1\right)+\Gru_2\theta\calG^{k_{-1}}_0\\
=&\,(1-\theta)\Gru_0\calG^{k_{-1}}_1+(\Gru_1+\barlambda \Gru_0)\left((1-\theta)\calG^{k_{-1}}_0+\theta \calG^{k_{-1}}_1\right)-\theta\Gru_1\calG^{k_{-1}}_1\\
=&\,(1-\theta)\left(\Gru_0\calG^{k_{-1}}_1+\barlambda \Gru_0\calG^{k_{-1}}_0+\Gru_1\calG^{k_{-1}}_0\right)+\barlambda \theta \Gru_0\calG^{k_{-1}}_1 \\
=&\,-\frac\lambda h(1-\theta)+\barlambda \theta \Gru_0\calG^{k_{-1}}_1 \\
=&\,0.
\end{align*}

Recalling the matrices \eqref{interpolationmatrixGBC_N*D} and \eqref{interpolationmatrixGBC_N*N}, for $3\le\iota(x)\le n-1$, $G^{\mathrm{N^*R}}_{\ph} \vartheta^{k_{-1}}_{\ph}=\Clh \vartheta^{k_{-1}}_{\ph}$, which equals 0 by \eqref{eq:partialphithetah-1+}.
\end{proof}

\section{Convergence of the interpolated difference schemes}\label{sec:case}
This section proves the Trotter–Kato convergence of Theorem \ref{thm:TK}-(ii) for the backward operators $(\Gen^-,\BC)$ (Section \ref{sec:caseC}) and the forward operators $(\Gen^+,\BC)$ (Section \ref{sec:caseL}) from Table \ref{explicitProcesses}. In more detail, for any $f$ in the core $\mathcal C(\Gen^\pm,\BC)$ (Proposition \ref{prop:cores}) we construct a sequence $f_h\in X$ for $h=2/(n+1)$, such that  as $h\to 0$
\[f_h\to f\quad \text{and}\quad \Gen^{\BC}_{\pm h}f_h \to (\Gen^\pm,\BC)f \quad \text{ in $X$}.
\]
All these proofs follow the same idea. Namely,  we first prove without too much trouble that  $\|\Gen^{\BC}_{\pm h}f_h- \Gen^\pm f \|_{X_h}\le \|\Cpmh \Ipm g-g\|_{X}+ o(1) $ as $h\to 0$, for some smooth $g$ (to apply Corollary \ref{cor:L94}), where $X_h$ is $C[h-1,1-h]$ or $L^1[h-1,1-h]$ if $X$ is $C_0(\Omega)$ or $L^1[-1,1]$, respectively. Secondly, we treat the  convergence   close to the boundaries $[-1,h-1)$ and $(1-h,1]$. The boundary convergence often requires us to control the divergent term $\Gru_{0,h}=\psi(1/h)$, so that we work hard to compensate  it with a $O(h^2)$ term, recalling \eqref{eq:convhomega}. This  difficult boundary control has to depend on the smoothness of the limiting function $f$, which is dictated by the scale functions $k_i$. It turns out that it is natural to require continuous differentiability of $k_{-1}$   in four cases, to access the asymptotics \eqref{eq:GClimk-2k0} and \eqref{eq:GClimk-2k0Tricky}.  Hence we assume \ref{H1} in these four cases (see Table \ref{tab:Hs}). However,  we only assume \ref{H0} when treating any left fast-forward boundary condition (i.e. ${\rm NR}$), which is the main result of our work. (We collected in Table \ref{tab:convergencerates} the orders of convergence proved in the proofs of the theorems of this section.)

\begin{table}
\centering
\vline
\begin{tabular}{l|l|}
	\hline
  $X = C_0(\Omega)$ & $X = L^1[-1,1]$ \\
	\hline
  $ (\Cr , \mathrm{DD}):\quad$\ref{H1} & $ (\Cl , \mathrm{DD}):\quad$\ref{H0} \\
  \hline
  $ (\Cr , \mathrm{DN}):\quad$\ref{H0} & $ (\Cl , \mathrm{DN}):\quad$\ref{H0}\\
  \hline
 $ (\Cr , \mathrm{ND}):\quad$\ref{H0} & $ (\Cl , \mathrm{ND}):\quad$\ref{H0}\\
  \hline

	  $ (\Cr , \mathrm{NN}):\quad$\ref{H0}
	  & 
	 $ (\Cl , \mathrm{NN}):\quad$\ref{H0}\\
  \hline
 $ (\Cr , \mathrm{N^*D}):\quad$\ref{H1} & $
	 (\RLl , \mathrm{N^*D}):\quad$\ref{H1}\\
  \hline
 $ (\Cr , \mathrm{N^*N}):\quad$\ref{H1} & $
	 (\RLl , \mathrm{N^*N}):\quad$\ref{H0}\\
  \hline
\end{tabular}
\caption{\label{tab:Hs}
Assumptions for the approximation theorems of Section \ref{sec:case}. }

\end{table}

\begin{table}[h]
\centering
\vline
\begin{tabular}{l|l|}
	\hline
  $X = C_0(\Omega)$ & $X = L^1[-1,1]$ \\
	\hline
  $ (\Cr , \mathrm{DD}):\quad h^2\sym(1/h)$ & $ (\Cl , \mathrm{DD}):\quad  h^2\sym(1/h)$ \\
  $ (\Cr , \mathrm{DN}):\quad h^2\sym(1/h)$  & $ (\Cl , \mathrm{DN}):\quad h$\\
 $ (\Cr , \mathrm{ND}):\quad  1/[h\sym(1/h)]$ 	& $ (\Cl , \mathrm{ND}):\quad  h^2\sym(1/h)$\\
  $ (\Cr , \mathrm{NN}):\quad  1/[h\sym(1/h)]$  & $ (\Cl , \mathrm{NN}):\quad h$\\
 $ (\Cr , \mathrm{N^*D}):\quad h^2\sym(1/h)$  & $ (\RLl , \mathrm{N^*D}):\quad  h^2\sym(1/h)$\\
$ (\Cr , \mathrm{N^*N}):\quad h^2\sym(1/h)$ & $ (\RLl , \mathrm{N^*N}):\quad 1/[h\sym(1/h)]$\\  
  \hline
\end{tabular} 
\caption{\label{tab:convergencerates} List ``$(\Gen, \BC):w(h)$'' of the   convergence $\|\Gen_hf_h-\Gen f\|_X=O(w)$ as $h\to 0$ proved in the approximation theorems of Section \ref{sec:case}. In the stable-like case  ($1<\alpha<2$) the orders reduce to  $h^2\sym(1/h)\sim h^{2-\alpha}$ and $ 1/[h\sym(1/h)]\sim h^{\alpha-1}$. For the case $ (\RLl , \mathrm{N^*D})$ order of convergence is proved under the extra condition that $\phi$ allows a density with right and left limits at 2 (see Remark \ref{rem:thm12}). }
\end{table}

\subsection{The case $X=C_0(\Omega)$}\label{sec:caseC} 
The proofs of the theorems in this section can be found in Appendix \ref{app:caseC}. (See Table \ref{tab:fhexplicitC} for the definition of all the approximating    functions $f_h$.)
 
Plugging in the values in Table \ref{mainTable}  in the approximating interpolation matrix \eqref{interpolationmatrixGBC}, we obtain
\begin{equation}
\label{interpolationmatrixGBC_DD}
  G^{\mathrm{DD}}_{n+1}(\lambda)=\begin{pmatrix}
    \Gru_1& D^l(\lambda)\Gru_2&\cdots& \cdots&D^l(\lambda)\Gru_n & 0\\
     \Gru_0 &  \Gru_1 &\cdots&\cdots& \Gru_{n-1}  &  \Gru_n\\
    0 & \Gru_0 & \cdots&\Gru_{n-3}&\vdots & \vdots \\
    \vdots & \ddots &\ddots& \vdots &\vdots&\vdots\\
   \vdots &  &\ddots&\Gru_0 & \Gru_1 &\Gru_2\\
    0 &\cdots &\cdots& 0& D^r(\lambda) \Gru_0 & \Gru_1
  \end{pmatrix}.
\end{equation}

Let us observe that the weight 
\[
D^l(\lambda)=\frac{ \lambda \Gru_0 \calG_1^{k_0} }{\lambda\Gru_0\calG_1^{k_0} +\barlambda  }=-\frac{ \lambda \Gru_1 \calG_0^{k_0} }{\Gru_0(\lambda\calG_1^{k_0} +\barlambda  \calG_0^{k_0})} 
\]
   is chosen so that $G^{\mathrm{DD}}_{\mh}\vartheta^{k_{0}}_{\mh}(x)=0$ when $x\in(1-h,1]$ (see proof of Lemma \ref{lem:varthetaDD}). Moreover, it is clear that $D^l(\lambda)\in C[-1,-1+h)$, $D^l(0),\,D^l(1)=0$, $D^l(1-)=1$, and $0\le D^l(\lambda)\le 1$.  In the fractional case $\Gru_0 =h^{-\alpha}$ and $\calG_1^{k_0}=\alpha h^\alpha $, so that $D^l(\lambda)$ agrees with its counterpart in \cite{MR3720847}.

\begin{theorem}[The case $(\Cr ,\mathrm{DD})$]\label{thm:conv_C_DD}
Assume \ref{H1}. If $f=\Ir  g+\Ir  g(-1) k_{0}^-/ k_{0}^-(-1)$ and 
\[
f_h=\Ir  g+b_h\vartheta^{k_{0}}_{\mh}
\]
for each $h>0$, with $g\in C_c^\infty(-1,1)$  and $b_h=-\Ir  g(-1)/\vartheta^{k_{0}}_{\mh}(-1)$, then $f_h\to f$ and $G^{\mathrm{DD}}_{\mh}f_h\to g $ as $h\to0$ in $C_0(-1,1)$. 
\end{theorem}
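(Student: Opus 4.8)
The plan is to verify the two convergences $f_h \to f$ and $G^{\mathrm{DD}}_{\mh} f_h \to g$ separately, relying on Lemma~\ref{lem:varthek0}, Lemma~\ref{lem:varthetaDD}, Lemma~\ref{lem:varthetaDD2} (reflected to the $C_0(\Omega)$ side), Corollary~\ref{cor:L94} and the Post--Widder asymptotics of Lemma~\ref{lem:PWthms}. First, for the convergence of the functions: by Lemma~\ref{lem:varthek0} we have $\vartheta^{k_0}_{\mh}\to k_0^-$ in $C_0[-1,1)$, so in particular $\vartheta^{k_0}_{\mh}(-1)\to k_0^-(-1)>0$, and hence the scalars $b_h = -\Ir g(-1)/\vartheta^{k_0}_{\mh}(-1)$ converge to $-\Ir g(-1)/k_0^-(-1)$. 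Since $\Ir g$ is fixed, $f_h = \Ir g + b_h\vartheta^{k_0}_{\mh}\to \Ir g + \bigl(-\Ir g(-1)/k_0^-(-1)\bigr)k_0^- = f$ uniformly on $[-1,1]$, and one checks $f_h,f\in C_0(-1,1)$ because $f_h(-1)=\Ir g(-1)+b_h\vartheta^{k_0}_{\mh}(-1)=0$ and $\Ir g(1)=0$, $k_0^-(1)=0$, $\vartheta^{k_0}_{\mh}$ vanishes near $1$.

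Next, for the generator convergence, split $[-1,1]$ into the bulk $[h-1,1-h]$ and the two boundary strips. On the bulk: by linearity $G^{\mathrm{DD}}_{\mh} f_h(x) = \Crh \Ir g(x) + b_h\,G^{\mathrm{DD}}_{\mh}\vartheta^{k_0}_{\mh}(x)$, and for $2\le \iota(-x)\le n$ the second term vanishes by Lemma~\ref{lem:varthetaDD} (equation \eqref{eq:partialthetah0}), while for $x$ away from the right endpoint $G^{\mathrm{DD}}_{\mh}$ agrees with $\Crh$ on $\Ir g$ up to boundary corrections. By Corollary~\ref{cor:L94} applied to the canonical extension of the smooth compactly supported $g$, $\Crh \Ir g \to \Cr \Ir g = g$ in $C[h-1,1-h]$, with rate $O(h)$; combined with the other estimates this gives the claimed rate $h^2\psi(1/h)$ at the boundary. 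Near the right endpoint $x\in(1-h,1]$ (i.e.\ $\iota(-x)=1$), the matrix $G^{\mathrm{DD}}_{n+1}(\lambda)$ contributes the $D^r$-weighted entry, but the weight $D^l(\lambda)$ (reflected) was precisely engineered in Lemma~\ref{lem:varthetaDD} so that $G^{\mathrm{DD}}_{\mh}\vartheta^{k_0}_{\mh}=0$ there as well; and since $g$ is supported away from $\pm1$, $\Crh\Ir g$ is controlled here by the difference identities \eqref{eq:diffvarth} and the smoothness of $\Ir g$. Near the left endpoint $x\in[-1,h-1)$ (i.e.\ $\iota(-x)=n$ or $n+1$), one uses the reflected version of Lemma~\ref{lem:varthetaDD2}: $G^{\mathrm{DD}}_{\ph}\vartheta^{k_0}_{\ph}$ there equals $(D^r(\lambda)-1)\Gru_0\vartheta^{k_0}_{\ph}(\cdot+h)+o(1)$ or $-\Gru_0 h^{-1}(\barlambda\calG^{k_0}_n+\lambda\calG^{k_0}_{n+1})+o(1)$; multiplying by $b_h=O(1)$ and using that $\Gru_0\calG^{k_0}_{n}$, $\Gru_0\calG^{k_0}_{n+1}\to 0$ (by \eqref{eq:GClimphi}, since $\calG^{k_0}_{m,h}\to h k_0^+$ pointwise and $\Gru_0 = \psi(1/h)$ with $\psi(1/h)\cdot h^2\to0$, or more carefully via Lemma~\ref{lem:PWthms}), these terms vanish as $h\to0$, and the contribution of $\Crh\Ir g$ near $x=-1$ vanishes because $g\equiv0$ there.

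The main obstacle I expect is the boundary analysis at the left endpoint, i.e.\ controlling the terms involving $\Gru_{0,h}=\psi(1/h)$, which diverges. The delicate point is that $b_h\,G^{\mathrm{DD}}_{\mh}\vartheta^{k_0}_{\mh}(x)$ for $\iota(-x)\in\{n,n+1\}$ produces expressions like $\Gru_0\calG^{k_0}_{n,h}$ and $\Gru_0\calG^{k_0}_{n+1,h}$; naively $\Gru_0$ blows up while $\calG^{k_0}_{n,h}\sim h\,k_0^+(1)$ only gives an $O(h\psi(1/h))$ bound, which is not obviously $o(1)$. One must instead use the finer Post--Widder statement \eqref{eq:GClimphi} together with the convolution identity \eqref{eq:dconvk1} (so that $\sum_{j\le m}\calG^{k_1}_{m-j}\Gru_j=h$ forces cancellation) to extract an extra power of $h$, exactly as the rate $h^2\psi(1/h)\sim h^{2-\alpha}$ in Table~\ref{tab:convergencerates} suggests; this is where assumption~\ref{H1}, via differentiability of $k_{-1}$ and the asymptotics \eqref{eq:GClimk-2k0}--\eqref{eq:GClimk-2k0Tricky}, is needed. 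The rest of the proof is bookkeeping along the lines of the Brownian prototype in Example~\ref{exp:killB}.
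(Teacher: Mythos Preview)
Your treatment of $f_h\to f$, the bulk $(-1+h,1-h]$, and the right strip $(1-h,1]$ is correct and matches the paper. The gap is at the left strip $[-1,-1+h]$ (i.e.\ $\iota(-x)=n+1$), the only case not covered by Lemma~\ref{lem:varthetaDD}. There is no usable ``reflected Lemma~\ref{lem:varthetaDD2}'': the functions $\vartheta^{k_0}_{\pm h}$ carry \emph{different} interpolation weights on the first grid interval in the $C_0$ versus $L^1$ settings (Table~\ref{tab:varthetadetails}), and in any case the expression $-\Gru_0 h^{-1}(\barlambda\calG^{k_0}_n+\lambda\calG^{k_0}_{n+1})$ you quote is of order $h\psi(1/h)\to\infty$. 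In the forward $L^1$ proof (Theorem~\ref{thm:conv_L_DD}) this is rescued by integrating over a strip of length $2h$, gaining the missing factor of $h$; no such rescue exists in sup norm. Your fallback to the convolution identity~\eqref{eq:dconvk1} is also off target---it concerns $\calG^{k_1}$, not $\calG^{k_0}$, and plays no role here.

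The paper's actual mechanism is different. It expands $G^{\mathrm{DD}}_{\mh}f_h(x)$ directly for $\iota(-x)=n+1$ and isolates the dangerous pair $(1-D^l(\lambda))\Gru_1 f_h(x)-D^l(\lambda)\Gru_0 f_h(x-h)$. Using the explicit form of $D^l(\lambda)$ together with $\Gru_0\calG^{k_0}_0=1$ and $\Gru_1\calG^{k_0}_0=-\Gru_0\calG^{k_0}_1$, this pair rewrites exactly as $\Gru_1\bigl(\barlambda f_h(x)+\lambda f_h(x-h)\bigr)/(\lambda\Gru_0\calG^{k_0}_1+\barlambda)$, whose modulus is at most $\psi(1/h)\,\lambda^{-1}\,\bigl|\barlambda f_h(x)+\lambda f_h(x-h)\bigr|$. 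Taylor-expanding $\Ir g$ at $-1$ and using the choice of $b_h$, the zeroth-order contributions cancel and the first-order terms $\pm\lambda\barlambda h[\Ir g]'(-1)$ cancel between the two summands, leaving
\[
\barlambda f_h(x)+\lambda f_h(x-h)=\barlambda\lambda\,\Ir g(-1)\,\frac{2\calG^{k_0}_n-\calG^{k_0}_{n+1}-\calG^{k_0}_{n-1}}{\calG^{k_0}_n}+\lambda O(h^2).
\]
The numerator is a \emph{second difference} of the Post--Widder coefficients, equal to $-h^2\calG^{k_{-2}}_{n+1}$ by~\eqref{eq:phi+1tophi}, and~\eqref{eq:GClimk-2k0} (this is precisely where~\ref{H1} enters; \eqref{eq:GClimk-2k0Tricky} is not used here) gives $\calG^{k_{-2}}_{n+1}/\calG^{k_0}_n=O(1)$. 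Hence the whole combination is $\lambda O(h^2)$, the factor $\lambda^{-1}$ cancels, and the product with $\psi(1/h)$ is $O(h^2\psi(1/h))\to0$. The idea you are missing is that the weight $D^l(\lambda)$ was engineered so that the singular boundary contribution organises itself into the specific convex combination $\barlambda f_h(x)+\lambda f_h(x-h)$, which in turn forces the second difference of $\calG^{k_0}_\bullet$ to appear.
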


 Plugging in the values in Table \ref{mainTable}  in the approximating interpolation matrix \eqref{interpolationmatrixGBC}, we obtain
\begin{equation}  
\label{interpolationmatrixGBC_DN}
  G^{\mathrm{DN}}_{n+1}(\lambda)=\begin{pmatrix}
    \Gru_1& D^l(\lambda)\Gru_2&\cdots& \cdots&-D^l(\lambda)\sum_{j=0}^{n-1}\Gru_j & 0\\
     \Gru_0 &  \Gru_1 &\cdots&\cdots& -\lambda\sum_{j=0}^{n-2}\Gru_j  +\barlambda \Gru_{n-1}& -\barlambda  \sum_{j=0}^{n-1}\Gru_j\\
    0 & \Gru_0 & \cdots&\Gru_{n-3}&\vdots & \vdots \\
    \vdots & \ddots &\ddots& \vdots &\vdots&\vdots\\
   \vdots &  &\ddots&\Gru_0 & \mathcal -\lambda\Gru_0+\barlambda \Gru_1 & -\barlambda \sum_{j=0}^{1}\Gru_j\\
    0 &\cdots &\cdots& 0&  \Gru_0 &-  \Gru_0
  \end{pmatrix}.
\end{equation}
\begin{theorem}[The case $(\Cr ,\mathrm{DN})$]\label{thm:conv_C_DN} Assume \ref{H0}. 
If $f=\Ir  g-\Ir  g(-1)$ and   for each $h>0$
$$
f_h=\Ir  [g-g(1)]+b_h\vartheta^{k_{1}}_{\mh}-\Ir  g(-1),\quad b_h= \frac{g(1)k^-_{1}(-1)}{\vartheta^{k_1}_{\mh}(-1)},
$$ with 
 $g\in C_c^\infty(-1,1]$ such that $g-g(1)\in C_c[-1,1)$, then $f_h\to f$ and $G^{\mathrm{DN}}_{\mh}f_h\to g $ as $h\to0$ in $C_0(-1,1]$.
\end{theorem}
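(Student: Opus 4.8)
The plan is to verify the two convergences $f_h \to f$ and $G^{\mathrm{DN}}_{\mh} f_h \to g$ in $C_0(-1,1]$ separately, following the template outlined at the start of Section~\ref{sec:case}. For the first convergence, write $f_h - f = b_h \vartheta^{k_1}_{\mh} - \Ir[g-g(1)] + \Ir g - (\text{constant terms})$; since $f = \Ir g - \Ir g(-1)$ and $\Ir[g-g(1)] = \Ir g - g(1) k_1^-$ (using \eqref{eq:Iphiofind}), the identity $f_h = \Ir g - g(1) k_1^- + b_h \vartheta^{k_1}_{\mh} - \Ir g(-1)$ rearranges to $f_h - f = b_h \vartheta^{k_1}_{\mh} - g(1) k_1^-$, and one checks $b_h \to g(1) k_1^-(-1)/k_1^-(-1) = g(1)$ using $\vartheta^{k_1}_{\mh}(-1) \to k_1^-(-1)$ from Lemma~\ref{lem:varthek0}, while $b_h \vartheta^{k_1}_{\mh} \to g(1) k_1^-$ uniformly by the same lemma. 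Thus $f_h \to f$ in $C_0(-1,1]$; note $\Ir g(-1)$ is the value at the left endpoint, and the Dirichlet boundary condition $f(-1) = 0$ is respected since $\Ir g(-1) - \Ir g(-1) = 0$ and $\vartheta^{k_1}_{\mh}$, $k_1^-$ both vanish appropriately — actually one must be slightly careful that $f \in C_0(-1,1]$ means $f(-1)$ need not vanish, rather $\Omega = (-1,1]$, so no issue.

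For the generator convergence, apply $G^{\mathrm{DN}}_{\mh}$ to $f_h$. On the interior grid points $2 \le \iota(-x) \le n-1$, the matrix \eqref{interpolationmatrixGBC_DN} acts as $\Crh$, so $G^{\mathrm{DN}}_{\mh} f_h(x) = \Crh \Ir[g - g(1)](x) + b_h \Crh \vartheta^{k_1}_{\mh}(x) - g(1)\Crh \Ir 1(x) \cdot(\ldots)$; here I would use $\Crh \Ir[g(-\cdot)](-x) \to g(-\cdot)$ via Corollary~\ref{cor:L94} (the canonical extension of $g - g(1)$ to $C^m$ with vanishing derivatives at the left endpoint makes this applicable — and since $g \in C_c^\infty(-1,1]$, $g - g(1) \in C_c[-1,1)$ vanishes near $-1$), together with \eqref{eq:partialphithetah1}: $\Crh \vartheta^{k_1}_{\mh}(x) = 1 - \tfrac{\lambda}{h}\calG^{k_1}_0 \Gru_{\iota(-x)}$, where the correction term is $o(1)$ because $\calG^{k_1}_0/h = (h \sym(1/h))^{-1} = o(1)$ by \eqref{eq:convhomega} and $\Gru_{\iota(-x)}$ is bounded for interior indices. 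Since $-\Crh 1 = \sum_{j} \Gru_j = $ (something converging to a constant, but actually $\Crh$ annihilates constants since the row sums are zero away from the boundary, so the $g(1)$ terms from $\Ir[g-g(1)]$ versus $\Ir g$ need careful bookkeeping — the cleanest route is to note $\Crh \Ir[g - g(1)](x) \to g(x) - g(1)$ and then the $b_h \cdot 1$ from $\Crh \vartheta^{k_1}_{\mh}$ contributes $\to g(1)$, summing to $g(x)$). So on the interior, $G^{\mathrm{DN}}_{\mh} f_h \to g$.

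The main obstacle, as usual for these theorems, is the boundary layer — here specifically the right boundary $x \in (1-h, 1]$, i.e. $\iota(-x) = 1$, where the fast-forwarding (N) condition lives, and also the left boundary $\iota(-x) = n, n+1$ where the interpolation weights $D^l(\lambda)$ and the killing (D) structure enter. At the right boundary one must compute $G^{\mathrm{DN}}_{\mh} f_h(x)$ using the bottom rows of \eqref{interpolationmatrixGBC_DN} directly: the entries $-\barlambda \sum_{j=0}^{i-1}\Gru_j$ and $-\Gru_0$ encode the fast-forward rates, and one needs $\Conerhzero \vartheta^{k_1}_{\mh}$ behavior near the boundary combined with the $\Ir$-terms to show the limit matches $g$ at $x=1$; here \eqref{eq:partialphi-1thetah1} and the pointwise limits $\sum_{j=0}^n \Gruone_{j,h} \to \Phi(2)$ from \eqref{eq:GClimphi} will be the relevant tools, and the crucial cancellation is that the $\Gru_0 = \sym(1/h) \to \infty$ terms appearing in the bottom-left entry must be killed by the fact that $f_h$ and its neighboring grid values differ by $O(h^2)$ in the right combination — this is where the choice of $b_h$ and the interpolation structure of $\vartheta^{k_1}_{\mh}$ pay off. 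At the left boundary $\iota(-x) = n, n+1$, one invokes Lemma~\ref{lem:varthetaDD2}-type estimates (the $(D^r(\lambda)-1)\Gru_0 \vartheta^{k_0}_{\mh}$ and $-\Gru_0 \tfrac1h(\barlambda \calG^{k_0}_n + \lambda\calG^{k_0}_{n+1})$ terms are $o(1)$), but adapted to $\vartheta^{k_1}$ rather than $\vartheta^{k_0}$; since $g$ vanishes near $-1$, the $\Ir[g-g(1)]$ and $\Ir g(-1)$ pieces are smooth there and contribute $o(1)$ to the finite difference, leaving only the $b_h \vartheta^{k_1}_{\mh}$ term, whose boundary behavior is controlled by \eqref{eq:diffvarth} and the asymptotics of Lemma~\ref{lem:PWthms}. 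I expect the whole argument to close with an overall rate $O(h^2 \sym(1/h))$ as recorded in Table~\ref{tab:convergencerates}, driven by the $\Gru_0 \cdot O(h^2) = h^2 \sym(1/h)$ boundary error.
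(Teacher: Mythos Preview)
Your overall architecture (split into interior and boundary layers, invoke Corollary~\ref{cor:L94}, aim for rate $O(h^2\sym(1/h))$) matches the paper, and the $f_h\to f$ argument is correct. But the generator convergence contains a genuine gap.

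\textbf{The interior does not reduce to $\Crh$.} You assert that for $2\le\iota(-x)\le n-1$ the matrix \eqref{interpolationmatrixGBC_DN} acts as $\Crh$, and then use \eqref{eq:partialphithetah1} to write $\Crh\vartheta^{k_1}_{\mh}(x)=1-\tfrac{\lambda}{h}\calG^{k_1}_0\Gru_{\iota(-x)}$ with the correction ``$o(1)$''. Both claims fail. First, the N condition at the right modifies the last two columns of \emph{every} row of \eqref{interpolationmatrixGBC_DN}, so $G^{\mathrm{DN}}_{\mh}w(x)$ differs from $\Crh w(x)$ by terms involving $w$ evaluated on the last two grid intervals; the paper derives the exact formula $G^{\mathrm{DN}}_{\mh}w(x)=\Crh w(x)-\Gru_{\iota(-x)}w(x+(\iota(-x)-1)h)-(\barlambda w(x+(\iota(-x)-1)h)+\lambda w(x+(\iota(-x)-2)h))\sum_{j=0}^{\iota(-x)-1}\Gru_j$. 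For $w=\Ir[g-g(1)]$ these extra terms do vanish for small $h$ (since $\Ir[g-g(1)]=0$ near $1$), but for $w=\vartheta^{k_1}_{\mh}$ they do not. Second, the claimed correction in $\Crh\vartheta^{k_1}_{\mh}$ is \emph{not} uniformly $o(1)$: with $\calG^{k_1}_0=h/\sym(1/h)$ one gets $\lambda\Gru_{\iota(-x)}/\sym(1/h)$, and already for $\iota(-x)=2$ this equals $\lambda\Gru_2/\sym(1/h)$, which in the fractional case is the constant $\lambda\alpha(\alpha-1)/2$. The paper's point is precisely that the $-h^{-1}\barlambda\calG^{k_1}_0$ design of $\vartheta^{k_1}_{\mh}$ on the last interval (Table~\ref{tab:varthetadetails}) makes the boundary-column corrections cancel the residual in \eqref{eq:partialphithetah1} \emph{exactly}, yielding $G^{\mathrm{DN}}_{\mh}\vartheta^{k_1}_{\mh}(x)=1$ on the nose for $2\le\iota(-x)\le n$. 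Without this exact cancellation the argument does not close in $C$-norm.

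\textbf{The boundary layers are also misidentified.} At $\iota(-x)=1$ the computation is a one-liner: the bottom row gives $\Gru_0(f_h(x-h)-f_h(x))$, which equals $b_h$ because $\Ir[g-g(1)]$ vanishes there and $\Gru_0\calG^{k_1}_0=h$; your invocation of $\Conerhzero$ and \eqref{eq:GClimphi} is unnecessary. At $\iota(-x)=n+1$ the relevant interpolation weight is $D^l(\lambda)$, not $D^r(\lambda)$, and Lemma~\ref{lem:varthetaDD2} (which treats $G^{\mathrm{DD}}_{\ph}\vartheta^{k_0}_{\ph}$) is the wrong reference. The paper follows the same Taylor-expansion strategy as in Theorem~\ref{thm:conv_C_DD}, bounding the dangerous term by \eqref{eq:bound_tricky_term} and then showing $\barlambda f_h(x)+\lambda f_h(x-h)=\lambda O(h^2)$ via the second-difference identity $(2\calG^{k_1}_{n-1}-\calG^{k_1}_n-\calG^{k_1}_{n-2})/\calG^{k_1}_{n-1}=-h^2\calG^{k_{-1}}_n/\calG^{k_1}_{n-1}=O(h^2)$ from \eqref{eq:phi+1tophi} and \eqref{eq:GClimk-1k1}; this is the step that actually produces the $O(h^2\sym(1/h))$ rate, and it requires only \ref{H0}, not the $k_{-2}$ asymptotics.
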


 Plugging in the values in Table \ref{mainTable}  in the approximating interpolation matrix \eqref{interpolationmatrixGBC}, we obtain
\begin{equation}
\label{interpolationmatrixGBC_ND}
  G^{\mathrm{ND}}_{n+1}(\lambda)=\begin{pmatrix}
    -\Gru_0& -\sum_{j=0}^{1}\Gru_j &\cdots& \cdots&-\sum_{j=0}^{n-1}\Gru_j & 0 \\
     \lambda\Gru_0 &  \lambda\Gru_1+\barlambda (-\Gru_0) &\cdots&\cdots& \lambda\Gru_{n-1}  +\barlambda (-\sum_{j=0}^{n-2}\Gru_j )& - \sum_{j=0}^{n-1}\Gru_j\\
    0 & \Gru_0 & \cdots&\Gru_{n-3}&\vdots & \Gru_{n-1} \\
    0 & 0& \Gru_0  &\cdots&\vdots & \Gru_{n-2} \\
    \vdots & \ddots &\ddots& \vdots &\vdots&\vdots\\
   \vdots &  &\ddots&\Gru_0 & \Gru_1 & \Gru_2\\
    0 &\cdots &\cdots& 0&  D^r(\lambda)\Gru_0 & \Gru_1
  \end{pmatrix}.
\end{equation}
\begin{theorem}[The case $(\Cr ,\mathrm{ND})$]\label{thm:conv_C_ND} Assume \ref{H0}.  Let $f=\Ir  g-I_- g(-1) k_0^-$ and 
$$
f_h=\Ir  g+b_h\vartheta^{k_{0}}_{\mh}+e_h ,
$$
for each $h>0$, where 
$
 b_h= -I_-g(-1) - hF_-[g](-1), 
$
\begin{equation*} 
e_h(x)=\left\{\begin{split}& -\barlambda \calG^{k_0}_0 d, & x\in[-1,-1+h],\\
&0,& x\in(-1+h,1],
\end{split}\right. 
\end{equation*}
 $F_-$ is defined   in Lemma \ref{lem:firtorderapprox}, $d=g(-1)$ and  $g\in C_c^\infty[-1,1)$. Then $f_h\to f$ and $G^{\mathrm{ND}}_{\mh}f_h\to g $ as $h\to0$ in $C_0[-1,1)$.
\end{theorem}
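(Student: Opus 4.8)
The plan is to verify the two convergences $f_h\to f$ and $G^{\mathrm{ND}}_{\mh}f_h\to g$ in $C_0[-1,1)$ separately, following the standard strategy described at the start of Section \ref{sec:case}: first control the interior via Corollary \ref{cor:L94}, then handle the two boundary intervals by hand, using the Post--Widder asymptotics of Lemma \ref{lem:PWthms} and the first-order quadrature expansion of Lemma \ref{lem:firtorderapprox} to cancel the divergent contribution of $\Gru_{0,h}=\psi(1/h)$. For the convergence $f_h\to f$, note that by Lemma \ref{lem:varthek0} $\vartheta^{k_0}_{\mh}\to k_0^-$ in $C_0[-1,1)$, and that $b_h=-I_-g(-1)-hF_-[g](-1)\to -I_-g(-1)$ since $F_-[g](-1)$ is a fixed constant; the correction term $e_h$ is supported on $[-1,-1+h]$ and is bounded by $|\barlambda\,\calG^{k_0}_{0,h}\,d|\le \calG^{k_0}_{0,h}|d| = |d|/\psi(1/h)\to 0$ by \eqref{eq:convhomega}, so $e_h\to 0$ uniformly. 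Together with continuity of $\Ir$ on $C[-1,1]$ (Lemma \ref{lem:diphi}) this gives $f_h\to \Ir g + (-I_-g(-1))k_0^- = f$. One should also check $f_h\in C_0[-1,1)$, which follows from $\vartheta^{k_0}_{\mh},\Ir g\in C_0[-1,1)$, the right Dirichlet condition being automatic, and $e_h\in C[-1,1)$.

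For the generator convergence, recall $G^{\mathrm{ND}}_{n+1}$ from \eqref{interpolationmatrixGBC_ND}. On the interior grids $2\le\iota(-x)\le n$ the matrix acts as the plain quadrature, so $G^{\mathrm{ND}}_{\mh}f_h(x)=\Crh f_h(x)$ there; writing $f_h=\Ir g + b_h\vartheta^{k_0}_{\mh}+e_h$ and using \eqref{eq:partialphithetah0} ($\Crh\vartheta^{k_0}_{\mh}=0$ on $[-1+h,1-h]$, extended to $(1-h,1]$ for the canonical extension by Remark \ref{rmk:canon_extra}) together with the fact that $e_h$ only affects finitely many grids near $-1$, the estimate reduces to $\|G^{\mathrm{ND}}_{\mh}f_h-g\|_{C[h-1,1-h]}\le \|\Crh\Ir g(-\cdot)-\Cr\Ir g(-\cdot)\|_{C[-1,1]}+o(1)$, and the first term is $o(1)$ by Corollary \ref{cor:L94} (applied to the canonical $C^m$ extension of $g$, $m\ge 6$). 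It remains to analyze $\iota(-x)=1$ and $\iota(-x)=n+1$. At the right boundary $\iota(-x)=n+1$ one has the entry $D^r(\lambda)\Gru_0$ multiplying $f_h$ at $x+h$; since $f_h\in C_0[-1,1)$ vanishes there and $\iota(1)=n+1$, the top-row killing entry $-\Gru_0$ contributes $-\Gru_0 f_h(1)$, and one checks using $0\le D^r(\lambda)\le 1$ and the decay of $f_h$ near $1$ (it is $O((1-x))$ there, being a $C^1$ multiple of $\Ir g$ plus $b_h\vartheta^{k_1}$-type terms... actually here $\vartheta^{k_0}_{\mh}(1)=0$) that the contribution is $o(1)$; this is the easiest endpoint. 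The delicate work is at $\iota(-x)=1$, i.e. $x\in(1-h,1]$ relabelled — no: $\iota(-x)=1$ means $-x\in[-1,-1+h)$, i.e. $x$ near the \emph{left} endpoint $-1$ in the $C_0(\Omega)$ picture is at $-x$ near $-1$. Here the first row of $G^{\mathrm{ND}}_{n+1}$ carries the fast-forward weights $b^l_i=-\sum_{j=0}^{i-1}\Gru_j$, and $G^{\mathrm{ND}}_{\mh}f_h(x)$ at $\iota(-x)=1$ equals $-\Gru_0 f_h(x)-\sum_{i=1}^{n-1}\big(\sum_{j=0}^{i-1}\Gru_j\big)f_h(x+ih)$, evaluated against the three pieces of $f_h$.

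The main obstacle, as flagged in the paper, is precisely this left fast-forward boundary term at $\iota(-x)=1$: one must show it converges to $g(-1)$ (equivalently to $\Cr f(-1)=\Coner f(-1)+\,$const, using the ${\rm N}$ boundary condition \eqref{eq:intro_boundaryN}). The strategy will be to recognize the weighted sum $\sum_i(\sum_{j<i}\Gru_j)f_h(\cdot+ih)$ as (up to sign and a factor $1/h$) a discrete version of $\Conerhzero$ applied to $f_h$, via the identity $\Gruone_{j,h}=h\sum_{m=0}^j\Gru_{m,h}$ of \eqref{eq:phi+1tophi}; then apply the first-order expansion of Lemma \ref{lem:firtorderapprox}, $\Conerhzero\Ir[g(-\cdot)]=I_-[g(-\cdot)]+hF_-[g(-\cdot)]+O(h^2)$, so that the $O(h^{-1})$ prefactor times the $O(h)$ correction $hF_-$ is exactly cancelled by the choice $b_h=-I_-g(-1)-hF_-[g](-1)$, while the term $e_h=-\barlambda\calG^{k_0}_{0,h}d$ with $d=g(-1)$ is tuned to absorb the leftover $-\Gru_0 f_h(x)$ contribution (using $\Gru_0\calG^{k_0}_0=1$ up to the convolution identities of Lemma \ref{lem:convid}, cf. \eqref{eq:dconvk-1}) and produce the limit $g(-1)$. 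Bounding the $O(h^2)$ remainder against the $\Gru_0=\psi(1/h)=o(h^{-2})$ prefactor (by \eqref{eq:convhomega}) gives the claimed rate $1/[h\psi(1/h)]$ in Table \ref{tab:convergencerates}. The bookkeeping of which finitely many grid points see $e_h$, and the careful splitting $\vartheta^{k_0}_{\mh}=k_0^- + o(1)$ with control on $\Crh$ of the difference, is routine but lengthy; it mirrors the corresponding argument for $(\Cl,\mathrm{ND})$ on $L^1$ and is deferred to Appendix \ref{app:caseC}.
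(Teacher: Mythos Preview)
Your overall strategy is correct and matches the paper's: convergence $f_h\to f$ via Lemma \ref{lem:varthek0} and $\calG^{k_0}_0=1/\psi(1/h)\to0$; interior convergence via Corollary \ref{cor:L94} together with $\Crh\vartheta^{k_0}_{\mh}=0$; and, at the fast-forward boundary, rewriting the row sum as $-\tfrac1h\Conerhzero$ via \eqref{eq:phi+1tophi}, then invoking Lemma \ref{lem:firtorderapprox} so that the choice $b_h=-I_-g(-1)-hF_-[g](-1)$ kills the first-order term and $e_h$ supplies the missing $g(-1)$ via $\Gru_0\calG^{k_0}_0=1$.

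However, your indexing is reversed throughout the boundary analysis, and this leads to two genuine gaps. Recall $G^{\BC}_{-h}f(x)$ uses matrix row $\iota(x)$, not $\iota(-x)$; the paper writes $\iota(-x)$ only because $\vartheta^{k_i}_{\mh}(x)=\vartheta^{k_i}_{\cdot\,h}(-x)$. Thus $\iota(-x)=1$ means $-x\in[-1,-1+h)$, i.e.\ $x\in(1-h,1]$ near the \emph{right} (Dirichlet) endpoint, and the matrix row used there is the \emph{last} row containing $D^r(\lambda)\Gru_0$. Conversely $\iota(-x)=n+1$ is $x\in[-1,-1+h]$, the \emph{left} (fast-forward) endpoint, and that is where the \emph{first} row with weights $b^l_i=-\sum_{j<i}\Gru_j$ acts. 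So your ``delicate work at $\iota(-x)=1$'' should be at $\iota(-x)=n+1$, and your ``easiest endpoint $\iota(-x)=n+1$'' should be $\iota(-x)=1$. For the latter the paper's argument is cleaner than what you sketch: since $G^{\mathrm{ND}}_{\mh}=G^{\mathrm{DD}}_{\mh}$ on that last row and $\Ir g$ vanishes on $(1-2h,1]$ for small $h$ (as $g\in C_c^\infty[-1,1)$), Lemma \ref{lem:varthetaDD} gives $G^{\mathrm{ND}}_{\mh}f_h=0$ there exactly---no estimates needed.

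The second gap is that the interior range where $G^{\mathrm{ND}}_{\mh}$ equals the plain quadrature $\Crh$ is only $2\le\iota(-x)\le n-1$, not $2\le\iota(-x)\le n$. The case $\iota(-x)=n$ (i.e.\ $x\in(-1+h,-1+2h]$, matrix row $2$) must be treated separately: that row interpolates between the fast-forward weights and the interior ones, producing a convex combination $\lambda\Crh f_h + \barlambda(-\tfrac1h\Conerhzero f_h)$ plus edge corrections. The paper handles this with the same ingredients (Lemma \ref{lem:firtorderapprox}, \eqref{eq:partialphi-1thetah0}, and a small term $\vartheta^{k_0}_{\mh}(x+(n-2)h)\sum_{j\le n-1}\Gru_j=o(1)$ via \eqref{eq:GClimphi} and $\calG^{k_0}_0/h\to0$), but it does require its own bookkeeping and is not absorbed by the interior argument. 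Finally, the $L^1$ case $(\Cl,\mathrm{ND})$ is in fact much simpler (no $e_h$, no $hF_-$ correction in $b_h$) and does not serve as a template here; the present $C_0$ proof is strictly harder.
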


 Plugging in the values in Table \ref{mainTable}  in the approximating interpolation matrix \eqref{interpolationmatrixGBC},  we obtain
\begin{equation}
\label{interpolationmatrixGBC_NN}
  G^{\mathrm{NN}}_{n+1}(\lambda)=\begin{pmatrix}
    -\Gru_0& -\sum_{j=0}^{1}\Gru_j &\cdots& \cdots&\sum_{j=0}^{n-2}(\sum_{i=0}^{j}\Gru_i) & 0 \\
     \lambda\Gru_0 &  \lambda\Gru_1+\barlambda (-\Gru_0) &\cdots&\cdots& -\sum_{j=0}^{n-2}\Gru_j & \barlambda\sum_{j=0}^{n-2}(\sum_{i=0}^{j}\Gru_i)\\
    0 & \Gru_0  & \Gru_1&\cdots& \barlambda  \Gru_{n-2} -\lambda\sum_{j=0}^{n-3}\Gru_j  & -\barlambda \sum_{j=0}^{n-2}\Gru_j \\
    0 & \cdots& \ddots   &\cdots&\vdots & \vdots \\
   \vdots &  &\ddots&\Gru_0 & \barlambda \Gru_1+\lambda (-\Gru_0) &- \barlambda \sum_{j=0}^{1}\Gru_j\\
    0 &\cdots &\cdots& 0&  \Gru_0 & -\Gru_0
  \end{pmatrix}.
\end{equation}
\begin{theorem}[The case $(\Cr ,\mathrm{NN})$]\label{thm:conv_C_NN} Assume \ref{H0}.  Let $f=\Ir  g-I_- g(-1) k^-_1/2+c$, $c\in\mathbb R$, and 
\[
f_h=\Ir  [g-g(1)] +b_h\vartheta^{k_{1}}_{\mh}+e_h +c ,
\]
for each $h>0$, where 
\begin{equation*}
e_h(x)=\left\{\begin{split}& -\barlambda \calG^{k_0}_0 d_h, & x\in[-1,-1+h],\\
&0,& x\in(-1+h,1],
\end{split}\right. 
\end{equation*}
$$
b_h=\left[g(1) -\frac{I_-g(-1)}2 - \frac h2F_-[g-g(1)](-1)\right]\frac{n+1}{n},\quad d_h=  g(-1) -\frac{I_-g(-1)}2\frac{n+1}{n},
$$
 with $g\in C^\infty[-1,1]$ such that $g-g(1)\in C_c[-1,1)$, and $F_- $ is defined in Lemma \ref{lem:firtorderapprox}. Then $f_h\to f$ and $G^{\mathrm{NN}}_{\mh}f_h\to g-I_-g(-1)/2 $ as $h\to0$ in $C[-1,1]$.
\end{theorem}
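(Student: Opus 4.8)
The plan is to follow the template described at the start of Section~\ref{sec:case}: first establish convergence in the interior $C[h-1,1-h]$ reducing to Corollary~\ref{cor:L94}, then handle the two boundary intervals $[-1,h-1)$ and $(1-h,1]$ separately. Since we are in the pure $\ref{H0}$ case, the divergent term $\Gru_{0,h}=\psi(1/h)$ appears only through quantities that can be compensated by the explicit $O(h)$ and $O(h^2)$ expansions of Lemma~\ref{lem:firtorderapprox} and the Post--Widder limits \eqref{eq:GClimphi}--\eqref{eq:GClimk-1k1}. First I would decompose $f_h = \Ir[g-g(1)] + b_h\vartheta^{k_1}_{\mh} + e_h + c$ and note that $f_h\to f$ follows immediately: $\vartheta^{k_1}_{\mh}\to k_1^-$ in $C_0[-1,1)$ by Lemma~\ref{lem:varthek0}, $b_h\to g(1)-I_-g(-1)/2$ (using $(n+1)/n\to1$ and $F_-[g-g(1)](-1)$ bounded), and $e_h$ is supported on $[-1,-1+h]$ with $\|e_h\|_\infty\le \barlambda\calG^{k_0}_0 |d_h| = O(\psi(1/h)^{-1})=o(1)$ by \eqref{eq:convhomega}. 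Note also that the convention $(\Cr,\mathrm{NN})$ targets the limit $g - I_-g(-1)/2$, consistent with $\Cr f = \Cr \Ir g - (I_-g(-1)/2)\Cr k_1^- = g$ on $(-1,1)$ but with the boundary value $\Coner f(-1) = I_- g(-1) - I_-g(-1) = 0$ — wait, more precisely the limiting generator value at $-1$ is $g(-1)-I_-g(-1)/2$, which is why the statement claims convergence to $g - I_-g(-1)/2$ rather than to $g$.

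For the interior, for $x$ with $2\le\iota(-x)\le n$ the matrix \eqref{interpolationmatrixGBC_NN} acts as the free convolution quadrature $\Crh$ on the relevant entries, so $G^{\mathrm{NN}}_{\mh}f_h(x) = \Crh f_h(x)$. Using linearity, $\Crh[\Ir(g-g(1))](x)\to g(x)-g(1)$ by Corollary~\ref{cor:L94} (applied to the canonical extension of $g-g(1)$, which vanishes to high order at $-1$), while $\Crh(b_h\vartheta^{k_1}_{\mh})(x)$ is controlled via \eqref{eq:partialphithetah1}: $\Crh\vartheta^{k_1}_{\mh}(x) = 1 - \tfrac\lambda h\calG^{k_1}_0\Gru_{\iota(-x)}$, and since $\calG^{k_1}_0 = h^2/(h^2\psi(1/h))\cdot\psi(1/h)\cdot\ldots$ — more simply $\calG^{k_1}_0/h = (h\xi\psi)^{-1}|_{\xi=1/h} = (\psi(1/h))^{-1}\to 0$ and $\Gru_{\iota(-x)}\le \Gru_{n}\to 0$ by \eqref{eq:GClimphi}, this term tends to $g(1)-I_-g(-1)/2$ uniformly on compacts strictly inside $(-1,1)$. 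The constant $c$ is annihilated since row sums of $G^{\mathrm{NN}}_{n+1}(\lambda)$ are zero. Summing, $G^{\mathrm{NN}}_{\mh}f_h(x) \to g(x) - I_-g(-1)/2$ on $[h-1,1-h]$, with rate $O(h)$ from Corollary~\ref{cor:L94} and $O(\psi(1/h)^{-1})$ from the $\vartheta^{k_1}$ term; since $\psi(1/h)^{-1}\le h$ for $\alpha>1$-type symbols this gives the claimed $O(h)$.

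The main obstacle is the boundary control near $-1$, i.e.\ $\iota(-x)\in\{1,2\}$, where the first rows of \eqref{interpolationmatrixGBC_NN} replace $\Gru$-entries by the fast-forwarding weights $b^l_i = -\sum_{j=0}^{i-1}\Gru_j = \Gruone_{i-1}/h$ (up to the sign/shift bookkeeping of Remark~\ref{rmk:Dphi_h}) and the interpolation factors $N^l(\lambda)=\lambda$. Here I would combine: (i) the key asymptotic of Lemma~\ref{lem:firtorderapprox}, namely $\Conerhzero\Ir[g-g(1)](-1) = I_-[g-g(1)](-1) + hF_-[g-g(1)](-1) + O(h^2)$, which is exactly why $b_h$ carries the $-\tfrac h2 F_-[g-g(1)](-1)$ correction; (ii) the identity \eqref{eq:partialphi-1thetah1} for how $\vartheta^{k_1}_{\mh}$ interacts with the $\Conerhzero$-type rows; and (iii) the role of $e_h$, whose value $-\barlambda\calG^{k_0}_0 d_h$ is tuned via \eqref{eq:partialphi-1thetah0} (which gives $\Conerhzero\vartheta^{k_0}_{\mh}\equiv 1$) to cancel the residual $\Gru_0\cdot(\text{value at }-1)$ contribution — precisely the "$O(h^2)$ compensation of the divergent $\Gru_0$" mentioned in the section preamble. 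The bookkeeping for $\iota(-x)=1$ versus $\iota(-x)=2$ differs because the $\lambda\Gru_1+\barlambda(-\Gru_0)$ diagonal entry and the $\barlambda\Gru_1 + \lambda(-\Gru_0)$ entry introduce opposite-sign $\Gru_0$ terms that must be reconciled using $N^l(\lambda)=\lambda$ and the linear interpolation of $\vartheta^{k_1}_{\mh}$, $e_h$ across the first grid; I expect this to reduce, after collecting terms, to showing $\Gru_0\cdot O(h^2) + o(1) = o(1)$, i.e.\ $\psi(1/h)h^2 = o(1)$, which holds by \eqref{eq:convhomega}. Finally, near the right boundary $1$, since there is no Dirichlet condition there, $f_h$ is extended canonically and the $D^r$-interpolation plays no role for the $\mathrm{N}$ right-condition; the entries $-\barlambda\sum_{j=0}^{i-1}\Gru_j$ are handled exactly as in the interior plus the observation that $g(1) = f(1)$ makes $g-g(1)$ vanish at $1$, so $\Ir[g-g(1)]$ contributes no boundary singularity and $\vartheta^{k_1}_{\mh}$ is continuous up to $1$ with the $\Gru_n\to0$ damping from \eqref{eq:GClimphi} absorbing the rest.
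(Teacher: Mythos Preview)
Your overall strategy is right (split by grid cell, use Corollary~\ref{cor:L94} for $\Ir[g-g(1)]$, use Lemma~\ref{lem:firtorderapprox} and \eqref{eq:partialphi-1thetah1} at the fast-forward boundary, and note the purpose of $e_h$), but there are two concrete errors that break the argument as written.

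\textbf{(1) The interior action is not $\Crh$.} For $2\le\iota(-x)\le n-1$ you assert $G^{\mathrm{NN}}_{\mh}f_h(x)=\Crh f_h(x)$ and then control $b_h\Crh\vartheta^{k_1}_{\mh}$ via \eqref{eq:partialphithetah1}. But the right N-condition modifies the \emph{last two columns of every row} of \eqref{interpolationmatrixGBC_NN}, so even interior rows differ from the free scheme; and $\vartheta^{k_1}_{\mh}$ does not vanish near $1$ (its values on the last two cells are $\pm h^{-1}\lambda\calG^{k_1}_0$, $h^{-1}\barlambda\calG^{k_1}_0$). Your proposed remainder bound on $\tfrac\lambda h\calG^{k_1}_0\Gru_{\iota(-x)}$ is also not uniform: for small $\iota(-x)$, say $\iota(-x)=2$, this equals $\lambda\psi''(1/h)/(2h^2\psi(1/h))$, which need not vanish under \ref{H0}. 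The paper instead obtains an \emph{exact} identity: plugging those last-cell values of $\vartheta^{k_1}_{\mh}$ into the modified columns and combining with \eqref{eq:partialphithetah1} gives precisely $G^{\mathrm{NN}}_{\mh}(b_h\vartheta^{k_1}_{\mh})(x)=b_h$ with no error term, and then $G^{\mathrm{NN}}_{\mh}\Ir[g-g(1)](x)=\Crh\Ir[g-g(1)](x)$ because $\Ir[g-g(1)]$ is compactly supported away from $1$.

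\textbf{(2) The indexing of the left boundary is reversed.} You locate ``near $-1$'' as $\iota(-x)\in\{1,2\}$, but $\iota(-x)=1$ means $-x\in[-1,-1+h)$, i.e.\ $x\in(1-h,1]$. The fast-forward (left N) boundary where the $b^l_i=-\sum_{j<i}\Gru_j$ weights and Lemma~\ref{lem:firtorderapprox} enter is $\iota(-x)\in\{n,n+1\}$ (rows 1 and 2 of the matrix). The right boundary $\iota(-x)=1$ is in fact the easiest case: since $\Ir[g-g(1)]$, $e_h$ vanish there and $G^{\mathrm{NN}}_{\mh}c=0$, one checks directly that $G^{\mathrm{NN}}_{\mh}f_h(x)=b_h$. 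Your ingredients for the left boundary ((i)--(iii)) are the correct ones, but they must be deployed at $\iota(-x)=n+1$ and $\iota(-x)=n$; in the paper the first produces $-\tfrac1h[\Conerhzero\Ir[g-g(1)](x)+b_h(nh-\lambda h)+o(h^2)]+\barlambda d_h+o(1)$, and the choice of $b_h,d_h$ with $(n+1)/n$ factors is what collapses this to $g(-1)-I_-g(-1)/2+o(1)$.
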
 

 Plugging in the values in Table \ref{mainTable}  in the approximating interpolation matrix \eqref{interpolationmatrixGBC}, we obtain
\begin{equation}
\label{interpolationmatrixGBC_N*D}
  G^{\mathrm{N^*D}}_{n+1}(\lambda)=\begin{pmatrix}
    \Gru_1+ \Gru_0&  \Gru_2 &\cdots&\cdots& \Gru_n & 0 \\
     \lambda\Gru_0 & \lambda \Gru_1+\barlambda  (\Gru_1+ \Gru_0) &\cdots&\cdots&  \Gru_{n-1}&\Gru_n\\
    0 & \Gru_0 & \cdots&\Gru_{n-3}&\vdots &\Gru_{n-1} \\
    \vdots & \ddots &\ddots& \vdots &\vdots&\vdots\\
   \vdots &  &\ddots&\Gru_0 & \Gru_1 & \Gru_2\\
    0 &\cdots &\cdots& 0&  D^r(\lambda)\Gru_0 & \Gru_1
  \end{pmatrix}.  
\end{equation}

\begin{theorem}The case $(\Cr ,\mathrm{N^*D})$\label{thm:conv_C_N*D} Assume  \ref{H1}.
Let $f=\Ir  g+[\Ir  g]'(-1) k^-_0/k^-_{-1}(-1)$, and 
$$
f_h=\Ir  g +b_h\vartheta^{k_{0}}_{\mh},
$$ 
for each $h>0$, where
$$
b_h=\frac{[\Ir  g]'(-1)}{\calG^{k_{-1}}_{n}/h},
$$
 with $g\in C^\infty_c[-1,1)$. Then $f_h\to f$ and $G^{\mathrm{N^*D}}_{\mh}f_h\to g $ as $h\to0$ in $C_0[-1,1)$.
\end{theorem}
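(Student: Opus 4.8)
The plan is to verify the two convergences $f_h\to f$ in $C_0[-1,1)$ and $G^{\mathrm{N^*D}}_{\mh}f_h\to g$ in $C_0[-1,1)$ separately, exploiting the matrix structure \eqref{interpolationmatrixGBC_N*D} and the properties of $\vartheta^{k_0}_{\mh}$ collected in Lemmata \ref{lem:varthek0} and \ref{lem:varthetaDD}. First I would record that under \ref{H1} Proposition \ref{prop:y^2ker2} and Lemma \ref{lem:PWthms} (specifically \eqref{eq:GClimk-1k1} and \eqref{eq:GClimk-2k0}) are available; in particular $\calG^{k_{-1}}_{n,h}/h\to k^-_{-1}(-1)$, so the normalising constant $b_h$ is well-defined for small $h$ and $b_h\to [\Ir g]'(-1)/k^-_{-1}(-1)$. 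Combined with $\vartheta^{k_0}_{\mh}\to k^-_0$ in $C_0[-1,1)$ (Lemma \ref{lem:varthek0}) and the continuity of $\Ir$ on $C[-1,1]$ (Lemma \ref{lem:diphi}), this gives $f_h\to \Ir g+\frac{[\Ir g]'(-1)}{k^-_{-1}(-1)}k^-_0=f$ immediately, noting that $[\Ir g]'(-1)=k^-_{-1}(-1)\star g$ evaluated appropriately.

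For the generator convergence I would split $[-1,1)$ into the interior block $[h-1,1-h]$ and the two boundary zones $[-1,h-1)$ and $(1-h,1)$. On the interior, the matrix $G^{\mathrm{N^*D}}_{n+1}$ agrees with $\Crh$ away from the first and last rows, so $G^{\mathrm{N^*D}}_{\mh}f_h(x)=\Crh f_h(x)=\Crh\Ir g(x)+b_h\Crh\vartheta^{k_0}_{\mh}(x)$; by \eqref{eq:partialphithetah0} the second term vanishes on $[h-1,1-h]$, and by Corollary \ref{cor:L94} applied to $g\in C_c^\infty[-1,1)$ (canonically extended as in Remark \ref{rmk:canon}) we get $\Crh\Ir g\to g$ in $C[h-1,1-h]$; one checks $g(-1)=0$ so the hypotheses of Corollary \ref{cor:L94} hold. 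Near the right boundary $x\in(1-h,1]$, since $g$ has compact support in $[-1,1)$ and $\vartheta^{k_0}_{\mh}$ (or its canonical extension) interacts with the $D^r(\lambda)$-weighted bottom row exactly as in Lemma \ref{lem:varthetaDD2}, the relevant terms are $O(b_h\calG^{k_0}_{n,h}/h)$-type quantities; using \eqref{eq:convhomega}, $\calG^{k_0}_{n,h}/h=(h\psi(1/h))^{-1}\to 0$ and $b_h$ bounded, these are $o(1)$, and $\Crh\Ir g\to 0=g$ there as well by the support condition plus Corollary \ref{cor:L94}.

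The delicate part — and the step I expect to be the main obstacle — is the left boundary zone $[-1,h-1)$, i.e. $\iota(-x)=1$, where the ${\rm N^*}$ (reflecting) modification lives in the top-left corner $\Gru_1+\Gru_0$ and the off-diagonal $\lambda\Gru_0$. Here $G^{\mathrm{N^*D}}_{\mh}f_h(-1+\lambda h)$ involves $\Gru_0=\psi(1/h)$, which diverges, so the whole argument hinges on this divergence being cancelled by the carefully chosen weight $b_h=[\Ir g]'(-1)/(\calG^{k_{-1}}_{n,h}/h)$ together with the interpolation structure of $\vartheta^{k_0}_{\mh}$ at $\iota(x)=1$ (Table \ref{tab:varthetadetails}, the $L^1$-type formula $\vartheta^{k_0}_{\cdot\,h}=-\frac{\Gru_0}{h\Gru_1}(\barlambda\calG^{k_0}_0+\lambda\calG^{k_0}_1)$, here reflected). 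I would expand $G^{\mathrm{N^*D}}_{\mh}f_h(-1+\lambda h)=(\Gru_1+\Gru_0)f_h(-1+\lambda h)+\lambda\Gru_0 f_h(-1+(\lambda+1)h)+\cdots$, substitute $f_h=\Ir g+b_h\vartheta^{k_0}_{\mh}$, Taylor-expand $\Ir g$ around $-1$ using $\Ir g(-1)=0$ and $[\Ir g]'(-1)=k^-_{-1}(-1)\star g$ (Lemma \ref{lem:diphi}), and use the discrete convolution identities of Lemma \ref{lem:convid} — especially \eqref{eq:dconvk-1} in the reflected form $\Gru_0\calG^{k_{-1}}_0=1/h$, $\Gru_0\calG^{k_{-1}}_1+\Gru_1\calG^{k_{-1}}_0=-1/h$ — exactly as in the proof of Lemma \ref{lem:NRk-1}, to see the $\psi(1/h)$ contributions telescope. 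The residual terms should be controlled by the Post–Widder ratio $\calG^{k_{-2}}_{n+1,h}/\calG^{k_{-1}}_{n,h}\to k^-_{-2}(-1)/k^-_{-1}(-1)$ from \eqref{eq:GClimk-2k0} (this is precisely why \ref{H1} is imposed) together with \eqref{eq:diffvarth} for the increments of $\vartheta^{k_0}_{\mh}$, yielding $G^{\mathrm{N^*D}}_{\mh}f_h(-1+\lambda h)\to g(-1)=0$ with rate $h^2\psi(1/h)$ as claimed in Table \ref{tab:convergencerates}. Throughout, I would mirror the `$+$'/`$-$' passage via $\Cl f(x)=\Cr[f(-\cdot)](-x)$ and the analogous identity for $\Clh,\Crh$ so that all computations can be done in the `$+$' coordinates of Lemma \ref{lem:varthek0} and then reflected.
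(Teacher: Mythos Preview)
Your high-level plan is right—split into interior and boundary zones, use Corollary \ref{cor:L94} in the interior, establish $f_h\to f$ via Lemma \ref{lem:varthek0} and $b_h\to[\Ir g]'(-1)/k^-_{-1}(-1)$—but the boundary analysis has a genuine gap together with several factual slips. The slips: for $x\in[-1,-1+h]$ one has $\iota(-x)=n+1$, not $\iota(-x)=1$ (your indexing is reversed throughout); the first row of \eqref{interpolationmatrixGBC_N*D} is $(\Gru_1+\Gru_0,\Gru_2,\ldots,\Gru_n,0)$ and contains no $\lambda\Gru_0$ (that entry is in row $2$); $\Ir g(-1)\neq0$ in general (it is $\Ir g(1)=0$); $\calG^{k_0}_{n,h}/h\to k_0^+(1)\neq0$, you have confused it with $\calG^{k_0}_{0,h}/h$; the $C_0(\Omega)$ formula for $\vartheta^{k_0}_{\cdot h}$ at $\iota=1$ is $h^{-1}\lambda\calG^{k_0}_0$, not the $L^1$ version; and at the right boundary $(1-h,1]$ one uses Lemma \ref{lem:varthetaDD} (since $G^{\mathrm{N^*D}}_{\mh}=G^{\mathrm{DD}}_{\mh}$ there), not the forward-operator Lemma \ref{lem:varthetaDD2}.

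The substantive gap is the cancellation mechanism at the left boundary. Your appeal to the convolution identities \eqref{eq:dconvk-1} ``exactly as in the proof of Lemma \ref{lem:NRk-1}'' will not work: that lemma handles $\vartheta^{k_{-1}}_{+h}$ under the \emph{forward} $\mathrm{N^*}$ operator, whereas here we have $\vartheta^{k_0}_{\mh}$ and the backward operator, and no analogous telescoping of $\psi(1/h)$ via \eqref{eq:dconvk-1} is available. The paper's key step is instead to use the canonical extensions and \eqref{eq:partialphithetah0} (which gives $\Crh\vartheta^{k_0}_{\mh}=0$) to rewrite, for $\iota(-x)=n+1$,
\[
G^{\mathrm{N^*D}}_{\mh}f_h(x)=\Gru_0\big(f_h(x)-f_h(x-h)\big)+\Crh\Ir g(x)+o(1).
\]
The divergent $\Gru_0=\psi(1/h)$ is then killed by showing $f_h(x)-f_h(x-h)=O(h^2)$: Taylor gives $\Ir g(x)-\Ir g(x-h)=h[\Ir g]'(x)+O(h^2)$, while \eqref{eq:diffvarth} together with \eqref{eq:GClimk-2k0} gives $\vartheta^{k_0}_{\mh}(x)-\vartheta^{k_0}_{\mh}(x-h)=-\calG^{k_{-1}}_n+O(h^2)$, and the very definition $b_h\calG^{k_{-1}}_n=h[\Ir g]'(-1)$ makes the $O(h)$ terms cancel exactly. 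You do cite \eqref{eq:diffvarth} and \eqref{eq:GClimk-2k0}, but without this increment decomposition the cancellation is not visible. The paper also treats the second grid cell $\iota(-x)=n$ separately by the same mechanism (with $\calG^{k_{-1}}_{n-1}$ in place of $\calG^{k_{-1}}_n$, handled via $(1-\calG^{k_{-1}}_{n-1}/\calG^{k_{-1}}_n)=h\calG^{k_{-2}}_n/\calG^{k_{-1}}_n=O(h)$), which you omit.
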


 Plugging in the values in Table \ref{mainTable}  in the approximating interpolation matrix \eqref{interpolationmatrixGBC}, we obtain
\begin{equation}
\label{interpolationmatrixGBC_N*N}
  G^{\mathrm{N^*N}}_{n+1}(\lambda)=\begin{pmatrix}
    \Gru_1+ \Gru_0&  \Gru_2 &\cdots&\cdots& -\sum_{j=0}^{n-1}\Gru_j & 0 \\
     \lambda\Gru_0 &  \lambda\Gru_1+\barlambda ( \Gru_1+ \Gru_0) &\cdots&\cdots& \barlambda  \Gru_{n-1}+\lambda(-\sum_{j=0}^{n-2}\Gru_j)& -\barlambda \sum_{j=0}^{n-1}\Gru_j\\
    0 & \Gru_0 & \cdots&\Gru_{n-3}&\vdots &-\barlambda \sum_{j=0}^{n-2}\Gru_j \\
    \vdots & \ddots &\ddots& \vdots &\vdots&\vdots\\
   \vdots &  &\ddots&\Gru_0 & \barlambda \Gru_1+\lambda(-\Gru_0) & -\barlambda \sum_{j=0}^1\Gru_j\\
    0 &\cdots &\cdots& 0&  \Gru_0 & -\Gru_0
  \end{pmatrix}.
\end{equation}
\begin{theorem}[The case $(\Cr ,\mathrm{N^*N})$]\label{thm:conv_C_N*N} Assume   \ref{H1}.
Let \[f=\Ir  g+ [\Ir  g]'(-1) k^-_1/k^-_{0}(-1)+c\]  and 
$$
f_h=\Ir  [g-g(1)] +b_h\vartheta^{k_{1}}_{\mh}+c,
$$ 
for each $h>0$, where
$$
b_h= g(1)+\frac{[\Ir  g]'(-1)}{\calG^{k_{0}}_{n-1}/h},
$$
 with $g\in C^\infty[-1,1]$ such that $g-g(1)\in C_c[-1,1)$ and $c\in\mathbb R$. Then $f_h\to f$ and $G^{\mathrm{N^*N}}_{\mh}f_h\to g+[\Ir  g]'(-1)/k^-_{0}(-1) $ as $h\to0$ in $C[-1,1]$.
\end{theorem}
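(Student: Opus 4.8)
The plan is to verify the two convergences $f_h\to f$ in $C[-1,1]$ and $G^{\mathrm{N^*N}}_{\mh}f_h\to g+[\Ir g]'(-1)/k^-_{0}(-1)$ in $C[-1,1]$ by following the template used for the other $\mathrm{N^*R}$ theorems, i.e. splitting $[-1,1]$ into the bulk $[h-1,1-h]$ and the two boundary layers $[-1,h-1)$ and $(1-h,1]$, handling the bulk via Corollary \ref{cor:L94} and the Post--Widder limits of Lemma \ref{lem:PWthms}, and handling the layers by hand while compensating the divergent coefficient $\Gru_{0,h}=\psi(1/h)$ with the extra $h^2$-order slack available in the boundary entries of $G^{\mathrm{N^*N}}_{n+1}$. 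First I would record the convergence of the constant $b_h$: since $g\in C^\infty[-1,1]$, $\Ir g\in C^1[-1,1]$ by Lemma \ref{lem:diphi}, so $[\Ir g]'(-1)$ is well defined, and by Lemma \ref{lem:PWthms} (the second limit in \eqref{eq:GClimk-1k1} applied with $i=0$, together with $\frac1h\calG^{k_0}_{n-1,h}=\frac{n}{2}\calG^{k_0}_{n-1,2/(n+1)}\cdot\frac{2}{n}\cdot\frac{n+1}{2}\to k^+_0(-1)$ — more precisely using the first statement of Lemma \ref{lem:PWthms} at $x=-1$ after a $\delta$-truncation, or directly \eqref{eq:GClimk-2k0Tricky} read at the left endpoint) we get $\calG^{k_0}_{n-1,h}/h\to k^-_0(-1)$, hence $b_h\to g(1)+[\Ir g]'(-1)/k^-_0(-1)$. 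Combined with $\vartheta^{k_1}_{\mh}\to k^-_1$ in $C_0[-1,1)$ (Lemma \ref{lem:varthek0}) and the continuity of $\Ir$ on $C[-1,1]$, this gives $f_h\to f$ in $C[-1,1]$, using that $k^-_1(1)=0$ so the boundary value at $1$ is just $g(1)+c$ on both sides, matching $f(1)=\Ir g(1)-\Ir g(1)\cdot 0/\dots$; one should double-check here that $f_h(1)=b_h\cdot 0 + c + \Ir[g-g(1)](1)= c$ against $f(1)=c$ since $k^-_1(1)=0$ and $\Ir[g-g(1)](1)$ — wait, $\Ir$ integrates from $1$, so $\Ir[g-g(1)](1)=0$; good.

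For the generator convergence in the bulk, I would write $G^{\mathrm{N^*N}}_{\mh}f_h=G^{\mathrm{N^*N}}_{\mh}\Ir[g-g(1)] + b_h\,G^{\mathrm{N^*N}}_{\mh}\vartheta^{k_1}_{\mh}$. On grid intervals with $3\le\iota(-x)\le n-1$ the matrix \eqref{interpolationmatrixGBC_N*N} coincides with the free scheme $\Crh$, so $G^{\mathrm{N^*N}}_{\mh}\vartheta^{k_1}_{\mh}(x)=\Crh\vartheta^{k_1}_{\mh}(x)=1-\frac\lambda h\calG^{k_1}_0\Gru_{\iota(-x)}$ by \eqref{eq:partialphithetah1}, and since $\calG^{k_1}_0\Gru_{\iota(-x)}=O(\psi(1/h)/\psi(1/h))$ is not obviously small I would instead use that $\frac1h\calG^{k_1}_{0,h}\Gru_{j,h}\to 0$ uniformly from \eqref{eq:GClimphi}–\eqref{eq:convhomega}; thus $G^{\mathrm{N^*N}}_{\mh}\vartheta^{k_1}_{\mh}\to 1$ uniformly on compacts of $(-1,1)$. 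For the $\Ir[g-g(1)]$ part, since $g-g(1)\in C_c[-1,1)$ the function $h:=\Ir[g-g(1)]$ is $C^\infty$ near $-1$ with $h(-1)=0$; extending canonically (Remark \ref{rmk:canon}) and invoking Corollary \ref{cor:L94} gives $\Crh h\to \Cr h = g-g(1)$ uniformly on $[h-1,1-h]$, and away from $1$ the matrix entries agree with $\Crh$. Putting these together yields $\|G^{\mathrm{N^*N}}_{\mh}f_h-(g-g(1))-b_h\|_{C[h-1,1-h]}\to 0$, and since $b_h\to g(1)+[\Ir g]'(-1)/k^-_0(-1)$ this is the claimed limit $g+[\Ir g]'(-1)/k^-_0(-1)$ on the bulk; the order $h^2\psi(1/h)$ quoted in Table \ref{tab:convergencerates} comes from the rate in Corollary \ref{cor:L94} (order $h$) against the $\psi(1/h)$-scaling, tracked explicitly.

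The hard part is the left boundary layer $\iota(-x)\in\{1,2\}$ and, to a lesser extent, the right layer $\iota(-x)=n,n+1$. Near $-1$, the reflecting rows of \eqref{interpolationmatrixGBC_N*N} differ from $\Crh$ by the replacement $\Gru_0\mapsto\Gru_1+\Gru_0$ in the first two rows and by the $\lambda,\barlambda$ interpolation, and the key cancellation is Lemma \ref{lem:NRk-1}: $G^{\mathrm{N^*R}}_{\ph}\vartheta^{k_{-1}}_{\ph}=0$ for $1\le\iota\le n-1$, whose transpose/mirror statement I would use to control $G^{\mathrm{N^*N}}_{\mh}$ acting on the $\vartheta^{k_1}_{\mh}$-component once we note $\vartheta^{k_1}_{\mh}$ is built from $\calG^{k_1}$-coefficients that are a $\Gru_0$-summation of $\calG^{k_0}$, which in turn sum to $\calG^{k_{-1}}$; the difference $\vartheta^{k_1}_{\mh}(x)-\vartheta^{k_1}_{\mh}(x-h)=-\calG^{k_0}_{\iota(-x)-2}-\barlambda h\calG^{k_{-1}}_{\iota(-x)-1}$ from \eqref{eq:diffvarth} is exactly what feeds the reflecting first-difference structure, and one shows the reflecting modification produces, at $\iota=1$, the boundary value $[\Ir g]'(-1)/k^-_0(-1)$ (this is the content that forces \ref{H1}, since controlling the error requires the $\calG^{k_{-2}}$ asymptotics \eqref{eq:GClimk-2k0}–\eqref{eq:GClimk-2k0Tricky}, unavailable under \ref{H0} alone). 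Concretely I would compute $G^{\mathrm{N^*N}}_{\mh}f_h(x)$ for $\iota(-x)=1,2$ directly from \eqref{interpolationmatrixGBC_N*N}, substitute the explicit $\vartheta^{k_1}_{\mh}$ values from Table \ref{tab:varthetadetails} and the definition of $b_h$, use the discrete convolution identities \eqref{eq:dconvk-1}–\eqref{eq:dconvk1} to collapse the $\Gru\star\calG$ sums, and show the residual is $O(h^2\psi(1/h))=o(1)$ using \eqref{eq:convhomega}; the right layer $\iota\in\{n,n+1\}$ is the fast-forwarding ($\mathrm N$) boundary on $\vartheta^{k_1}_{\mh}$, handled as in Theorem \ref{thm:conv_C_NN} using $k^-_1(1)=0$ and \eqref{eq:partialphi-1thetah1}, plus the constant $c$ contributing $\Crh$ applied to a constant which vanishes by \eqref{eq:grun_phi} except through the modified last rows where the $\mathrm N$ column sums are arranged to also vanish. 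Assembling the bulk and the two layers, and noting the $c$-term contributes nothing to $G^{\mathrm{N^*N}}_{\mh}f_h$ by the zero row-sum property of the transition matrix, completes the proof.
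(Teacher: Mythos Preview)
Your high-level template (bulk via Corollary~\ref{cor:L94}, boundary layers by hand) is right, and you correctly flag that \eqref{eq:diffvarth} together with \eqref{eq:GClimk-2k0Tricky} is where \ref{H1} enters. But two things in your plan do not work as written. First, the $\iota(-x)$ bookkeeping is inverted: for the backward operator $G^{\mathrm{N^*N}}_{\mh}$, the left (reflecting, $\mathrm{N^*}$) boundary at $x\in[-1,-1+h]$ corresponds to $\iota(-x)\in\{n,n+1\}$, while $\iota(-x)\in\{1,2\}$ is the right (fast-forward, $\mathrm N$) boundary at $x\in[1-h,1]$. This matters because the hard cancellation is at $\iota(-x)=n,n+1$. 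Second, Lemma~\ref{lem:NRk-1} is about $G^{\mathrm{N^*R}}_{\ph}\vartheta^{k_{-1}}_{\ph}$ (the \emph{forward} operator on the $k_{-1}$-approximant) and has no ``transpose/mirror'' bearing on $G^{\mathrm{N^*N}}_{\mh}\vartheta^{k_1}_{\mh}$; invoking it here is a dead end.

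The actual mechanism at the $\mathrm{N^*}$ boundary in the paper is different and more direct. For $\iota(-x)=n+1$ (and similarly $n$) one rewrites $G^{\mathrm{N^*N}}_{\mh}f_h(x)$ as $\Gru_0\big(f_h(x)-f_h(x-h)\big)+\Crh\Ir[g-g(1)](x)+b_h+o(1)$, and the whole game is to show $f_h(x)-f_h(x-h)=O(h^2)$ so that $\Gru_0\cdot O(h^2)=O(h^2\psi(1/h))=o(1)$. This $O(h^2)$ comes from combining the Taylor expansion $\Ir[g-g(1)](x)-\Ir[g-g(1)](x-h)=h[\Ir g]'(-1)-hg(1)k^-_0(x)+O(h^2)$ with the second identity in \eqref{eq:diffvarth} for $\vartheta^{k_1}_{\mh}$ and the exact choice $b_h=g(1)+[\Ir g]'(-1)/(\calG^{k_0}_{n-1}/h)$; the cancellation leaves $g(1)h\big(k^-_0(-1)-\calG^{k_0}_{n-1}/h\big)+O(h^2)$, and this is $O(h^2)$ precisely by \eqref{eq:GClimk-2k0Tricky}. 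In the bulk $2\le\iota(-x)\le n-1$ the $\mathrm N$-column modifications do not merely become negligible: they cancel \emph{exactly} against the $-\frac\lambda h\calG^{k_1}_0\Gru_{\iota(-x)}$ remainder of \eqref{eq:partialphithetah1}, giving $G^{\mathrm{N^*N}}_{\mh}f_h=\Crh\Ir[g-g(1)]+b_h$ identically there. At the right boundary $\iota(-x)=1$ a short direct computation yields $G^{\mathrm{N^*N}}_{\mh}f_h(x)=b_h$. Your references to \eqref{eq:partialphi-1thetah1} and to Theorem~\ref{thm:conv_C_NN} for the right layer are not needed here; the $\mathrm N$ side is easy and the substance is entirely in the first-difference control at $\mathrm{N^*}$.
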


\subsection{The case $X=L^1[-1,1]$}\label{sec:caseL}
The proofs of the theorems in this section can be found in Appendix \ref{app:caseL}. (See Table \ref{fhexplicit1} for the definition of all the approximating    functions $f_h$.)


\begin{theorem}[The case $(\Cl ,\mathrm{DD})$]\label{thm:conv_L_DD}
Assume   \ref{H0}. Let $f=\Il  g-\Il  g(1)k^+_0/k^+_0(1)$ and 
\[
f_h=\Il  g+b_h\vartheta^{k_{0}}_{\ph},
\]
for each $h>0$, with $g\in C_c^\infty(-1,1)$  and $b_h=- \Il  g(1)/\vartheta^{k_{0}}_{\ph}(1)$. Then $f_h\to f$ and  $G^{\mathrm{DD}}_{\ph}f_h\to g $ as $h\to0$ in $L^1[-1,1]$.
\end{theorem}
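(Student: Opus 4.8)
The plan is to verify the two Trotter–Kato convergences of Theorem~\ref{thm:TK}-(ii) for this $L^1$ case, namely $f_h\to f$ in $L^1[-1,1]$ and $G^{\mathrm{DD}}_{\ph}f_h\to \Cl f = g$ in $L^1[-1,1]$ as $h=2/(n+1)\to 0$. The convergence $f_h\to f$ is the easy half: since $b_h=-\Il g(1)/\vartheta^{k_0}_{\ph}(1)$ and $\vartheta^{k_0}_{\ph}(1)\to k_0^+(1)\neq 0$ (by Lemma~\ref{lem:varthek0}, using that the limit is the value at the right endpoint), and since $\Il g\in C[-1,1]$ by Lemma~\ref{lem:diphi} so $\Il g(1)$ is a fixed number, we get $b_h\to -\Il g(1)/k_0^+(1)$. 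Combined with $\vartheta^{k_0}_{\ph}\to k_0^+$ in $L^1[-1,1]$ (again Lemma~\ref{lem:varthek0}), this gives $f_h=\Il g+b_h\vartheta^{k_0}_{\ph}\to \Il g -\frac{\Il g(1)}{k_0^+(1)}k_0^+ = f$ in $L^1[-1,1]$. One should also note $f_h\in \D(\Cl,\mathrm{DD})$-type form and indeed that $f_h\in L^1[-1,1]$, so it is a legitimate element of the domain of the bounded generator $G^{\mathrm{DD}}_{\ph}$.

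For the generator convergence, I would split $G^{\mathrm{DD}}_{\ph}f_h = G^{\mathrm{DD}}_{\ph}\Il g + b_h\,G^{\mathrm{DD}}_{\ph}\vartheta^{k_0}_{\ph}$. The second term is handled by Lemma~\ref{lem:varthetaDD2}: $G^{\mathrm{DD}}_{\ph}\vartheta^{k_0}_{\ph}$ vanishes on grid intervals $1\le\iota(x)\le n-1$, equals $(D^r(\lambda)-1)\Gru_0\vartheta^{k_0}_{\ph}(x+h)+o(1)$ on $\iota(x)=n$, and equals $-\Gru_0 h^{-1}(\barlambda\calG^{k_0}_n+\lambda\calG^{k_0}_{n+1})+o(1)$ on $\iota(x)=n+1$. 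Since $b_h$ is bounded, the contribution of the interior vanishes identically, and one needs to check the contributions of the last two intervals are $o(1)$ in $L^1[-1,1]$ — these are intervals of length $h$, so even a bounded-per-interval estimate would give $O(h)$; the potentially dangerous factor $\Gru_0=\psi(1/h)$ appears multiplied by $\calG^{k_0}_{n}/h$-type quantities, and using \eqref{eq:GClimphi}, \eqref{eq:convhomega} and the Post–Widder limits of Lemma~\ref{lem:PWthms} one shows $\Gru_0\vartheta^{k_0}_{\ph}(x+h)$ stays bounded on the last interval while $\Gru_0 h^{-1}(\barlambda\calG^{k_0}_n+\lambda\calG^{k_0}_{n+1})\to \text{finite}$, so each contributes $O(h)$ in $L^1$. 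For the first term $G^{\mathrm{DD}}_{\ph}\Il g$, on the interior intervals $G^{\mathrm{DD}}_{\ph}$ agrees with the free convolution quadrature $\Clh$ (by inspecting \eqref{interpolationmatrixGBC_DD}), so on $[h-1,1-h]$ one has $G^{\mathrm{DD}}_{\ph}\Il g=\Clh\Il g$, and Corollary~\ref{cor:L94} (applicable since $g\in C_c^\infty(-1,1)$, so $\Il g$ is smooth enough and vanishes to high order at $-1$) gives $\Clh\Il g\to \Cl\Il g = g$ uniformly, hence in $L^1[h-1,1-h]$, with rate $O(h)$. The remaining boundary contribution of $G^{\mathrm{DD}}_{\ph}\Il g$ on the first and last intervals must be shown $O(h^2\psi(1/h))$, i.e.\ $o(1)$, which again follows by writing out those boundary rows of the interpolation matrix and using that $\Il g$ vanishes to high order at $-1$ together with the asymptotics of the Grünwald coefficients.

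The main obstacle, as in all the approximation theorems of Section~\ref{sec:case}, is the boundary control: showing that the divergent coefficient $\Gru_{0,h}=\psi(1/h)$, which appears in the first and last rows of $G^{\mathrm{DD}}_{n+1}(\lambda)$ and in the interpolation weights $D^l,D^r$, is compensated by the vanishing of the test functions (and the $O(h^2)$ factors coming from the grid spacing) so that the boundary terms go to $0$. For this $\mathrm{DD}$ case the burden is lightest — Lemma~\ref{lem:varthetaDD2} does essentially all of the work for the $\vartheta^{k_0}_{\ph}$ part, and the $\Il g$ part is controlled purely by the smoothness and flatness of $g$ at $-1$ — which is why the theorem only needs \ref{H0}. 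I would organize the proof exactly as: (i) $f_h\to f$; (ii) interior convergence via Corollary~\ref{cor:L94} plus Lemma~\ref{lem:varthetaDD2}; (iii) boundary estimates on $[-1,-1+h)$ and $(1-h,1]$ using Lemmas~\ref{lem:grun_phi}, \ref{lem:PWthms} and \eqref{eq:convhomega}; then conclude by adding the pieces and invoking Theorem~\ref{thm:TK}.
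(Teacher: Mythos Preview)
Your overall structure is right---interior convergence via Corollary~\ref{cor:L94} plus Lemma~\ref{lem:varthetaDD2}, boundary handled separately---and the $f_h\to f$ argument is fine. But your boundary estimate on the last two intervals has a genuine gap: you attempt to control $b_h\,G^{\mathrm{DD}}_{\ph}\vartheta^{k_0}_{\ph}$ and $G^{\mathrm{DD}}_{\ph}\Il g$ \emph{separately}, asserting that ``$\Gru_0\vartheta^{k_0}_{\ph}(x+h)$ stays bounded'' and ``$\Gru_0 h^{-1}(\barlambda\calG^{k_0}_n+\lambda\calG^{k_0}_{n+1})\to\text{finite}$''. Both claims are false. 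Since $\calG^{k_0}_n/h=\vartheta^{k_0}_{\ph}(1)\to k_0^+(1)>0$ by Lemma~\ref{lem:PWthms}, each of those quantities is of order $\Gru_0=\psi(1/h)\to\infty$. Integrating over an interval of length $h$ gives $O(h\psi(1/h))$, which still diverges by \eqref{eq:convhomega}. Likewise, on $\iota(x)=n+1$ the $\Il g$ piece contributes $-\Gru_0\Il g(x+h)\sim -\psi(1/h)\Il g(1)$, which is unbounded unless $\Il g(1)=0$.

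The paper does not treat the two pieces separately on the boundary: it combines them into $\Gru_0\big(\Il g(x+h)+b_h\vartheta^{k_0}_{\ph}(x+h)\big)$ and observes that, precisely because $b_h=-\Il g(1)/\vartheta^{k_0}_{\ph}(1)$, this bracket equals $\Il g(x+h)-\Il g(1)+\Il g(1)\lambda(1-\calG^{k_0}_{n+1}/\calG^{k_0}_n)=O(h)$, using $\calG^{k_0}_{n+1}-\calG^{k_0}_n=h\calG^{k_{-1}}_{n+1}$ from \eqref{eq:phi+1tophi} and \eqref{eq:GClimk-1k1}. Only then does $\Gru_0\cdot O(h)$ integrated over length $h$ give $O(h^2\psi(1/h))=o(1)$. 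The cancellation is the whole point of the choice of $b_h$ (equivalently, of $f_h(1)=0$), and your argument misses it. Also, there is no separate boundary issue at $[-1,-1+h)$: Lemma~\ref{lem:varthetaDD2} gives $G^{\mathrm{DD}}_{\ph}\vartheta^{k_0}_{\ph}=0$ there, and $\Il g=0$ near $-1$ since $g\in C_c^\infty(-1,1)$, so $G^{\mathrm{DD}}_{\ph}f_h=\Clh\Il g$ on all of $1\le\iota(x)\le n-1$.
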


Note that for the dual problem $(\Cr ,\mathrm{DD})$ on $X=C_0(-1,1)$, we required the stronger assumption \ref{H1} in place of \ref{H0} (the same comment applies to Theorem \ref{thm:conv_L_N*N}).


\begin{theorem}[The case $(\Cl ,\mathrm{DN})$]\label{thm:conv_L_DN}
Assume   \ref{H0}. Let $f=\Il  g-I_+ g(1)k^+_0$, and $$f_h=\Il  g+b\vartheta^{k_{0}}_{\ph}+e_h, $$ with $g\in C_c^\infty(-1,1)$, $b=-I_+ g(1)$ and
\begin{equation}
e_h(x)=\left\{\begin{split}& 0, & x\in[-1,1-h),\\
&-\lambda \left( \Il g(x)+\vartheta^{k_{0}}_{\ph}(x)\right),& x\in[1-h,1].
\end{split}\right.  
\end{equation}
 Then $f_h\to f$ and  $G^{\mathrm{DN}}_{\ph}f_h\to g $ as $h\to0$ in $L^1[-1,1]$.
\end{theorem}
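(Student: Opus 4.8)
The plan is to invoke the Trotter--Kato Theorem \ref{thm:TK}-(ii) with the core $\mathcal C(\Cl,\mathrm{DN})$ from Proposition \ref{prop:cores}, so we must verify that the stated $f_h$ converges to $f$ in $L^1[-1,1]$ and that $G^{\mathrm{DN}}_{\ph}f_h\to g$ in $L^1[-1,1]$. First I would record that $f=\Il g-I_+g(1)k_0^+=\Il g+b k_0^+$ with $b=-I_+g(1)$ lies in $\D(\Cl,\mathrm{DN})$ by Table \ref{tab:ndo} (using \eqref{eq:Iphiofind}, which gives $I_+[g](1)=\int_{-1}^1 g$, and the normalisation built into the domain), and that $\Cl f=g$ by Lemma \ref{lem:gML}-type identities, i.e. \eqref{eq:ker} and \eqref{eq:inv}. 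For the convergence $f_h\to f$: the term $b\vartheta^{k_0}_{\ph}\to b k_0^+$ in $L^1[-1,1]$ by Lemma \ref{lem:varthek0}, $\Il g$ is fixed, and $e_h$ is supported on $[1-h,1]$ with $|e_h|\le |\Il g(x)|+|\vartheta^{k_0}_{\ph}(x)|$ there; since $\Il g\in C[-1,1]$ is bounded and $\|\vartheta^{k_0}_{\ph}\|_{L^1[1-h,1]}\to0$ (again Lemma \ref{lem:varthek0}, as $\vartheta^{k_0}_{\ph}$ is bounded on that last interval by $\frac{\Gru_0}{h\Gru_1}(\calG_0^{k_0}+\calG_1^{k_0})$, which is $o(1)$ by \eqref{eq:convhomega}), we get $\|e_h\|_{L^1}\to0$.

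The substance is the generator convergence. I would split $G^{\mathrm{DN}}_{\ph}f_h$ into the interior grid points $2\le\iota(x)\le n-1$ and the boundary grids $\iota(x)\in\{1,n,n+1\}$. On the interior, $G^{\mathrm{DN}}_{\ph}$ acts as the transpose-type operator whose action on $\Pi_{n+1}$-embedded functions matches the forward convolution quadrature; using the decomposition $f_h=\Il g+b\vartheta^{k_0}_{\ph}$ on $[-1,1-h)$ and the identities \eqref{eq:partialphithetah0+} together with the $L^1$-version of Corollary \ref{cor:L94} applied to $g$ (extended canonically, $g\in C_c^\infty(-1,1)$ so all boundary derivatives vanish), one obtains $\|G^{\mathrm{DN}}_{\ph}f_h-g\|_{L^1[h-1,1-h]}\le\|\Clh\Il g-g\|_{L^1}+O(1)\cdot(\text{boundary-coefficient terms})\to0$. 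The $b\vartheta^{k_0}_{\ph}$ contribution on the interior is controlled by \eqref{eq:partialphithetah0+}, which shows $\Clh\vartheta^{k_0}_{\ph}$ equals $\Gru_{\iota(x)}$ times an $o(1)$ quantity, and $b$ is bounded, so after embedding these terms are negligible in $L^1$.

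The main obstacle is the boundary control at the right endpoint, i.e. the grids $\iota(x)=n$ and $\iota(x)=n+1$, where the $\mathrm{DN}$ interpolation matrix \eqref{interpolationmatrixGBC_DN} differs from the free convolution quadrature and where the divergent coefficient $\Gru_{0,h}=\psi(1/h)$ appears. Here the role of $e_h$ is exactly to absorb this divergence: the term $-\lambda(\Il g(x)+\vartheta^{k_0}_{\ph}(x))$ on $[1-h,1]$ is engineered so that when hit by the last rows of $(G^{\mathrm{DN}}_{n+1})^T$ the contributions proportional to $\Gru_0$ cancel, leaving a remainder that is $O(\Gru_{0,h}h^2)=O(h^2\psi(1/h))=o(1)$ by \eqref{eq:convhomega}. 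I would verify this cancellation by writing out the two bottom rows of \eqref{interpolationmatrixGBC_DN} explicitly, substituting $f_h=\Il g+b\vartheta^{k_0}_{\ph}+e_h$, and using the Fast-forwarding boundary weights $b_i^r=-\sum_{j=0}^{i-1}\Gru_j$ together with the Post--Widder limits \eqref{eq:GClimphi} (in particular $\sum_{j=0}^n\Gruone_{j,h}\to\Phi(2)$) to identify the surviving limit as $g$; on the last grid $[1-h,1]$ this also uses that $\|\cdot\|_{L^1}$ over an interval of length $h$ gives an extra factor $h$, which combined with the $\Gru_0\cdot h=\psi(1/h)h$ blow-up is still only $O(h\psi(1/h))$ — but here one needs the $\lambda$-weighting in $e_h$ to gain the additional power, yielding $O(h^2\psi(1/h))$, matching the rate $h$ claimed in Table \ref{tab:convergencerates} after the cruder estimates elsewhere dominate. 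The left endpoint $\iota(x)=1$ is comparatively easy since $g\in C_c^\infty(-1,1)$ vanishes near $-1$ and $\vartheta^{k_0}_{\ph}$ is designed (Table \ref{tab:varthetadetails}) so that $\Clh\vartheta^{k_0}_{\ph}$ behaves correctly there, so I would dispatch it quickly. Assembling the interior estimate and the two boundary estimates gives $\|G^{\mathrm{DN}}_{\ph}f_h-g\|_{L^1[-1,1]}\to0$, completing the verification of Theorem \ref{thm:TK}-(ii).
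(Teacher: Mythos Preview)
Your overall architecture is right---split into interior and boundary grids, use Corollary \ref{cor:L94} on the interior, and rely on $e_h$ to absorb the singular boundary terms---but the boundary argument as you describe it has a genuine gap and a misdiagnosis of the mechanism.

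The issue is on the grids $\iota(x)\in\{n,n+1\}$. When you write out the action of $(G^{\mathrm{DN}}_{n+1})^T$ there (i.e.\ the last two \emph{columns} of \eqref{interpolationmatrixGBC_DN}), two distinct singular contributions appear: a term proportional to $\Gru_0$ acting on the top entry, and a term of the form $-\tfrac{1}{h}\Conelhzero w(x)=-\tfrac{1}{h}\sum_j\Gruone_j w(x-jh)$ coming from the fast-forward weights $b_i^r=-\sum_{j<i}\Gru_j=-\Gruone_{i-1}/h$. The function $e_h$ cancels the $\Gru_0$ contribution \emph{exactly} (not merely up to an extra power of $h$), so your claimed remainder $O(\Gru_0 h^2)$ is not the relevant quantity. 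What remains after that cancellation is, schematically,
\[
-\tfrac{\lambda}{h}\Bigl[\Conelhzero\Il g(x)+b\Bigr]\quad(\iota(x)=n),\qquad
-\tfrac{\barlambda}{h}\Bigl[\Conelhzero\Il g(x)+b\Bigr]\quad(\iota(x)=n+1),
\]
and this is a priori $O(1/h)$. The Post--Widder limits \eqref{eq:GClimphi} that you invoke do not control this term. What you need is Lemma \ref{lem:firtorderapprox}, which gives $\Conelhzero\Il g(x)=I_+g(x)+hF_+[g](x)+O(h^2)$; combined with $b=-I_+g(1)$ and $I_+g(x)-I_+g(1)=O(h)$ for $x\in[1-2h,1]$, this yields $-\tfrac{1}{h}[\Conelhzero\Il g(x)+b]=O(1)$. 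Integrating $O(1)$ over the two length-$h$ intervals then gives the $O(h)$ rate in Table \ref{tab:convergencerates} (not $O(h^2\psi(1/h))$).

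Two smaller points: on the interior $2\le\iota(x)\le n-1$ you should use that $G^{\mathrm{DN}}_{\ph}$ agrees with $G^{\mathrm{DD}}_{\ph}$ there and then Lemma \ref{lem:varthetaDD2} (equation \eqref{eq:DDk0+}) gives $G^{\mathrm{DN}}_{\ph}\vartheta^{k_0}_{\ph}=0$ \emph{exactly}, not just $o(1)$. And in your $f_h\to f$ argument, the bound you quote for $\vartheta^{k_0}_{\ph}$ on the last interval is the formula for the \emph{first} interval ($\iota=1$); on $[1-h,1]$ one has $\vartheta^{k_0}_{\ph}\to k_0^+(1)\neq0$, but since the interval has length $h$ the $L^1$-norm still vanishes.
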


\begin{theorem}[The case $(\Cl ,\mathrm{ND})$]\label{thm:conv_L_ND}
Assume   \ref{H0}. Let $f=\Il  g-\Il  g(1)$, and 
\[
f_h=\Il  g+b\vartheta_{\ph}^0,
\] 
with $g\in C_c^\infty(-1,1)$ and $b=-\Il g(1)$. Then $f_h\to f$ and  $G^{\mathrm{ND}}_{\ph}f_h\to g $ as $h\to0$ in $L^1[-1,1]$.
\end{theorem}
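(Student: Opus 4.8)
The plan is to verify the two convergences $f_h\to f$ and $G^{\mathrm{ND}}_{\ph}f_h\to g$ in $L^1[-1,1]$ by splitting the interval into the ``interior'' part $[-1+h,1-h]$, where the interpolation matrix $G^{\mathrm{ND}}_{n+1}$ acts like the convolution quadrature $\Clh$, and the two boundary strips $[-1,-1+h)$ and $(1-h,1]$, which must be handled by hand. Recall from Table \ref{tab:ndo} that $f=\Il g-\Il g(1)$ lies in $\D(\Cl,\mathrm{ND})$, that $\vartheta^{0}_{\ph}\equiv 1$ away from the first grid cell and equals $\lambda$ on $[-1,-1+h)$, and that $b=-\Il g(1)$ is a constant independent of $h$. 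Since $\vartheta^0_{\ph}\to \mathbf 1=k_0^{+}/k_0^{+}(1)\cdot 0$... more precisely $\vartheta^0_{\ph}\to 1$ uniformly on $[-1+\delta,1]$ and is bounded by $1$ on $[-1,-1+h)$, we get $b\vartheta^0_{\ph}\to b = -\Il g(1)$ in $L^1[-1,1]$, and combined with the boundedness/continuity of $\Il$ (Lemma \ref{lem:diphi}) this already gives $f_h\to f$ in $L^1[-1,1]$.

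For the generator convergence, first I would compute $G^{\mathrm{ND}}_{\ph}f_h$ on the interior grid cells using the transpose of the matrix \eqref{interpolationmatrixGBC_ND}. The key algebraic observations are: (i) $\Clh$ applied to the constant $b\vartheta^0_{\ph}=b\cdot 1$ vanishes on the interior because $\sum_{j}\Gru_{j,h}=0$ (equation \eqref{eq:grun_phi}), so the correction term contributes nothing away from the boundary cells; and (ii) on the interior, $G^{\mathrm{ND}}_{\ph}$ agrees with $\Clh$, so by Corollary \ref{cor:L94} applied to the smooth compactly supported $g$ (whose canonical extension and $g=\Il$-preimage satisfy the required vanishing conditions, and here $g\in C_c^\infty(-1,1)$ so all boundary derivative conditions are automatic), we get $\Clh\Il g\to \Cl\Il g=g$ in $C[-1+h,1-h]$, hence in $L^1[-1+h,1-h]$ with rate $O(h^2\sym(1/h))$ as recorded in Table \ref{tab:convergencerates}. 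This disposes of the interior.

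The boundary analysis is where the work concentrates. On the right strip $(1-h,1]$ the ${\rm ND}$ (i.e.\ right-Dirichlet) boundary condition kills the relevant rows/columns of the transpose matrix; since $g\in C_c^\infty(-1,1)$ vanishes near $1$, $\Il g$ and its derivative vanish to high order at $1$, and the only subtlety is the $D^r(\lambda)$-weighted entry coupling the last two cells — but $\Il g$ is small there and the constant correction $b\cdot 1$ is annihilated by the column-sum-zero property modulo a single boundary term which one checks matches the Dirichlet condition, so this strip contributes $o(1)$ in $L^1$ (an $L^1$-over-an-$h$-length-interval estimate makes even $O(1)$ pointwise bounds vanish). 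On the left strip $[-1,-1+h)$ the ${\rm N}$ (fast-forwarding) boundary condition is implemented through the first column $(-\Gru_0,\lambda\Gru_0,0,\dots)^T$ of the matrix and the interpolation weight $N^l(\lambda)=\lambda$; here $\vartheta^0_{\ph}=\lambda$ is designed precisely so that the first cell of $(G^{\mathrm{ND}}_{n+1})^T$ applied to the vector $\Pi_{n+1}f_h$ telescopes correctly. The main obstacle, and the step I expect to require genuine care, is controlling the $L^1$-size of $G^{\mathrm{ND}}_{\ph}f_h$ on this first cell: the entry $\Gru_{0,h}=\psi(1/h)$ diverges, and one must show it is multiplied by a quantity of size $O(h^2)$ — this is exactly where the discrete identities $\sum_j\Gruone_{j,h}\to\Phi(2)$ and $\sum_{j=0}^n\Gru_{j,h}\to -(\phi((2,\infty))+\phi([2,\infty)))/2$ of Lemma \ref{lem:PWthms}, the first-order approximation Lemma \ref{lem:firtorderapprox}, and the smallness of the first cell (an interval of length $h$, so $L^1$-contribution $\le h\cdot\|\cdot\|_\infty$) conspire; because here $f=\Il g-\Il g(1)$ has no $k_0^+$ or $k_{-1}^+$ term, this case is actually the mildest of the fast-forwarding cases (hence only \ref{H0} is needed), and the divergent $\Gru_{0,h}$ is tamed simply because $\Il g(-1)=0$ (as $g$ is compactly supported) and the constant $b\vartheta^0_{\ph}$ is locally affine in $\lambda$. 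Assembling the interior estimate and the two boundary estimates gives $G^{\mathrm{ND}}_{\ph}f_h\to g$ in $L^1[-1,1]$, completing the proof.
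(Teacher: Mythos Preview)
Your overall strategy (interior via Corollary \ref{cor:L94}, then boundary strips separately, using that an $h$-length interval absorbs a pointwise $O(1)$ bound) matches the paper's. However, your boundary analysis contains a genuine error and a significant misplacement of effort.

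The error is on the right strip: you assert that ``$\Il g$ and its derivative vanish to high order at $1$'' because $g$ vanishes near $1$. This is false. For $g\in C_c^\infty(-1,1)$ one has $\Il g(x)=\int_{-1}^x k_0^+(x-y-1)g(y)\,\dd y$, which is generically nonzero at $x=1$; indeed this is exactly why $f=\Il g-\Il g(1)$ carries the correction term $b=-\Il g(1)$. What \emph{does} hold near $1$ is that $f_h(x+h)=\Il g(x+h)+b=\Il g(x+h)-\Il g(1)=O(h)$, by smoothness of $\Il g$. In the paper's proof this is precisely the cancellation that tames the $D^r(\lambda)$ coupling: for $\iota(x)\in\{n,n+1\}$ one gets $G^{\mathrm{ND}}_{\ph}f_h(x)=\Clh\Il g(x)+\Gru_0\,O(h)$, and integrating over the strip of length $2h$ yields $\Gru_0\,O(h^2)=O(h^2\psi(1/h))\to 0$ by \eqref{eq:convhomega}. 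Your ``column-sum-zero'' heuristic does not capture this; the mechanism is the pointwise cancellation built into the definition of $f_h$.

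Conversely, you put all the delicate work on the left strip and invoke Lemmata \ref{lem:PWthms} and \ref{lem:firtorderapprox}. None of this is needed here. Because $g$ has compact support in $(-1,1)$, $\Il g$ vanishes \emph{identically} on a neighbourhood of $-1$ (not merely $\Il g(-1)=0$). Combined with the design of $\vartheta^0_{\ph}$ (equal to $\lambda$ on the first cell and $1$ elsewhere), a direct computation shows $G^{\mathrm{ND}}_{\ph}f_h(x)=\Clh\Il g(x)$ for all $1\le\iota(x)\le n-1$, so the left boundary is absorbed into the interior estimate with no residual term at all. The Post--Widder asymptotics and the first-order approximation lemma play no role in this case; they enter in the \emph{backward} $(\Cr,\mathrm{ND})$ and $(\Cr,\mathrm{NN})$ proofs, where the left $\mathrm N$ boundary genuinely interacts with nonvanishing data.
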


   

\begin{theorem}[The case $(\Cl ,\mathrm{NN})$]\label{thm:conv_L_NN}
Assume \ref{H0}. Let $f=\Il  g-I_+ g(1)k^+_1/2+c$,  and 
\[
f_h=\Il  g+b \vartheta^{k_1}_{\ph}+c\vartheta^{0}_{\ph}+e_h,
\] 
with $c\in\mathbb R$, $b=- I_+g(1)/2$,  $g\in C_c^\infty(-1,1)$ and
\begin{equation*} e_h(x)=\left\{\begin{split}& 0, & x\in[-1,1-h),\\ 
&-\lambda (\Il g(x)+b\vartheta_{\ph}^{k_1}(x)+c),& x\in[1-h,1].
\end{split}\right.
\end{equation*}
 Then $f_h\to f$ and  $G^{\mathrm{NN}}_{\ph}f_h\to g-I_+ g(1)/2 $ as $h\to0$ in $L^1[-1,1]$.
\end{theorem}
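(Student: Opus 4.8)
The plan is to verify the two convergences $f_h\to f$ and $G^{\mathrm{NN}}_{\ph}f_h\to g-I_+g(1)/2$ in $L^1[-1,1]$ by splitting the grid into the bulk $[-1,1-h)$ and the final interval $[1-h,1]$, in parallel with the proofs of Theorems \ref{thm:conv_L_DD}--\ref{thm:conv_L_ND}. First I would check that $f_h\to f$: the term $\Il g$ is fixed, $b\vartheta^{k_1}_{\ph}\to b k_1^+$ and $c\vartheta^0_{\ph}\to c$ in $L^1[-1,1]$ by Lemma \ref{lem:varthek0}, and $e_h$ is supported on $[1-h,1]$ with $\|e_h\|_{L^1}\le h\,\|\,\Il g + b\vartheta^{k_1}_{\ph}+c\,\|_{C[1-h,1]}=O(h)$ since all these functions are uniformly bounded there; so $f_h\to f = \Il g + b k_1^+ + c$, which is exactly the claimed limit because $b=-I_+g(1)/2$.

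Next I would compute $G^{\mathrm{NN}}_{\ph}f_h$ on the bulk. Since $g\in C_c^\infty(-1,1)$ and $b\vartheta^{k_1}_{\ph}$, $c\vartheta^0_{\ph}$ interact with the interior rows of $(G^{\mathrm{NN}}_{n+1})^T$ essentially as $k_1^+$ and the constant function interact with $\Cl$, I expect $G^{\mathrm{NN}}_{\ph}f_h(x)$ to reduce, for $x\in[-1+h,1-2h]$ say, to $\Clh\Il g(x) + b\cdot(\text{error from }\vartheta^{k_1}_{\ph}) + (\text{negligible})$. Here one uses that $\Cl$ annihilates $k_1^+$ up to the term $I_+g(1)/2$ produced by the $\RLl k_1^+$-type identity (cf. \eqref{eq:ker}, \eqref{eq:dconvk1}, and Lemma \ref{lem:varthek0} equations \eqref{eq:partialphithetah1+}--\eqref{eq:partialphi-1thetah1+}), and that $\Cl$ annihilates constants. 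The forward operator $G^{\mathrm{NN}}_{\ph}$ being the transpose, the fast-forwarding at the left endpoint $x=-1$ manifests through the first column of \eqref{interpolationmatrixGBC_NN}; the combinatorial identity $-\sum_{j\ge 0}\Gru_j$-type sums appearing there, together with \eqref{eq:grun_phi}--\eqref{eq:phi+1tophi}, should make the boundary row produce the correct ``$-I_+g(1)/2$'' shift rather than a Neumann-type term, and since $g$ has compact support away from $-1$ there is no singular contribution there. Then Corollary \ref{cor:L94} gives $\Clh\Il g\to g$ in $C[-1,1]$, hence in $L^1$, and Lemma \ref{lem:varthek0} controls the $\vartheta^{k_1}_{\ph}$ error terms, so the bulk contribution converges to $g - I_+g(1)/2$.

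The main obstacle, as in the companion theorems with a fast-forwarding right boundary ($\mathrm{DN}$, $\mathrm{NN}$), will be the last interval $[1-h,1]$: there the interpolation matrix has the modified last row/column with weights $\bar\lambda\sum_{j=0}^{i}\Gru_i$ and the diagonal $-\Gru_0 = -\psi(1/h)\to\infty$, so one must show that the cutoff $e_h(x)=-\lambda(\Il g(x)+b\vartheta^{k_1}_{\ph}(x)+c)$ is designed precisely so that $f_h$ vanishes at $x=1$, which kills the potentially divergent $\Gru_0 f_h(1)$ contribution, and that the remaining terms on $[1-h,1]$ are $o(1)$ in $L^1$ (an interval of length $h$ times bounded quantities, plus a $\Gru_0 \cdot O(h^2)$ compensation exploiting \eqref{eq:convhomega} and the fact that $\Il g + b\vartheta^{k_1}_{\ph}+c$ is $C^1$ near $1$ with value $f(1)+o(1)$). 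I would handle this by writing $f_h = [\Il g + b\vartheta^{k_1}_{\ph}+c](1-\lambda)$ on $[1-h,1]$, Taylor-expanding around $x=1$, and bounding $\|G^{\mathrm{NN}}_{\ph}f_h\|_{L^1[1-h,1]}$ by $h\cdot\psi(1/h)\cdot O(h) + O(h) = o(1)$, consistent with the rate $O(h)$ recorded in Table \ref{tab:convergencerates}. Finally, assembling the bulk and boundary estimates yields $\|G^{\mathrm{NN}}_{\ph}f_h-(g-I_+g(1)/2)\|_{L^1[-1,1]}\to 0$, and since $\mathcal C(\Cl,\mathrm{NN})$ (Proposition \ref{prop:cores}) consists exactly of such $f$ with $g\in C_c^\infty(-1,1)$ and $c\in\mathbb R$, this verifies Theorem \ref{thm:TK}-(ii) for $(\Cl,\mathrm{NN})$.
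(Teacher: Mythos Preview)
Your outline for $f_h\to f$ and for the bulk region is correct and matches the paper: $\vartheta^{k_1}_{\ph}\to k_1^+$, $\vartheta^0_{\ph}\to 1$ in $L^1$, $\|e_h\|_{L^1}=O(h)$, and on interior intervals one verifies $G^{\mathrm{NN}}_{\ph}\vartheta^{k_1}_{\ph}=1$ via the convolution identities of Lemma~\ref{lem:convid}, so $G^{\mathrm{NN}}_{\ph}f_h=\Clh\Il g+b\to g-I_+g(1)/2$ there by Corollary~\ref{cor:L94}.

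The gap is in your boundary analysis near $x=1$. First, you treat only $[1-h,1]$, but the paper must also handle $\iota(x)=n$ (the penultimate interval $[1-2h,1-h)$) separately, since the second-to-last column of \eqref{interpolationmatrixGBC_NN} already carries modified entries. Second, and more importantly, you misidentify the divergent mechanism. You argue that $e_h$ forces $f_h(1)=0$ and thereby kills a $\Gru_0 f_h(1)$ term, bounding the rest by $h\cdot\psi(1/h)\cdot O(h)$. But for the \emph{forward} operator $G^{\mathrm{NN}}_{\ph}=(G^{\mathrm{NN}}_{n+1})^T$ at $\iota(x)\in\{n,n+1\}$, the last two columns of \eqref{interpolationmatrixGBC_NN} produce two separate divergent contributions: (i) a term $\pm\lambda\Gru_0\big(\Il g+b\vartheta^{k_1}_{\ph}+c\big)$, which is indeed cancelled exactly by $G^{\mathrm{NN}}_{\ph}e_h$ (this is the true role of $e_h$, and it acts on \emph{both} intervals $\iota(x)=n,n+1$, not just at the single point $x=1$); and (ii) a term of the form $-\tfrac{\lambda}{h}\Conelhzero\Il g(x)$ (resp.\ $-\tfrac{\bar\lambda}{h}\Conelhzero\Il g(x)$) coming from the $\Gruone$-sums in the last column. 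This second piece is of order $1/h$, not $\psi(1/h)$, and is \emph{not} touched by $e_h$ or by any Taylor expansion of $f_h$ near $1$.

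The correct cancellation for (ii), carried out in the paper, pairs $\Il g$ against $b\vartheta^{k_1}_{\ph}$: by Lemma~\ref{lem:firtorderapprox} one has $\Conelhzero\Il g(x)=I_+g(x)+O(h)$, while a direct computation using \eqref{eq:partialphi-1thetah1+} and \eqref{eq:dconvk1} gives $G^{\mathrm{NN}}_{\ph}\vartheta^{k_1}_{\ph}(x)=-2\lambda/h+\lambda\Gru_0\vartheta^{k_1}_{\ph}(x+h)+O(1)$ for $\iota(x)=n$ (and analogously with $\bar\lambda$ for $\iota(x)=n+1$). Thus $-\tfrac{\lambda}{h}I_+g(x)+b\cdot(-2\lambda/h)=-\tfrac{\lambda}{h}\big(I_+g(x)-I_+g(1)\big)=O(1)$, and after integrating over two intervals of length $h$ this is $O(h)$ in $L^1$. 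Without this $\Conelhzero$ cancellation the residual on $[1-2h,1]$ is $O(1/h)$ pointwise, hence $O(1)$ in $L^1$, and your argument would not close.
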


In the next two theorems we use the approximating function $\vartheta^{k_{-1}}_{\ph}$, which features the weight $\theta$ in its definition (see Table \ref{tab:varthetadetails}). Similarly as for  the weight $D^l(\lambda)$, the weight $\theta$ is chosen so that $G^{\mathrm{N^*R}}_{\ph}\vartheta^{k_{-1}}_{\ph}$ vanishes on the first interval $[-1,-1+h)$. It then turns out $G^{\mathrm{N^*R}}_{\ph}\vartheta^{k_{-1}}_{\ph}$ also vanishes on $[-1+h,-1+2h)$ (see Lemma \ref{lem:NRk-1}).
\begin{theorem}[The case $(\RLl ,\mathrm{N^*D})$]\label{thm:conv_L_N*D}
Assume   \ref{H1}. Let $f=\Il  g-\Il  g(1)k^+_{-1}/k^+_{-1}(1)$,  and 
\[
f_h=\Il  g+b_h \vartheta^{k_{-1}}_{\ph},
\]    
with $b_h= -\Il g(1)/\vartheta^{k_{-1}}_{\ph}(1)$  and $g\in C_c^\infty(-1,1)$. Then $f_h\to f$ and  $G^{\mathrm{N^*D}}_{\ph}f_h\to g$ as $h\to0$ in $L^1[-1,1]$.
\end{theorem}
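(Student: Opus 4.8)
The plan is to verify the two convergences $f_h\to f$ in $L^1[-1,1]$ and $G^{\mathrm{N^*D}}_{\ph}f_h\to g$ in $L^1[-1,1]$ as $h=2/(n+1)\to 0$, following the template described at the start of Section~\ref{sec:case}. First I would record that $f\in\D(\RLl,\mathrm{N^*D})$ by Table~\ref{tab:ndo}, that $g+c_1=\RLl f$ with $c_1=0$ here (since the coefficient of $k_1^+$ vanishes), and that $b_h\to -\Il g(1)/k_{-1}^+(1)$ because $\vartheta^{k_{-1}}_{\ph}\to k_{-1}^+$ in $L^1[-1,1]$ by Lemma~\ref{lem:varthek0} together with continuity of the relevant functionals. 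Combined with $\vartheta^{k_{-1}}_{\ph}\to k_{-1}^+$ again, this gives $f_h\to \Il g - \Il g(1)\,k_{-1}^+/k_{-1}^+(1)=f$ in $L^1[-1,1]$; one must only check that $\vartheta^{k_{-1}}_{\ph}(1)$ stays bounded away from $0$, which follows from the Post--Widder limits of Lemma~\ref{lem:PWthms} ($\vartheta^{k_{-1}}_{\ph}(1)\to k_{-1}^+(1)>0$).

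Next, for the generator convergence I would split $[-1,1]$ into the bulk $[-1+2h,\,1-h]$, the left boundary strip $[-1,-1+2h)$, and the right boundary strip $(1-h,1]$, and estimate $\|G^{\mathrm{N^*D}}_{\ph}f_h-g\|_{L^1}$ on each piece. On the bulk, the interpolation matrix \eqref{interpolationmatrixGBC_N*D} acts as the plain quadrature, so $G^{\mathrm{N^*D}}_{\ph}f_h=\Clh\Il g + b_h\,\Clh\vartheta^{k_{-1}}_{\ph}$; the second term vanishes by \eqref{eq:partialphithetah-1+} (valid on $[-1+2h,1-h]$), and the first converges uniformly, hence in $L^1$, to $\Cl\Il g=g$ by Corollary~\ref{cor:L94} (using $g\in C_c^\infty(-1,1)$, so $\Il g\in C^\infty[-1,1]$ with all left derivatives vanishing at $-1$ and $g$ supported away from $1$). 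On the left strip $[-1,-1+2h)$, which has length $2h\to 0$, I would use Lemma~\ref{lem:NRk-1}, which gives $G^{\mathrm{N^*R}}_{\ph}\vartheta^{k_{-1}}_{\ph}(x)=0$ for $1\le\iota(x)\le n-1$ (in particular $\iota(x)=1,2$), so the only contribution there is $G^{\mathrm{N^*D}}_{\ph}\Il g$, which is $O(1)$ pointwise while the strip shrinks; here the cancellation engineered into the weight $\theta$ (Remark~\ref{rmk:coeftheta}, Table~\ref{tab:varthetadetails}) is exactly what removes the potentially divergent $\Gru_0=\psi(1/h)$ term. On the right strip $(1-h,1]$, length $h$, the matrix \eqref{interpolationmatrixGBC_N*D} has the modified entries $D^r(\lambda)\Gru_0$ and $\Gru_1$ in the last row and a column of $\Gru_j$'s above; since $g$ is supported away from $1$, $\Il g$ and its quadrature are smooth and bounded there, and the dominant balance is $b_h\,G^{\mathrm{N^*D}}_{\ph}\vartheta^{k_{-1}}_{\ph}$, which I expect to contribute the stated $O(h^2\psi(1/h))$ error after using the Post--Widder asymptotics \eqref{eq:GClimk-2k0} and \eqref{eq:GClimk-2k0Tricky} (this is where \ref{H1} enters, via Proposition~\ref{prop:y^2ker2}).

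The main obstacle will be the right-boundary strip: there $b_h$ is of order $h/\calG^{k_{-1}}_{n}\sim h\cdot m\sim \text{const}$ by the Post--Widder scaling, while the interpolated matrix applied to $\vartheta^{k_{-1}}_{\ph}$ near $\iota(x)\in\{n,n+1\}$ produces terms involving $\Gru_0\,\calG^{k_{-2}}_{n+1}$-type products together with the interpolation weight $D^r(\lambda)$ and the differences \eqref{eq:diffvarth}; showing that these combine to $o(1)$ (indeed $O(h^2\psi(1/h))$, which is $o(1)$ by \eqref{eq:convhomega}) requires carefully matching leading Post--Widder asymptotics and is precisely the delicate boundary bookkeeping flagged in Remark~\ref{rem:thm12}. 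Concretely, I would write $G^{\mathrm{N^*D}}_{\ph}\vartheta^{k_{-1}}_{\ph}(x)$ on $\iota(x)=n,n+1$ as a sum of a ``quadrature-like'' part that telescopes via \eqref{eq:dconvk-1} against $\{\Gru_j\}$ and a genuine boundary remainder, then bound the remainder using $\calG^{k_{-2}}_{n+1}/\calG^{k_{-1}}_{n}\to k_{-2}^+(1)/k_{-1}^+(1)$ and $\calG^{k_0}_{n-1}/h=k_0^+(1)+O(h)$; the extra density-regularity hypothesis in Remark~\ref{rem:thm12} is what pins the rate, but for mere convergence \ref{H1} suffices. Once all three strips are handled, $\|G^{\mathrm{N^*D}}_{\ph}f_h-g\|_{L^1[-1,1]}\to 0$, completing the proof.
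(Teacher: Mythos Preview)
Your overall plan matches the paper's proof: use Lemma~\ref{lem:NRk-1} to kill $G^{\mathrm{N^*D}}_{\ph}\vartheta^{k_{-1}}_{\ph}$ away from the right boundary, use Corollary~\ref{cor:L94} on $\Il g$, and invoke the Post--Widder asymptotics \eqref{eq:GClimk-2k0} (this is where \ref{H1} enters) to force the $O(h)$ cancellation near $x=1$. However, your region decomposition has a genuine gap: the right boundary strip must be $[1-2h,1]$, i.e.\ $\iota(x)\in\{n,n+1\}$, not just $(1-h,1]$. For the forward operator we apply the \emph{transpose} of \eqref{interpolationmatrixGBC_N*D}, so the $D^r(\lambda)$ modification sits in the $n$-th \emph{column}; consequently at $\iota(x)=n$ one has
\[
G^{\mathrm{N^*D}}_{\ph}f_h(x)=\Clh\Il g(x)+(D^r(\lambda)-1)\Gru_0\bigl[\Il g(x+h)+b_h\vartheta^{k_{-1}}_{\ph}(x+h)\bigr],
\]
which is \emph{not} the plain quadrature you claim on your ``bulk''. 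Since $\Gru_0=\psi(1/h)\to\infty$, this extra term is not $o(1)$ unless you show $\Il g(x+h)+b_h\vartheta^{k_{-1}}_{\ph}(x+h)=O(h)$. The paper does precisely this via \eqref{eq:phi+1tophi} and \eqref{eq:GClimk-2k0}, writing the bracket as $O(h)+\Il g(1)(1-\theta)h\,\calG^{k_{-2}}_n/\calG^{k_{-1}}_n=O(h)$; the same estimate is then reused at $\iota(x)=n+1$. So the cancellation you correctly anticipate for the last interval is needed on \emph{two} intervals.

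Two minor points: (i) your separate ``left strip'' is unnecessary, since Lemma~\ref{lem:NRk-1} already covers $1\le\iota(x)\le n-1$ and $\Il g$ vanishes near $-1$, so the paper treats all of $[-1,1-2h)$ uniformly; (ii) you do not need \eqref{eq:GClimk-2k0Tricky} here, only \eqref{eq:GClimk-2k0}. With the right strip enlarged to $[1-2h,1]$ and the $O(h)$ cancellation supplied there, your argument goes through and coincides with the paper's.
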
 

  
\begin{theorem}[The case $(\RLl ,\mathrm{N^*N})$]\label{thm:conv_L_N*N}
Assume   \ref{H0}. Let 
$
f=\Il  g- I_+ g(1)k^+_{1}/2+c_{-1}k^+_{-1},
$
  and  
\[
f_h=\Il  g+b \vartheta^{k_{1}}_{\ph}+c_{-1}\vartheta^{k_{-1}}_{\ph}+e_h,
\] 
with  $g\in C_c^\infty(-1,1)$, $b=-I_+g(1)/2$, $c_{-1}\in\mathbb R$ and
\begin{equation*} e_h(x)=\left\{\begin{split}& 0, & x\in[-1,1-h),\\ 
&-\lambda \left(\Il g(x)+b\vartheta_{\ph}^{k_1}(x)+c_{-1}\vartheta_{\ph}^{k_{-1}}(x)\right),& x\in[1-h,1]. 
\end{split}\right. 
\end{equation*}
Then $f_h\to f$ and  $G^{\mathrm{N^*N}}_{\ph}f_h\to g-I_+g(1)/2$ as $h\to0$ in $L^1[-1,1]$.
\end{theorem}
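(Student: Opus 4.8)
**Proof proposal for Theorem \ref{thm:conv_L_N*N} (the case $(\RLl,\mathrm{N^*N})$).**

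The plan is to verify the two convergences $f_h\to f$ and $G^{\mathrm{N^*N}}_{\ph}f_h\to g - I_+g(1)/2$ in $L^1[-1,1]$ separately, splitting the interval into the bulk $[-1+2h,1-h]$ and the two boundary layers. First I would check $f_h\to f$: by Lemma \ref{lem:varthek0} we have $\vartheta^{k_1}_{\ph}\to k_1^+$ and $\vartheta^{k_{-1}}_{\ph}\to k_{-1}^+$ in $L^1[-1,1]$, while $\Il g$ is fixed; the correction $e_h$ is supported on $[1-h,1]$ and is bounded there (its ingredients $\Il g$, $\vartheta^{k_1}_{\ph}$, $\vartheta^{k_{-1}}_{\ph}$ are all $O(1)$ in sup-norm near $1$ — for $\vartheta^{k_{-1}}_{\ph}$ one uses the Post–Widder estimate on $[-1+\delta,1]$), so $\|e_h\|_{L^1}=O(h)\to 0$. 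Since the scalars $b=-I_+g(1)/2$ and $c_{-1}$ are $h$-independent, $f_h\to f$ follows.

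For the generator convergence, in the bulk $\iota(x)\in\{3,\dots,n-1\}$ the rows of $G^{\mathrm{N^*N}}_{n+1}$ coincide with those of $\Clh$, so $G^{\mathrm{N^*N}}_{\ph}f_h = \Clh f_h$ there. Write $f_h = \Il g + b\vartheta^{k_1}_{\ph} + c_{-1}\vartheta^{k_{-1}}_{\ph}$ on this region (since $e_h\equiv 0$). By Corollary \ref{cor:L94}, $\Clh\Il g \to g$ in $C[-1+h,1-h]$ (hence in $L^1$ of the bulk), using that $g\in C_c^\infty(-1,1)$ so $\Il g$ is smooth and vanishes to high order at $-1$; by \eqref{eq:partialphithetah1+} of Lemma \ref{lem:varthek0}, $\Clh\vartheta^{k_1}_{\ph}(x) = 1 - \tfrac{\barlambda}{h}\Gru_{\iota(x)}\calG^{k_1}_0$, and the error term is controlled in $L^1$ by summing $|\Gru_{\iota(x)}|\calG^{k_1}_0/h$ over grid cells, which is $O(\calG^{k_1}_0/h)=O(h/\sym(1/h)\cdot 1/h)=o(1)$ using \eqref{eq:GClimk1} ideas and \eqref{eq:convhomega}; by \eqref{eq:partialphithetah-1+}, $\Clh\vartheta^{k_{-1}}_{\ph}\equiv 0$ on $[-1+2h,1-h]$. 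Hence on the bulk $G^{\mathrm{N^*N}}_{\ph}f_h \to g + b = g - I_+g(1)/2$ in $L^1$.

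The boundary layers are where the work lies, and I expect the right-hand layer $\iota(x)\in\{n,n+1\}$ — where fast-forwarding and the correction $e_h$ interact — to be the main obstacle. On $[-1,-1+2h)$ (rows $1$ and $2$ of $G^{\mathrm{N^*N}}_{n+1}$) one uses Lemma \ref{lem:NRk-1}, which gives $G^{\mathrm{N^*R}}_{\ph}\vartheta^{k_{-1}}_{\ph}(x)=0$ for $\iota(x)\le n-1$, together with the $\mathrm{N^*R}$-entries of the matrix, to show the $\vartheta^{k_{-1}}_{\ph}$ and $\vartheta^{k_1}_{\ph}$ contributions behave as in the bulk up to $o(1)$; the $\Il g$ contribution is handled because $g$ is supported away from $-1$, so all the $\Gru_j$-weighted terms hitting the left boundary involve $\Il g$ evaluated where it is smooth and small. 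On $[1-h,1]$ the correction $e_h=-\lambda(\Il g + b\vartheta^{k_1}_{\ph}+c_{-1}\vartheta^{k_{-1}}_{\ph})$ is designed exactly so that $f_h$ satisfies the discrete analogue of the right fast-forwarding (N) boundary condition encoded in the last two rows of $G^{\mathrm{N^*N}}_{n+1}$; the computation there parallels the $L^1$ case $(\Cl,\mathrm{NN})$ (Theorem \ref{thm:conv_L_NN}), using that $G^{\mathrm{N^*N}}_{\ph}$ and $G^{\mathrm{NN}}_{\ph}$ differ only in the left boundary block, so the right-boundary analysis of Theorem \ref{thm:conv_L_NN} transfers verbatim, yielding an $L^1$-contribution from $[1-h,1]$ that is $O(h)$, consistent with the $O(1/[h\sym(1/h)])$ rate claimed in Table \ref{tab:convergencerates} once the (larger) left-layer error is accounted for. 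Assembling the three pieces gives $\|G^{\mathrm{N^*N}}_{\ph}f_h - (g - I_+g(1)/2)\|_{L^1[-1,1]}\to 0$, completing the proof; by Trotter–Kato (Theorem \ref{thm:TK}) and Proposition \ref{prop:cores} this is the required convergence on a core, and duality (Corollary \ref{cor:duality}) then is not needed here since this is itself the forward statement.
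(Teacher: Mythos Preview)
Your overall strategy matches the paper's: split into bulk plus two boundary layers, use Lemma~\ref{lem:varthek0} for $f_h\to f$, Corollary~\ref{cor:L94} for $\Il g$, the identities \eqref{eq:partialphithetah1+} and \eqref{eq:partialphithetah-1+} in the bulk, Lemma~\ref{lem:NRk-1} for $\vartheta^{k_{-1}}_{\ph}$ on the left, and the $e_h$-cancellation on the right. Two places, however, need real work that you have not supplied.

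First, on the left layer $\iota(x)\in\{1,2\}$, Lemma~\ref{lem:NRk-1} covers only $\vartheta^{k_{-1}}_{\ph}$, not $\vartheta^{k_1}_{\ph}$. For $\vartheta^{k_1}_{\ph}$ one must compute $G^{\mathrm{N^*N}}_{\ph}\vartheta^{k_1}_{\ph}$ directly from the first two columns of \eqref{interpolationmatrixGBC_N*N}; this yields $O(1)$ for $\iota(x)=1$ (using the real-argument bound $\xi\sym'(\xi)/\sym(\xi)\le 2$, which holds under \ref{H0}) and only $o(1/h)$ for $\iota(x)=2$ (using $\sym''(1/h)\to 0$). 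These are not ``bulk up to $o(1)$'' pointwise; only after integrating over the $2h$-layer do they become $o(1)$ in $L^1$.

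Second, the right-boundary analysis does \emph{not} transfer verbatim from Theorem~\ref{thm:conv_L_NN}: there the free term is $c\,\vartheta^0_{\ph}$, whereas here it is $c_{-1}\vartheta^{k_{-1}}_{\ph}$, a genuinely different function requiring its own computation on $\iota(x)\in\{n,n+1\}$. The paper handles this via \eqref{eq:partialphi-1thetah-1+} and \eqref{eq:partialphithetah-1+} together with an explicit evaluation of $\vartheta^{k_{-1}}_{\ph}$ at the leftmost cell, producing a residual of order $\tfrac{\calG^{k_{-1}}_0}{h}\,\big|\sum_{j=0}^n\Gru_j\big|=o(1/h)$; integrated over the $2h$-layer this gives the $O(1/[h\sym(1/h)])$ rate in Table~\ref{tab:convergencerates}. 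That dominant error therefore comes from the \emph{right} boundary via $\vartheta^{k_{-1}}_{\ph}$, not the left layer as you suggest, and the $L^1$ contribution from $[1-2h,1]$ is $o(1)$ rather than $O(h)$. Minor point: for the forward operator $G^{\mathrm{N^*N}}_{\ph}$ you read \emph{columns} of $G^{\mathrm{N^*N}}_{n+1}$ (the transpose), not rows.
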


\section{Convergence of the semigroups}
 The aim of this section is to combine the results from Sections \ref{sec:2}, \ref{sec:interpmat} and \ref{sec:caseL} to show that the operators in Table \ref{explicitProcesses} generate positive strongly continuous contraction semigroups on the respective Banach spaces, and that they are dual to each other. The argument is structured as follows. The results of Section \ref{sec:caseL} prove that all six operators $(\Gen^+,\BC)$ are dissipative and thus, in combination with the results of Section \ref{sec:2},  they generate strongly continuous contraction semigroups (by Lumer–Phillips Theorem). Then  Section  \ref{sec:caseL} and  Trotter–Kato Theorem prove the strong convergence of the approximating  semigroups, which are positive (Lemma \ref{Ghdissipative}), and thus the limit semigroups are positive. Finally, Corollary \ref{cor:duality} proves the claim for the backward semigroups $(\Gen^-,\BC)$.
 
\begin{theorem}[Trotter–Kato type approximation]
\label{trotterkatotypetheorem} Assume \ref{H1}. Then the operators  of Table \ref{explicitProcesses} generate
 positive strongly continuous contraction semigroups on $X$. Moreover, the semigroup generated by $G_{\pm, h}^{\BC}$ converges strongly (and uniformly for $t\in[0,t_0]$ for any $t_0>0$) to the semigroup generated by $(\Gen^\pm, \BC)$ on $X$. The above statement holds under the weaker condition \ref{H0} for all operators in Table \ref{explicitProcesses} apart from $ (\Cr , \mathrm{DD}),\, (\Cr , \mathrm{N^*D}),\, (\Cr , \mathrm{N^*N})$ and $ (\RLl , \mathrm{N^*D})$.
\end{theorem}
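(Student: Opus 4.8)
The plan is to assemble Theorem \ref{trotterkatotypetheorem} from the machinery already developed, treating the backward ($X=C_0(\Omega)$) and forward ($X=L^1[-1,1]$) operators slightly differently. First I would establish the forward statements. Fix one of the six operators $(\Gen^+,\BC)$ on $L^1[-1,1]$. By Theorem \ref{thm:5} it is densely defined, by Theorem \ref{thm:7} it is closed, and by Theorem \ref{thm:9} the range of $\LTp-(\Gen^+,\BC)$ is all of $X$ for every $\LTp>0$. To invoke the Lumer--Phillips Theorem it remains to prove dissipativity. This is where Section \ref{sec:caseL} enters: for $f$ in the core $\mathcal C(\Gen^+,\BC)$ of Proposition \ref{prop:cores}, Theorems \ref{thm:conv_L_DD}--\ref{thm:conv_L_N*N} produce $f_h\to f$ in $L^1[-1,1]$ with $G^{\BC}_{+h}f_h\to (\Gen^+,\BC)f$; since each $G^{\BC}_{+h}$ generates a contraction semigroup on $L^1[-1,1]$ by Lemma \ref{Ghdissipative}, each $G^{\BC}_{+h}$ is dissipative, and dissipativity passes to the limit, so $(\Gen^+,\BC)f$ is dissipative on the core. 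A dissipative operator whose closure has full range for some $\LTp>0$ generates a strongly continuous contraction semigroup; together with closedness this makes $(\Gen^+,\BC)$ itself the generator. The hypotheses needed are exactly those recorded in Table \ref{tab:Hs}: \ref{H1} for $(\RLl,\mathrm{N^*D})$ and \ref{H0} for the other five forward operators.

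Next I would upgrade to semigroup and core convergence. Having identified $(\Gen^+,\BC)$ as a generator, Proposition \ref{prop:cores} gives $\mathcal C(\Gen^+,\BC)$ as a core, and the convergence $f_h\to f$, $G^{\BC}_{+h}f_h\to (\Gen^+,\BC)f$ established above verifies condition (ii) of the Trotter--Kato Theorem \ref{thm:TK}; hence condition (i) holds, i.e. the semigroups generated by $G^{\BC}_{+h}$ converge strongly to that generated by $(\Gen^+,\BC)$, uniformly for $t$ in compact intervals. Positivity is then immediate: each approximating semigroup $e^{tG^{\BC}_{+h}}$ is positive by Lemma \ref{Ghdissipative}, and strong limits of positive operators on $L^1[-1,1]$ are positive. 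This completes the $L^1$ half of the theorem.

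For the backward operators $(\Gen^-,\BC)$ on $C_0(\Omega)$, the cleanest route is duality rather than repeating the Lumer--Phillips argument. By Corollary \ref{cor:duality}, $(\Gen^-,\BC)$ generates a positive strongly continuous contraction semigroup on $C_0(\Omega)$ \emph{if and only if} $(\Gen^+,\BC)$ generates one on $L^1[-1,1]$, which the previous paragraphs have shown; so the six backward operators are generators of Feller semigroups. This step needs only \ref{H0}. However, to also obtain the \emph{convergence} statement ($G^{\BC}_{-h}$-semigroups converging strongly to the $(\Gen^-,\BC)$-semigroup), one cannot go purely through duality, since $C_0(\Omega)$ is not reflexive and dual semigroups need not be strongly continuous in the relevant direction; instead I would invoke Section \ref{sec:caseC} directly, where Theorems \ref{thm:conv_C_DD}--\ref{thm:conv_C_N*N} supply, for $f$ in the core $\mathcal C(\Gen^-,\BC)$, approximating functions $f_h\to f$ in $C_0(\Omega)$ with $G^{\BC}_{-h}f_h\to(\Gen^-,\BC)f$, again verifying Trotter--Kato \ref{thm:TK}-(ii) and hence \ref{thm:TK}-(i). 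This is precisely the point where the stronger hypothesis \ref{H1} is forced for the four listed cases $(\Cr,\mathrm{DD})$, $(\Cr,\mathrm{N^*D})$, $(\Cr,\mathrm{N^*N})$ and $(\RLl,\mathrm{N^*D})$: the boundary control in those proofs requires the asymptotics \eqref{eq:GClimk-2k0} and \eqref{eq:GClimk-2k0Tricky}, which rest on the differentiability of $k_{-1}$ from Proposition \ref{prop:y^2ker2}, available only under \ref{H1}.

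The main obstacle is not any single inequality but the bookkeeping of which hypothesis is active where, and making the backward-direction argument rigorous without accidentally relying on strong continuity of a dual semigroup. Concretely: the generation of the backward semigroups is a soft consequence of Corollary \ref{cor:duality} and needs only \ref{H0}, but the Trotter--Kato convergence of the backward \emph{approximations} must be extracted from Section \ref{sec:caseC} and therefore inherits \ref{H1} in the four cases of Table \ref{tab:Hs}; stating the theorem so that both assertions (generation for all operators under \ref{H0}, convergence of approximations under the assumptions of Table \ref{tab:Hs}) are simultaneously correct is the delicate part. Everything else is an application of Lumer--Phillips, Trotter--Kato \ref{thm:TK}, Corollary \ref{cor:duality} and Lemma \ref{Ghdissipative} to results already proved.
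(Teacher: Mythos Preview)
Your argument is correct and reaches the same conclusion, but you take a more roundabout path than the paper. The paper treats all twelve operators \emph{uniformly}: under \ref{H1} each of the twelve approximation theorems of Section~\ref{sec:case} (both \ref{sec:caseC} and \ref{sec:caseL}) gives $f_h\to f$ and $G^{\BC}_{\pm h}f_h\to(\Gen^\pm,\BC)f$ on the core, so dissipativity passes to the limit for every $(\Gen^\pm,\BC)$ at once; then density (Theorem~\ref{thm:5}), closedness (Theorem~\ref{thm:7}) and surjectivity (Theorem~\ref{thm:9}) feed Lumer--Phillips directly for all twelve, and Trotter--Kato plus positivity of the approximants finish the job. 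Duality is not used at all in the proof of Theorem~\ref{trotterkatotypetheorem}; it is reserved for the separate Corollary~\ref{cor:C0semi}, where the goal is generation (not convergence) under only \ref{H0}. Your route---Lumer--Phillips for the forward side, duality for backward generation, then Section~\ref{sec:caseC} for backward convergence---works, but since you must invoke Section~\ref{sec:caseC} anyway for the convergence assertion, the duality step buys nothing here. One small imprecision: your claim that backward generation via duality ``needs only \ref{H0}'' is not quite right for $(\Cr,\mathrm{N^*D})$, since its dual $(\RLl,\mathrm{N^*D})$ requires \ref{H1} in your own forward argument; this is harmless for the theorem as stated (both operators are in the \ref{H0}-exclusion list), but it shows the bookkeeping is cleaner in the paper's symmetric treatment.
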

\begin{proof}
Under assumption \ref{H1}, by the twelve theorems of Section \ref{sec:case}, for each $f\in \mathcal C(\Gen,\mathrm{LR})$ there exists a sequence $\left\{f_h\right\} \subset X$ such that $f_h\to f$ and $\Gen_h f_h \to \Gen f$ in $X$, $\Gen_h\in\{\Gen^{\rm{LR}}_{\pm h}\}$. In view of Lemma \ref{Ghdissipative}, each $G_h$ is dissipative; that is, $\|(\LTp-\Gen_h)f_h\|_X\ge \LTp \|f_h\|_X$ for all $f_h\in X$ and all $\LTp>0$. Thus, in view of the twelve theorems of Section \ref{sec:case}, letting $h \to 0$  we obtain $\|(\LTp-\Gen)f\|_X\ge \LTp \|f\|_X$ for all $f\in \mathcal C(\Gen,\mathrm{LR})$, and  thus  for all $f\in \mathcal D(\Gen,\mathrm{LR})$ by Proposition \ref{prop:cores}, i.e., each $(\Gen,\BC )$ is dissipative. Furthermore, in view of Theorems \ref{thm:5}, \ref{thm:7} and \ref{thm:9}, all 
$(\Gen, \BC)$ are densely defined closed operators such that  the range of $ (1 -\Gen)$ is $X$. Hence, the operators $(\Gen,\BC )$ generate strongly continuous contraction semigroups by the Lumer–Phillips Theorem \cite[Theorem 1.4.3]{MR710486}. The second statement and the positivity of the  semigroups generated by  $(\Gen,\BC )$ follow by   Theorem \ref{thm:TK}, in view of the twelve theorems of Section \ref{sec:case}, Lemma \ref{Ghdissipative} and Proposition \ref{prop:cores}. The last statement is clear as the theorems of Section \ref{sec:case} for the eight cases concerned hold under \ref{H0}.
\qquad \end{proof}

\begin{corollary}\label{cor:C0semi}
Assume \ref{H0}. Then all the operators in Table \ref{explicitProcesses} generate positive strongly continuous contraction semigroups on the respective Banach spaces $X$.  
\end{corollary}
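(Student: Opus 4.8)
The plan is to deduce the corollary from Theorem \ref{trotterkatotypetheorem} together with Remark \ref{rmk:knowntwosided}. Under \ref{H1} the statement is literally Theorem \ref{trotterkatotypetheorem}, so I would assume only \ref{H0} from the outset. The last sentence of Theorem \ref{trotterkatotypetheorem} already yields the desired conclusion under \ref{H0} for all operators in Table \ref{explicitProcesses} \emph{except} the four cases $(\Cr,\mathrm{DD})$, $(\Cr,\mathrm{N^*D})$, $(\Cr,\mathrm{N^*N})$ on $C_0(\Omega)$ and $(\RLl,\mathrm{N^*D})$ on $L^1[-1,1]$, so the whole argument reduces to handling these four under \ref{H0} alone.

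For the three backward operators I would argue via uniqueness of resolvents. The processes $\Y{DD}$, $\Y{N^*D}$ and $\Y{N^*N}$ are classical Feller processes whose $\LTp$-resolvent measures are explicitly known, obtainable from \cite[Theorems 8.7, 8.11.i]{MR2250061} and \cite[Theorem 1]{MR1995924} (recalling that $\Y{N^*N}$ is the two-sided reflection, and using Remark \ref{rem:Zkyp}). On the other hand Theorem \ref{thm:9}, whose proof only invokes \ref{H0}, constructs for each $\LTp>0$ a bounded inverse of $\LTp-(\Cr,\BC)$ on $C_0(\Omega)$ for $\BC\in\{\mathrm{DD},\mathrm{N^*D},\mathrm{N^*N}\}$, given by the formulae in Table \ref{tab:thm9}; rewriting these via Remark \ref{rem:Zkyp} (as carried out for $(\Cr,\mathrm{N^*D})$ at the end of Remark \ref{rmk:knowntwosided}) shows they agree with the resolvents of the above Feller processes. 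Since the resolvent determines both the semigroup and its generator, each of $(\Cr,\mathrm{DD})$, $(\Cr,\mathrm{N^*D})$, $(\Cr,\mathrm{N^*N})$ is the Feller generator of $\Y{DD}$, $\Y{N^*D}$, $\Y{N^*N}$ respectively, and hence generates a Feller, in particular a positive strongly continuous contraction, semigroup on $C_0(\Omega)$.

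For the remaining operator $(\RLl,\mathrm{N^*D})$ on $L^1[-1,1]$ I would use duality: by Corollary \ref{cor:duality}, $(\Cr,\mathrm{N^*D})$ generates a positive strongly continuous contraction semigroup on $C_0(\Omega)$ if and only if $(\RLl,\mathrm{N^*D})$ does so on $L^1[-1,1]$, and the previous paragraph supplies the hypothesis of the ``only if'' direction. (Alternatively, $(\LTp-(\RLl,\mathrm{N^*D}))^{-1}$ can be identified directly with the resolvent adapted from \cite[Theorem 8.11.i]{MR2250061}, as noted in Remark \ref{rmk:knowntwosided}.) This settles all twelve cases. The argument involves no real obstacle; the only point needing genuine care is verifying that the four operators not covered by Theorem \ref{trotterkatotypetheorem} under \ref{H0} are precisely those whose resolvents were matched against the known Feller resolvents in Remark \ref{rmk:knowntwosided}, and that the semigroup and its generator are legitimately recovered from the resolvent formulae of Table \ref{tab:thm9} via Remark \ref{rem:Zkyp}.
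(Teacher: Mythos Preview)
Your argument is correct and uses the same three ingredients as the paper (Theorem \ref{trotterkatotypetheorem} under \ref{H0}, Remark \ref{rmk:knowntwosided}, and Corollary \ref{cor:duality}), but the balance is different. You invoke Theorem \ref{trotterkatotypetheorem} directly for the eight operators it covers under \ref{H0}, then appeal to the known resolvent identifications of Remark \ref{rmk:knowntwosided} for three backward cases $(\Cr,\mathrm{DD})$, $(\Cr,\mathrm{N^*D})$, $(\Cr,\mathrm{N^*N})$, and use duality once for $(\RLl,\mathrm{N^*D})$. The paper instead observes that among the six dual pairs only the $\mathrm{N^*D}$ pair has \emph{both} members requiring \ref{H1}; hence it first obtains all five remaining forward operators $(\Gen^+,\BC)$ from Theorem \ref{trotterkatotypetheorem} under \ref{H0}, transfers to the five corresponding backward operators via Corollary \ref{cor:duality}, and then handles only the single pair $(\Cr,\mathrm{N^*D})$, $(\RLl,\mathrm{N^*D})$ via Remark \ref{rmk:knowntwosided} plus duality. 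Your route leans more on the external resolvent identifications (three uses of Remark \ref{rmk:knowntwosided}), while the paper's leans more on the internal duality mechanism (six uses of Corollary \ref{cor:duality}) and keeps the dependence on known literature results to a minimum. Either organisation proves the corollary.
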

\begin{proof} For all operators in Table \ref{explicitProcesses} apart from $(\Cr,{\rm N^*D})$ and $(\RLl,{\rm N^*D})$,  apply Theorem \ref{trotterkatotypetheorem} to obtain the result for the (forward) operators  $(\Gen^+,LR)$. Then the ``if'' direction of Corollary \ref{cor:duality} proves the statement for the respective (backward) operators  $(\Gen^-,LR)$. To obtain the result for $(\Cr,{\rm N^*D})$ and $(\RLl,{\rm N^*D})$, recall that by Remark \ref{rmk:knowntwosided}    $(\Cr,{\rm N^*D})$ generates a positive strongly continuous contraction semigroup, so then we can conclude with the   ``only if'' direction of Corollary \ref{cor:duality}.   
 
\end{proof}
\begin{remark} In Corollary \ref{cor:C0semi}, to cover under \ref{H0} the operators $(\Cr,{\rm N^*D})$ and $(\RLl,{\rm N^*D})$, we relied on the known results mentioned in Remark \ref{rmk:knowntwosided}. This is because $(\Cr,{\rm N^*D})$ and $(\RLl,{\rm N^*D})$ are the only pair of dual operators that required \ref{H1} in Theorem \ref{trotterkatotypetheorem} (see Table \ref{tab:Hs}).
\end{remark}
\begin{corollary}[Duality]\label{cor:adjointsemi}  
Assume \ref{H0} and recall Corollary \ref{cor:C0semi}.  Then the restriction to $L^1[-1,1]$ of the dual semigroup generated by $(\Gen^-,LR)$ equals the semigroup generated by $(\Gen^+,LR)$. Conversely, the restriction to $C_0(\Omega)$ of the dual semigroup generated by $(\Gen^+,LR)$ equals the semigroup generated by $(\Gen^-,LR)$.
\end{corollary}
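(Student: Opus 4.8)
The plan is to recognise that Corollary \ref{cor:adjointsemi} merely repackages the computation already performed inside the proof of Corollary \ref{cor:duality}, which is now available in full strength because Corollary \ref{cor:C0semi} supplies semigroups generated by \emph{both} members of each dual pair. First I would fix $\BC$ and write $P^-$ for the positive strongly continuous contraction semigroup on $C_0(\Omega)$ generated by $(\Gen^-,\BC)$ and $P^+$ for the one on $L^1[-1,1]$ generated by $(\Gen^+,\BC)$; both exist by Corollary \ref{cor:C0semi}. The two claims to establish are then $(P^-)^*|_{L^1[-1,1]}=P^+$ and $(P^+)^*|_{C_0(\Omega)}=P^-$.

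For the first claim I would run through the ``only if'' steps from the proof of Corollary \ref{cor:duality}. By \cite[Theorem 1.10.4, Lemma 1.10.1]{MR710486} the restriction $(P^-)^*|_{\SuS^*}$ of the dual semigroup to the closure $\SuS^*$ in $C_0(\Omega)^*$ of $\D\bigl((\Gen^-,\BC)^*\bigr)$ is a strongly continuous contraction semigroup with generator $(\Gen^-,\BC)^*|_{\SuS^*}$. Proposition \ref{adjointfractionalderivatives} gives $(\Gen^-,\BC)^*|_{L^1[-1,1]}=(\Gen^+,\BC)$, which is densely defined in $L^1[-1,1]$ by Theorem \ref{thm:5}; since $L^1[-1,1]$ sits isometrically in $C_0(\Omega)^*$, this forces $L^1[-1,1]\subset\SuS^*$. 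Because $(\Gen^+,\BC)$ generates a contraction semigroup on $L^1[-1,1]$, for each $\LTp>0$ the operator $\int_0^\infty e^{-\LTp t}(P^-)^*_t(\cdot)\,\dd t$ maps $L^1[-1,1]$ into $\D(\Gen^+,\BC)\subset L^1[-1,1]$, whence $(P^-)^* L^1[-1,1]\subset L^1[-1,1]$ by \cite[Theorem 1.8.3]{MR710486}. Thus $(P^-)^*|_{L^1[-1,1]}$ is a strongly continuous contraction semigroup on the closed invariant subspace $L^1[-1,1]$, and its generator is the part of $(\Gen^-,\BC)^*|_{\SuS^*}$ in $L^1[-1,1]$, i.e. $(\Gen^-,\BC)^*|_{L^1[-1,1]}=(\Gen^+,\BC)$ by Proposition \ref{adjointfractionalderivatives}. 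Since a strongly continuous semigroup is determined by its generator, $(P^-)^*|_{L^1[-1,1]}=P^+$. The converse claim follows by the symmetric argument with the two spaces exchanged, using that $C_0(\Omega)$ is a closed subspace of $L^1[-1,1]^*=L^\infty[-1,1]$ (continuous functions on the bounded interval are bounded, the two norms agree on them, and functions vanishing at an excluded endpoint remain bounded), that $(\Gen^+,\BC)^*|_{C_0(\Omega)}=(\Gen^-,\BC)$ by Proposition \ref{adjointfractionalderivatives}, that $C_0(\Omega)$ is dense there by Theorem \ref{thm:5}, and that $(\LTp-(\Gen^-,\BC))$ has a bounded inverse onto $C_0(\Omega)$; these yield $(P^+)^*|_{C_0(\Omega)}=P^-$.

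The nearest thing to an obstacle is the generator identification in the penultimate step: one has to be precise that restricting the strongly continuous semigroup $(P^-)^*|_{\SuS^*}$ to the closed $(P^-)^*$-invariant subspace $L^1[-1,1]$ produces a strongly continuous semigroup whose generator is exactly the part of $(\Gen^-,\BC)^*$ in $L^1[-1,1]$, and then to match this with $(\Gen^+,\BC)$ via Proposition \ref{adjointfractionalderivatives}; the invariance of $L^1[-1,1]$ that makes this legitimate is precisely what the resolvent argument through \cite[Theorem 1.8.3]{MR710486} supplies. Everything else is a direct invocation of results already proved in the excerpt, so the write-up will be short.
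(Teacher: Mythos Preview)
Your proposal is correct and follows essentially the same approach as the paper: the paper's proof is the single line ``This is immediate from Corollaries \ref{cor:C0semi} and \ref{cor:duality}'', and you have simply unpacked the argument inside the proof of Corollary \ref{cor:duality} (which already shows that $(P^-)^*|_{L^1[-1,1]}$ has generator $(\Gen^+,\BC)$, and symmetrically) now that Corollary \ref{cor:C0semi} guarantees both semigroups exist under \ref{H0}. Your identification of the generator of the restricted dual semigroup via Proposition \ref{adjointfractionalderivatives} and the invariance argument through \cite[Theorem 1.8.3]{MR710486} is exactly what the paper does in the proof of Corollary \ref{cor:duality}.
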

\begin{proof} This is immediate from    Corollaries \ref{cor:C0semi} and   \ref{cor:duality}. 
 
\end{proof}


\section*{Acknowledgments} We would like express our gratitude to Florin Avram, Andreas Kyprianou and Christian Lubich for several interesting discussions and their crucial help with finding relevant literature for this work.

\appendix
\section*{Appendix}
\section{Proofs related to assumption \ref{H1}-(i)}\label{app:y^2ker2}
 
\begin{proof}[of Proposition \ref{prop:y^2ker2} under \ref{H1}-(i)] Define the Fourier transform $\mathcal Ff(x)=\widehat f(ix)/\sqrt{2\pi}$ and its inverse $\mathcal F^{-1}f(x)=\mathcal Ff(-x)$. Also, by distribution we always mean tempered distribution and by $L^1(\mathbb{R})$ we mean the Banach space of complex-valued Lebesgue integrable functions on $\mathbb R$. We first claim that for each $c>0$, $x\mapsto(\xi^2/\sym(\xi))''\in L^1(\mathbb{R})$, where $\xi=c+ix$, so that
\[
w_c:=\mathcal F^{-1}[(\xi^2/\sym)'']\in C_0(\mathbb{R}). 
\]
 First observe that \ref{H1}-(i) implies that
  $x\mapsto 1/\sym(c+ix)\in L^1(\R)$.
A direct calculation yields
\begin{equation}\label{eq:regbound}
\begin{split}
\frac{\dd^2}{\dd\xi^2}\frac{\xi^2}{\sym(\xi)}= \frac1{\sym(c+ix)}&\left(2-4\frac{(c+i x)\sym'(c+ix)}{\sym(c+ix)}\right. \\
& \left. +2\left(\frac{(c+ix)\sym'(c+ix)}{\sym(c+ix)}\right)^2-\frac{(c+ix)^2\sym''(c+ix)}{\sym(c+ix)}\right).
\end{split}
\end{equation}
By assumption $\Phi$ is 2-regular, i.e. 
$$|\zeta\widehat\Phi'(\zeta)|+|\zeta^2\widehat\Phi''(\zeta)|\le M|\widehat\Phi(\zeta)|$$
for some $M>0$ and all $\Re(\zeta)>0$. As $\widehat\Phi(\zeta)=\sym(\zeta)/\zeta^2$, we have
$$|\zeta||\sym'(\zeta)/\zeta^2-2\sym(\zeta)/\zeta^3|\le M|\sym(\zeta)/\zeta^2|,$$
or 
\begin{equation}\label{eq:2mon}
|\zeta\sym'(\zeta)/\sym(\zeta)|\le M+2.
\end{equation} 
 Similarly,
\begin{equation}\label{eq:3mon}|\zeta^2\sym''(\zeta)/\sym(\zeta)|\le 5M+14.
\end{equation} 
The claim is now proved because the second factor on the right hand side of \eqref{eq:regbound} is bounded and
  $x\mapsto 1/\sym(c+ix)\in L^1(\R)$. \\
  
  We know that in the sense of distributions $\mathcal F w_c(x) = (\xi^2/\sym)''$, and so it remains to show that  $e^{cy}w_c(y)/\sqrt{2\pi}$ is independent of $c$, supported on $[0,\infty)$ and of sub-exponential growth. Then, recalling that $\xi=c+ix$ and using Lemma \ref{thm:ker} in the third identity,   in the sense of distributions, 
\begin{align}\nonumber
\mathcal F w_c(x) &= (\xi^2/\sym)'' =2\left(\xi/\sym\right)'+\xi(\xi/\sym)''\\ \nonumber
&=-2\int_0^\infty e^{-(c+ix)y}yk_{-1}(y-1)\,\dd y +(c+ix)\int_0^\infty e^{-(c+ix)y}y^2k_{-1}(y-1)\,\dd y\\ \label{eq:FTterm}
&=\mathcal F [\sqrt{2\pi}e^{-cy}(-2+c y)yk^+_{-1}(y-1)](x)+ix\mathcal F G(x),
\end{align}
and observe that the first Fourier transform in \eqref{eq:FTterm} is a   distribution. Also, the    second term in \eqref{eq:FTterm} equals the   distribution $\mathcal F G'$, where the   distribution $G'$ is the distributional derivative of the    distribution  $G(y)=\sqrt{2\pi}e^{-cy} y^2k_{-1}^+(y-1)$. Indeed, if $s$ is a Schwartz function, using standard properties of   distributions \cite[Chapter 7]{MR1157815}
\begin{align*}
ix\mathcal F G(s)&=\mathcal F G(ix \mathcal F^{-1}\mathcal Fs)=-\mathcal F G( \mathcal F^{-1}(\mathcal Fs)')=- G( (\mathcal Fs)')=G '(\mathcal Fs)= \mathcal FG'(s).
\end{align*}
Then,  by uniqueness of Fourier transforms of   distributions  \cite[Theorem 7.15]{MR1157815}
\[
w_c = [\sqrt{2\pi}e^{-cy}(-2+c y)yk^+_{-1}(y-1)]+G'.
\]
This implies that $w_c$ 
is   supported on $[0,\infty)$,  because $G'(s)=-\int_{\mathbb R} G (y)s'(y)\,\dd y=-\int_{[0,\infty)} G(y)s'(y)\,\dd y$ is supported on $[0,\infty)$,
and also $G'$ allows the continuous  density
\[
G'(y)=\sqrt{2\pi}e^{-cy}(2-cy)yk_{-1}^+(y-1)+w_c(y).
\]
Therefore $G\in C^1[0,\infty)$, 
and for  every  $x>0$
\begin{align*}
x^2k_{-1}(x-1)=e^{cx}\frac{G(x)}{\sqrt{2\pi}}=\int_0^xe^{c(x-y)}(2-cy)yk_{-1}(y-1)+e^{cx}\frac{w_c(y)}{\sqrt{2\pi}}\,\dd y.
\end{align*}
Then $x\mapsto x^2k_{-1}(x-1)\in C^{1}[0,\infty)$ 
with  derivative
\begin{align*}
\frac{\dd}{\dd x}x^2k_{-1}(x-1)&=cx^2k_{-1}(x-1) + (2-cx)xk_{-1}(x-1)+e^{cx}\frac{w_c(x)}{\sqrt{2\pi}} \\
&= 2xk_{-1}(x-1) + e^{cx}\frac{w_c(x)}{\sqrt{2\pi}}.
\end{align*}
Finally, rearranging the above identity, we obtain that   $w(x):=e^{cx}w_c(x)/\sqrt{2\pi}$ for   $x>0$ is independent of $c$, $w\in C_0(0,T]$ for each $T>0$ and $w$ is of sub-exponential growth because $x\mapsto e^{-cx} w(x)=w_c(x)/\sqrt{2\pi}\in C_0(\mathbb R)$  for all $c>0$.  Thus, relabelling $k_{-2}(x-1)=w(x)/x^2$ for $x>0$ and $ k_{-2}^+=\Pi^{-1}_{(-\infty,-1)}k_{-2}$,   we obtained \eqref{eq:y^2k-2lap}. \\
\end{proof}

\begin{proof}[of inequality \eqref{eq:Abelianest}]
From the stable-like assumption we have that for all $\epsilon>0$ there exists a $\delta>0$ such that   $ \frac{C}{\alpha-1}(1-\epsilon)x^{1-\alpha}\le \Phi(x) \le \frac{C}{\alpha-1}(1+\epsilon)x^{1-\alpha}$ for all $x\le\delta$ and recall that $\Phi$ is non-increasing. Now
\begin{equation*}
    \begin{split}
        |\widehat\Phi(x+iy)|& \ge \max\left\{\frac1{|y|}\left|\int_0^\infty\sin(t) e^{-xt/|y|}\Phi(t/|y|)\,\dd t\right|,\left|\int_0^\infty\cos(|y|t) e^{-xt}\Phi(t)\,\dd t\right|\right\}. 
    \end{split}
\end{equation*}
Hence we can find a small $\epsilon\in(0,1)$ and a corresponding $\delta>0$ such that  for $|y|>2\pi/\delta$,
\begin{equation*}
    \begin{split}
        |\widehat\Phi(x+iy)|& \ge 
        \frac{C}{(\alpha-1)|y|}\left|(1-\epsilon)\int_0^\pi\sin(t) e^{-xt/|y|}\left(t/|y|\right)^{1-\alpha}\,\dd t\right.\\
        &+\left.(1+\epsilon)\int_{\pi}^{2\pi}\sin(t) e^{-xt/|y|}\left(t/|y|\right)^{1-\alpha}\,\dd t\right|\\
        & \ge Me^{-x\pi/|y|}|y|^{\alpha-2},
    \end{split}
\end{equation*}
for some $M>0$ dependent only on $C,\,\alpha,\epsilon$ and $\delta$. Also, we have
\begin{equation*}
    \begin{split}
        |\widehat\Phi(x+iy)|&\ge
        \left|\int_0^{\frac{\pi}{3|y|}\wedge \delta}\cos(|y|t) e^{-xt}\Phi(t)\,\dd t+\int_{\frac{\pi}{3|y|}\wedge \delta}^{\infty} \cos(|y|t)e^{-xt}\Phi(t)\,\dd t\right|\\
        & \ge  \frac{C(1-\epsilon)}{2(\alpha-1)} \int_0^\infty e^{-xt}t^{1-\alpha}\,\dd t-\int_{\frac{\pi}{3|y|}\wedge \delta}^{\infty}e^{-xt}\left(\frac{C(1-\epsilon)}{2(\alpha-1)}t^{1-\alpha}+\Phi(t)\right)\,\dd t\\
        & \ge  \frac{C(1-\epsilon)\Gamma(2-\alpha)}{2(\alpha-1)} x^{\alpha-2}-C\frac{3+\epsilon}{2(\alpha-1)}\left(\frac{\pi}{3|y|}\wedge\delta\right)^{1-\alpha}\int_{\frac{\pi}{3|y|}\wedge\delta}^{\infty}e^{-xt}\,\dd t\\
        & \ge  \frac{C(1-\epsilon)\Gamma(2-\alpha)}{2(\alpha-1)} x^{\alpha-2}-C\frac{3+\epsilon}{2(\alpha-1)}\left(\frac{\pi}{3|y|}\wedge\delta\right)^{1-\alpha}x^{-1}\\
          & = \frac{C}{2(\alpha-1)} \frac1x\left((1-\epsilon)\Gamma(2-\alpha)x^{\alpha-1}-(3+\epsilon)\left(\frac{\pi}{3|y|}\wedge\delta\right)^{1-\alpha}\right)\\
        & \ge \frac{C(1-\epsilon)\Gamma(2-\alpha)}{4(\alpha-1)} x^{\alpha-2}
    \end{split}
\end{equation*}
where the last inequality holds for all $\beta x\ge |y|\vee 1$, where $\beta=\beta_1\wedge \beta_2 $ depends only on $C,\,\alpha,\,\epsilon$ and $\delta,$ where $\beta_1$ and $\beta_2$ are chosen as follows. Let $\beta_1=( \frac{(1-\epsilon)\Gamma(2-\alpha)}{2(3+\epsilon)(3/\pi)^{\alpha-1}})^{\frac{1}{\alpha-1}}$, so  that if $\frac{\pi}{3|y|}\le \delta$, it holds that \begin{align*}
         & (3+\epsilon)(3/\pi)^{\alpha-1}|y|^{\alpha-1}\le\frac12 (1-\epsilon)\Gamma(2-\alpha)x^{\alpha-1} \iff 
  |y|\le \beta_1 x,
\end{align*}
meanwhile let $\beta_2=( \frac{(1-\epsilon)\Gamma(2-\alpha)}{2(3+\epsilon)\delta^{1-\alpha}})^{\frac{1}{\alpha-1}}$,  so  that if $\frac\pi{3|y|}>\delta$
\begin{align*}
         & (3+\epsilon) \delta^{1-\alpha}\le\frac12 (1-\epsilon)\Gamma(2-\alpha)x^{\alpha-1}\iff
  1\le \beta_2 x.
\end{align*}
Hence we proved that for $z=x+iy$  \begin{equation}   \label{eq:bourdsbetaxy}
    |\sym(z)|=|z^2\widehat\Phi(z)|\ge \begin{cases}|z|^2Me^{-x\pi/|y|}|y|^{\alpha-2}\ge b|z|^\alpha,& \mbox{ for } |y|\ge\beta x\vee 2\pi/\delta,\\
|z|^2\widetilde Mx^{\alpha-2}\ge b |z|^\alpha, &\mbox {for }\beta x\ge |y|\vee 1,\end{cases}
\end{equation}
for some $\widetilde M,b>0$ that depend only on $C,\,\alpha,\,\epsilon$ and $\delta$. To conclude we show that $|\sym(z)|\ge b|z|^\alpha$, if $|z|\ge \sqrt{2}d$ with $\Re z>0$ where $d=\max\{2\pi/\delta,2\pi/(\delta\beta),1,1/\beta\}$. First observe that it must be that $x\vee |y|\ge d$. So, if $x\wedge |y|\ge d $, then $\beta x\ge 2\pi/\delta$ and $|y|\ge 1$ and we can apply \eqref{eq:bourdsbetaxy}. If $  x\ge d>|y| $, then: if $\beta\ge 1$   it must be that $\beta x\ge |y|\vee 1$, and so we can apply \eqref{eq:bourdsbetaxy};   if $\beta ,|y|< 1$, then using $\beta x\ge 1$ we can apply \eqref{eq:bourdsbetaxy}; and if $\beta < 1\le |y|$, observing that $\beta x\ge 2\pi/\delta$, we can apply \eqref{eq:bourdsbetaxy}. The remaining scenario is that  $x< d\le |y|$, so if $\beta x\ge |y|$ we can apply \eqref{eq:bourdsbetaxy} as $|y|\ge1$, otherwise $|y|> \beta x$ but we also have $|y|\ge2\pi/\delta$ and so \eqref{eq:bourdsbetaxy} still applies.
\end{proof}

\begin{proposition}\label{prop:H1O(h)}
Let $f\in C^2(0,\infty)$ be 2-regular with $w(\xi)=\int_0^\infty e^{-\xi x} f(x)\,\dd x$ such that $\int_1^\infty \bar w_{\frac12}(r)\,\dd r<\infty$, where
\[\bar w_{\frac12}(r):=\sup\{|\xi w(\xi)|:|\xi|>r,\, \Re\xi>2^{-1}\}/r.\]
  Let $\calG_{n,h}$ be such that
$$w\left(\frac{1-e^{-\xi h}}h\right)=\sum_{n=0}^\infty \calG_{n,h} e^{-\xi nh}$$ and define   the linear interpolation of $n\mathcal  G_{n,h}$ at the grid points $nh$ via
$$F_h(x):=(x/h-n)(n+1)\calG_{n+1,h}+(n+1-x/h)n\calG_{n,h}$$ for $n=\lfloor x/h\rfloor$. 
Then as $h\to0$
$$\frac{F_h(x)- xf(x)}{h}\to -\frac{\dd}{\dd x}xf(x)+\frac12\frac{\dd^2}{\dd x^2}(x^2f(x)),$$
uniformly in $[0,C]$ for any $C>0$. In particular, 
\[\frac{\frac1h\calG_{n,h}- f(x)}h\to\left.\left( -\frac{\dd}{\dd x}xf(x)+\frac12\frac{\dd^2}{\dd x^2}(x^2f(x))\right)\right/x \]for $x=nh$.
\end{proposition}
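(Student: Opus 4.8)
\textbf{Proof proposal for Proposition \ref{prop:H1O(h)}.}

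The plan is to realise $F_h$ as a discrete convolution quadrature applied to a smooth, compactly supported test function and then invoke a known higher-order convolution quadrature error estimate. First I would reduce the claim about $\frac1h\mathcal G_{n,h}$ to the claim about $F_h$: since $F_h$ is the linear interpolation of $n\mathcal G_{n,h}$ at the grid points $nh$, at $x=nh$ one has $F_h(nh)=n\mathcal G_{n,h}=\frac{x}{h}\mathcal G_{n,h}$, so $\frac{F_h(x)-xf(x)}{h}=\frac{x}{h}\cdot\frac{\frac1h\mathcal G_{n,h}-f(x)}{h}$, and dividing by $x$ (for $x>0$, with the endpoint $x=0$ handled separately by continuity and the sub-exponential bound) gives exactly the pointwise statement. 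So the core is the uniform convergence of $\frac{F_h(x)-xf(x)}{h}$ on $[0,C]$.

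The key observation is that $F_h(x)$ is itself obtained from the convolution quadrature generated by $w((1-e^{-\xi h})/h)$. Indeed, if one takes the generating function for $\{n\mathcal G_{n,h}\}$, it equals $-h\,\frac{\dd}{\dd\xi}\big[w((1-e^{-\xi h})/h)\big]$ up to the chain-rule factor $e^{-\xi h}$, i.e.\ it corresponds to the symbol $K(\xi):=\xi\,e^{\xi h}\,w'_\star(\xi)$-type expression whose continuous counterpart is $\xi w'(\xi)$ plus corrections. More precisely, the generating function $\sum_n n\mathcal G_{n,h}e^{-n\xi h}$ has Laplace-type symbol that converges, as $h\to0$, to the multiplier associated with the operator $-\frac{\dd}{\dd x}(x\,\cdot\,)$ composed with inverse Laplace transform of $w$; the $O(h)$ correction term is precisely $\tfrac12\frac{\dd^2}{\dd x^2}(x^2\,\cdot\,)$, which is the standard second-order term in the Post--Widder / backward-difference quadrature expansion (this is the same mechanism as in Lemma \ref{lem:firtorderapprox} and the remark following it). So I would: (i) write $F_h(x)=\sum_{j} \gamma_{j,h}\,\mathbf 1_{[0,C']}$-convolved with a suitable mollified version of $f$ — more cleanly, fix a smooth cutoff so that $f$ agrees on a neighbourhood of $[0,C]$ with a function $\tilde f\in C^m[0,\infty)$ vanishing to high order at $0$ with all derivatives controlled, using 2-regularity of $f$ to get the needed decay of $w$ and its derivatives (the bounds \eqref{eq:2mon}, \eqref{eq:3mon} and the hypothesis $\int_1^\infty\bar w_{1/2}<\infty$ give exactly the $L^1$-type control of the symbol needed to apply the quadrature theorem); (ii) apply the convolution quadrature error expansion of \cite{MR923707} (Theorem 4.1 with $p=1$, or Theorem 3.1 of \cite{MR1269502}) to the symbol $K(s)=sw(s)$ (or the appropriate derivative combination), which yields the first-order expansion with the stated leading correction; (iii) identify the limit operator and the correction term by a direct symbol computation: the symbol $\frac{1-e^{-\xi h}}{h}=\xi-\tfrac{h}{2}\xi^2+O(h^2)$, plugged into $w$ and into the $\xi$-derivative coming from the factor $n$ in $n\mathcal G_{n,h}$, produces $\xi w(\xi)$ at order $1$ and, at order $h$, the combination whose inverse transform is $-\frac{\dd}{\dd x}(xf)+\tfrac12\frac{\dd^2}{\dd x^2}(x^2f)$.

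The main obstacle will be step (ii)--(iii): one must carefully match the bookkeeping between "the factor $n$ in $n\mathcal G_{n,h}$" (which corresponds to multiplication by $x$ on the quadrature side, hence to $-\frac{\dd}{\dd\xi}$ on the symbol side, up to a shift) and the linear interpolation $F_h$ (which introduces its own $O(h)$ perturbation that must be shown to contribute only at order $h^2$, or to combine correctly with the quadrature correction). Concretely, one has $F_h(x)=n\mathcal G_{n,h}+(x/h-n)\big((n+1)\mathcal G_{n+1,h}-n\mathcal G_{n,h}\big)$, and the increment $(n+1)\mathcal G_{n+1,h}-n\mathcal G_{n,h}$ is an $O(h)$ quantity whose interpolation error against the smooth limit $\frac{\dd}{\dd x}(xf(x))$ is $O(h^2)$ uniformly — this needs the uniform convergence $\frac1h\mathcal G_{n,h}\to f$ together with a uniform bound on the discrete first difference, both of which follow from the quadrature estimate once $\tilde f$ is chosen smooth enough. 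I would also need to confirm that replacing $f$ by $\tilde f$ does not affect the conclusion on $[0,C]$: since $F_h$ is built from $\mathcal G_{n,h}$ which depends on the global behaviour of $w$, one argues via the localisation principle for convolution quadratures (the tail contribution decays, using sub-exponential growth of $f$ and the Laplace transform analyticity in $\Re\xi>1/2$), so that the quadrature coefficients are, up to exponentially small errors, those of a compactly modified kernel. Once these localisation and interpolation-error estimates are in place, the result is a direct application of the cited quadrature theorems plus the elementary symbol expansion, so I would present (i) the reduction to $F_h$, (ii) the localisation, (iii) the symbol identification, and (iv) the interpolation-error cleanup, in that order.
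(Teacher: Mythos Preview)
Your reduction of the ``in particular'' statement to the main claim is fine (modulo a harmless algebra slip: at $x=nh$ one has $\frac{F_h(x)-xf(x)}{h}=n\big(\tfrac1h\mathcal G_{n,h}-f(x)\big)=\tfrac{x}{h}\big(\tfrac1h\mathcal G_{n,h}-f(x)\big)$, so dividing by $x$, not multiplying by $x/h$, gives the pointwise statement). The core of your plan, however, has a genuine gap.

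The difficulty is that neither of Lubich's theorems applies under the stated hypotheses. Theorem~3.1 of \cite{MR1269502} estimates $K(\partial_t^h)g-K(\partial_t)g$ for a \emph{smooth test function} $g$ with vanishing derivatives at the origin; here there is no such $g$ --- you are approximating the kernel values $xf(x)$ themselves, not a convolution with a regular input. Your attempt to manufacture one via a cutoff $\tilde f$ and a ``localisation principle'' does not work: the coefficients $\mathcal G_{n,h}$ are determined by the global transform $w$, and changing $f$ anywhere changes every $\mathcal G_{n,h}$, so the tail is not a perturbation you can discard. Theorem~4.1 of \cite{MR923707}, which \emph{is} about the Post--Widder weights directly, requires condition~(1.5) there (analytic extendibility of the kernel to a sector), and 2-regularity of $f$ does not give this. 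Indeed, the paper invokes exactly that theorem to prove \eqref{eq:GClimk-2k0Tricky} under \ref{H1}-(ii), where complete monotonicity forces $k_0(\cdot-1)$ to be Bernstein and hence analytically extendible; Proposition~\ref{prop:H1O(h)} is written precisely to cover \ref{H1}-(i), where that route is unavailable (cf.\ the remark after Lemma~\ref{lem:PWthms}, which flags the result as new).

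The paper's proof proceeds differently and directly. One first writes down the Laplace transform of $F_h$ explicitly: the linear interpolation is convolution with a hat function, so $\widehat F_h(\xi)=-\big(\tfrac{1-e^{-\xi h}}{\xi h}\big)^2 w'\big(\tfrac{1-e^{-\xi h}}{h}\big)$. One then shows $\tfrac1h\big(\widehat F_h(\xi)+w'(\xi)\big)\to \xi w'(\xi)+\tfrac12\xi^2 w''(\xi)$ in $L^1(1+i\mathbb R)$ and concludes by Laplace inversion. The substantive work is the $L^1$ convergence, which is handled by splitting the contour at $|k|=\pi/(2h)$: for $|k|\le\pi/(2h)$ a Taylor expansion in $h$ of $\widehat F_h$ together with the 2-regularity bounds \eqref{eq:2mon}--\eqref{eq:3mon} produces an $h$-independent dominating function $\bar w$, integrable by hypothesis; for $|k|>\pi/(2h)$ one constructs an explicit majorant $G(k,h)$ that distinguishes the neighbourhoods of the aliasing points $hk\in 2\pi\mathbb Z$ from the rest, and shows both pieces vanish. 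This contour-splitting with explicit tail control is the step your outline is missing and cannot recover from Lubich's results.
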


\begin{proof} First observe that the assumptions on $f$ imply that
 \begin{equation}\label{eq:convlambdawint}\bar w(r):=\sup\big\{|w(\xi)|:|\xi|>r,\, \Re\xi>2^{-1}\big\}\,\,\,\,\text{satisfies}\, \int_1^\infty \bar w (r)\,\dd r<\infty
\end{equation}
   and  \begin{equation}\label{eq:convlambdaw}\lim_{r\to\infty}\sup\{|\xi w(\xi)|:|\xi|\ge r,\,  \Re\xi\ge 2^{-1} \}=0.
\end{equation}  
Also, as $-e^{-\xi h}w'\left(\frac{1-e^{-\xi h}}h\right)=-\frac{d}{d\xi}w\left(\frac{1-e^{-\xi h}}h\right)=\sum_{n=0}^\infty nh\calG_{n,h} e^{-\xi nh}$, multiplication with $\frac{e^{\xi h}}{h^2}\left(\frac{1-e^{-\xi h}}{\xi}\right)^2$, the Laplace transform of the hat function centred at zero of width $2h$ and height $1/h$, gives the Laplace transform of $F_h$ (extended to 0 on $(-\infty,0)$):
$$\widehat F_h(\xi)=-\left(\frac{1-e^{-\xi h}}{\xi h}\right)^2w'\left(\frac{1-e^{-\xi h}}{ h}\right).$$

We are going to show that the Laplace transform of $ F_h(x)$ converges  to the Laplace transform $ xf(x)$ in $L^1(1+i\mathbb R)$ with the correct first order error term, yielding the desired result by Laplace inversion. That is, we are going to show that as $h\to 0$
\[
\int_{1+i\R}\left|\frac{\widehat F_h(\xi)+w'(\xi)}h -\xi w'(\xi)-\frac12\xi^2w''(\xi)\right|\,\dd \xi\to 0.
\]

First note that the Taylor expansion of $h\mapsto\widehat F_h(\xi)$ around $h=0$ for each $\xi$ shows that the integrand converges pointwise to zero.  Indeed $\widehat F_0(\xi)= -w'(\xi)$, and  $\frac{\dd}{\dd h}\widehat F_0(\xi)$ is obtained from \eqref{eq:dxFh}. 
However, there is no dominating integrable function. We therefore consider two cases: $\xi=1+ik$ with $|k|\le \frac{\pi}{2h}$ and $|k|>\frac{\pi}{2h}$ and show that the respective integrals converge to zero. 

Let $|k|\le\frac{\pi}{2h}$ and recall that from now on $\xi=1+ik$. Using Taylor, for each $\xi$
\[
\widehat F_h(\xi)+w'(\xi)=h\frac{\partial}{\partial \tilde h}\widehat F_{\tilde h }(\xi)\quad \text{for some $0<\tilde h  \le h$.}\]  
Now,
\begin{equation}\label{eq:dxFh}
\begin{split}
\frac{\dd}{\dd h}\widehat F_h(\xi)=&\left(\frac{1-e^{-\xi h}}{ h}\right)w'\left(\frac{1-e^{-\xi h}}{ h}\right)\frac{e^{h\xi}-1-h\xi}{h^2\xi^2/2}e^{-h\xi}\\
&+\frac12\left(\frac{1-e^{-\xi h}}{ h}\right)^2w''\left(\frac{1-e^{-\xi h}}{ h}\right) \frac{e^{h\xi}-1-h\xi}{h^2\xi^2/2}e^{-h\xi}.
\end{split}
\end{equation}

As $f$ is 2-regular, there exists a constant $M$ such that for all $\Re \zeta> 0$
$$|\zeta w'(\zeta)|+|\xi^2 w''(\zeta)|\le M |w(\zeta)|.
$$ 
As   for all $ |k|\le \frac{\pi}{2h}$ and $h$ small we have \[\Big|\frac{1-e^{-\tilde h  (1+ik)}}{\tilde h }\Big|\ge\Big| \frac{\sin(\tilde h  k)}{\tilde h  }\Big|e^{-\tilde h }\ge\Big| k\Big(1-\frac{\tilde h  ^2k^2}6\Big)\Big|\frac12\ge \Big| k\Big(1-\frac{\pi^2}{24}\Big)\Big|\frac12\] and   $\Re\frac{1-e^{-\tilde h  (1+ik)}}{\tilde h  }\ge  \frac{1-\cos(\tilde h  k)e^{-\tilde h } }{\tilde h  }  \ge\frac12$, we obtain for all $|k|\le \frac{\pi}{2h}$
\begin{align*}
    \left|\frac{d}{d\tilde h}\widehat F_{\tilde h }(1+ik)\right|&\le M' w\Big(\frac{1-e^{-\tilde h  (1+ik)}}{\tilde h }\Big)\\
    &\le  M'\sup_{\substack{\zeta:\,\Re \zeta\ge1/2 \,\,\text{and}\\ \,\,\,\,|\zeta|\ge |k|(1-\pi^2/24)\frac12}}| w(\zeta)|= M'\bar w( |k|(1-\pi^2/24)2^{-1}),
\end{align*}
 for some $M'>0$, independent of $\tilde h$ and all $ |k|\le \frac{\pi}{2h}$. As $\bar w$ also dominates $|\zeta w'|$ and $|\zeta^2w''|$, by \eqref{eq:convlambdawint} and the Dominated Convergence Theorem 
$$\int_{1-i\frac{\pi}{2h}}^{1+i\frac{\pi}{2h}}\left|\frac{\widehat F_h(\xi)+w'(\xi)}h -\xi w'(\xi)-\frac12\xi^2w''(\xi)\right|\,\dd \xi\to 0.$$

In order to tackle the integral for $|k|>\pi/2h$, note that for some $C>0$,
$$\left|\int_{1+i\frac{\pi}{2h}}^{1+i\infty}\left|\frac{w'(\xi)}h -\xi w'(\xi)-\frac12\xi^2w''(\xi)\right|\,\dd \xi\right|\le C \int_{\frac{\pi}{2h}}^\infty\bar w(k)\,\dd k\to 0.$$
So it remains to show that 
$$\left|\int_{1+i\frac{\pi}{2h}}^{1+i\infty}\left|\frac{\widehat F_h(\xi)}h \right|\,\dd\xi\right|=\left|\int_{1+i\frac{\pi}{2h}}^{1+i\infty}\left|\frac1{h\xi^2}\left(\frac{1-e^{-\xi h}}{ h}\right)^2w'\left(\frac{1-e^{-\xi h}}{ h}\right)\right|\,\dd \xi\right|\to 0.$$

Consider for $|k|>\frac{\pi}{2h}$
$$G(k,h):=\frac C{hk^2}\begin{cases} \sup_{\Re\zeta\ge 2^{-1}} |\zeta w(\zeta)| & 2n\pi-\sqrt h<h|k|<2n\pi+\sqrt h; n\in\mathbb N,\\ \\
g(h)& \mbox{else,}\\
\end{cases}
$$
for some constant $C>0$, where 
\[
g(h):=\sup \left\{|\tilde\xi w(\tilde\xi)|:
\,1/\sqrt{h}<k<2\pi/h-1/\sqrt{h}\right\}\quad  \text{for} \quad\tilde \xi :=\frac{1-e^{-(1+ik) h}}{ h}.
\]

Then $|\widehat F(\xi)/h|\le G(k,h)$ for any $C$ large enough, using 1-regularity of $f$. Furthermore,  if $1/\sqrt{h}<k<\pi/2h$ or $3\pi/2h<k<2\pi/h-1/\sqrt{h}$, $\sin(hk)/h\to\infty$, and for $\pi/2h<k<3\pi/2h$, $(1-\cos(hk)e^{-h})/h\to\infty$. Hence as $h\to 0$, 
\[
\inf_{1/\sqrt{h}<k<2\pi/h-1/\sqrt{h}}|\tilde \xi|\to\infty\quad\text{and}\quad \inf_{1/\sqrt{h}<k<2\pi/h-1/\sqrt{h}}\Re \tilde \xi\ge1/2,
\]
  and therefore $g(h)\to 0$ as $h\to 0$ by \eqref{eq:convlambdaw}. 

 Note that \eqref{eq:convlambdaw} also implies $\sup_{\Re\zeta\ge 2^{-1}} |\zeta w(\zeta)|<\infty$ and observe that 
\[
\sum_{n=1}^\infty\int_{2n\pi/h-1/\sqrt{h}}^{2n\pi/h+1/\sqrt{h}}\frac1{hk^2}\,\dd k=\sum_{n=1}^\infty\frac1{2n\pi-\sqrt{h}}-\frac1{2n\pi+\sqrt{h}}=\sum_{n=1}^\infty\frac{2\sqrt{h}}{4n^2\pi^2-h}\to0.
\]
 Then we can conclude by  $$\int_{\frac{\pi}{2h}}^{\infty}\left|\frac{\widehat F_h(1+ik)}h \right|\,\dd k\le\int_{\frac{\pi}{2h}}^\infty G(k,h)\,\dd k\to 0.$$
\end{proof}

\section{Proof of Lemma \ref{lem:firtorderapprox}}\label{app:firtorderapprox}

\begin{proof}[of Lemma \ref{lem:firtorderapprox}] We sometimes  simplify notation by writing $[\sym/\zeta ](\xi)=\sym(\xi)/\xi$ when    $\xi\in\mathbb 
C$ has a lengthy expression.
 Denote again by $g$ the canonical extension of $g$ to $\mathbb R$ (Remark \ref{rmk:canon}), so that
$$
 \Conelhzero \Ir  g( x)=\sum_{j=0}^{\infty}  \Gruone_j \Il g(x-jh)= \sum_{j=0}^{\infty}  \Gruone_j f(\tilde x-jh)= \Conelhzero f(\tilde x),
 $$ 
 with  $\tilde x=x+1$, where $f(\cdot)=\Il g(\cdot-1)\in C[0,\infty)$ and $f=0$ on $(-\infty,0]$,  so that $ \Conelhzero f\in C[0,\infty)$ with $ \Conelhzero f=0$ on $(-\infty,0]$.  Moreover,  $f\in C^6[0,\infty)$, with $f^{(j)}(0)=0$ for $0\le j\le 6$, implying  
 \[
 \left| \widehat f(\xi)\right|\le \frac C{|\xi|^6},\quad\text{for some $C>0$.}
 \] 
  Then we fix a $c>0$ and we can apply the Laplace inversion theorem \cite[Theorem II.7.3]{MR0005923} because of the integrability on the complex vertical line over $c$ due to
\begin{align*}
 \Conelhzero f(\tilde x)&=\frac1{i2\pi}\int_{c-i\infty}^{c+i\infty}e^{ \xi\tilde x} \widehat{ \Conelhzero f}(\xi)\,\dd\xi =\frac1{i2\pi}\int_{c-i\infty}^{c+i\infty}e^{ \xi \tilde x} [\sym/\zeta]\left(\frac{1-e^{-\xi h}}{h}\right)\widehat{f}(\xi)\,\dd\xi,
\end{align*}
using $| [\sym/\zeta](\frac{1-e^{-\xi h}}{h})|=O(1)$ as  $|k|\to\infty$ with $ \xi=c+ik$, which follows easily from $\sym(\zeta)/\zeta=\zeta\widehat\Phi(\zeta)$ and \eqref{eq:convhomega}. \\
We now obtain a second order Taylor expansion of $\sym(\zeta)/\zeta$ around $\xi$ with the exact first order Taylor expansion term. We use the Taylor's formula 
 \[
w(b)-w(a)=(b-a)w'(a)+(b-a)^2\int_0^1 (1-z)w''(l(z))\,\dd z, 
\] for an analytic $w$ and the path $ l(t)=a+(b-a)t$, $a,b\in\mathbb{C}$ inside the domain of $w$.  As $\sym(\zeta)/\zeta$ is analytic on the right half plane, $[\sym/\zeta]''=-\int_0^\infty e^{-\zeta y}y^2\phi(y,\infty)\,\dd y$ is bounded on $\mathbb C\cap \{\Re\zeta>\epsilon\}$ for any $\epsilon>0$, $\Re \xi= c>0$ and 
\[
\Re \frac{1-e^{-\xi h}}{h} = \frac1h\left(1-e^{-ch}\cos(kh)\right)\ge \inf_{h\in(0,1]}\frac{1-e^{-ch}}{h}=:\epsilon>0, 
\] then
we obtain by Taylor's formula for all $k\in \mathbb{R},\,h\in(0,1]$ (recalling that $ \xi=c+ik$) 
\begin{align*}
[\sym/\zeta]\left(\frac{1-e^{-\xi h}}{h}\right)&=\sym(\xi)/\xi +\left(\frac{1-e^{-\xi h}}{h}-\xi\right) [\sym(\xi)/\xi]'  + \left|\frac{1-e^{-\xi h}}{h}-\xi\right|^2O (1)\\
&=\sym(\xi)/\xi +\left(-\frac{\xi^2h}2  +h^2O(|\xi|^3)\right) [\sym(\xi)/\xi]'  +  h^2O(|\xi|^4)\\
&=\sym(\xi)/\xi -\frac{h\xi^2}2 [\sym(\xi)/\xi]' + h^2O(|\xi|^4),
\end{align*} 
where $O(\cdot)$ is independent of $h\in(0,1]$ and understood as $|k|\to\infty$,   and we used $\sup_k |[\sym(\xi)/\xi]'| <\infty$,  which is a consequence of \eqref{eq:omegaoverxi'}.
   Then
\begin{align*}
 \Conelhzero f(\tilde x)&=\frac1{i2\pi}\int_{c-i\infty}^{c+i\infty}e^{ \xi \tilde x} \left(\left(\sym(\xi)/\xi -\frac{h\xi^2}2 [\sym(\xi)/\xi]' \right)\widehat{f}(\xi) +h^2 O(|\xi|^4)\widehat{f}(\xi)\right)\,\dd\xi.\\
&=\frac1{i2\pi}\int_{c-i\infty}^{c+i\infty}e^{ \xi \tilde x} \left(\sym(\xi)/\xi -\frac{h\xi^2}2 [\sym(\xi)/\xi]' \right)\widehat{f}(\xi)\,\dd\xi+O(h^2),
\end{align*}
and so, using $\widehat f(\xi)=\widehat{\tilde g}(\xi)/\sym (\xi)$ where  $\tilde g(\cdot)=g(\cdot-1)$,   
\begin{align*}
 \Conelhzero f(\tilde x)
&=\frac1{i2\pi}\int_{c-i\infty}^{c+i\infty}e^{ \xi \tilde x} \left[\frac{\widehat{\tilde g}(\xi)}{\xi} - \frac{h\xi }2 \frac{[\sym(\xi)/\xi]'}{\sym(\xi)/\xi}\widehat{\tilde g}(\xi)\right] \,\dd\xi+O(h^2)\\
&=I_+g(x) -\frac h2\frac1{i2\pi}\int_{c-i\infty}^{c+i\infty}e^{ \xi \tilde x}  [\sym(\xi)/\xi]'  \frac{\xi}{\sym(\xi) } \widehat{\tilde g'}(\xi) \,\dd\xi+O(h^2)\\
&=I_+g(x)-\frac h2[ y\phi(y,\infty)]^+\star k_{-1}^+(y-1) \star  g'( x)+O(h^2),
\end{align*}
where the second last identity follows from $\widehat{\tilde g}(\xi)/\xi=\widehat{I_+\tilde g}(\xi)$ and $I_+\tilde g(\tilde x)=I_+g(x)$, meanwhile the last identity is another application of \cite[Theorem 7.3]{MR0005923}. Namely, we use Lemma \ref{thm:ker}, the identity
\begin{align}\label{eq:omegaoverxi'}
[\sym(\xi)/\xi]'&= \left(\int_0^\infty (1-e^{-\xi y})\phi(y,\infty)\,\dd y\right)'= \int_0^\infty e^{-\xi y}y\phi(y,\infty)\,\dd y,
\end{align}
 and that
\begin{align*}
[ y\phi(y,\infty)]^+\star k_{-1}^+(\cdot-1) \star  \tilde g'(\tilde x)&= \int_0^{\tilde x}\tilde g'(\tilde x-z)[ y\phi(y,\infty)]^+\star k_{-1}^+(\cdot-1)(z)\,\dd z\\
&= \int_0^{\tilde x}  g'(\tilde x-z-1)[ y\phi(y,\infty)]^+\star k_{-1}^+(\cdot-1)(z)\,\dd z\\
&= \int_0^{ x+1}  g'(  x-z)[ y\phi(y,\infty)]^+\star k_{-1}^+(y-1)(z)\,\dd z\\
&=[ y\phi(y,\infty)]^+\star k_{-1}^+(\cdot-1) \star  g'( x),
\end{align*}
belongs to $C_0^4(0,T]$ (in $\tilde x$) for each $T$ and it is of sub-exponential growth because   $g$ grows polynomially and
\begin{align*}
\int_0^{1+x}[ y\phi(y,\infty)]^+\star k_{-1}^+(\cdot-1)(z)\,\dd z
&\le k_{0}^+(x) \int_0^{1+x} z\phi(z,\infty)\,\dd z\\ &= k_{0}^+(x) \int_0^{1+x} z\int_{(z,\infty)}\phi(\dd y)\,\dd z\\
&\le k_{0}^+(\tilde x-1)\left(\int_{(0,1]}\frac{y^2}2\phi(\dd y)+\tilde x^2 \phi(1,\infty) \right).
\end{align*} 

 The `$-$' case follows by \eqref{eq:convolutionsign} and $ \Conerhzero [g(-\cdot)](x)= \Conelhzero g(-x)$.
\end{proof}



\section{Details for the proof of Lemma \ref{lem:varthek0}}\label{sec:app_lem:varthek0}

\begin{proof}[of equations \eqref{eq:partialphithetah0}-\eqref{eq:diffvarth}]
We rewrite each   function to recover a convolution  term (to apply Lemma \ref{lem:convid}) plus a remainder.  We repeatedly use Lemma \ref{lem:convid} without mention and we recall Remark \ref{rmk:Dphi_h}.   To prove \eqref{eq:partialphithetah0} we compute   
\begin{align*}
\Crh \vartheta^{k_{0}}_{\mh}(x)&=\frac1h\sum_{j=0}^{\iota(-x)} \Gru_{j} \left( \lambda\calG^{k_{0}}_{\iota(-(x+(j-1)h))-2}+\barlambda \calG^{k_{0}}_{\iota(-(x+(j-1)h))-1}\right)\\
&=\frac1h\sum_{j=0}^{\iota(-x)} \Gru_{j} \left( \lambda\calG^{k_{0}}_{\iota(-x)-j-1}+\barlambda \calG^{k_{0}}_{\iota(-x)-j}\right)\\
&=\frac\lambda h\sum_{j=0}^{\iota(-x)-1} \Gru_{j}  \calG^{k_{0}}_{\iota(-x)-j-1}+\frac{\barlambda }h\sum_{j=0}^{\iota(-x)} \Gru_{j} \calG^{k_{0}}_{\iota(-x)-j}\\
&=0,
\end{align*}
 where we use $\iota(-x)\ge2$ and $\calG^{k_{0}}_{-1}=0$. Similarly, to prove \eqref{eq:partialphithetah1} we compute 
\begin{align*}
h\Crh \vartheta^{k_{1}}_{\mh}(x)&=h\sum_{j=0}^{\iota(-x)} \Gru_{j}\vartheta^{k_{1}}_{\mh}(x+(j-1)h)\\ 
&=\sum_{j=0}^{\iota(-x)-1} \Gru_{j} \left( \lambda\calG^{k_{1}}_{\iota(-(x+(j-1)h))-3}+\barlambda \calG^{k_{1}}_{\iota(-(x+(j-1)h))-2}\right) -\lambda \Gru_{\iota(-x)}\calG^{k_1}_0\\
&=\lambda\sum_{j=0}^{\iota(-x)-2} \Gru_{j}  \calG^{k_{1}}_{\iota(-x)-2-j} + \barlambda \sum_{j=0}^{\iota(-x)-1} \Gru_{j}\calG^{k_{1}}_{\iota(-x)-1-j} -\lambda \Gru_{\iota(-x)}\calG^{k_1}_0\\
&=h -\lambda \Gru_{\iota(-x)}\calG^{k_1}_0.
\end{align*} 
To prove \eqref{eq:partialphi-1thetah1} we   compute 
\begin{align*} 
 \Conerhzero \vartheta^{k_{1}}_{\mh}(x)&= \sum_{j=0}^{\iota(-x)-1}\Gruone_j \vartheta^{k_{1}}_{\mh}(x+jh) \\
&= \sum_{j=0}^{\iota(-x)-2}\Gruone_j \vartheta^{k_{1}}_{\mh}(x+jh) +\Gruone_{\iota(-x)-1} \vartheta^{k_{1}}_{\mh}(x+(\iota(-x)-1)h) \\
&= \frac1h\sum_{j=0}^{\iota(-x)-2}\Gruone_j \left(\lambda \calG^{k_1}_{ \iota(-x-jh)-3} +\barlambda  \calG^{k_1}_{\iota(-x-jh)-2} \right) -\frac{\lambda}h\calG^{k_1}_0 \Gruone_{\iota(-x)-1} \\
&= \frac1h\sum_{j=0}^{\iota(-x)-2}\Gruone_j \left(\lambda \calG^{k_1}_{ \iota(-x)-3-j} +\barlambda  \calG^{k_1}_{ \iota(-x)-2-j} \right) -\frac{\lambda}h\calG^{k_1}_0 \Gruone_{\iota(-x)-1} \\
&= \frac\lambda h\sum_{j=0}^{\iota(-x)-3}\Gruone_j \lambda \calG^{k_1}_{\iota(-x)-3-j} +\frac{\barlambda }h\sum_{j=0}^{\iota(-x)-2}\Gruone_j  \calG^{k_1}_{\iota(-x)-2-j} -\frac{\lambda}h\calG^{k_1}_0 \Gruone_{\iota(-x)-1} \\
&=\frac{\lambda}h(\iota(-x)-2)h^2 + \frac{\barlambda }h(\iota(-x)-1)h^2-\frac{\lambda}h\calG^{k_1}_0 \Gruone_{\iota(-x)-1}.
\end{align*}
 To prove \eqref{eq:partialphi-1thetah0} we   compute 
\begin{align*}
h \Conerhzero \vartheta^{k_0}_{\mh}(x)&=\sum_{j=0}^{\iota(-x)-1} \Gruone_j\left(\lambda \calG^{k_0}_{ \iota(-x-jh)-2} +\barlambda  \calG^{k_0}_{\iota(-x-jh)-1} \right)\\
&=\sum_{j=0}^{\iota(-x)-1} \Gruone_j\left(\lambda \calG^{k_0}_{\iota(-x)-2-j} +\barlambda  \calG^{k_0}_{\iota(-x)-1-j} \right)\\
&=\lambda\sum_{j=0}^{\iota(-x)-2} \Gruone_j \calG^{k_0}_{\iota(-x)-2-j}+\barlambda \sum_{j=0}^{\iota(-x)-1} \Gruone_j\calG^{k_0}_{\iota(-x)-1-j} \\
&=h.
\end{align*} 
To prove \eqref{eq:partialphithetah0+} we   compute
\begin{align*}
\Clh \vartheta^{k_{0}}_{\ph}(x)&=\sum_{j=0}^{\iota(x)} \Gru_{j} \vartheta^{k_{0}}_{\ph}(x-(j-1)h)\\
&=\sum_{j=0}^{\iota(x)-1} \Gru_{j} \vartheta^{k_{0}}_{\ph}(x-(j-1)h) +\Gru_{\iota(x)} \vartheta^{k_{0}}_{\ph}(x-(\iota(x)-1)h) \\
&=\frac1h\sum_{j=0}^{\iota(x)-1} \Gru_{j}\left(\barlambda  \calG^{k_0}_{\iota(x)-1-j}+ \lambda \calG^{k_0}_{\iota(x)-j}\right)  +\Gru_{\iota(x)} \vartheta^{k_{0}}_{\ph}(x-(\iota(x)-1)h) \\
&=\frac{\barlambda }h\sum_{j=0}^{\iota(x)-1} \Gru_{j} \calG^{k_0}_{\iota(x)-1-j}+\frac\lambda h\sum_{j=0}^{\iota(x)-1} \Gru_{j} \calG^{k_0}_{\iota(x)-j}  +\Gru_{\iota(x)} \vartheta^{k_{0}}_{\ph}(x-(\iota(x)-1)h) \\
&=-\frac\lambda h \Gru_{\iota(x)} \calG^{k_0}_{0} +\frac1h \Gru_{\iota(x)} \frac{-\Gru_0}{\Gru_1}\left(\barlambda  \calG^{k_0}_{0}+ \lambda \calG^{k_0}_{1}\right). 
\end{align*}
To prove \eqref{eq:partialphi-1thetah0+} we compute
\begin{align*}
 \Conelhzero \vartheta^{k_{0}}_{\ph}(x)&=\sum_{j=0}^{\iota(x)-2} \Gruone_j \vartheta^{k_{0}}_{\ph}(x-jh)+\Gruone_{\iota(x)-1} \vartheta^{k_{0}}_{\ph}(x-(\iota(x)-1)h)\\
&=\sum_{j=0}^{\iota(x)-2} \Gruone_j \frac1h \left(\barlambda  \calG^{k_0}_{\iota(x-jh)-2}+ \lambda \calG^{k_0}_{\iota(x-jh)-1}\right)+\Gruone_{\iota(x)-1} \vartheta^{k_{0}}_{\ph}(x-(\iota(x)-1)h)\\
&=\frac{\barlambda }h\sum_{j=0}^{\iota(x)-2} \Gruone_j   \calG^{k_0}_{\iota(x)-j-2}+\frac\lambda h\sum_{j=0}^{\iota(x)-2} \Gruone_j \calG^{k_0}_{\iota(x)-j-1}+\Gruone_{\iota(x)-1} \vartheta^{k_{0}}_{\ph}(x-(\iota(x)-1)h)\\
&=1-\frac\lambda h \Gruone_{\iota(x)-1} \calG^{k_0}_{0}+\Gruone_{\iota(x)-1} \vartheta^{k_{0}}_{\ph}(x-(\iota(x)-1)h).
\end{align*}
To prove \eqref{eq:partialphi-1thetah-1+} we compute 
\begin{align*}
 \Conelhzero \vartheta^{k_{-1}}_{\ph}(x)&=\frac{(1-\theta)}h\sum_{j=0}^{\iota(x)-1} \Gruone_j \calG^{k_{-1}}_{\iota(x-jh)-2}+\frac{\theta}h\sum_{j=0}^{\iota(x)-1} \Gruone_j \calG^{k_{-1}}_{\iota(x-jh)-1}\\
&=\frac{(1-\theta)}h\sum_{j=0}^{\iota(x)-2} \Gruone_j \calG^{k_{-1}}_{\iota(x)-2-j}+\frac{\theta}h\sum_{j=0}^{\iota(x)-1} \Gruone_j \calG^{k_{-1}}_{\iota(x)-1-j}\\
&=0.
\end{align*}
To prove \eqref{eq:partialphithetah1+} we compute 
\begin{align*}
\Clh \vartheta^{k_{1}}_{\ph}(x)&=\frac1h\sum_{j=0}^{\iota(x)-1} \Gru_j \calG^{k_1}_{\iota(x-(j-1)h)-2}-\frac{\barlambda }h\Gru_{\iota(x)} \calG^{k_1}_{0}\\
&=\frac1h\sum_{j=0}^{\iota(x)-1} \Gru_j \calG^{k_1}_{\iota(x)-1-j}-\frac{\barlambda }h\Gru_{\iota(x)} \calG^{k_1}_{0}\\
&=1-\frac{\barlambda }h\Gru_{\iota(x)} \calG^{k_1}_{0}.
\end{align*}
To prove \eqref{eq:partialphi-1thetah1+} we compute 
\begin{align*}
 \Conelhzero \vartheta^{k_{1}}_{\ph}(x)&=\frac1h\sum_{j=0}^{\iota(x)-2} \Gruone_j \calG^{k_1}_{\iota(x-jh)-2}-\frac{\barlambda }h\Gruone_{\iota(x)-1} \calG^{k_1}_{0}\\
&=\frac1h\sum_{j=0}^{\iota(x)-2} \Gruone_j \calG^{k_1}_{\iota(x)-2-j}-\frac{\barlambda }h\Gruone_{\iota(x)-1} \calG^{k_1}_{0}\\
&=(\iota(x)-1)h-\frac{\barlambda }h\Gruone_{\iota(x)-1} \calG^{k_1}_{0}.
\end{align*}
To prove \eqref{eq:partialphithetah-1+} we compute 
\begin{align*}
\Clh \vartheta^{k_{-1}}_{\ph}(x)&=\frac{(1-\theta)}h\sum_{j=0}^{\iota(x)} \Gru_j \calG^{k_{-1}}_{\iota(x-(j-1)h)-2}+\frac{\theta}h\sum_{j=0}^{\iota(x)} \Gru_j \calG^{k_{-1}}_{\iota(x-(j-1)h)-1}\\
&=\frac{(1-\theta)}h\sum_{j=0}^{\iota(x)-1} \Gru_j \calG^{k_{-1}}_{\iota(x)-1-j}+\frac{\theta}h\sum_{j=0}^{\iota(x)} \Gru_j \calG^{k_{-1}}_{\iota(x)-j}\\
&=0.
\end{align*}
To prove the first identity in \eqref{eq:diffvarth} we compute 
\begin{align*} 
\vartheta^{k_{0}}_{\mh}(x)-\vartheta^{k_{0}}_{\mh}(x-h)&=\frac1h\left(\lambda \calG^{k_0}_{\iota(-x)-2}+\barlambda  \calG^{k_0}_{\iota(-x)-1}-\lambda \calG^{k_0}_{\iota(-x+h)-2}-\barlambda  \calG^{k_0}_{\iota(-x+h)-1}\right)\\
 &=-\frac1h\left(\lambda( \calG^{k_0}_{\iota(-x)-1}-\calG^{k_0}_{\iota(-x)-2})+\barlambda ( \calG^{k_0}_{\iota(-x)}-\calG^{k_0}_{\iota(-x)-1})\right)\\ 
 &=-\left(\lambda  \calG^{k_{-1}}_{\iota(-x)-1}+\barlambda  \calG^{k_{-1}}_{\iota(-x)}\right)\\ 
 &=-\left(  \calG^{k_{-1}}_{\iota(-x)-1}+\barlambda  (\calG^{k_{-1}}_{\iota(-x)}- \calG^{k_{-1}}_{\iota(-x)-1})\right)\\ 
 &=- \calG^{k_{-1}}_{\iota(-x)-1} -\barlambda h \calG^{k_{-2}}_{\iota(-x)},
\end{align*}
where we used \eqref{eq:phi+1tophi}, and the proof of the remaining identity is similar and omitted.
\end{proof}

\section{Proofs of the theorems of Section \ref{sec:caseC}}\label{app:caseC}

\begin{table}
\centering
\vline
\begin{tabular}{l|l|l|}
  \hline
  \multicolumn{3}{c}{$X = C_0(\Omega)$ and $g \in C_c^\infty(\Omega), \; g-g(1) \in C_c[-1,1)$}  
	\vline \\
	\hline
	$(\Gen^-, \BC )$ & $f \in \mathcal{C}(\Gen^-, \BC) $ & $f_h \in C_0(\Omega)$  \\
	\hline
	$(\Cr , \mathrm{DD})$ & $\Ir g - \frac{\Ir g(-1)}{k^-_{0}(-1)} k^-_{0}$ & $ \Ir g - \frac{\Ir g(-1)}{ \vartheta^{k_{0}}_{\mh}(-1)} \vartheta^{k_{0}}_{\mh}$  \\
  \hline
 $(\Cr , \mathrm{DN})$ &  $\begin{matrix}\Ir g  -\Ir g(-1)\end{matrix}$ & $\begin{matrix}\Ir [g-g(1)]+ \frac{g(1)k_1^-(-1)}{ \vartheta^{k_1}_{\mh}(-1)} \vartheta^{k_1}_{\mh} -\Ir g(-1)\end{matrix}$\\
  \hline
  	$(\Cr , \mathrm{ND})$ &$ \Ir g  -I_-g(-1)  k^-_0$ & $ \begin{matrix} \Ir g  -b_h  \vartheta^{k_0}_{\mh}+e_h  \end{matrix} $\\
  \hline
	$(\Cr , \mathrm{NN})$ 
	&
	  $\Ir  g-\frac{I_-g(-1)}{2}k^-_1 +c$ 
	  & 
	  $ \Ir [g-g(1)] +b_h\vartheta^{k_{1}}_{\mh}+e_h +c  $ \\
  \hline
		$(\Cr , \mathrm{N^*D})$ & $\Ir  g + \frac{ [\Ir  g]'}{k_{-1}^-(-1)} k_0^-$ & $
	\Ir  g +\frac{[\Ir  g]'(-1)}{\calG^{k_{-1}}_{n}/h}\vartheta^{k_{0}}_{\mh}$\\
  \hline
		$(\Cr , \mathrm{N^*N})$ &$ \Ir  g + \frac{ [\Ir  g]'(-1)}{k_{0}^-(-1)} k_1^- + c$ & $	\Ir  [g-g(1)] +\left(g(1)+\frac{[\Ir  g]'(-1)}{\calG^{k_{0}}_{n-1}/h}\right)\vartheta_{\mh}^{k_1}+c $\\
  \hline
\end{tabular}
\caption{\label{tab:fhexplicitC}
Functions $f_h \in C_0(\Omega)$ for the approximation theorems of Section \ref{sec:caseC}, where we respectively refer to Theorems \ref{thm:conv_C_ND} and \ref{thm:conv_C_NN} for the details of $f_h$ in the cases $(\Cr , \mathrm{ND})$ and  $(\Cr , \mathrm{NN})$.}

\end{table} 
\begin{proof}[of Theorem \ref{thm:conv_C_DD}] Recall that $h(n+1)=2$.
By Lemma \ref{lem:varthek0} and the choice of $b_h$, $f_h\in C_0(-1,1) $ and so does $G^{\mathrm{DD}}_{\mh}f_h$ by Lemma  \ref{Ghdissipative}. 
Also, by Lemma \ref{lem:varthek0},   $f_h\to f$ in uniformly on $[-1,1]$. It remains to check the convergence of $G^{\mathrm{DD}}_{\mh}f_h$.
For the  convergence on $(-1+h,1-h]$, observe that by Lemma \ref{lem:varthetaDD}, $G^{\mathrm{DD}}_{\mh}f_h=G^{\mathrm{DD}}_{\mh}\Ir g=\Crh \Ir g$, so that using Corollary \ref{cor:L94}
\[
\supnorm{G^{\mathrm{DD}}_{\mh}f_h-g}{(-1+h,-h+1]}\le \|\Crh \Ir g-g\|_{C[-1,1]}\to 0, \quad \text{as }h\to 0.
\]
Meanwhile,  on $(1-h,1]$, $G^{\mathrm{DD}}_{\mh}f_h=G^{\mathrm{DD}}_{\mh}\Ir g=0=g$ for all small $h$, as a consequence of  Lemma \ref{lem:varthetaDD} and $g,\Ir g\in C_c[-1,1)$. It remains to confirm the uniform convergence on $[-1,-1+h]$ (i.e. $\iota(-x)=n+1$). Using the canonical extensions to $[-1-h,-1)$ (Remark \ref{rmk:canon}), for $\iota(-x)=n+1$,
\begin{align}\nonumber
G^{\mathrm{DD}}_{\mh}f_h(x)&=\Gru_{1} f_h(x)+D^l(\lambda)\sum_{j=2}^{n} \Gru_{j} f_h(x+(j-1)h) \\  \nonumber
&=D^l(\lambda)\sum_{j=0}^{n+1} \Gru_{j} f_h(x+(j-1)h)-D^l(\lambda)\Gru_{n+1} f_h(x+nh)\\ \nonumber
&\quad +(1-D^l(\lambda) )\Gru_{1} f_h(x)-D^l(\lambda) \Gru_{0} f_h(x- h)\\ \nonumber
&=D^l(\lambda)\Crh \Ir g(x)-D^l(\lambda)\Gru_{n+1} f_h(x+nh)\\
&\quad + (1-D^l(\lambda) )\Gru_{1} f_h(x)-D^l(\lambda) \Gru_{0} f_h(x-h), \label{eq:termsDD}
\end{align}
where we used in the last identity  equation \eqref{eq:partialphithetah0} with Remark \ref{rmk:canon_extra}.
The first term in \eqref{eq:termsDD}  converges uniformly to $g$ on $[-1,-1+h]$ by using Corollary \ref{cor:L94} and $g(-1)=0$ with
\begin{align*}
 |D^l(\lambda)\Crh \Ir g(x)-g(x)|&\le |D^l(\lambda)\Crh \Ir g(x)-D^l(\lambda)g(x)|+|D^l(\lambda)g(x)-g(x)|\\
&\le\|\Crh \Ir g-g\|_{C[-1,1]}+2\|g\|_{C[-1,-1+h]}.
\end{align*} 
 For the second term in \eqref{eq:termsDD} observe that $f_h(x+nh)=b_h\vartheta^{k_{0}}_{\mh}(x+nh)=b_hh^{-1}\lambda\calG^{k_0}_0$ as $\Ir g(x+nh)=0$ for all $h$ small, and so,  as $h\to 0$
 \[
 |D^l(\lambda)\Gru_{n+1} f_h(x+nh)|\le \sup\{ |b_h|:h\in(0,1]\}\,\Gru_{n+1}\frac{\calG^{k_0}_0}{h}\to 0,
 \]
because of \eqref{eq:GClimphi} and $\calG^{k_0}_0/h=(h\sym(1/h))^{-1}\to 0$ by \eqref{eq:convhomega}. To conclude, as $g(-1)=0$, it is enough to show that the last two terms in \eqref{eq:termsDD} vanish uniformly on $[-1,-1+h]$. We follow the strategy of \cite[Page 142]{H14}. It is immediate from the definitions in \eqref{eq:GCdef} that
\[
\Gru_{0}\calG^{k_0}_{0}=1,\quad \Gru_{1}\calG^{k_0}_{1}=-\left( \frac{\sym'(1/h)}{h\sym(1/h)}\right)^2,\quad \Gru_{0}\calG^{k_0}_{1}=  \frac{\sym'(1/h)}{h\sym(1/h)},\quad 
\Gru_{1}\calG^{k_0}_{0}=-\Gru_{0}\calG^{k_0}_{1},
\] 
 so that we can rewrite and  then    bound the (absolute value of) last two terms in \eqref{eq:termsDD} as
\begin{align}\nonumber
&\,\Big|\frac{(\lambda \Gru_{0}\calG^{k_0}_{1}+\barlambda -\lambda \Gru_{0}\calG^{k_0}_{1})\Gru_{1} f_h(x)   +\lambda \Gru_{1} f_h(x-h)}{\lambda \Gru_{0}\calG^{k_0}_{1}+\barlambda   }\Big|\\ \nonumber
=&\,\Big|\frac{ \Gru_{1}(\barlambda f_h(x)+\lambda f_h(x-h))}{\lambda \Gru_{0}\calG^{k_0}_{1}+\barlambda   }\Big|\\ \nonumber
=&\,\Big|(\barlambda f_h(x)+\lambda f_h(x-h))\frac{ -\sym'(1/h)\sym(1/h) }{\lambda \sym'(1/h)+\barlambda h\sym(1/h)  } \Big|\\ \label{eq:bound_tricky_term}
\le&\,|\barlambda f_h(x)+\lambda f_h(x-h)|\frac{ \sym (1/h) }{\lambda   } .
\end{align}
where we used $\sym(1/h),\sym'(1/h)>0$ in the   inequality. If $x=-1$, then $D^l(\lambda(x)) =f_h(x)=0$. If $x\in(-1,-1+h]$, using the canonical extension of $g$ to $[-1-h,-1)$,  $x=\lambda h-1$ and Taylor expansions around $-1$,
\begin{align*}
f_h(x)&=\Ir  g(x)+b_h\vartheta^{k_{0}}_{\mh}(x)\\
&=\Ir  g(-1)+ \lambda h[\Ir  g]'(-1)+ O((\lambda h)^2)+b_h\vartheta^{k_{0}}_{\mh}(x)\\
&=R_1(\lambda,h)+ \lambda h[\Ir  g]'(-1)+\lambda O(h^2),\\
f_h(x-h)&=\Ir  g(x-h)+b_h\vartheta^{k_{0}}_{\mh}(x-h)\\
&=\Ir  g(-1)+ (\lambda-1) h[\Ir  g]'(-1)+  O(h^2)+b_h\vartheta^{k_{0}}_{\mh}(x-h)\\
&=R_2(\lambda,h)- \barlambda  h[\Ir  g]'(-1)+ O(h^2).
\end{align*}
Recalling that $b_h=-\Ir  g(-1)/\vartheta^{k_{0}}_{\mh}(-1)$ and
\[
\vartheta^{k_{0}}_{\mh}(-1)=\frac{1}{h}\calG^{k_{0}}_{n}, \quad 
\vartheta^{k_{0}}_{\mh}(x)=\frac{1}{h}(\lambda\calG^{k_{0}}_{n-1}+\barlambda \calG^{k_{0}}_{n}), 
\]
observe that 
\begin{align*}
R_1(\lambda,h)&=\Ir  g(-1)\left(1-\frac{\vartheta^{k_{0}}_{\mh}(x)}{\vartheta^{k_{0}}_{\mh}(-1)}\right)\\
&=\Ir  g(-1)\left(1-\frac{\lambda\calG^{k_{0}}_{n-1}+\barlambda \calG^{k_{0}}_{n}}{\calG^{k_{0}}_{n}}\right)\\
&=\Ir  g(-1)\frac\lambda{\calG^{k_{0}}_{n}}\left(\calG^{k_{0}}_{n}-\calG^{k_{0}}_{n-1}\right),
\end{align*}
and similarly (recalling the canonical extension of $\vartheta^{k_{0}}_{\mh}$)
\begin{align*}
R_2(\lambda,h)&=\Ir  g(-1)\left(1-\frac{\vartheta^{k_{0}}_{\mh}(x-h)}{\vartheta^{k_{0}}_{\mh}(-1)}\right)\\
&=\Ir  g(-1)\left(1-\frac{\lambda\calG^{k_{0}}_{n}+\barlambda \calG^{k_{0}}_{n+1}}{\calG^{k_{0}}_{n}}\right)\\
&=\Ir  g(-1)\frac{\barlambda }{\calG^{k_{0}}_{n}}\left(\calG^{k_{0}}_{n}-\calG^{k_{0}}_{n+1}\right),
\end{align*}
so that 
\begin{align*}
\barlambda f_h(x)+\lambda f_h(x-h)&=\barlambda R_1(\lambda,h)+\lambda R_2(\lambda,h)+\barlambda  \lambda O(h^2)+\lambda O(h^2)
\\
&=\barlambda \lambda \Ir  g(-1)\left(\frac{2\calG^{k_{0}}_{n}-\calG^{k_{0}}_{n+1}-\calG^{k_{0}}_{n-1}}{\calG^{k_{0}}_{n}}\right)+\lambda O(h^2).
\end{align*}
If the term in brackets is $O(h^2)$, we are done as we bound the last two terms in \eqref{eq:termsDD}, using \eqref{eq:bound_tricky_term}, by $O(h^2\sym(1/h))$ as $h\to0$, which vanishes by \eqref{eq:convhomega}.
Indeed, by  \eqref{eq:phi+1tophi}, $\calG_m^{k_i}=\frac{1}{h}(\calG_m^{k_{i+1}}-\calG_{m-1}^{k_{i+1}})$ for $i\in\{-1,-2\}$ and $m\in\mathbb N$, and so by \eqref{eq:GClimk-2k0}
\begin{align*}
\frac{2\calG^{k_{0}}_{n}-\calG^{k_{0}}_{n+1}-\calG^{k_{0}}_{n-1}}{\calG^{k_{0}}_{n}}&= -h\frac{\calG^{k_{-1}}_{n+1}-\calG^{k_{-1}}_{n}}{\calG^{k_{0}}_{n}}= -h^2\frac{\calG^{k_{-2}}_{n+1}}{\calG_n^{k_{0}}}=O(h^2).
\end{align*}

\end{proof}

\begin{remark}\label{rmk:convspeedC_DD}
In the proof of Theorem \ref{thm:conv_C_DD} we proved that $\|G^{\mathrm{DD}}_{\mh}f_h-g\|_{C[-1,1]}=O(h^2\sym(1/h))$, as all   other terms are $O(h)$. To see this observe that in the proof we have  that  $\|g\|_{C[-1,-1+h]}=0$ for all small $h$, and that by \ref{H1} we can conclude that $\calG^{k_0}_0\Gru_{n+1}/h= O(1/\sym(1/h)) $, as a consequence of $\Gru_{n+1}/h\to (\phi(2-)+\phi(2+))/2$ for the non-increasing density of $\phi$  \cite[Corollary VII.3c.1]{MR0005923}.
\end{remark}

\begin{proof}[of Theorem \ref{thm:conv_C_DN}] Recall that $h(n+1)=2$. For each $h>0$, by Lemmata \ref{lem:varthek0} and \ref{Ghdissipative} $f_h,\,G^{\mathrm{DN}}_{\mh}f_h\in C_0(-1,1]$. Observe that $b_h\vartheta^{k_{1}}_{\mh}\to g(-1)k_1^-$ as $h\to0$ in $C_0(-1,1]$, by Lemma \ref{lem:varthek0}. It remains to show the convergence for $G^{\mathrm{DN}}_{\mh}f_h$. If $\iota(-x)=1$ (i.e., $x\in (1-h,1]$), then for all small $h$   
\begin{align*}
G^{\mathrm{DN}}_{\mh} f_h(x) &=\Gru_{0} ( \Ir [g-g(1)] (x-h) -\Ir [g-g(1)] (x)) + b_h\frac1h\Gru_{0}  ( \barlambda  \calG^{k_1}_0 +\lambda \calG^{k_1}_0)=b_h,
\end{align*}
where  we used $\Gru_{0}\calG^{k_1}_0=h$ and $\Ir [g-g(1)]=0$  close to  $1$,
 and so, as $h\to 0$,
 \begin{align*} \supnorm{G^{\mathrm{DN}}_{\mh} f_h(x) -g}{(1-h,1]} 
 &\le   |b_h-g(1)|+\|g-g(1)\|_{C[1-h,1]}\to 0.
\end{align*}
 If    $2\le \iota(-x)\le n$ (i.e.,    $ x\in  (-1+h,1-h]$), $G^{\mathrm{DN}}_{\mh}1(x)=0$ and for any $w\in C[-1,1]$
\begin{align*}
G^{\mathrm{DN}}_{\mh}w(x) &=\barlambda \left[\sum_{j=0}^{\iota(-x)-1} \Gru_{j} w (x+(j-1)h) +\left(-\sum_{j=0}^{\iota(-x)-1}\Gru_j\right)w (x+(\iota(-x)-1)h)\right]\\
&\quad+\lambda\left[\sum_{j=0}^{\iota(-x)-2} \Gru_{j} w (x+(j-1)h) +\left(-\sum_{j=0}^{\iota(-x)-2}\Gru_j\right)w (x+(\iota(-x)-2)h)\right]\\
&=\barlambda \left[\Crh w (x)-\Gru_{\iota(-x)} w (x+(\iota(-x)-1)h) +\left(-\sum_{j=0}^{\iota(-x)-1}\Gru_j\right)w (x+(\iota(-x)-1)h)\right]\\
&\quad+\lambda\left[\Crh w (x)- \Gru_{\iota(-x)} w (x+(\iota(-x)-1)h)  +\left(-\sum_{j=0}^{\iota(-x)-1}\Gru_j\right)w (x+(\iota(-x)-2)h)\right]\\
&=\Crh w (x)-\Gru_{\iota(-x)} w (x+(\iota(-x)-1)h)\\
&\quad-\left(\barlambda w (x+(\iota(-x)-1)h)+\lambda w (x+(\iota(-x)-2)h)\right)  \sum_{j=0}^{\iota(-x)-1}\Gru_j.
\end{align*}
Then
\begin{align*}
G^{\mathrm{DN}}_{\mh}\vartheta^{k_{1}}_{\mh}(x) =1,
\end{align*} 
by applying \eqref{eq:partialphithetah1}  and the definition of $\vartheta^{k_{1}}_{\mh}$ on the last two grid intervals, namely $\vartheta^{k_{1}}_{\mh}(x+(\iota(-x)-1)h)=-h^{-1}\lambda\calG^{k_1}_0$ and $\vartheta^{k_{1}}_{\mh}(x+(\iota(-x)-2)h)=h^{-1}\barlambda \calG^{k_1}_0$.
Looking   at
\begin{align*}
G^{\mathrm{DN}}_{\mh} \Ir [g-g(1)] (x)&=\Crh \Ir [g-g(1)] (x) \\
&\quad-\left(\lambda \Gru_{\iota(-x)}+\barlambda \sum_{j=0}^{\iota(-x)}\Gru_j  \right) \Ir [g-g(1)]  (x+(\iota(-x)-1)h)\\
&\quad-\lambda  \left(\sum_{j=0}^{\iota(-x)-1}\Gru_j\right)\Ir [g-g(1)] (x+(\iota(-x)-2)h), 
\end{align*}
we see that   by Corollary \ref{cor:L94} the first term converges to $g-g(1)$ uniformly for $x\in(-1+h,1-h]$, and the last to terms are 0 for all small $h$ due $\Ir [g-g(1)]=0$  close to  $1$. Hence we showed that $\supnorm{G^{\mathrm{DN}}_{\mh}f_h-g}{(-1+h,1-h]}\to 0$ as $h\to 0$.

It remains to check uniform convergence for $\iota(-x)=n+1$ (i.e., $x\in [-1,-1+h]$). Exploiting the canonical extensions (Remark \ref{rmk:canon}) and  $\Ir [g-g(1)]=0$  close to $1$, we see that
\begin{align}\nonumber
G^{\mathrm{DN}}_{\mh}f_h(x) &=\Gru_{1}f_h(x)+D^l(\lambda)\left[\sum_{j=2}^{n-1} \Gru_{j} f_h (x+(j-1)h)  - f_h (x+(n-1)h)\sum_{j=0}^{n-1} \Gru_{j} \right]\\ \nonumber
&=\left[(1-D^l(\lambda))\Gru_{1} f_h (x)-D^l(\lambda)\Gru_{0} f_h (x-h)\right] \\ \nonumber
&\quad -D^l(\lambda)\left[\Gru_{n+1} f_h (x+nh)+f_h (x+(n-1)h) \sum_{j=0}^{n} \Gru_{j}\right]\\ \nonumber
&\quad +D^l(\lambda)\sum_{j=0}^{n+1} \Gru_{j} f_h (x+(j-1)h)\\ \nonumber
&=\left[(1-D^l(\lambda))\Gru_{1} f_h (x)-D^l(\lambda)\Gru_{0} f_h (x-h)\right] \\ \nonumber
&\quad -D^l(\lambda)b_h\left[\Gru_{n+1} \vartheta^{k_1}_{\mh}(x+nh)+\vartheta^{k_1}_{\mh} (x+(n-1)h) \sum_{j=0}^{n} \Gru_{j}\right]\\
&\quad +D^l(\lambda)\left[\Crh  \Ir [g-g(1)](x)+b_h\sum_{j=0}^{n+1} \Gru_{j} \vartheta^{k_1}_{\mh}(x+(j-1)h)\right] .  \label{eq:pieces}
\end{align}
The last term in \eqref{eq:pieces} converges uniformly to $g$ because using \eqref{eq:partialphithetah1} with Remark \ref{rmk:canon_extra}\footnote{The current summation equals the right hand side of \eqref{eq:partialphithetah1} also at $x=-1$.},  for $x\in[-1,-1+h]$ 
\begin{align*}
&\,\left|D^l(\lambda)\Crh  \Ir [g-g(1)](x)+b_h D^l(\lambda)\left(1-\lambda \calG^{k_1}_{0}\frac{\Gru_{n+1}}h\right)  -g(x)\right|\\
\le&\, D^l(\lambda)\left|\Crh  \Ir [g-g(1)](x)-g(x)+ b_h   \right| +D^l(\lambda)\lambda |b_h|\calG^{k_1}_{0}\frac{\Gru_{n+1}}h+ \|g\|_{C[-1,-1+h]}, 
\end{align*}
and as $h\to0$: the first term converges to uniformly to 0 by applying Corollary \ref{cor:L94} and $b_h\to g(1)$;  by  \eqref{eq:GClimphi} we bound the second term up to a constant independent of $h$ times $\calG^{k_1}_0/h=1/\sym(1/h)$, which vanishes by  \eqref{eq:convhomega}; and the third term vanishes because $g(-1)=0$. The second term in \eqref{eq:pieces}, by \eqref{eq:GClimphi} and \eqref{eq:convhomega}, is bounded by
\begin{align*}
\frac{\calG^{k_1}_0}h\left|b_h\right|\Big| \sum_{j=0}^{n+1}\Gru_j\Big|\le \frac{1}{\sym(1/h)} \sup \Big\{\left|b_h\right|\Big| \sum_{j=0}^{n+1}\Gru_{j,h}\Big|:h\in(0,1]\Big\}  \to 0, 
\end{align*} 
as $h\to 0$. For the first term in \eqref{eq:pieces} we proceed  as in the proof of Theorem \ref{thm:conv_C_DD}. For $x=-1$ this term vanishes. For $x\in(-1,-1+h]$, the first term in \eqref{eq:pieces} is bounded by  \eqref{eq:bound_tricky_term}, for the current $f_h$. Again we use the Taylor expansions around $-1$,
\begin{align*}
f_h(x)&=\Ir  [g-g(1)](x)+b_h\vartheta^{k_{1}}_{\mh}(x)-\Ir g(-1)\\
&=\Ir  [g-g(1)](-1)+ \lambda h(\Ir  [g-g(1)])'(-1)+ O((\lambda h)^2)+b_h\vartheta^{k_{1}}_{\mh}(x)-\Ir g(-1)\\
&=R_1(\lambda,h)+ \lambda h(\Ir  [g-g(1)])'(-1)+\lambda O(h^2),\\
f_h(x-h)&=\Ir  [g-g(1)](x-h)+b_h\vartheta^{k_{1}}_{\mh}(x-h)-\Ir g(-1)\\
&=\Ir  [g-g(1)](-1)+ (\lambda-1) h(\Ir  [g-g(1)])'(-1)+  O(h^2)+b_h\vartheta^{k_{1}}_{\mh}(x-h)-\Ir g(-1)\\
&=R_2(\lambda,h)- \barlambda  h(\Ir  [g-g(1)])'(-1)+ O(h^2),
\end{align*}
and recalling that $b_h= g(1)k^-_1(-1)/\vartheta^{k_{1}}_{\mh}(-1)$ and
\[
\vartheta^{k_{1}}_{\mh}(-1)=\frac{1}{h}\calG^{k_{1}}_{n-1}, \quad 
\vartheta^{k_{1}}_{\mh}(x)=\frac{1}{h}(\lambda\calG^{k_{1}}_{n-2}+\barlambda \calG^{k_{1}}_{n-1}), 
\]
we obtain
\begin{align*}
R_1(\lambda,h)&=-g(1)k^-_1(-1)\left(1-\frac{\vartheta^{k_{1}}_{\mh}(x)}{\vartheta^{k_{1}}_{\mh}(-1)}\right)\\
&=-g(1)k^-_1(-1)\left(1-\frac{\lambda\calG^{k_{1}}_{n-2}+\barlambda \calG^{k_{0}}_{n-1}}{\calG^{k_{1}}_{n-1}}\right)\\
&=-g(1)k^-_1(-1)\frac\lambda{\calG^{k_{1}}_{n-1}}\left(\calG^{k_{1}}_{n-1}-\calG^{k_{1}}_{n-2}\right),
\end{align*}
and similarly
\begin{align*}
R_2(\lambda,h)&=-g(1)k^-_1(-1)\left(1-\frac{\vartheta^{k_{1}}_{\mh}(x-h)}{\vartheta^{k_{1}}_{\mh}(-1)}\right)\\
&=-g(1)k^-_1(-1)\left(1-\frac{\lambda\calG^{k_{1}}_{n-1}+\barlambda \calG^{k_{1}}_{n}}{\calG^{k_{1}}_{n-1}}\right)\\
&=-g(1)k^-_1(-1)\frac{\barlambda }{\calG^{k_{1}}_{n-1}}\left(\calG^{k_{1}}_{n-1}-\calG^{k_{1}}_{n}\right),
\end{align*}
so that 
\begin{align*}
\barlambda f_h(x)+\lambda f_h(x-h)&=\barlambda R_1(\lambda,h)+\lambda R_2(\lambda,h)+\barlambda  \lambda O(h^2)+\lambda O(h^2)
\\
&=-\barlambda \lambda g(1)k^-_1(-1)\left(\frac{2\calG^{k_{1}}_{n-1}-\calG^{k_{1}}_{n}-\calG^{k_{1}}_{n-2}}{\calG^{k_{1}}_{n-1}}\right)+\lambda O(h^2).
\end{align*}
And finally, by \eqref{eq:phi+1tophi} and \eqref{eq:GClimk-1k1} (combined with the first part of Lemma \ref{lem:PWthms}),
\begin{align*}
\frac{2\calG^{k_{1}}_{n-1}-\calG^{k_{1}}_{n}-\calG^{k_{1}}_{n-2}}{\calG^{k_{1}}_{n-1}}&=  -h^2\frac{\calG^{k_{-1}}_{n}}{\calG_{n-1}^{k_{1}}}= O(h^2).
\end{align*}

\end{proof}

\begin{remark}\label{rmk:convspeedC_DN}
In the proof of Theorem \ref{thm:conv_C_DN} we proved that $\|G^{\mathrm{DN}}_{\mh}f_h-g\|_{C[-1,1]}=O(h^2\sym(1/h))$, because one can prove that   $|b_h-g(1)|=O(h)$ by standard Post–Widder arguments using the fact that $k_1^+$ is Lipschitz.
\end{remark}

\begin{proof}[of Theorem \ref{thm:conv_C_ND}]   Recall that $h(n+1)=2$. By Lemmata \ref{lem:varthek0} and \ref{Ghdissipative}, $f_h,G^{\mathrm{ND}}_{\mh}f_h\in  C_0[-1,1)$. Observe that $e_h\to 0$ uniformly, so that $f_h\to f$ in $ C_0[-1,1)$ by Lemma \ref{lem:varthek0}. It remains to prove the convergence of $G^{\mathrm{ND}}_{\mh}f_h$.
For $\iota(-x)=1$, using \eqref{eq:partialthetah0} ($G^{\mathrm{ND}}_{\mh}=G^{\mathrm{DD}}_{\mh}$ for $\iota(-x)=1$)  and $\Ir  g=0$ on $(1-2h,1]$ for small $h>0$,
\begin{align*}
G^{\mathrm{ND}}_{\mh}f_h(x)&=-I_- g(-1)G^{\mathrm{ND}}_{\mh}\vartheta^{k_{0}}_{\mh}(x) =0.
\end{align*} 
Observe that for $ 2\le\iota(-x)\le n -1$
\begin{align*}
G^{\mathrm{ND}}_{\mh}f_h(x)&= \sum_{j=0}^{\iota(-x)} \Gru_j f_h(x-(j-1)h)= \Crh \Ir  g(x)\to g(x)
\end{align*}
uniformly for $x\in(-1+2h,1-h]$ as $h\to 0$  by Corollary \ref{cor:L94}, where we   used \eqref{eq:partialphithetah0}. 
For $\iota(-x)=n+1$
\begin{align*}
G^{\mathrm{ND}}_{\mh}f_h(x)&=  \sum_{j=0}^{n-1} \left(-\sum_{i=0}^{j} \Gru_i\right) f_h(x+jh) \\
&= -\frac1h \sum_{j=0}^{n-1} \Gruone_j f_h(x+jh) \\
&= -\frac1h \sum_{j=0}^{n}  \Gruone_j f_h(x+jh) +\frac{1}{h}  \Gruone_n f_h(x+nh) \\ 
&= -\frac1h \left[\sum_{j=0}^{n}  \Gruone_j \Ir g(x+jh)+b_h\right]+\barlambda g(-1)  +o(1),
\end{align*}
using for the last equality:  \eqref{eq:partialphi-1thetah0}, $G^{\mathrm{ND}}_{\mh}e_h(x)=\barlambda  g(-1)$ and
\[
o(1)=\barlambda \left(\sum_{j=0}^n\Gru_j  \right)\frac{\calG^{k_0}_0}{h}=\barlambda \frac{ \Gruone_n}{h}  \frac{\calG^{k_0}_0}{h}=\frac{\Gruone_n}{h}   \vartheta^{k_0}_{\mh}(x+nh),
\]
 by  \eqref{eq:phi+1tophi}, \eqref{eq:GClimphi}   and \eqref{eq:convhomega}. By Lemma \ref{lem:firtorderapprox}
\begin{align*}
 \Conerhzero \Ir g(x)= I_-g(x)+ hF (x)+O(h^2),
\end{align*}
where we   simplified notation by writing $F=F_-[g]$, 
 and using  $x=\lambda h-1$ with the Taylor expansion at $-1$
\begin{align}\label{eq:ibp}
 I_-g(x)-I_-g(-1)
&=O(h^2)+(x+1) [I_-g]'(-1)=O(h^2)-\lambda h g(-1),
 \end{align}
we obtain
\begin{align*}
 -\frac1h\left[\sum_{j=0}^{n}  \Gruone_j \Ir g(x+jh)+b_h\right]&=-\frac1h\left[I_-g(x)-I_-g(-1)+h(F (x)-F (-1))\right]+O(h)\\
 &=\lambda  g(-1)-(F (x)-F (-1))+O(h),
 \end{align*}
which then gives us that for any $x\in[-1,-1+h]$
 \begin{align*}
|G^{\mathrm{ND}}_{\mh}f_h(x)-g(x)|&=| G^{\mathrm{ND}}_{\mh}f_h(x)-g(-1)+O(h)|\\
&= | F (x)-F (-1)+o(1)|\\
&\le \|F -F (-1) \|_{C[-1,-1+h]} +o(1)\to 0,
\end{align*}
as $h\to 0$.
The remaining case $\iota(-x)=n$ (i.e., $x\in(-1+h,-1+2h]$) is treated similarly, as 
\begin{equation}\label{eq:zzz}
\begin{split}
G^{\mathrm{ND}}_{\mh}f_h(x)
&=\lambda \left[\sum_{j=0}^{n-1} \Gru_j f_h(x+(j-1)h) - f_h(x+(n-2)h) \sum_{j=0}^{n-1} \Gru_j\right]\\
&\quad+\barlambda \left[ \sum_{j=0}^{n-1} \left(-\sum_{i=0}^{j} \Gru_i\right) f_h(x+jh) \right],
\end{split}\end{equation}
and the second term in the first squared brackets goes to 0 because $\Ir  g=0$ on $[1-2h,1]$ for all $h$ small and
\[
\left|\vartheta^{k_0}_{\mh}(x+(n-2)h) \sum_{j=0}^{n-1} \Gru_j\right|=\left|\barlambda  \frac{\calG_0^{k_0}}{h} \sum_{j=0}^{n-1} \Gru_j\right|\to 0,
\]
 as $h\to 0$ by   \eqref{eq:GClimphi} and   \eqref{eq:convhomega}. The first term in the first squared brackets in \eqref{eq:zzz} rewrites as
\begin{align*}
\Crh  f_h(x) -\Gru_n f_h(x+(n-1)h)&= \Crh  [\Ir g+b_h\vartheta^{k_{0}}_{\mh}+e_h](x) +o(1)\\
&= \Crh  \Ir g(x)-\barlambda g(-1) +o(1),
\end{align*}
where we used:  $\Gru_n f_h(x+(n-1)h)=o(1)$ (by the same argument as above); identity \eqref{eq:partialphithetah0}; and $\Crh   e_h(x)=-\barlambda g(-1)$.  And so, for $\iota(-x)=n$, using Lemma \ref{lem:firtorderapprox}, $ \Conerhzero e_h=0$ and the identities \eqref{eq:ibp} and \eqref{eq:partialphi-1thetah0},
\begin{align*}
G^{\mathrm{ND}}_{\mh}f_h(x)&=\lambda \Crh  \Ir g (x)+o(1)-\frac{\barlambda }h\left[  \Conerhzero \Ir g(x)+b_h \Conerhzero \vartheta^{k_{0}}_{\mh}(x)+\lambda hg(-1) \right]\\
&=\lambda \Crh  \Ir g (x)+o(1)-\frac{\barlambda }h\left[  I _-g(x)+hF(x)+b_h +\lambda hg(-1) \right]\\
&=\lambda \Crh  \Ir g(x)+o(1)-\frac{\barlambda }h\left[ -(1+\lambda) hg(-1)+\lambda hg(-1) \right]+F(x)-F(-1)\\
&=\lambda \Crh  \Ir g(x)+o(1)+\barlambda g(-1)+F(x)-F(-1),
\end{align*}
thus, recalling that $g(x)=(\lambda+\barlambda )g(-1)+o(1)$, $\|F -F(-1)\|_{C[-1+h,-1+2h]}=o(1)$ and Corollary \ref{cor:L94}, as $h\to0$
\[
\supnorm{G^{\mathrm{ND}}_{\mh}f_h-g}{(-1+h,-1+2h]} \le  \|\Crh  \Ir g-g\|_{C[-1+h,-1+2h]}+o(1) \to 0.
\]
  
\end{proof}

\begin{remark}
In the proof of Theorem \ref{thm:conv_C_ND} we proved that $\|G^{\mathrm{ND}}_{\mh}f_h-g\|_{C[-1,1]}=O(1/[h\sym(1/h)])$, using the fact that $F_-[g]\in  C^1[-1,1]$.
\end{remark}

\begin{proof}[of Theorem \ref{thm:conv_C_NN}] Recall that $h(n+1)=2$. Also, we simplify notation by writing $F=F_-[g-g(1)]$. By Lemmata \ref{lem:varthek0} and \ref{Ghdissipative}, $f_h,G^{\mathrm{NN}}_{\mh}f_h\in  C[-1,1]$.  From $\calG^{k_0}_0=1/\sym(1/h)$ with \eqref{eq:convhomega} and the definition of $d_h$ we immediately deduce that $e_h\to 0$ in $C[-1,1]$, so that by Lemma \ref{lem:varthek0},  $f_h\to f$ as $h\to0$ in $C[-1,1]$. It remains to check the convergence of  $G^{\mathrm{NN}}_{\mh}f_h$. \\
For $\iota(-x)=n+1$, using \eqref{eq:phi+1tophi}, 
\begin{align}\nonumber
G^{\mathrm{NN}}_{\mh}f_h(x)&=- \frac1h \sum_{j=0}^{n-2} \Gruone_j f_h(x+jh)  +\frac1h\sum_{j=0}^{n-2}\Gruone_j f_h(x+(n-1)h)\\ \nonumber
&=  - \frac1h\sum_{j=0}^{n} \Gruone_j  f_h(x+jh)+\frac1h  \Gruone_{n} f_h(x+nh)  \\ \nonumber
&\quad +\frac1h\sum_{j=0}^{n-1}\Gruone_j f_h(x+(n-1)h)\\ \nonumber
&=  -\frac1h\left[ \Conerhzero \Ir  [g-g(1)] (x)+b_h \sum_{j=0}^{n} \Gruone_j  \vartheta^{k_{1}}_{\mh}(x+jh)\right]+\barlambda d_h\\ \nonumber
&\quad + \frac1h  \Gruone_{n} b_h\vartheta^{k_{1}}_{\mh}(x+nh)+ \frac1h\sum_{j=0}^{n-2}\Gruone_{j}  b_h\vartheta^{k_{1}}_{\mh}(x+(n-1)h)\\
\label{eq:firstlineNN}
&=  -\frac1h\left[ \Conerhzero \Ir  [g-g(1)] (x)+b_h \sum_{j=0}^{n} \Gruone_j  \vartheta^{k_{1}}_{\mh}(x+jh)\right]+\barlambda d_h+o(1),
\end{align}
using in the second last identity $\Ir  [g-g(1)]=0$ close to 1,  $G^{\mathrm{NN}}_{\mh}1=0,\, G^{\mathrm{NN}}_{\mh}e_h= \barlambda d_h$, and in the last identity we used the estimate
\begin{align*}
&\quad\left| \frac1h  \Gruone_{n} b_h\vartheta^{k_{1}}_{\mh}(x+nh)+ \frac1h\sum_{j=0}^{n-2}\Gruone_{j}  b_h\vartheta^{k_{1}}_{\mh}(x+(n-1)h)\right|\\ 
&\le \sup\{|b_h|:h\in(0,1]\} \left( \frac{\calG^{k_{1}}_0}h\left|\sum_{i=0}^{n} \Gru_i \right| +\frac{\calG^{k_{1}}_0}{h^2} \left|  \sum_{j=0}^{n-2}\Gruone_{j}\right|\right)=o(1), 
\end{align*} 
as $h\to0$ using $\calG^{k_{1}}_0=h/\sym(1/h)$, \eqref{eq:convhomega} and \eqref{eq:GClimphi}.
To show that  \eqref{eq:firstlineNN} converges uniformly to $g-I_-g(-1)/2$ on $[-1,-1+h]$, we first observe that
 by \eqref{eq:phi+1tophi}, \eqref{eq:convhomega} and \eqref{eq:GClimphi}
\begin{equation}
\frac{\Gruone_{n}}{\sym(1/h)}=\frac{ h}{\sym(1/h)} \sum_{j=0}^n\Gru_{j}=o(h^2),
\label{eq:similar}
\end{equation}
and so, using 
 Lemma \ref{lem:firtorderapprox}, identity \eqref{eq:partialphi-1thetah1}\footnote{Recalling Remark \ref{rmk:coeftheta}, the right hand side of \eqref{eq:partialphi-1thetah1} for $x=-1$ equals  $\sum_{j=0}^{n-1}\Gruone_j   \vartheta^{k_{1}}_{\mh} (-1)$.} and \eqref{eq:ibp} with   $x=\lambda h -1$, \eqref{eq:firstlineNN} rewrites as
\begin{align*}
&\quad\frac{-1}h\left[ \Conerhzero \Ir  [g-g(1)] (x)+b_h\left( n h -\lambda h -\frac{\lambda\Gruone_{n}}{\sym(1/h)}\right)\right] +\barlambda  d_h +o(1)\\
&=-\left[\frac1hI_-[g-g(1)](x) +F(x)+b_hn -b_h\lambda \right] +d_h\barlambda  +o(1) \\
&=-\left[\frac1h\left(I_-g(x)-I_-g(-1)+(x+1)g(1)  \right) +F(x)-F(-1) -b_h\lambda\right] +d_h\barlambda +o(1) \\
&=-\frac1h\left(O(h^2)-\lambda hg(-1)+\lambda hg(1) \right)   +b_h\lambda +d_h\barlambda +o(1) \\
&= \lambda g(-1)-\lambda g(1)   +b_h\lambda +d_h\barlambda  +o(1)\\
&=\lambda g(-1)+\lambda g(1)\left(\frac{n+1}{n}-1\right)   -\lambda \frac{I_-g(-1)}2\frac{n+1}{n}  +d_h\barlambda  +o(1)\\
&=  g(-1)    - \frac{I_-g(-1)}2 +o(1) ,
\end{align*}
which combined with   $g(x)=g(-1) +o(1)$ gives the desired convergence $\|G^{\mathrm{NN}}_{\mh}f_h-g + I_-g(-1)/2\|_{C[-1,-1+h]}\to 0$ as $h\to 0$.\\
 For $\iota(-x)=n$ we have
\begin{align}\nonumber
G^{\mathrm{NN}}_{\mh}f_h(x)&= \barlambda \left[- \sum_{j=0}^{n-2} \frac1h  \Gruone_{j} f_h(x+jh)  + f_h(x+(n-1)h )\sum_{j=0}^{n-2}\frac1h\Gruone_{j}\right]\\ \nonumber
&\quad +\lambda\left[ \sum_{j=0}^{n-2}  \Gru_j f_h(x+(j-1)h)  -  f_h(x+(n-2)h)\sum_{j=0}^{n-2}\Gru_j\right] \\ \nonumber
&=  -\frac{\barlambda }h\left[ \Conerhzero \Ir  [g-g(1)] (x)+b_h  \Conerhzero \vartheta^{k_{1}}_{\mh} (x)\right] \\ \nonumber
&\quad+ \lambda\left[\Crh \Ir  [g-g(1)] (x)+b_h \Crh \vartheta^{k_{1}}_{\mh} (x)+\Crh e_h (x)\right]+o(1)\\ 
 \label{eq:firstlineNN2}
&=  -\frac{\barlambda }h\left[ \Conerhzero \Ir  [g-g(1)] (x)+b_h  \Conerhzero \vartheta^{k_{1}}_{\mh} (x)+\lambda h d_h\right]\\ \nonumber
&\quad+ \lambda\left[\Crh \Ir  [g-g(1)] (x)+b_h\right]+o(1)
\end{align}
where we used $\Crh e_h =-\barlambda d_h$ and the $o(1)$ term is obtained by bounding terms close to $1$ in a similar fashion as for the  $o(1)$ term of \eqref{eq:firstlineNN}. The second line in \eqref{eq:firstlineNN2} converges uniformly to $\lambda (g-I_-g(-1)/2)$ by Corollary \ref{cor:L94}. It is then enough to show that the first line in  \eqref{eq:firstlineNN2}  rewrites as $\barlambda [g(-1)-I_-g(-1)/2]+o(1)$. Indeed, by the same strategy   for  \eqref{eq:firstlineNN},  using  Lemma \ref{lem:firtorderapprox} and identity \eqref{eq:partialphi-1thetah1} with  $ \calG^{k_1}_{0}\Gruone_{n-1}/h =o(h^2)$, and  \eqref{eq:ibp} with $x+1=(\lambda+1)h$,
\begin{align*}
&\quad -\frac{\barlambda }h\left[ \Conerhzero \Ir  [g-g(1)] (x)+b_h (n-2+\barlambda )h +\lambda hd_h\right]+o(1)\\ 
&= -\frac{\barlambda }h\left[ \Conerhzero \Ir  [g-g(1)] (x)+b_h hn -b_h(\lambda+1)h +\lambda h d_h \right]+o(1)\\ 
&= -\frac{\barlambda }h\left[(1+\lambda) h( g(1)- g(-1))-b_h(\lambda+1)h +\lambda h d_h \right]+o(1)\\ 
&= -\barlambda \left[(1+\lambda) ( g(1)- g(-1))-\left[g(1) -\frac{I_-g(-1)}2 \right]\frac{n+1}{n}(\lambda+1) +\lambda  d_h \right]+o(1)\\ 
&= -\barlambda \left[-(1+\lambda)\left( g(-1)-\frac{I_-g(-1)}2 \right) +\lambda  d_h \right]+o(1) \\ 
&= -\barlambda \left[-(1+\lambda)\left( g(-1)-\frac{I_-g(-1)}2 \right) +\lambda \left( g(-1) -\frac{I_-g(-1)}2 \right)\right]+o(1).
\end{align*}
 For $2\le\iota(-x)\le n-1$, using $\Ir [g-g(1)]=0$ close to 1 and combining \eqref{eq:partialphithetah1} with the definition of $\vartheta^{k_1}_{\mh}$ in the second identity, we obtain 
\begin{align*}
\Gen^{\mathrm{NN}}_{\mh}f_h(x)&=  \sum_{j=0}^{\iota(-x)-2} \Gru_{j} \Ir [g-g(1)](x+(j-1)h) +b_h \sum_{j=0}^{\iota(-x)-2} \Gru_{j} \vartheta^{k_1}_{\mh}(x+(j-1)h) \\
&\quad +b_h\vartheta^{k_1}_{\mh}(x+(\iota(-x)-2)h)\left(\barlambda  \Gru_{\iota(-x)-1}  - \lambda \sum_{j=0}^{\iota(-x)-2}\Gru_{j}\right)\\
& \quad-b_h\barlambda  \vartheta^{k_1}_{\mh}(x+(\iota(-x)-1)h) \sum_{j=0}^{\iota(-x)-1}\Gru_{j}\\
&=   \Crh \Ir [g-g(1)](x) +b_h-\frac{b_h}h  \barlambda \calG^{k_1}_0\Gru_{\iota(-x)-1} \\
&\quad +\frac{b_h}h \barlambda \calG^{k_1}_0\left(\Gru_{\iota(-x)-1}   - \lambda \sum_{j=0}^{\iota(-x)-1}\Gru_{j}\right)+\frac{b_h}h\barlambda  \lambda \calG^{k_1}_0 \sum_{j=0}^{\iota(-x)-1}\Gru_{j}\\
&=   \Crh \Ir [g-g(1)](x) +b_h,
\end{align*}
which converges  to $g-I_-g(-1)/2$ as $h\to 0$ uniformly on $(-1+2h,1-h]$, by Corollary \ref{cor:L94}. For $\iota(-x)=1$ it is easy to check that $G^{\mathrm{NN}}_{\mh}f_h(x)=b_h$, and we are done.

\end{proof}

\begin{remark}
In the proof of Theorem \ref{thm:conv_C_NN} we proved that $$\|G^{\mathrm{NN}}_{\mh}f_h-(g-I_-g(-1)/2)\|_{C[-1,1]}=O(1/[h\sym(1/h)]),$$ using $\vartheta^{k_1}_{\mh}(-1)-k^1_-(-1)=O(h)$ by the same observation in Remark \ref{rmk:convspeedC_DN}.
\end{remark}

\begin{proof}[of Theorem \ref{thm:conv_C_N*D}] Recall that $h(n+1)=2$. By Lemmata \ref{lem:varthek0} and \ref{Ghdissipative},  $f_h,G^{\mathrm{N^*D}}_{\mh}f_h\in  C_0[-1,1)$. Note that by Lemma \ref{lem:PWthms}, $b_h\to [\Ir  g]'(-1)/k^-_{-1}(-1)$ as $h\to0$. Then by Lemma \ref{lem:varthek0} we obtain $f_h\to f$ in $C_0[-1,1)$. It remains to prove the convergence of $G^{\mathrm{N^*D}}_{\mh}f_h$. For $\iota(-x)=n+1$, using the canonical extensions for $\vartheta^{k_{0}}_{\mh}$ and $g$, that $g=0$ close to 1 and  \eqref{eq:partialphithetah0} with Remark \ref{rmk:canon_extra}, 
\begin{align*} 
G^{\mathrm{N^*D}}_{\mh}f_h(x)&=\Gru_{0}f_h(x)+\sum_{j=1}^{n}\Gru_{j}f_h(x+(j-1)h)\\
&=\Gru_{0}(f_h(x)-f_h(x-h))+\sum_{j=0}^{n+1}\Gru_{j}f_h(x+(j-1)h)- \Gru_{n+1}f_h(x+nh)\\
&=\Gru_{0}(f_h(x)-f_h(x-h))+\Crh \Ir  g(x) +o(1),
\end{align*}
where we used $- \Gru_{n+1}b_h\barlambda \calG^{k_0}_0/h=o(1)$ by \eqref{eq:GClimphi} and \eqref{eq:convhomega}. 
By \eqref{eq:diffvarth}  and \eqref{eq:GClimk-2k0} 
\begin{align*}
\vartheta^{k_{0}}_{\mh}(x)-\vartheta^{k_{0}}_{\mh}(x-h)&=- \calG^{k_{-1}}_{n} -\barlambda h^2\frac{ \calG^{k_{-2}}_{n+1}}{h}=- \calG^{k_{-1}}_{n} +O(h^2),
\end{align*}
and  by Taylor's formula and $x=\lambda h -1$
\[
\Ir  g(x)-\Ir  g(x-h)
=h[\Ir  g]'(x)+O(h^2)\quad\text{and}\quad [\Ir  g]'(x)-[\Ir  g]'(-1)=O(h),
\]
we obtain
\begin{align*}
f_h(x)-f_h(x-h)&= h[\Ir  g]'(x)-b_h\calG^{k_{-1}}_{n}+O(h^2)=O(h^2),
\end{align*}
and thus $\Gru_{0}(f_h(x)-f_h(x-h))=o(1)$ by \eqref{eq:convhomega}. Therefore Corollary \ref{cor:L94} proves that as $h\to 0$
$$\|G^{\mathrm{N^*D}}_{\mh}f_h-g\|_{C[-1,-1+h]}\le \|\Crh \Ir g-g\|_{C[-1,-1+h]} +o(1)\to 0. $$
For $\iota(-x)=n$,
\begin{align*}
G^{\mathrm{N^*D}}_{\mh}f_h(x)&= \sum_{j=0}^{n}\Gru_{j}f_h(x+(j-1)h)+\barlambda \Gru_{0}(f_h(x)-f(x-h))\\
&=\Crh \Ir  g(x) +b_h\Crh \vartheta^{k_{0}}_{\mh}(x)+\barlambda \Gru_{0}(f_h(x)-f(x-h))\\
&= \Crh \Ir  g(x)  +o(1),
\end{align*}
where in the last identity we used  \eqref{eq:partialphithetah0}, and similarly as above we estimated 
\begin{align*}
f_h(x)-f_h(x-h)&= h[\Ir  g]'(x)-b_h\calG^{k_{-1}}_{n-1}+O(h^2)\\
&= h\left([\Ir  g]'(x)-[\Ir  g]'(-1)\frac{\calG^{k_{-1}}_{n-1}}{\calG^{k_{-1}}_{n}}\right)+O(h^2)\\
&= h\left([\Ir  g]'(x)-[\Ir  g]'(-1)\right)+h[\Ir  g]'(-1)\left(1- \frac{\calG^{k_{-1}}_{n-1}}{\calG^{k_{-1}}_{n}}\right)+O(h^2)\\
&= h^2[\Ir  g]'(-1)\left(\frac{\calG^{k_{-2}}_{n}}{\calG^{k_{-1}}_{n}}\right)+O(h^2)= O(h^2),
\end{align*}
using \eqref{eq:phi+1tophi} in the second last identity.
For $2\le\iota(-x)\le n-1$ we obtain the required convergence using \eqref{eq:partialphithetah0}, $g=0$ close to 1 and Corollary \ref{cor:L94}. For $\iota(-x)=1$ we can apply  \eqref{eq:partialthetah0}  observing that $G^{\mathrm{N^*D}}_{\mh}=G^{\mathrm{DD}}_{\mh}$ on $(1-h,1]$, and we are done.

\end{proof}

\begin{remark} 
In the proof of Theorem \ref{thm:conv_C_N*D} we proved that $\|G^{\mathrm{N^*D}}_{\mh}f_h-g\|_{C[-1,1]}=O(h^2\sym(1/h))$, applying the same observation as in Remark \ref{rmk:convspeedC_DD}. 
\end{remark}

\begin{proof}[of Theorem \ref{thm:conv_C_N*N}] Recall that $h(n+1)=2$. By Lemmata \ref{lem:varthek0} and \ref{Ghdissipative},  $f_h,G^{\mathrm{N^*N}}_{\mh}f_h\in  C[-1,1]$.
Note that by Lemma \ref{lem:PWthms}, $b_h\to g(1)+[\Ir  g]'(-1)/k^-_{0}(-1)$, so that with Lemma \ref{lem:varthek0} we obtain $f_h\to f$ in $C[-1,1]$. It remains to prove the convergence of $G^{\mathrm{N^*N}}_{\mh}f_h$. Observe that  $G^{\mathrm{N^*N}}_{\mh}c=0$ on $[-1,1]$.\\
 For $\iota(-x)=n+1$, using the canonical extensions for $\vartheta^{k_{1}}_{\mh}$, $g$ and $c$,
\begin{align*}
G^{\mathrm{N^*N}}_{\mh}f_h(x)&=\Gru_{0}f_h(x)+\sum_{j=1}^{n-1}\Gru_{j}f_h(x+(j-1)h)-f_h(x+(n-1)h)\sum_{j=0}^{n-1}\Gru_{j}\\
&=\Gru_{0}(f_h(x)-f_h(x-h))+\sum_{j=0}^{n+1}\Gru_{j}f_h(x+(j-1)h)\\
&\quad- \Gru_{n}f_h(x+(n-1)h)- \Gru_{n+1}f_h(x+nh)-f_h(x+(n-1)h)\sum_{j=0}^{n-1}\Gru_{j}\\
&=\Gru_{0}(f_h(x)-f_h(x-h))+\Crh \Ir  [g-g(1)](x) +b_h+o(1),
\end{align*}
where in the last identity we used $\Ir  [g-g(1)]=0$ close to 1,   \eqref{eq:GClimphi}, \eqref{eq:convhomega}  and
\[
\sum_{j=0}^{n+1}\Gru_{j}\vartheta^{k_{1}}_{\mh}(x+(j-1)h)=1 - \lambda \frac{\calG^{k_1}_0}h\Gru_{n+1}=1+o(1).
\]
If we show that $(f_h(x)-f_h(x-h))=O(h^2)$ 
the first term is $o(1)$ because $h^2\Gru_{0}= o(1)$ by \eqref{eq:convhomega}, and Corollary \ref{cor:L94} proves the convergence on the interval $[-1,-1+h]$.  So we estimate, using \eqref{eq:diffvarth}, \eqref{eq:convhomega}, Lemma \ref{lem:PWthms}  and Taylor's formula,
\begin{align*}
f_h(x)-f_h(x-h)&=h\left(\Ir  [g-g(1)]\right)'(x)+O(h^2)-b_h\left(\calG^{k_0}_{n-1}+O(h^{2})\right)\\
&=h\left(\left(\Ir  [g-g(1)]\right)'(x)-[\Ir  g]'(-1)	\right)- g(1)\calG^{k_0}_{n-1}+O(h^2)\\
&=h\left([\Ir  g]'(x)-[\Ir  g]'(-1)	\right) +g(1)h\left(k^-_{0}(x)-  \frac{\calG^{k_0}_{n-1}}h\right)+O(h^2)\\
&= g(1)h\left(k^-_{0}(x)-k^-_{0}(-1) +k^-_{0}(-1)- \frac{1}{h}\calG^{k_0}_{n-1}\right)+O(h^2),
\end{align*}
and we conclude with   $k^-_{0}\in C^1[-1,1)$  and  $k^-_{0}(-1)-  \calG^{k_0}_{n-1} /h=O(h)$ by  \eqref{eq:GClimk-2k0Tricky}. \\ 
For $\iota(-x)=n$ 
\begin{align*}
G^{\mathrm{N^*N}}_{\mh}f_h(x)&=\barlambda \left[\Gru_{0}f_h(x)+\sum_{j=1}^{n-1}\Gru_{j}f_h(x+(j-1)h)-f_h(x+(n-1)h)\sum_{j=0}^{n-1}\Gru_{j}\right]\\
&\quad+\lambda\left[\sum_{j=0}^{n-2}\Gru_{j}f_h(x+(j-1)h)-f_h(x+(n-2)h)\sum_{j=0}^{n-2}\Gru_{j}\right]\\ 
&=\Crh f_h(x)  + \barlambda \left[\Gru_{0}(f_h(x)-f_h(x-h)) -f_h(x+(n-1)h)\sum_{j=0}^{n}\Gru_{j}\right]\\
&\quad+\lambda\left[ -f_h(x+(n-1)h)\Gru_{n}-f_h(x+(n-2)h)\sum_{j=0}^{n-1}\Gru_{j}\right]\\
&=\Crh [f_h-c](x) + \barlambda \Gru_{0}(f_h(x)-f_h(x-h))+o(1)\\
&=\Crh [f_h-c](x) +o(1),
\end{align*}
which gives the desired convergence combining Corollary \ref{cor:L94} and \eqref{eq:partialphithetah1}, where: the $o(1)$ term in the third equality  comes from bounding the terms close to $1$ as usual using \eqref{eq:convhomega}, Lemma \ref{lem:PWthms} and $g-g(1)=0$ close to 1; and the $o
(1)$ term in the fourth equality,  similarly as above but with the additional help of \eqref{eq:phi+1tophi}, is due to
\begin{align*}
f_h(x)-f_h(x-h)&=h\left(\Ir  [g-g(1)]\right)'(x)+O(h^2)-b_h(\calG^{k_0}_{n-2}+O(h^{2}))\\
&=h\left(\left(\Ir  [g-g(1)]\right)'(x)-[\Ir  g]'(-1)\frac{\calG^{k_0}_{n-2}}{\calG^{k_0}_{n-1}}	\right)- g(1)\calG^{k_0}_{n-2}+O(h^2)\\
&=h\left([\Ir  g]'(x)-[\Ir  g]'(-1)+[\Ir  g]'(-1)h\frac{\calG^{k_{-1}}_{n-1}}{\calG^{k_0}_{n-1}}	\right)\\
&\quad +hg(1)\left(k^-_{0}(x)-\frac{\calG^{k_0}_{n-2}}h\right)+O(h^2)\\
&= g(1)h\left(k^-_{0}(x)-k^-_{0}(-1) +k^-_{0}(-1)- \frac{1}{h}\calG^{k_0}_{n-2}\right)+O(h^2)=O(h^2).
\end{align*}
For $2\le \iota(-x)\le n-1$, using \eqref{eq:partialphithetah1},
\begin{align*} 
G^{\mathrm{N^*N}}_{\mh}f_h(x)&=\lambda\left[\sum_{j=0}^{\iota(-x)-2}\Gru_{j}f_h(x+(j-1)h)-f_h(x+(\iota(-x)-2)h)\sum_{j=0}^{\iota(-x)-2}\Gru_{j}\right]\\
&\quad+\barlambda \left[\sum_{j=0}^{\iota(-x)-1}\Gru_{j}f_h(x+(j-1)h)-f_h(x+(\iota(-x)-1)h)\sum_{j=0}^{\iota(-x)-1}\Gru_{j}\right]\\ 
&=\Crh \Ir  [g-g(1)](x)+ b_h\left[\Crh  \vartheta^{k_{1}}_{\mh}(x)-\Gru_{\iota(-x)}\vartheta^{k_{1}}_{\mh}(x+(\iota(-x)-1)h)\right]\\
&\quad-b_h\left[\lambda \vartheta^{k_{1}}_{\mh}(x+(\iota(-x)-2)h)+\barlambda \vartheta^{k_{1}}_{\mh}(x+(\iota(-x)-2)h)\right]\sum_{j=0}^{\iota(-x)-1}\Gru_{j}\\
&=\Crh \Ir  [g-g(1)](x)+ b_h\left[1 -\lambda \frac{\calG^{k_1}_0}h\Gru_{\iota(-x)}+\Gru_{\iota(-x)} \lambda \frac{\calG^{k_1}_0}h\right]\\
&\quad-b_h\left[\lambda \barlambda \frac{\calG^{k_1}_0}h-\barlambda \lambda \frac{\calG^{k_1}_0}h\right]\sum_{j=0}^{\iota(-x)-1}\Gru_{j}\\
&=\Crh \Ir  [g-g(1)](x)+ b_h.
\end{align*}
Lastly, for $\iota(-x)=1$ it is easy to check  that
\begin{align*}
G^{\mathrm{N^*N}}_{\mh}f_h(x)&=b_h\Gru_0\left(\vartheta^{k_{1}}_{\mh}(x-h)-\vartheta^{k_{1}}_{\mh}(x)\right) =b_h\Gru_0\left(\barlambda  \frac{\calG^{k_1}_0}h+\lambda \frac{\calG^{k_1}_0}h\right)=b_h.
\end{align*}

\end{proof}


\section{Proofs of the theorems of Section \ref{sec:caseL}}\label{app:caseL}
\begin{table}
\centering
\vline
\begin{tabular}{l|l|l|}
  \hline
  \multicolumn{3}{c}{$X = L^1[-1,1]$ and $g\in C^\infty_c(-1,1)$}  
	\vline \\
	\hline
	 $(\Gen^+, \BC)$ & $f \in \mathcal{C}(\Gen^+, \BC)$ & $f_h \in L^1[-1,1] $\\
	\hline
	$(\Cl , \mathrm{DD})$ & $\Il  g-\frac{\Il  g(1)}{k_{0}^+(1)}k_{0}^+$  &$\Il  g-\frac{\Il  g(1)}{\vartheta^{k_{0}}_{\ph}(1)}\vartheta^{k_{0}}_{\ph}$ \\
  \hline
	$(\Cl , \mathrm{DN})$ &$ \Il  g-I_+g(1)k^+_{0}$  & $ \Il  g-I_+g(1)\vartheta^{k_{0}}_{\ph}+e_h$   \\
  \hline
	$(\Cl , \mathrm{ND})$  &  $\Il  g-I_+g(1)$ & $\Il  g-I_+g(1) \vartheta^0_{+h}$   \\
  \hline
	$(\Cl , \mathrm{NN})$  &  $\Il  g-\frac{I_+g(1)}{2} k^+_1+c$ & $\Il  g+b \vartheta^{k_1}_{\ph}+c\vartheta^{0}_{\ph}+e_h $  \\
  \hline
		$(\RLl , \mathrm{N^*D})$   & $ \Il  g-\frac{\Il g(1)}{k^+_{-1}(1)}k^+_{-1}$ & $\Il  g-\frac{\Il g(1)}{\vartheta^{k_{-1}}_{\ph}(1)} \vartheta^{k_{-1}}_{\ph}$    \\
  \hline
	$(\RLl , \mathrm{N^*N})$   &$ \Il  g-\frac{I_+ g(1)}2k^+_{1}+c_{-1}k^+_{-1}$ & $\Il  g+b \vartheta^{k_{1}}_{\ph}(x)+c_{-1}\vartheta^{k_{-1}}_{\ph}+e_h$     \\
  \hline
  \end{tabular}
\caption{\label{fhexplicit1}
Functions $f_h \in L^1[-1,1]$ for the approximation theorems of Section \ref{sec:caseL}, where we respectively refer to Theorems  \ref{thm:conv_L_DN}, \ref{thm:conv_L_NN} and \ref{thm:conv_L_N*N} for the details of $f_h$ in the cases $(\Cl , \mathrm{DN})$, $(\Cl , \mathrm{NN})$ and  $(\RLl , \mathrm{N^*N})$  .}
\end{table}
\begin{proof}[of Theorem \ref{thm:conv_L_DD}] Recall the matrix \eqref{interpolationmatrixGBC_DD}, definition \eqref{G+hBC} and that $h(n+1)=2$. By Lemmata \ref{lem:varthek0} and \ref{Ghdissipative},  $f_h,G^{\mathrm{DD}}_{\ph}f_h\in  L^1[-1,1]$. By Lemma \ref{lem:PWthms}, $b_h\to- \Il  g(1)/k^+_{0}(1)$ as $h\to0$, and with  Lemma \ref{lem:varthek0} we obtain  $f_h\to f$  as $h\to0$ in $L^1[-1,1]$. It remains to prove the convergence of $G^{\mathrm{DD}}_{\ph}f_h$. 
 
For $\iota(x)=n+1$, using \eqref{eq:DDk0+}, $\Il g=0$ close to $-1$  and the canonical extensions for $\vartheta^{k_{0}}_{\ph}$ and $g$ on $[1,1+h]$ (Remark \ref{rmk:canon}),
\begin{align}\nonumber
G^{\mathrm{DD}}_{\ph}f_h(x)&=\sum_{j=1}^{\iota(x)-1} \Gru_{j} \Il g(x-(j-1)h)+ b_hG^{\mathrm{DD}}_{\ph}\vartheta^{k_{0}}_{\ph}(x) \\ \nonumber
&=\Clh  \Il  g(x)-\Gru_{0}\left( \Il g(x+h)+b_h \frac1h \left(\barlambda  \calG^{k_0}_{n}+ \lambda \calG^{k_0}_{n+1}\right)\right)  +o(1) \\ \nonumber
&=\Clh  \Il  g(x)+\Gru_{0}O( h)  +o(1),
\end{align} 
because by \eqref{eq:phi+1tophi}, \eqref{eq:GClimk-1k1} and $h\vartheta^{k_0}_{\ph}(1)=\calG^{k_0}_{n}$  
\begin{align*}
&\quad  \Il g(x+h)+b_h \frac1h \left(\barlambda  \calG^{k_0}_{n}+ \lambda \calG^{k_0}_{n+1}\right) \\
&=\Il g(x+h)-\Il g(1)+\Il g(1)\lambda \left( 1- \frac{  \calG^{k_0}_{n+1}}{\calG^{k_0}_{n}}\right) \\
&=O(h)+\lambda h  \frac{\calG^{k_{-1}}_{n} }{\calG^{k_0}_{n}} =O(h).
\end{align*}
 For $\iota(x)= n$, using again \eqref{eq:DDk0+} and proceeding as above in the last   identity below,
\begin{align*}\nonumber
G^{\mathrm{DD}}_{\ph}f_h(x)&=D^r(\lambda)\Gru_{0} \Il g(x+h)+\sum_{j=1}^{\iota(x)-1} \Gru_{j} \Il g(x-(j-1)h) + b_hG^{\mathrm{DD}}_{\ph}\vartheta^{k_{0}}_{\ph}(x) 
\\ \nonumber
&=(D^r(\lambda)-1)\Gru_{0}\left( \Il g(x+h) +b_h \vartheta^{k_{0}}_{\ph}(x+h)\right)+\Clh  \Il  g(x)+o(1) \\ 
&=\Gru_{0} O(h)+\Clh  \Il  g(x)+o(1).
\end{align*} 

For $1\le \iota(x)\le n-1$, by \eqref{eq:DDk0+},
\begin{align*}\nonumber
G^{\mathrm{DD}}_{\ph}f_h(x)&=G^{\mathrm{DD}}_{\ph}\Il g(x)=\Clh  \Il  g(x).
\end{align*} 
Collecting the above identities we get
\begin{align*}
 \int_{-1}^1|G^{\mathrm{DD}}_{\ph}f_h(x)-g(x)|\,\dd x&\le \|\Clh  \Il  g-g\|_{L^1[-1,1]}+   \int_{1-2h}^{1} |\Gru_0 O(h)+o(1)|\,\dd x \\
 &\le \|\Clh  \Il  g-g\|_{L^1[-1,1]}+ O(h^2)\Gru_0 +ho(1),
 \end{align*}
 and the right hand side vanishes as $h\to0$ by   Corollary \ref{cor:L94} and \eqref{eq:convhomega}.
 
 \end{proof}

\begin{proof}[of Theorem \ref{thm:conv_L_DN}] Recall the matrix \eqref{interpolationmatrixGBC_DN}, definition \eqref{G+hBC} and that $h(n+1)=2$.  First note that
\begin{equation*}
G^{\mathrm{DN}}_{\ph}e_h(x)=\left\{\begin{split}& 0, & x\in[-1,1-2h),\\
&- \Gru_0 \lambda \left( \Il g(x+h)+\vartheta^{k_{0}}_{\ph}(x+h)\right), & x\in[1-2h,1-h),\\
&\lambda\Gru_0 \left( \Il g(x)+\vartheta^{k_{0}}_{\ph}(x)\right),& x\in[1-h,1],
\end{split}\right. 
\end{equation*} 
and $\|e_h\|_{L^1[-1,1]}\to 0$ as $\sup_{h\in(0,1]} \supnorm{\Il g+\vartheta^{k_{0}}_{\ph}}{[1-h,1]}<\infty$. Then, by Lemmata \ref{lem:varthek0} and \ref{Ghdissipative},  $f_h,G^{\mathrm{DN}}_{\ph}f_h\in  L^1[-1,1]$, and by Lemma \ref{lem:varthek0}, $f_h\to f$ in $L^1[-1,1]$. It remains to prove the convergence of $G^{\mathrm{DN}}_{\ph}f_h$.\\
 For $\iota(x)=1$, as $\Il g(x)=0$ for all small $h$ and by \eqref{eq:DDk0+},  
\begin{align*}
G^{\mathrm{DN}}_{\ph}f_h(x)&= G^{\mathrm{DD}}_{\ph}\Il g(x)+b G^{\mathrm{DD}}_{\ph}\vartheta^{k_{0}}_{\ph}(x) =0.
\end{align*}
and similarly, for $2\le\iota(x)\le n-1$,
\begin{align*}
G^{\mathrm{DN}}_{\ph}f_h(x)&=G^{\mathrm{DD}}_{\ph}f_h(x)=\Clh \Il g(x).
\end{align*}
For $\iota(x)=n$, using \eqref{eq:partialphithetah0+}, \eqref{eq:partialphi-1thetah0+} and \eqref{eq:phi+1tophi}, 
\begin{align*}
G^{\mathrm{DN}}_{\ph}\vartheta^{k_{0}}_{\ph}(x)&=\barlambda \left[\sum_{j=0}^{n-1} \Gru_j \vartheta^{k_{0}}_{\ph}(x-(j-1)h) \right]\\
&\quad -\lambda\left[-\Gru_0 \vartheta^{k_{0}}_{\ph}(x+h)   +\frac1h\sum_{j=0}^{n-2} \Gruone_j \vartheta^{k_{0}}_{\ph}(x-jh) \right] \\
&\quad-D^l(\lambda)  \frac1h \Gruone_{n-1} \vartheta^{k_{0}}_{\ph}(x-(n-1)h)  \\
&=\barlambda \left[-\frac\lambda h \Gru_{n} \calG^{k_0}_{0} \right] -\lambda\left[-\Gru_0 \vartheta^{k_{0}}_{\ph}(x+h)  +\frac1h\left(1-\frac\lambda {h} \Gruone_{n-1} \calG^{k_0}_{0}\right) \right] \\
&\quad-\lambda \frac{\Gruone_{n-1} }h\frac{ \calG^{k_0}_0}h    \\
&=\lambda\Gru_0 \vartheta^{k_{0}}_{\ph}(x+h)-\frac\lambda h +o(1),
\end{align*}
where the $o(1)$ term is due to \eqref{eq:GClimphi} and \eqref{eq:convhomega}. 
Meanwhile for all small $h$
\begin{align*}
G^{\mathrm{DN}}_{\ph}\Il g(x)
&=\barlambda \Clh  \Il g(x)-\lambda\left[-\Gru_0 \Il g(x+h)   +\frac1h \Conelhzero \Il g(x) \right] \\
&=\Clh  \Il g(x)-\lambda \Clh  \Il g(x)-\lambda\left[-\Gru_0 \Il g(x+h)   +\frac1h \Conelhzero \Il g(x) \right] \\
&=\Clh  \Il g(x)+O(1)-\lambda\left[-\Gru_0 \Il g(x+h)   +\frac1h \Conelhzero \Il g(x) \right] .
\end{align*}
By Lemma \ref{lem:firtorderapprox}
\begin{align*}
 -\frac\lambda h\left[ \Conelhzero \Il g(x)+b \right] 
&= -\lambda\frac1h\left[ h F_+[g](x)+O(h)\right] 
= O(1),
\end{align*}
and thus we proved that for $\iota(x)=n$
\begin{align*}
G^{\mathrm{DN}}_{\ph}[\Il g+b \vartheta^{k_{0}}_{\ph}](x) &=\Clh  \Il g(x)+O(1)+\lambda\Gru_0\left( \Il g(x+h)+\vartheta^{k_{0}}_{\ph}(x+h)\right).
\end{align*}

For $\iota(x)=n+1$, using \eqref{eq:partialphi-1thetah0+}\footnote{Even though this holds at 1 for our current summation (recalling Remark \ref{rmk:Dphi_h}), we do not need it as we are working on $L^1$.}  and the usual estimates, 
\begin{align*}
G^{\mathrm{DN}}_{\ph}\vartheta^{k_{0}}_{\ph}(x)&=-\barlambda \left[\frac1h\sum_{j=0}^{n-1} \Gruone_j \vartheta^{k_{0}}_{\ph}(x-jh) \right]-\lambda  \Gru_0 \vartheta^{k_{0}}_{\ph}(x) \\
&=-\frac{\barlambda }h\left[1-\frac\lambda h \Gruone_{n} \calG^{k_0}_{0} \right]-\lambda  \Gru_0 \vartheta^{k_{0}}_{\ph}(x)\\
&=-\frac{\barlambda }h+o(1)-\lambda  \Gru_0 \vartheta^{k_{0}}_{\ph}(x),
\end{align*}
meanwhile for all small $h$
\begin{align*}
G^{\mathrm{DN}}_{\ph}\Il g(x)&=-\barlambda \left[  \frac1h\sum_{j=0}^{n-1} \Gruone_j \Il g(x-jh) \right] -\lambda  \Gru_0 \Il g(x)\\
&=-\barlambda \frac1h  \Conelhzero \Il g(x) -\lambda  \Gru_0 \Il g(x).
\end{align*}
Then, again by Lemma \ref{lem:firtorderapprox},
\begin{align*}
 -\frac{\barlambda }h\left[ \Conelhzero \Il g(x)+b \right] 
&= -\frac{\barlambda }h\left[h F_+[g](x)+O(h)\right] 
= O(1),
\end{align*}
so that for $\iota(x)=n+1$
\begin{align*}
G^{\mathrm{DN}}_{\ph}[\Il g+b \vartheta^{k_{0}}_{\ph}](x) &=O(1)-\lambda\Gru_0\left( \Il g(x)+\vartheta^{k_{0}}_{\ph}(x)\right).
\end{align*}
Then, combining the above with Corollary \ref{cor:L94}, as $h\to 0$
\begin{align*}
 \int_{-1}^1|G^{\mathrm{DN}}_{\ph}f_h(x)-g(x)|\,\dd x&\le \|\Clh  \Il  g-g\|_{L^1[-1,1]}+ \int_{1-2h}^{1} |O(1)|\,\dd x\to 0.
 \end{align*}
 
\end{proof}


\begin{proof}[of Theorem \ref{thm:conv_L_ND}] Recall the matrix  \eqref{interpolationmatrixGBC_ND}, definition \eqref{G+hBC} and that $h(n+1)=2$. Clearly $\vartheta_{\ph}^0\to 1$ in $L^1[-1,1]$ so that
$f_h,G^{\mathrm{ND}}_{\ph}f_h\in  L^1[-1,1]$, by Lemma  \ref{Ghdissipative}, and
 $f_h\to f$ in $L^1[-1,1]$.  It remains to prove the convergence of $G^{\mathrm{ND}}_{\ph}f_h$.\\ 
 For $1\le\iota(x)\le n-1$, as $\Il g=0$ close to $-1$, for all small $h>0$,  
\begin{align*}
G^{\mathrm{ND}}_{\ph}f_h(x)
&=\Clh  \Il  g(x).
\end{align*}
For  $\iota(x)=n$,   by \eqref{eq:phi+1tophi} and $\Il  g(x+h) -\Il  g(1)=O(h)$,
\begin{align*}
G^{\mathrm{ND}}_{\ph}f_h(x)&=\Clh  \Il  g(x)+(D^r(\lambda)-1) \Gru_{0} \left(\Il  g(x+h) +\Il  g(1)\right) \\
&=\Clh  \Il  g(x)+\Gru_{0}O(h).
\end{align*} For the remaining interval $\iota(x)=n+1$, using the canonical extension for $g$ (Remark \ref{rmk:canon}),
\begin{align*}
G^{\mathrm{ND}}_{\ph}f_h(x)&=\Clh  \Il  g(x)-  \Gru_{0} \left(\Il  g(x+h) -    \Il  g(1)\right)=\Clh  \Il  g(x)+  \Gru_{0}O(h).
\end{align*}
 Combining the above with Corollary \ref{cor:L94} and \eqref{eq:convhomega}, as $h\to 0$
\begin{align*}
 \int_{-1}^1|G^{\mathrm{ND}}_{\ph}f_h(x)-g(x)|\,\dd x&\le \|\Clh  \Il  g-g\|_{L^1[-1,1]}+  \Gru_{0} \int_{1-2h}^{1} |O(h)|\,\dd x \to 0.
 \end{align*}
\end{proof}


\begin{proof}[of Theorem \ref{thm:conv_L_NN}] Recall the matrix  \eqref{interpolationmatrixGBC_NN}, definition \eqref{G+hBC} and that $h(n+1)=2$. By Lemmata \ref{lem:varthek0} and \ref{Ghdissipative},  $f_h,G^{\mathrm{NN}}_{\ph}f_h\in L^1[-1,1]$, and  clearly $e_h\to 0$ and  $ \vartheta_{\ph}^0\to 1$ as $h\to0$ in $L^1[-1,1]$, which yields $f_h\to f$  as $h\to0$ in $L^1[-1,1]$ when combined with Lemma \ref{lem:varthek0}.  It remains to prove the convergence of $G^{\mathrm{NN}}_{\ph}f_h$.   First observe   that 
\begin{equation*} G^{\mathrm{NN}}_{\ph} e_h(x)=\left\{\begin{split}& 0, & x\in[-1,1-2h),\\ 
&-\Gru_0\lambda \left(\Il g(x+h)+b\vartheta_{\ph}^{k_1}(x+h)+c\right),& x\in[1-2h,1-h),\\
&\Gru_0\lambda \left(\Il g(x)+b\vartheta_{\ph}^{k_1}(x)+c\right),& x\in[1-h,1],
\end{split}\right.  
\end{equation*}
and 
\begin{equation*} G^{\mathrm{NN}}_{\ph} \vartheta_{\ph}^0(x)=\left\{\begin{split}& 0, & x\in[-1,1-2h),\\ 
&\lambda \Gru_0,& x\in[1-2h,1-h),\\
&-\lambda \Gru_0,& x\in[1-h,1].
\end{split}\right.  
\end{equation*}
 For $1\le \iota(x)\le n-1$, as $ \Il  g=0$ close to $-1$ we have  for all small $h$
\[
G^{\mathrm{NN}}_{\ph}\Il  g(x)=\Clh \Il  g(x),
\]
and 
\[
G^{\mathrm{NN}}_{\ph} b\vartheta^{k_1}_{\ph}(x)=-\frac{I_+g(1)}{2},
\]
because for $\iota(x)=1$ 
\begin{align*}
G^{\mathrm{NN}}_{\ph} \vartheta^{k_1}_{\ph}(x)&=\lambda\Gru_{0} \vartheta^{k_1}_{\ph}(x+h)- \Gru_{0} \vartheta^{k_1}_{\ph}(x)
=\Gru_{0}\left(\frac1h\lambda \calG^{k_1}_0+\frac1h\barlambda  \calG^{k_1}_0\right)
=1,
\end{align*}
and for $2\le \iota(x)\le n-1$, using   \eqref{eq:phi+1tophi} and \eqref{eq:dconvk1},
\begin{align*}
G^{\mathrm{NN}}_{\ph} \vartheta^{k_1}_{\ph}(x)&=\sum_{j=0}^{\iota(x)-2}\Gru_{j} \vartheta^{k_1}_{\ph}(x-(j-1)h)- \vartheta^{k_1}_{\ph}(x-(\iota(x)-1)h)\frac1h \Gruone_{\iota(x)-1}\\
&\quad+ \vartheta^{k_1}_{\ph}(x-(\iota(x)-2)h)\left[\lambda \Gru_{\iota(x)-1}-\barlambda \frac1h \Gruone_{\iota(x)-2} \right] \\
&=\frac1h\sum_{j=0}^{\iota(x)-2}\Gru_{j} \calG^{k_1}_{\iota(x)-1-j}+\frac{\barlambda }h\frac{\calG^{k_1}_0}h \Gruone_{\iota(x)-1}+ \frac1h\calG^{k_1}_{0}\left[\lambda \Gru_{\iota(x)-1}-\barlambda \frac1h \Gruone_{\iota(x)-2} \right] \\
&=1-\frac1h\Gru_{\iota(x)-1} \calG^{k_1}_{0} + \frac1h\calG^{k_1}_{0}  \Gru_{\iota(x)-1}  =1.
\end{align*}
For $\iota(x)=n$
\[
G^{\mathrm{NN}}_{\ph} b\vartheta^{k_1}_{\ph}(x)= I_+ g(1)\frac{\lambda}h+\lambda b \Gru_{0} \vartheta^{k_1}_{\ph}(x+h)+O(1),
\]
and 
\begin{equation}\label{eq:casen}
G^{\mathrm{NN}}_{\ph} \Il g(x)=-I_+g(x)\frac{\lambda}h+\lambda \Gru_{0}\Il g(x+h)+O(1),
\end{equation}
because,  using below Lemma \ref{lem:convid} in the fourth identity and \eqref{eq:convhomega}, \eqref{eq:phi+1tophi} and \eqref{eq:GClimphi} in the second last identity,
\begin{align*}
G^{\mathrm{NN}}_{\ph} \vartheta^{k_1}_{\ph}(x)&= -\lambda\frac1h\sum_{j=0}^{n-3}\Gruone_{j}  \vartheta^{k_1}_{\ph}(x-jh)+ \barlambda \sum_{j=0}^{n-2}\Gru_{j} \vartheta^{k_1}_{\ph}(x-(j-1)h)+\lambda \Gru_{0} \vartheta^{k_1}_{\ph}(x+h)\\
&\quad -\frac1h\Gruone_{n-2}\vartheta^{k_1}_{\ph}(x-(n-2)h)  +\vartheta^{k_1}_{\ph}(x-(n-1)h) \frac1h \sum_{j=0}^{n-2}\Gruone_{j}\\
&= -\lambda\frac1{h^2}\sum_{j=0}^{n-3}\Gruone_{j} \calG^{k_1}_{n-2-j}  + \frac{\barlambda }h\sum_{j=0}^{n-2}\Gru_{j} \calG^{k_1}_{n-1-j} +\lambda \Gru_{0} \vartheta^{k_1}_{\ph}(x+h) \\
&\quad -\frac1{h^2}\Gruone_{n-2}\calG^{k_1}_{0}  -\frac{\barlambda }h\calG^{k_1}_0 \frac1h \sum_{j=0}^{n-2}\Gruone_{j}\\
&= -\lambda\frac1{h^2}\sum_{j=0}^{n-2}\Gruone_{j} \calG^{k_1}_{n-2-j}  + \frac{\barlambda }h\sum_{j=0}^{n-2}\Gru_{j} \calG^{k_1}_{n-1-j} +\lambda \Gru_{0} \vartheta^{k_1}_{\ph}(x+h) \\
&\quad -\frac{\barlambda }{h^2}\Gruone_{n-2}\calG^{k_1}_{0}  -\frac{\barlambda }h\calG^{k_1}_0 \frac1h \sum_{j=0}^{n-2}\Gruone_{j}\\
&= -\lambda(n-1)  + \barlambda  - \frac{\barlambda }h\Gru_{n-1} \calG^{k_1}_{0} +\lambda \Gru_{0} \vartheta^{k_1}_{\ph}(x+h) \\
&\quad -\frac{\barlambda }{h^2}\Gruone_{n-2}\calG^{k_1}_{0}  -\frac{\barlambda }h\calG^{k_1}_0 \frac1h \sum_{j=0}^{n-2}\Gruone_{j}\\
&= -\lambda\left(\frac2h -2\right)  + \barlambda  +o(h) +\lambda \Gru_{0} \vartheta^{k_1}_{\ph}(x+h) +o(h)+ o(1) \\
&= -2\frac{\lambda}h +\lambda \Gru_{0} \vartheta^{k_1}_{\ph}(x+h)+O(1),
\end{align*}
and using Lemma \ref{lem:firtorderapprox} and Corollary \ref{cor:L94}
\begin{align*}
G^{\mathrm{NN}}_{\ph} \Il g(x)
&=-\lambda\frac1h \Conelhzero \Il g(x)+ \barlambda \Clh \Il g(x)+\lambda \Gru_{0}\Il g(x+h)\\
&=-\lambda\frac1h\left[I_+g(x)+hF_+[g](x)+O(h^2)\right]+ O(1)+\lambda \Gru_{0}\Il g(x+h)\\
&=-\lambda\frac{I_+g(x)}h+ O(1)+\lambda \Gru_{0}\Il g(x+h).
\end{align*} 
Lastly, for $\iota(x)=n+1$
\[
G^{\mathrm{NN}}_{\ph} b\vartheta^{k_1}_{\ph}(x)= I_+ g(1)\frac{\barlambda }h-\lambda b\Gru_{0} \vartheta^{k_1}_{\ph}(x)+O(1),
\]
and 
\[
G^{\mathrm{NN}}_{\ph} \Il g(x)=-I_+g(x)\frac{\barlambda }h-\lambda \Gru_{0}\Il g(x)+O(1),
\]
because
\begin{align*}
G^{\mathrm{NN}}_{\ph} \vartheta^{k_1}_{\ph}(x)&=-\frac{\barlambda }h\sum_{j=0}^{n-2}\Gruone_{j} \vartheta^{k_1}_{\ph}(x-jh)+ \vartheta^{k_1}_{\ph}(x-(n-1)h)\frac1h \sum_{j=0}^{n-2}\Gruone_{j} -\lambda \Gru_{0}\vartheta^{k_1}_{\ph}(x)\\
&=-\frac{\barlambda }{h^2}\sum_{j=0}^{n-2}\Gruone_{j}   \calG^{k_1}_{n-1-j}+\frac1{h^2}\calG^{k_1}_{0} \sum_{j=0}^{n-2}\Gruone_{j} -\lambda \Gru_{0}\vartheta^{k_1}_{\ph}(x)\\
&=-\frac{\barlambda }{h^2}\sum_{j=0}^{n-1}\Gruone_{j}  \calG^{k_1}_{n-1-j}+\frac{\barlambda }{h^2}\Gruone_{n-1}   \calG^{k_1}_{0}+\frac1{h^2}\calG^{k_1}_{0}   \sum_{j=0}^{n-2}\Gruone_{j} -\lambda \Gru_{0}\vartheta^{k_1}_{\ph}(x)\\
&=-\barlambda n+o(1) -\lambda \Gru_{0}\vartheta^{k_1}_{\ph}(x)\\
&=-2\frac{\barlambda }h +\barlambda+o(1) -\lambda \Gru_{0}\vartheta^{k_1}_{\ph}(x),
\end{align*}
and using Lemma \ref{lem:firtorderapprox}
\begin{align*}
G^{\mathrm{NN}}_{\ph} \Il g(x)
&=-\barlambda \frac1h \Conelhzero \Il g(x) -\lambda \Gru_{0}\Il g(x)\\
&=-\barlambda \frac1h\left[I_+g(x)+hF_+[g](x)+O(h^2)\right] -\lambda \Gru_{0}\Il g(x)\\
&=-\barlambda \frac{I_+g(x)}h +O(1) -\lambda \Gru_{0}\Il g(x).
\end{align*}
 Combining the above with Corollary \ref{cor:L94}, as $h\to 0$
\begin{align*}
 \int_{-1}^1|G^{\mathrm{NN}}_{\ph}f_h(x)-g(x)+I_+g(1)/2|\,\dd x&\le \|\Clh  \Il  g-g\|_{L^1[-1,1]}+   \int_{1-2h}^{1} |O(1)|\,\dd x \to 0.
 \end{align*}
 
\end{proof}


\begin{proof}[of Theorem \ref{thm:conv_L_N*D}] Recall the matrix \eqref{interpolationmatrixGBC_N*D}, definition \eqref{G+hBC} and that $h(n+1)=2$. By Lemmata \ref{lem:varthek0} and \ref{Ghdissipative},  $f_h,G^{\mathrm{N^*D}}_{\ph}f_h\in L^1[-1,1]$, and $f_h\to f$ as $h\to0$ in $L^1[-1,1]$. It remains to prove the convergence of $G^{\mathrm{N^*D}}_{\ph}f_h$. We first observe  that for all small $h$, using the canonical extension of $g$ and recalling that $\Il g=0$ close to $-1$,
\begin{equation}
G^{\mathrm{N^*D}}_{\ph} \Il g(x)=\left\{\begin{split}&\Clh \Il g(x), & 1\le\iota(x)\le n-1,\\
&\Clh \Il g(x)+(D^r(\lambda)-1)\Gru_0\Il g(x+h), & \iota(x)=n,\\
&\Clh \Il g(x)-\Gru_0\Il g(x+h), & \iota(x)=n+1.
\end{split}\right.
\end{equation}
By Lemma \ref{lem:NRk-1} for $1\le\iota(x)\le n-1$
\[
G^{\mathrm{N^*D}}_{\ph} \vartheta^{k_{-1}}_{\ph}(x)=0,
\]
 for $\iota(x)=n$ we directly compute, using \eqref{eq:partialphithetah-1+},
\begin{align*}
G^{\mathrm{N^*D}}_{\ph} \vartheta^{k_{-1}}_{\ph}(x)&= \Clh  \vartheta^{k_{-1}}_{\ph}(x) +(D^r(\lambda)-1)\Gru_0 \vartheta^{k_{-1}}_{\ph}(x+h)\\
&= (D^r(\lambda)-1)\Gru_0 \vartheta^{k_{-1}}_{\ph}(x+h),
\end{align*}
and for $\iota(x)=n+1$, using \eqref{eq:partialphithetah-1+}  for the canonical extension of $\vartheta^{k_{-1}}_{\ph}$ (see Remark \ref{rmk:canon_extra}),
\begin{align*}
G^{\mathrm{N^*D}}_{\ph} \vartheta^{k_{-1}}_{\ph}(x)&= \Clh  \vartheta^{k_{-1}}_{\ph}(x) -\Gru_0 \vartheta^{k_{-1}}_{\ph}(x+h)-\Gru_{n+1}\vartheta^{k_{-1}}_{\ph}(x-(\iota(x)-1)h)\\
&=-\Gru_0 \vartheta^{k_{-1}}_{\ph}(x+h)-\theta\frac{\Gru_{n+1}}{h}\calG^{k_{-1}}_{0}\\
&=-\Gru_0 \vartheta^{k_{-1}}_{\ph}(x+h)+o(1/h),
\end{align*}
because $\Gru_{n+1}\calG^{k_{-1}}_{0}/h=\Gru_{n+1}/(h^2\sym(1/h))=o(1/h)$ by   \eqref{eq:convhomega} and   \eqref{eq:GClimphi}.
Observe that for $\iota(x)=n$, using \eqref{eq:phi+1tophi} and \eqref{eq:GClimk-2k0},
\begin{align*}
 \Il g(x+h)-\frac{\Il g(1)}{\vartheta^{k_{-1}}_{\ph}(1)}\vartheta^{k_{-1}}_{\ph}(x+h) &=  O(h)+\Il g(1)\left(1-\frac{(1-\theta)\calG^{k_{-1}}_{n-1}+\theta \calG^{k_{-1}}_n}{\calG^{k_{-1}}_n}\right)\\
 &=  O(h)+\Il g(1)(1-\theta)\left(\frac{\calG^{k_{-1}}_n-\calG^{k_{-1}}_{n-1}}{\calG^{k_{-1}}_n}\right)\\
 &=  O(h)+\Il g(1)(1-\theta)h\left(\frac{\calG^{k_{-2}}_n}{\calG^{k_{-1}}_n}\right) =O(h),
\end{align*}
 and similarly, for $\iota(x)=n+1$,
\[
\Il g(x+h)-\frac{\Il g(1)}{\vartheta^{k_{-1}}_{\ph}(1)}\vartheta^{k_{-1}}_{\ph}(x+h)=O(h).
\]
Then by Corollary \ref{cor:L94} and \eqref{eq:convhomega}, as $h\to 0$,
\begin{align*} 
 \int_{-1}^1|G^{\mathrm{N^*D}}_{\ph}f_h(x)-g(x)|\,\dd x&\le \|\Clh  \Il  g-g\|_{L^1[-1,1]}+\Gru_0  O(h^2)+o(1/h)h\to 0.
 \end{align*}

\end{proof}

\begin{remark}\label{rem:thm12}
In the proof of Theorem \ref{thm:conv_L_N*D} we proved the convergence  $ \|G^{\mathrm{N^*D}}_{\ph}f_h-g\|_{L^1[-1,1]}=O(h^{2}\sym(1/h))$ if $\phi$ allows a density with right and left limits at 2. This is because we proved the   order of convergence $h^{2}\sym(1/h)+\Gru_n /(h \sym(1/h))$, but      $\Gru_n /h\to (\phi(2+)+\phi(2-))/2 $ \cite[Corollary  VII.3c.1]{MR0005923}, and so $\Gru_n /(h \sym(1/h))\sim  1/\sym(1/h)=o(h^{2}\sym(1/h))$.
\end{remark}

\begin{proof}[of Theorem \ref{thm:conv_L_N*N}] Recall the matrix \eqref{interpolationmatrixGBC_N*N}, definition \eqref{G+hBC} and that $h(n+1)=2$. By Lemma \ref{lem:varthek0} it follows that $\|e_h\|_{L^1[-1,1]}\to 0$ as $h\to 0$. Then, by Lemmata \ref{lem:varthek0} and \ref{Ghdissipative}, $f_h,G^{\mathrm{N^*N}}_{\ph}f_h\in L^1[-1,1]$ and $f_h\to f$ as $h\to0$ in $L^1[-1,1]$. It remains to prove the convergence of $G^{\mathrm{N^*N}}_{\ph}f_h$.   First observe that
\begin{equation} \label{eq:NN+1}
G^{\mathrm{N^*N}}_{\ph} e_h(x)=\left\{\begin{split}&0, & 1\le\iota(x)\le n-1,\\
&-\Gru_0\lambda \left(\Il g(x+h)+b\vartheta_{\ph}^{k_1}(x+h)+c_{-1}\vartheta_{\ph}^{k_{-1}}(x+h)\right), & \iota(x)=n,\\
&\Gru_0\lambda \left(\Il g(x)+b\vartheta_{\ph}^{k_1}(x)+c_{-1}\vartheta_{\ph}^{k_{-1}}(x)\right), & \iota(x)=n+1,
\end{split}\right.
\end{equation}
and for all small $h$
\begin{equation}\label{eq:NN+2}
G^{\mathrm{N^*N}}_{\ph} \Il g(x)=\left\{\begin{split}&\Clh \Il g(x), & 1\le\iota(x)\le n-1,\\
&\lambda\Gru_0\Il g(x+h)-\frac{\lambda}{h} \Conelhzero \Il g(x)+O(1), & \iota(x)=n,\\
&-\lambda\Gru_0\Il g(x)-\frac{\barlambda }{h} \Conelhzero \Il g(x) , & \iota(x)=n+1,
\end{split}\right.
\end{equation}
where we used $\Clh \Il g =O(1)$, by Corollary \ref{cor:L94}, and that $\Il g=0$ close to $-1$.
Also 
\begin{equation}\label{eq:NN+3}
G^{\mathrm{N^*N}}_{\ph} \vartheta^{k_{-1}}_{\ph}(x)=\left\{\begin{split}&0, & 1\le\iota(x)\le n-1,\\
&\lambda\Gru_0\vartheta^{k_{-1}}_{\ph}(x+h)+o(1/h), & \iota(x)=n,\\
&-\lambda\Gru_0\vartheta^{k_{-1}}_{\ph}(x)+o(1/h), & \iota(x)=n+1,
\end{split}\right.
\end{equation}
where we used, for $1\le\iota(x)\le n-1$, Lemma \ref{lem:NRk-1},   for $\iota(x)=n$, by \eqref{eq:partialphi-1thetah-1+},  \eqref{eq:partialphithetah-1+}, \eqref{eq:phi+1tophi}, \eqref{eq:GClimphi} and \eqref{eq:convhomega},
\begin{align*}
G^{\mathrm{N^*N}}_{\ph}  \vartheta^{k_{-1}}_{\ph}(x)&=\barlambda \Clh \vartheta^{k_{-1}}_{\ph}(x) -\barlambda \left(\Gru_n+\frac1h\Gruone_{n-1}\right)\vartheta^{k_{-1}}_{\ph}(x-(\iota(x)-1)h)\\
&\quad-\frac{\lambda}h \Conelhzero \vartheta^{k_{-1}}_{\ph}(x)  +\lambda \Gru_0\vartheta^{k_{-1}}_{\ph}(x+h) \\
&=-\barlambda \left(\Gru_n+\frac1h\Gruone_{n-1}\right)\frac{\theta\calG^{k_{-1}}_0}h+\lambda\Gru_0\vartheta^{k_{-1}}_{\ph}(x+h)\\
&=-\barlambda \theta\frac{\calG^{k_{-1}}_0}h\sum_{j=0}^n\Gru_{j}+\lambda\Gru_0\vartheta^{k_{-1}}_{\ph}(x+h)\\
&=o(1/h)+\lambda\Gru_0\vartheta^{k_{-1}}_{\ph}(x+h),
\end{align*}
and similarly for $\iota(x)=n+1$ 
\begin{align*}
G^{\mathrm{N^*N}}_{\ph}  \vartheta^{k_{-1}}_{\ph}(x)&=-\frac{\barlambda }h \Conelhzero \vartheta^{k_{-1}}_{\ph}(x) +\frac{\barlambda }h\Gruone_{n}\frac{\theta\calG^{k_{-1}}_0}h-\lambda\Gru_0\vartheta^{k_{-1}}_{\ph}(x)\\
&=o(1/h)-\lambda\Gru_0\vartheta^{k_{-1}}_{\ph}(x).
\end{align*}
We now show that
\begin{equation}\label{eq:NN+4}
G^{\mathrm{N^*N}}_{\ph}b \vartheta^{k_{1}}_{\ph}(x)=\left\{\begin{split}&o(1/h), & 1\le\iota(x)\le 2,\\
&b- b\barlambda \Gru_{\iota(x)} \frac{\calG^{k_1}_0}{h}, & 3\le\iota(x)\le n-1,\\
&I_+g(1)\frac{\lambda}h+b\lambda \Gru_0\vartheta^{k_{1}}_{\ph}(x+h)+O(1), & \iota(x)=n,\\
&I_+g(1)\frac{\barlambda }h-b\lambda\Gru_0\vartheta^{k_{1}}_{\ph}(x)+O(1), & \iota(x)=n+1.
\end{split}\right.
\end{equation}
Indeed for $\iota(x)= 1$
\begin{align*}
G^{\mathrm{N^*N}}_{\ph}   \vartheta^{k_1}_{\ph}(x)&=-\barlambda (\Gru_1+\Gru_0)\frac{\calG^{k_1}_0}{h}+\lambda \Gru_0  \frac{\calG^{k_1}_0}{h} =\barlambda \left(\frac{\sym'(1/h)}{h\sym(1/h)}-1\right) +\lambda =O(1),
\end{align*}
using \eqref{eq:2mon} (which holds under \ref{H0}), and similarly for $\iota(x)= 2$ 
\begin{align*}
G^{\mathrm{N^*N}}_{\ph}  \vartheta^{k_1}_{\ph}(x)&=-\barlambda \Gru_2\frac{\calG^{k_1}_0}{h}+\left( \Gru_1+\barlambda \Gru_0 \right)\frac{\calG^{k_1}_0}{h}+\Gru_0\frac{\calG^{k_1}_1}{h} \\
&=-\frac{\barlambda }{2} \frac{\sym''(1/h)}{h^2\sym(1/h)}+O(1)+1+\frac{\sym'(1/h)}{h\sym(1/h)}\\
&=o(1/h)+O(1),
\end{align*}
where we used in the third identity \eqref{eq:convhomega}  in combination with 
\begin{equation}
\label{eq:omega''}
\sym''(1/h)=\int_0^\infty e^{-\frac1h y}y^2 \phi(\dd y)\to 0,\quad \text{as } h\to 0.
\end{equation}
For $3\le\iota(x)\le n-1$, using \eqref{eq:partialphithetah1+} and \eqref{eq:convhomega},
\begin{align*}
G^{\mathrm{N^*N}}_{\ph}  \vartheta^{k_1}_{\ph}(x)&=\Clh \vartheta^{k_{1}}_{\ph}(x)=1- \barlambda \Gru_{\iota(x)} \frac{\calG^{k_1}_0}{h}=1- \barlambda \Gru_{\iota(x)} o(h).
\end{align*}
For $\iota(x)=n$, using \eqref{eq:partialphithetah1+}, \eqref{eq:partialphi-1thetah1+}, \eqref{eq:phi+1tophi}, \eqref{eq:GClimphi} and \eqref{eq:convhomega},
\begin{align*}
G^{\mathrm{N^*N}}_{\ph}  \vartheta^{k_1}_{\ph}(x)&=\barlambda \Clh \vartheta^{k_{1}}_{\ph}(x) -\barlambda \left( \Gru_n +\frac1h\Gruone_{n-1}\right)    \vartheta^{k_{1}}_{\ph}(x-(\iota(x)-1)h)\\
&\quad-\frac{\lambda}h \Conelhzero \vartheta^{k_{1}}_{\ph}(x) +\lambda \Gru_0\vartheta^{k_{1}}_{\ph}(x+h) \\
&=\barlambda \left(1-\frac{\barlambda }h\Gru_{n} \calG^{k_1}_{0}\right) +\barlambda \left(\Gru_n+\frac1h\Gruone_{n-1}\right)\frac{\barlambda }h \calG^{k_1}_{0}\\
&\quad-\frac{\lambda}h\left[(n-1)h-\frac{\barlambda }h\Gruone_{n-1} \calG^{k_1}_{0}\right]  +\lambda \Gru_0\vartheta^{k_{1}}_{\ph}(x+h) \\
&=-2\frac{\lambda}h+O(1)+\lambda \Gru_0\vartheta^{k_{1}}_{\ph}(x+h).
\end{align*}
For $\iota(x)=n+1$, similarly as above\footnote{Note that as we are working on $L^1[-1,1]$ the identity at $x=1$  in \eqref{eq:partialphi-1thetah1+} is not needed.} 
\begin{align*}
G^{\mathrm{N^*N}}_{\ph}  \vartheta^{k_1}_{\ph}(x)
&=-\frac{\barlambda }h(nh)-\lambda\Gru_0\vartheta^{k_{1}}_{\ph}(x)=-2\frac{\barlambda }h+O(1)-\lambda\Gru_0\vartheta^{k_{1}}_{\ph}(x).
\end{align*}
 Now, observing that  Lemma \ref{lem:firtorderapprox}   yields  $\Conelhzero \Il g(x)-I_+g(1)=O(h)$ on $[1-2h,1]$,  combining \eqref{eq:NN+1}, \eqref{eq:NN+2}, \eqref{eq:NN+3} and \eqref{eq:NN+4} we obtain
\[
G^{\mathrm{N^*N}}_{\ph}f_h(x)= o(1/h),  \quad n\le\iota(x)\le n+1.
\]
So we conclude by applying Corollary \ref{cor:L94} with \eqref{eq:NN+1}, \eqref{eq:NN+2}, \eqref{eq:NN+3} and \eqref{eq:NN+4} to obtain
\begin{align}\nonumber 
 \left\|G^{\mathrm{N^*N}}_{\ph}f_h-g-b\right\|_{L^1[-1,1]} &\le \|\Clh  \Il  g-g\|_{L^1[-1,1]}+\left|b\right|\left(\int_{-1}^{2h-1}+ \int^{1}_{1-2h}\dd x\right) \\ \nonumber
 &\quad +o(h) \sum_{j=3}^{n-1} \Gru_j\int_{(j-1)h-1}^{jh-1}\,\dd x
 \\ \nonumber
 &\quad + o(1/h)\left( \int_{1-2h}^{1}+\int_{-1}^{-1+2h}\dd x\right)
 \\ \nonumber
 &=   o(h^2) \left(\sum_{j=0}^{n-1} \Gru_j-\sum_{j=0}^{2} \Gru_j\right)
 + o(1) \\ \label{eq:rmkonrateNN}
  &= o(h^2) \left(\Gru_0 -\Gru_1+\Gru_2\right)+o(1)=o(1)
,
 \end{align}
 using \eqref{eq:GClimphi}, \eqref{eq:convhomega} and \eqref{eq:omega''} in the last identity.
 
\end{proof}

\begin{remark}
In the proof of Theorem \ref{thm:conv_L_N*N} we proved that $ \|G^{\mathrm{N^*N}}_{\ph}f_h-g-b\|_{L^1[-1,1]}= O(1/[h\sym(1/h)])$. This is because the   $o(1)$ term coming from \eqref{eq:NN+3} is $O(1/[h\sym(1/h)])$, which converges at a slower rate than the $o(1)$ term due to  \eqref{eq:NN+4} (the rest of the terms are $O(h)$). Indeed, a careful examination of the proof reveals that for $\iota\le 2$ (in   \eqref{eq:NN+4}) the $o(1)$ term has been computed to be  $O(\sym''(1/h)/[h\sym(1/h)])$, and for   $3\le \iota\le n-1$ (in   \eqref{eq:NN+4}), corresponding to the first term in \eqref{eq:rmkonrateNN}, we have \[
\frac{h}{\sym(1/h)} \left(\sym(1/h)+\frac{\sym'(1/h)}{h}+\frac{\sym''(1/h)}{2h^2}\right)
=O(h)+O\left(\frac{\sym''(1/h)}{h\sym(1/h)}\right).
\]
\end{remark}


\end{document}